\newcommand{\preprint}[1]{}
\newcommand{\hide}[1]{}
\numberwithin{equation}{section}
\theoremstyle{plain}
\newtheorem{thm}{Theorem}[section]
\newtheorem{prop}[thm]{Proposition}
\newtheorem{conj}[thm]{Conjecture}
\newtheorem{cor}[thm]{Corollary}
\newtheorem{lem}[thm]{Lemma}
\theoremstyle{definition}
\newtheorem{defi}[thm]{Definition}
\theoremstyle{remark}
\newtheorem{example}[thm]{Example}
\newtheorem{question}[thm]{Question}
\newtheorem{rem}[thm]{Remark}
\newtheorem{condition}[thm]{Condition}
\newcommand{\Def}{{\mathcal D}ef}
\newcommand{\A}{{\mathcal A}}
\newcommand{\CC}{{\mathbb C}}
\newcommand{\D}{{\mathcal D}}
\newcommand{\gd}{{\Delta}}
\newcommand{\E}{{\mathcal E}}
\newcommand{\F}{{\mathcal F}}
\newcommand{\G}{{\mathcal G}}
\renewcommand{\H}{{\mathcal H}}
\newcommand{\HH}{{\mathbb H}}
\newcommand{\gL}{{\Lambda}}
\newcommand{\M}{{\mathcal M}}
\newcommand{\fm}{{\mathfrak m}}
\newcommand{\fM}{{\mathfrak M}}
\newcommand{\tfM}{\widetilde{{\mathfrak M}}}
\newcommand{\hfM}{\overline{\fM}}
\newcommand{\ko}{{\mathcal O}}
\renewcommand{\P}{{\mathcal P}}
\newcommand{\PP}{{\mathbb P}}
\newcommand{\RR}{{\mathbb R}}
\newcommand{\U}{{\mathcal U}}
\newcommand{\V}{{\mathcal V}}
\newcommand{\W}{{\mathcal W}}
\newcommand{\X}{{\mathcal X}}
\newcommand{\Y}{{\mathcal Y}}
\newcommand{\Z}{{\mathcal Z}}
\newcommand{\RealNumbers}{{\mathbb R}}
\newcommand{\Integers}{{\mathbb Z}}
\newcommand{\ComplexNumbers}{{\mathbb C}}
\newcommand{\RationalNumbers}{{\mathbb Q}}
\newcommand{\LieAlg}[1]{{\mathfrak #1}}
\newcommand{\IsomRightArrow}{\stackrel{\cong}{\rightarrow}}
\newcommand{\RightArrowOf}[1]{\stackrel{#1}{\rightarrow}}
\newcommand{\LongRightArrowOf}[1]{\stackrel{#1}{\longrightarrow}}
\newcommand{\StructureSheaf}[1]{{\mathcal O}_{#1}}
\newcommand{\EndProof}{\hfill  $\Box$}
\newcommand{\restricted}[2]{#1_{\mid_{#2}}}
\newcommand{\rank}{{\rm rank}}
\newcommand{\coker}{{\rm coker}}
\newcommand{\Pic}{{\rm Pic}}
\newcommand{\Sym}{{\rm Sym}}
\newcommand{\Ext}{{\rm Ext}}
\newcommand{\Hom}{{\rm Hom}}
\newcommand{\Aut}{{\rm Aut}}
\newcommand{\End}{{\rm End}}
\newcommand{\SheafHom}{{\mathcal H}om}
\newcommand{\SheafEnd}{{\mathcal E}nd}
\newcommand{\SheafExt}{{\mathcal E}xt}
\newcommand{\SheafTor}{{\mathcal T}or}
\newcommand{\wt}[1]{\widetilde{#1}}
\newcommand{\Wedge}[1]{\stackrel{#1}{\wedge}}
\newcommand{\depth}{{\rm depth}}
\begin{document}
\title{A global Torelli theorem for rigid hyperholomorphic sheaves}
\author{Eyal Markman}
\address{Department of Mathematics and Statistics, 
University of Massachusetts, Amherst, MA 01003, USA}
\email{markman@math.umass.edu}
\author{Sukhendu Mehrotra}
\address{Chennai Mathematical Institute, 
H1 SIPCOT IT Park, Siruseri, Kelambakkam 603103, INDIA}
\email{mehrotra@cmi.ac.in}

\date{\today}

\begin{abstract}
Let $X$ be an irreducible holomorphic symplectic manifold of
$K3^{[n]}$ deformation type, $n\geq 2$. There  exists over $X\times X$ a rank $2n-2$
rigid and stable reflexive sheaf $A$ of Azumaya algebras, constructed in 
\cite{markman-hodge}, such that the pair $(X\times X,A)$ is deformation equivalent 
to the following pair $(M\times M,A_M)$.
The manifold $M$ is a smooth and compact moduli space of stable sheaves
over a $K3$ surface $S$, and $A_M$ is the reflexive sheaf, whose 
fiber, over a pair
$(F_1,F_2)\in M\times M$ of non-isomorphic stable sheaves $F_1$ and $F_2$, is $\End\left(\Ext^1_S(F_1,F_2)\right)$. 
We prove in this paper the following uniqueness result.

Let $A_1$ and $A_2$ be two rigid Azumaya algebras over $X\times X$, which are $(\omega,\omega)$-slope-stable
with respect to the same K\"{a}hler class $\omega$ on $X$, and
such that $(X\times X,A_i)$ is deformation equivalent to $(M\times M,A_M)$,
for $i=1,2$. Assume that the singularities of $A_1$ and $A_2$ along the diagonal are of the same infinitesimal type prescribed by $A_M$.
Then $A_1$ is isomorphic to $A_2$ or $A_2^*$. Furthermore, if $Pic(X)$ is trivial,
or  cyclic generated by a class of non-negative Beauville-Bogomolov-Fujiki degree, then 
there exists a {\em unique} pair $\{A,A^*\}$, such that $(X\times X,A)$ is 
deformation equivalent to $(M\times M,A_M)$, and $A$ is $(\omega,\omega)$-slope-stable
with respect to every K\"{a}hler class $\omega$ on $X$. 

The above is the main example of a more general 
global Torelli theorem proven. 
The result is used in the authors forthcoming work on 
generalized deformations of $K3$ surfaces.
\end{abstract}

\maketitle
\tableofcontents

%
\section{Introduction}
\label{sec-introduction}

An {\em irreducible holomorphic symplectic manifold} is a simply connected compact K\"{a}hler manifold $X$,
such that $H^0(X,\Wedge{2}T^*X)$ is spanned by an everywhere non-degenerate holomorphic $2$-form.
The second cohomology $H^2(X,\Integers)$ admits a symmetric non-degenerate integral bi-linear
pairing of signature $(3,b_2(X)-3)$, called the {\em Beauville-Bogomolov-Fujiki} pairing
\cite{beauville-varieties-with-zero-c-1}. The pairing is positive on the K\"{a}hler cone, and 
normalized, so that $\gcd\{(\alpha,\beta) \ : \ \alpha,\beta\in H^2(X,\Integers)\}=1.$ 
The pairing is invariant under deformations of the complex structure of $X$, and is thus monodromy invariant.
Given a K\"{a}hler $K3$ surface $S$, the Hilbert scheme (or Douady space) $S^{[n]}$ of
length $n$ subschemes of $S$ is an example of an irreducible holomorphic symplectic manifold. 
An irreducible holomorphic symplectic manifold is said to be of {\em $K3^{[n]}$-type},
if it is deformation equivalent to $S^{[n]}$, for some (hence any) K\"{a}hler $K3$ surface $S$.

Let $X_0$ be an irreducible holomorphic symplectic manifold.
Fix a lattice $\Lambda$ isometric to $H^2(X_0,\Integers)$, endowed with its 
Beauville-Bogomolov-Fujiki pairing. Let $\fM_\Lambda$ be the moduli space of marked 
irreducible holomorphic symplectic manifolds with integral second cohomology isometric to $\Lambda$.
A point of $\fM_\Lambda$ parametrizes  an isomorphism class
of a pair $(X,\eta)$, where $\eta:H^2(X,\Integers)\rightarrow \Lambda$ is an isometry \cite{huybrects-basic-results}.

Fix a positive integer $d$. 
In section \ref{sec-existence-of-coarse-moduli-of-triples}
we construct  a coarse moduli space $\tfM_\Lambda$,
consisting of isomorphism classes of triples $(X,\eta,E)$, 
where $(X,\eta)$ is a marked pair as above and $E$ is a  reflexive sheaf 
of Azumaya algebras over the $d$-th Cartesian product $X^d$, which is
stable with respect to some K\"{a}hler class, 
$c_2(E)$ is invariant under the diagonal action of 
a finite index subgroup of the monodromy group of $X$, and $E$
satisfies the technical open Condition \ref{cond-open-and-closed}.
We introduce next the necessary background needed for the statement of 
Condition \ref{cond-open-and-closed}. The main results are then stated using that condition.

\begin{defi}
\label{def-reflexive-sheaf-of-Azumaya-algebras}
A {\em reflexive sheaf of Azumaya\footnote{Caution: 
The standard definition of a sheaf of
Azumaya $\StructureSheaf{X}$-algebras assumes that $E$ is a locally free
$\StructureSheaf{X}$-module, while we assume only that it is reflexive.} 
$\StructureSheaf{X}$-algebras of rank $r$} over a 
K\"{a}hler manifold $X$
is a sheaf $E$ of reflexive coherent $\StructureSheaf{X}$-modules, with a 
global section $1_E$, and an associative multiplication 
$m:E\otimes E\rightarrow E$ with identity $1_E$, admitting an open covering
$\{U_\alpha\}$ of $X$, and an isomorphism 
$\eta_\alpha:\restricted{E}{U_\alpha}\rightarrow \SheafEnd(F_\alpha)$
of unital associative algebras,
for some reflexive sheaf $F_\alpha$ of rank $r$ over each $U_\alpha$.
\end{defi}

From now on the term a {\em sheaf of Azumaya algebras} will mean
a reflexive sheaf of Azumaya $\StructureSheaf{X}$-algebras.
A subsheaf $P$ of a sheaf $E$ of Azumaya algebras is a 
{\em sheaf of maximal parabolic subalgebras}
if, in the notation of Definition \ref{def-reflexive-sheaf-of-Azumaya-algebras},
$\eta_\alpha(\restricted{P}{U_\alpha})$ is the sheaf of subalgebras of $\SheafEnd(F_\alpha)$ 
leaving invariant a non-zero subsheaf $F'_\alpha\subset F_\alpha$ of lower rank, for all $\alpha$.  
Let $\pi_i:X^k\rightarrow X$ be the projection onto the $i$-th factor.  

\begin{defi}
\label{def-slope-stability}
Given a sheaf $E$ of rank $r$ Azumaya algebras over $X^d$ and 
a K\"{a}hler class $\omega$ on $X$,
we say that $E$ is {\em $\omega$-slope-stable}, 
if every non-trivial subsheaf of maximal parabolic subalgebras of $E$ has negative
$(\sum_{i=1}^d\pi_i^*\omega)$-slope, 
and the rank $r^2$ sheaf $E$ is 
$(\sum_{i=1}^d\pi_i^*\omega)$-slope-polystable.
\end{defi}

Let $M$ be a complex manifold and $E$ a rank $r$ sheaf of reflexive Azumaya algebras over $M$.
Let $U\subset M$ be the open subset where $E$ is locally free.
$E$ corresponds to a class in
$H^1(U,PGL(r))$.
The connecting homomorphism of the short exact sequence 
$1\rightarrow \mu_r\rightarrow SL(r)\rightarrow PGL(r)\rightarrow 1$ associates to $E$ a characteristic class in
$H^2(U,\mu_r)$, which is isomorphic to $H^2(M,\mu_r)$, since the singular locus has co-dimension
$\geq 3$. We will refer to this class as the {\em characteristic class of $E$ in $H^2(M,\mu_r)$}.
The image of this class in $H^*(M,\StructureSheaf{M}^*)$ is called the {\em Brauer class}
of $E$.

\begin{prop}
\label{prop-maximally-twisted-sheaf-is-slope-stable}
\cite[Prop. 7.8]{markman-hodge}
Let $E$ be a sheaf of Azumaya algebras over a K\"{a}hler manifold $M$.
If the order of the Brauer class of $E$ in $H^2(M,\StructureSheaf{M}^*)$ is equal to the rank of $E$, 
then $E$ is $\omega$-slope-stable with respect to every K\"{a}hler class $\omega$ on $M$.
\end{prop}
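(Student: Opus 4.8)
The plan is to realise $E$ as the endomorphism sheaf of a twisted sheaf $F$ on $M$ and to show that the hypothesis on the Brauer class forces $F$ to be so rigid that both clauses of $\omega$-slope-stability (Definition \ref{def-slope-stability}) become automatic.

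Write $r$ for the rank of $E$ and let $\beta\in H^2(M,\StructureSheaf{M}^*)$ be its Brauer class, which by hypothesis has order exactly $r$. I would begin with the standard dictionary between Azumaya algebras and twisted sheaves. On the open set $U\subseteq M$ where $E$ is locally free, $\restricted{E}{U}$ is an ordinary Azumaya algebra of rank $r$ and Brauer class $\restricted{\beta}{U}$, hence equals $\SheafEnd(F_U)$ for some $\beta$-twisted locally free sheaf $F_U$ of rank $r$; since $M\setminus U$ has codimension $\geq 3$, the reflexive extension $F:=j_*F_U$ (where $j:U\hookrightarrow M$) is a $\beta$-twisted reflexive sheaf of rank $r$ on $M$ with $\SheafEnd(F)\cong E$, and the local isomorphisms $\eta_\alpha$ of Definition \ref{def-reflexive-sheaf-of-Azumaya-algebras} may be taken to be conjugation by the transition isomorphisms of $F$. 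In particular $c_1(\SheafEnd(F))=0$, so $\deg_\omega E=0$ for every K\"ahler class $\omega$.

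The heart of the matter is the following rigidity: $F$ has no $\beta$-twisted coherent subsheaf $F'\subseteq F$ of rank $r'$ with $0<r'<r$. Indeed, given such an $F'$ I would pass to its reflexive hull $(F')^{**}$, which still embeds in $F$ because $F$ is reflexive, and form the determinant line bundle $\det\bigl((F')^{**}\bigr)=\bigl(\bigwedge^{r'}(F')^{**}\bigr)^{**}$. A reflexive sheaf of rank one on the smooth manifold $M$ is invertible, so this determinant is a $\beta^{r'}$-twisted line bundle on $M$; but a twisted line bundle exists only when its twisting class is trivial, whence $\beta^{r'}=1$ in $H^2(M,\StructureSheaf{M}^*)$, and since $\beta$ has order $r$ this forces $r\mid r'$, contradicting $0<r'<r$. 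I would then feed this into the two clauses of Definition \ref{def-slope-stability}. If $P\subseteq E$ were a non-trivial sheaf of maximal parabolic subalgebras, then over a dense open subset $V$ of $M$ on which all the relevant sheaves are vector bundles, $\eta_\alpha(\restricted{P}{U_\alpha})$ is fibrewise a maximal parabolic subalgebra of $\mathfrak{gl}_r$ and therefore determines a unique proper non-zero subspace of each fibre of $F_\alpha$; these subspaces agree on overlaps, since the stabilisers $\eta_\alpha(\restricted{P}{U_\alpha})$ do, and glue to a subbundle of $\restricted{F}{V}$ whose saturation in $F$ is a global $\beta$-twisted subsheaf of $F$ of some rank $r'$ with $0<r'<r$ --- contradicting the rigidity statement. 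Hence $E$ carries no non-trivial sheaf of maximal parabolic subalgebras, so the first clause of Definition \ref{def-slope-stability} holds vacuously, for every $\omega$. The same rigidity shows that every twisted subsheaf of $F$ of positive rank has rank $r$, so $F$ is $\omega$-slope-stable as a twisted reflexive sheaf for every K\"ahler class $\omega$; by the Kobayashi--Hitchin correspondence for twisted reflexive sheaves on a compact K\"ahler manifold (the twisted analogue of the Bando--Siu theorem) $F$ carries an $\omega$-Hermitian--Einstein metric, whence so does $\SheafEnd(F)=E$, and thus $E$ is $\omega$-slope-polystable. This gives the second clause, again for every $\omega$, and the proposition follows.

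The steps I expect to require the most care are those where the sheaves fail to be locally free: constructing the twisted sheaf $F$ together with the isomorphism $\SheafEnd(F)\cong E$ across the codimension-$\geq 3$ degeneracy locus of $E$, and organising the locally defined parabolic data into a genuine global twisted subsheaf of $F$. The only transcendental ingredient, the twisted Hermitian--Einstein existence theorem used for the polystability of $\SheafEnd(F)$, is by now standard; alternatively one may quote directly the fact that $F^*\otimes F$ is slope-polystable whenever $F$ is slope-stable.
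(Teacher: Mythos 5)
Your argument is correct and is exactly the standard one behind the cited \cite[Prop.~7.8]{markman-hodge}, which this paper itself invokes again in the proof of Corollary \ref{cor-triple-is-a-separated-point}: writing $E\cong\SheafEnd(F)$ for a twisted reflexive sheaf $F$, observing that the determinant of any rank-$r'$ twisted subsheaf is a $\beta^{r'}$-twisted line bundle so that the order of the Brauer class divides $r'$, and concluding that $F$ has no subsheaves of intermediate rank, whence the parabolic condition is vacuous and $\SheafEnd(F)$ is polystable. No gaps; the only point deserving the care you already flag is extending $F$ and the identification $\SheafEnd(F)\cong E$ across the codimension-$\geq 3$ singular locus, which is Proposition \ref{prop-azumaya-algebra-is-End-F} in the case of a one-point base.
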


Let $r$ be a positive integer and 
$\tilde{\theta}$ an element of order $r$ in $\Lambda^{\oplus d}/r\Lambda^{\oplus d}$.
Denote by $\mu_r\subset \ComplexNumbers^*$ the group of $r$-th roots of unity.
Let $\iota:H^2(X^d,\mu_r)\rightarrow H^2(X^d,\StructureSheaf{X^d}^*)$ be the natural homomorphism.
Given a marked pair $(X,\eta)$, let 
$\bar{\eta}:H^2(X^d,\Integers/r\Integers)\rightarrow \Lambda^{\oplus d}/r\Lambda^{\oplus d}$ be the isomorphism
induced by the marking $\eta$. 
Denote by 
\begin{equation}
\label{eq-theta-X-eta}
\theta_{(X,\eta)}
\end{equation}
the class $\iota\left[\exp\left(\frac{-2\pi i}{r}\bar{\eta}^{-1}(\tilde{\theta})\right)\right]$ in 
$H^2(X^d,\StructureSheaf{X^d}^*)$. 

\begin{rem}
\label{rem-trivial-picard-implies-Brauer-class-has-order-r}
Note that $\theta_{(X,\eta)}$ has order $r$, if $\Pic(X)$ is trivial. In that case any rank $r$ Azumaya algebra with Brauer 
class $\theta(X,\eta)$ is slope-stable with respect to every K\"{a}hler class on $X$,
by Proposition \ref{prop-maximally-twisted-sheaf-is-slope-stable}.
\end{rem}



\begin{defi}
\label{def-monodromy}
{\rm
The {\em monodromy group} $Mon(X)$ of  $X$
is the subgroup, of the automorphism group of the cohomology ring $H^*(X,\Integers)$,
generated by monodromy operators $g$ of families $\X \rightarrow B$ 
(which may depend on $g$) of irreducible holomorphic symplectic manifolds
deforming $X$. 
}
\end{defi}

Let $E$ be a sheaf of Azumaya algebras over a complex manifold $M$ and $F$ a 
(possibly twisted) reflexive sheaf, such that
$E$ is isomorphic to $\SheafEnd(F)$. Let
$\Ext^1_0(F,F)$ be the kernel of the trace homomorphism
$\Ext^1(F,F)\rightarrow H^1(M,\StructureSheaf{M})$.
Then $\Ext^1_0(F,F)$ parametrizes infinitesimal deformations of $E$ as a sheaf of Azumaya algebras over $M$.
Set $T^1_M(E):=\Ext^1_0(F,F)$.

\begin{condition}
\label{cond-open-and-closed}
Let $X$ be an irreducible holomorphic symplectic manifold and 
$E$ a sheaf of rank $r$ Azumaya algebras over $X^d$.
\begin{enumerate}
\item
\label{cond-item-characteristic-class}
The Chern class $c_2(E)\in H^{2,2}(X^d,\Integers)$ is invariant under 
the diagonal action of a finite index subgroup of $Mon(X)$. 
\item
\label{cond-item-Brauer-class-is-theta-X-eta}
The characteristic class of $E$ in $H^2(X^d,\mu_r)$ is 
$\exp\left(\frac{-2\pi i}{r}\bar{\eta}^{-1}(\tilde{\theta})\right)$, for some marking $\eta$.
In particular, the Brauer class of $E$ is $\theta_{(X,\eta)}$, given in Equation (\ref{eq-theta-X-eta}). 
\item
\label{cond-item-infinitesimally-rigid}
$T^1_{X^d}(E)=0$, 
so that the Azumaya algebra $E$ is {\em infinitesimally rigid} over the fixed $X^d$.
\item
\label{condition-stability}
$E$ is $\omega$-slope-stable with respect to some K\"{a}hler class $\omega$ on $X$.
\item
\label{cond-item-same-homogeneous-bundle}
If $d=1$, then $E$ is locally free. 
If $d>1$ then $E$ is locally free away from the diagonal image of $X$ in $X^d$. 
If $E$ is not locally free along the diagonal, then its singularity along the diagonal
satisfies the infinitesimal constraints listed in Section 
\ref{sec-infinitesimal-constraints-on-singularities-of-a-reflexive-sheaf}.
\end{enumerate}
\end{condition}

\hide{
Let $E$ be a sheaf of Azumaya algebras over $X$. 
Denote by $T^1_{X,lt}(E)$ the kernel of the trace homomorphism
$H^1(X,E)\rightarrow H^1(X,\StructureSheaf{X})$. 
Then $T^1_{X,lt}(E)$ parametrizes infinitesimal locally trivial deformations of $E$.
Indeed, the infinitesimal deformations of $E$ are equal to those of a reflexive twisted sheaf 
$F$ such that $E=\SheafEnd(F)$, modulo the $\Pic^0(X)$-action.
The locally trivial infinitesimal  deformations of $F$ are parametrized by 
the kernel $H^1(\SheafEnd(F))$ 
of the homomorphism $\Ext^1(F,F)\rightarrow H^0(\SheafExt^1(F,F))$.
Hence, the locally trivial infinitesimal  deformations of $E$ are parametrized by the 
kernel $T^1_{X,lt}(E)$ of the homomorphism 
$
\Ext^1(F,F)\rightarrow H^0(\SheafExt^1(F,F))\oplus H^1(X,\StructureSheaf{X}).
$
If $H^{0,1}(X)=0$, then $T^1_{X,lt}(E)=H^1(X,E).$
}

Let $E$ be a $\omega$-slope-stable 
sheaf of Azumaya algebras over $X^d$ satisfying Condition
\ref{cond-open-and-closed}. Parts
(\ref{cond-item-characteristic-class}) and
(\ref{condition-stability}) of 
Condition \ref{cond-open-and-closed} imply that the sheaf $E$
is  {\em  $\omega$-stable hyperholomorphic} in the sense of  
Verbitsky \cite{verbitsky-coherent-sheaves}. Associated to the K\"{a}hler class $\omega$ in
part (\ref{condition-stability}) above is 
a twistor family $\pi:\X\rightarrow \PP^1_\omega$ over $\PP^1$ and a 
reflexive sheaf $\E$ of Azumaya algebras over the $d$-th fiber product
$\X^d_\pi$
\cite[Lemma 6.14]{verbitsky-coherent-sheaves,markman-hodge},
which is  flat over $\PP^1_\omega$, by Proposition \ref{prop-flatness-of-the-twistor-deformation}. 
Part (\ref{cond-item-infinitesimally-rigid}) of 
Condition \ref{cond-open-and-closed} implies that $H^1(X^d,E)=0$.
Denote by $\E_t$, $t\in \PP^1_\omega$,
the restriction of $\E$ to the fiber $\X_t^d$ of $\X^d_\pi$ over $t$.

\begin{conj}
\label{conj-Ext-1-is-constant}
$H^1(\X_t^d,\E_t)=(0),$
for all $t\in \PP^1_\omega$.
\end{conj}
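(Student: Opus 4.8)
We outline the approach we would take towards Conjecture~\ref{conj-Ext-1-is-constant}. The plan is to reduce it to the constancy, along the twistor line, of the dimension of a space of harmonic $\E^0$-valued one-forms, and then to control that dimension via Verbitsky's Hodge theory of hyperholomorphic sheaves.

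First I would pass to the trace-free part. For every $t\in\PP^1_\omega$ the fiber $\X_t$ is of $K3^{[n]}$-type, so $H^1(\X_t,\ko_{\X_t})=0$ and hence $H^1(\X_t^d,\ko_{\X_t^d})=0$ by the K\"{u}nneth formula; as the reduced trace identifies $\E_t$ with $\ko_{\X_t^d}\cdot 1_{\E_t}\oplus\E_t^0$, we get $H^1(\X_t^d,\E_t)\cong H^1(\X_t^d,\E_t^0)$. Thus Conjecture~\ref{conj-Ext-1-is-constant} says that this group, which controls the infinitesimal deformations $T^1_{\X_t^d}(\E_t)$ of $\E_t$ as a sheaf of Azumaya algebras, vanishes for all $t$, knowing that it vanishes at the distinguished point $t_0$ with $(\X_{t_0}^d,\E_{t_0})=(X^d,E)$ by Condition~\ref{cond-open-and-closed}(\ref{cond-item-infinitesimally-rigid}). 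By Proposition~\ref{prop-flatness-of-the-twistor-deformation} the sheaf $\E$ is flat over $\PP^1_\omega$, so $t\mapsto h^1(\X_t^d,\E_t)$ is upper semicontinuous; it therefore vanishes on a Zariski-dense open $U\subseteq\PP^1_\omega$, and the remaining task is to show that the finite jumping locus $\PP^1_\omega\setminus U$ is empty.

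The global input is hyperholomorphicity. By Condition~\ref{cond-open-and-closed}(\ref{cond-item-characteristic-class}) and (\ref{condition-stability}), $\E$ is $\omega$-stable hyperholomorphic \cite{verbitsky-coherent-sheaves}, and away from the diagonals $\E\cong\SheafEnd(\F)$ for a locally free twisted sheaf $\F$ carrying a hyperholomorphic Hermitian--Einstein connection whose curvature is of type $(1,1)$ for every complex structure in the twistor family. I would use this to show that the $\bar\partial_{\X_t^d}$-harmonic $\E_t^0$-valued forms on the underlying real manifold form a finite-dimensional space $\mathcal{H}^\bullet$ \emph{independent of $t$}, carrying Verbitsky's $SU(2)$-action together with a $t$-dependent Hodge bigrading; in particular $h^1(\X_t^d,\E_t^0)+h^0(\X_t^d,\Omega^1_{\X_t^d}\otimes\E_t^0)=\dim_{\ComplexNumbers}\mathcal{H}^1$ for every $t$. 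Since the holomorphic symplectic form on $\X_t^d$ gives $\Omega^1_{\X_t^d}\cong T_{\X_t^d}$, the conjecture becomes equivalent to the constancy of $h^0(\X_t^d,T_{\X_t^d}\otimes\E_t^0)$, which equals $\dim_{\ComplexNumbers}\mathcal{H}^1$ at $t_0$ and is a priori at most that elsewhere. To close the gap one wants to show that no harmonic $\E^0$-valued one-form acquires a $(0,1)$-component under the twistor rotation, combining the triviality of the $SU(2)$-action in degrees $0$ and $2dn$, Serre duality $H^1(\X_t^d,\E_t)\cong H^{2dn-1}(\X_t^d,\E_t)^*$ (using $\omega_{\X_t^d}\cong\ko_{\X_t^d}$ and the self-duality $\E_t^*\cong\E_t$ from the trace pairing), and the constancy of $\chi(\X_t^d,\E_t)$. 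A parallel sufficient route would be to prove $H^1(\X^d_\pi,\E)=0$ on the total twistor space, since by the Leray spectral sequence over the curve $\PP^1_\omega$ this forces the first direct image of $\E$ under $\X^d_\pi\to\PP^1_\omega$ to be a torsion sheaf with no global sections, hence zero.

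The hard part, and the reason the statement is left as a conjecture, is twofold. First, $\X^d_\pi$ is not K\"{a}hler, so no Kodaira--Nakano-type vanishing is available to kill $H^1(\X^d_\pi,\E)$ or to pin down the Hodge type of $\mathcal{H}^1$ at the finitely many special complex structures; one is forced to argue directly with the hyperholomorphic connection along the fibers, and it is exactly such ``twistor-global'' harmonic estimates that are delicate. Second, for $d>1$ the sheaf $\E$ is not locally free along the diagonals, so Verbitsky's Hodge theory of hyperholomorphic \emph{bundles} must first be extended across the diagonal; this is a local analytic problem at the diagonal governed precisely by the infinitesimal constraints isolated in Section~\ref{sec-infinitesimal-constraints-on-singularities-of-a-reflexive-sheaf}, and I expect this singular-support analysis to be the principal obstacle.
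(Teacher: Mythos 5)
This statement is Conjecture~\ref{conj-Ext-1-is-constant}; the paper does not prove it and is explicit that it cannot: the main theorems are stated as \emph{conditional} on it, the only established case is the locally free one (attributed to Verbitsky, \cite[Cor.~8.1]{verbitsky-hyperholomorphic}), and the strongest unconditional statement quoted in the reflexive case is that the moduli space of $\E_t$ is zero-dimensional, i.e.\ that first-order deformations, if nonzero, are obstructed. Your proposal is likewise not a proof. Its first half (reduction to the trace-free part, semicontinuity of $h^1$ along the flat family, the $SU(2)$-action on harmonic $\E^0$-valued one-forms giving $h^{0,1}(\E^0_t)+h^{1,0}(\E^0_t)=\dim\mathcal{H}^1$ independently of $t$) is a correct outline of Verbitsky's argument in the locally free case; in fact the weight argument is slightly stronger than you state, since each weight-one $\LieAlg{su}(2)$-irreducible in $\mathcal{H}^1$ contributes equally to the $(1,0)$ and $(0,1)$ parts for \emph{every} induced complex structure, so vanishing of $H^{0,1}$ at a single $t_0$ already forces $\mathcal{H}^1=0$. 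But everything in that argument presupposes a hyperholomorphic Hermitian connection on a locally free sheaf, and for $d>1$ the sheaf $\E_t$ is not locally free along the diagonal. Your final paragraph concedes exactly this: the harmonic theory must be extended across the singular locus (with the appropriate $L^2$ conditions of \cite[Def.~3.15]{kaledin-verbitski-book}), and no such extension is supplied. A plan that names its own missing step is not a proof of that step.

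Two further remarks. First, the alternative route via $H^1(\X^d_\pi,\E)=0$ and the Leray spectral sequence over $\PP^1_\omega$ is logically sound as a reduction, but, as you note, the twistor space is not K\"{a}hler and no vanishing theorem is available there, so this does not advance matters. Second, the paper does contain a genuine partial result in this direction that your proposal does not engage with: Lemma~\ref{lem-surjectivity-of-Kodaira-Spencer-implies-reduction-of-conjecture} reduces Conjecture~\ref{conj-Ext-1-is-constant}, for $d=2$ and under the enhanced Condition~\ref{enhanced-open-and-closed-condition}, to Conjecture~\ref{conj-Ext-1-is-constant-isolated-singularity-case}, which concerns a reflexive sheaf singular at a \emph{single point}. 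That reduction (restricting $\E_t$ to the slices $\{x\}\times\X_t$ along horizontal sections, using the Kodaira--Spencer surjectivity of Equation~(\ref{eq-Kodaira-Spencer-homomorphism}) and the projection $\pi_1$) is the paper's substitute for a proof, and any serious attempt at the conjecture should either follow that reduction or explain how to handle the codimension-$\dim X$ singular locus along the diagonal directly.
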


Verbitsky proved the above conjecture in case $E$ is locally free
\cite[Cor. 8.1]{verbitsky-hyperholomorphic}.
In the reflexive sheaf case it is known that the moduli space of $\E_t$
is zero dimensional, i.e., if non-zero first order infinitesimal deformations of $\E_t$ exist, then they must be obstructed
\cite[Theorem 3.27]{kaledin-verbitski-book}.

The main results of this note are the following two theorems. 

\begin{thm} 
\label{thm-existence-introduction}
(Theorem \ref{thm-existence}) Assume Conjecture \ref{conj-Ext-1-is-constant}.
There exists a coarse moduli space $\tfM_{\Lambda,\tilde{\theta}}$, which is a complex non-Hausdorff manifold, 
parametrizing isomorphism classes of triples $(X,\eta,E)$ satisfying Condition
\ref{cond-open-and-closed}. 
\end{thm}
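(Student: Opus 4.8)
The plan is to construct $\tfM_{\Lambda,\tilde{\theta}}$ as a complex manifold that is étale over the moduli space $\fM_\Lambda$ of marked irreducible holomorphic symplectic manifolds, via the forgetful map $(X,\eta,E)\mapsto(X,\eta)$, and then to read off the coarse moduli property from this local structure. Since $\fM_\Lambda$ is already known to be a (generally non-Hausdorff) complex manifold of dimension $b_2(X)-2$ \cite{huybrects-basic-results}, and the forgetful map will turn out to be a local isomorphism, $\tfM_{\Lambda,\tilde{\theta}}$ is then automatically a complex manifold of the same dimension, and it is non-Hausdorff for the same reason $\fM_\Lambda$ is (members of a twistor family, or of a limit of twistor families, fail to be separated). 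The two local assertions to establish, for a fixed triple $(X_0,\eta_0,E_0)$ satisfying Condition \ref{cond-open-and-closed}, are: (i) over a contractible neighbourhood $B\subset\fM_\Lambda$ of $(X_0,\eta_0)$, with universal family $\X\to B$, there is a reflexive sheaf $\E$ of Azumaya algebras on the $d$-th fibre product $\X^d_B$, flat over $B$, restricting to $E_0$ over the central fibre, with every fibre $\E_b$ again satisfying Condition \ref{cond-open-and-closed}; and (ii) after possibly shrinking $B$, such a family is unique up to isomorphism. Granting (i) and (ii), the charts $(B,\E)$ glue along the canonical isomorphisms furnished by (ii) (the cocycle condition holding by uniqueness), producing $\tfM_{\Lambda,\tilde{\theta}}$ together with a universal family on it.

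For existence, assertion (i): by parts (\ref{cond-item-characteristic-class}) and (\ref{condition-stability}) of Condition \ref{cond-open-and-closed} the sheaf $E_0$ is $\omega$-stable hyperholomorphic in Verbitsky's sense \cite{verbitsky-coherent-sheaves}, so, as recalled before Conjecture \ref{conj-Ext-1-is-constant}, it deforms along the twistor line $\PP^1_\omega$ to a sheaf $\E$ of Azumaya algebras on $\X^d_\pi$, flat over $\PP^1_\omega$ by Proposition \ref{prop-flatness-of-the-twistor-deformation}, with each fibre again hyperholomorphic and slope-stable. The work is to spread this deformation over an entire neighbourhood of $(X_0,\eta_0)$ in $\fM_\Lambda$ rather than along a single twistor line. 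Here one uses that, after shrinking $B$, every point of $B$ lies on a twistor line whose base point is arbitrarily close to $(X_0,\eta_0)$ --- because the period map is a local isomorphism (local Torelli) and the K\"{a}hler cone varies continuously, so twistor lines through nearby periods cover an open set --- together with Verbitsky's deformation theory of hyperholomorphic sheaves over a connected component of the moduli space, to extend $E_0$ to a flat family $\E$ over all of $\X^d_B$. The reflexive, non-locally-free behaviour along the diagonal is controlled by the flatness statement of Proposition \ref{prop-flatness-of-the-twistor-deformation} together with the fact that the infinitesimal type of the diagonal singularity, constrained as in part (\ref{cond-item-same-homogeneous-bundle}) of Condition \ref{cond-open-and-closed} and Section \ref{sec-infinitesimal-constraints-on-singularities-of-a-reflexive-sheaf}, is locally rigid and therefore constant in the flat family.

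That each fibre $\E_b$ satisfies Condition \ref{cond-open-and-closed} is then checked condition by condition. The class $c_2(\E_b)$ is the parallel transport of $c_2(E_0)$ along the local system of fourth cohomology groups of the fibres of $\X^d_B\to B$, and $Mon(X_b)$ is identified with $Mon(X_0)$ through the markings, so part (\ref{cond-item-characteristic-class}) persists; the characteristic class in $H^2(X_b^d,\mu_r)$ likewise transports to $\exp\!\left(\frac{-2\pi i}{r}\bar{\eta}_b^{-1}(\tilde{\theta})\right)$ because $\tilde{\theta}$ is fixed once and for all and $\eta_b$ is the marking, giving part (\ref{cond-item-Brauer-class-is-theta-X-eta}); part (\ref{cond-item-infinitesimally-rigid}) --- the vanishing of $T^1_{X_b^d}(\E_b)$, equivalently, since $H^{0,1}(X_b)=0$, the vanishing of $H^1(X_b^d,\E_b)$ together with rigidity of the diagonal singularity --- holds on the central fibre by Condition \ref{cond-open-and-closed}(\ref{cond-item-infinitesimally-rigid}) and on the twistor lines filling out $B$ by Conjecture \ref{conj-Ext-1-is-constant}, hence on all of $B$ after shrinking, by upper semicontinuity; part (\ref{condition-stability}) follows from Verbitsky's theorem that hyperholomorphic slope-stability is preserved along twistor families \cite{verbitsky-coherent-sheaves,kaledin-verbitski-book} (or, when $\Pic(X_b)$ is small enough that the Brauer class has order $r$, from Proposition \ref{prop-maximally-twisted-sheaf-is-slope-stable}); and part (\ref{cond-item-same-homogeneous-bundle}) holds because flatness preserves local freeness off the diagonal and the diagonal singularity type is constant, as noted. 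Assertion (ii) is then obtained by the standard induction on the order of infinitesimal neighbourhoods of the centre, the obstruction to extending an isomorphism between two such families to the next order lying in $H^1$ of the fibres, which vanishes by the rigidity just established; hence any flat family over $B$ restricting to $E_0$ over the centre is isomorphic to the family $\E$ of (i).

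Finally, the coarse moduli property: any flat family of triples over a base $T$ induces a holomorphic classifying map $T\to\fM_\Lambda$ from its underlying marked manifolds, which lifts uniquely to a map $T\to\tfM_{\Lambda,\tilde{\theta}}$ by the uniqueness (ii) (the sheaf on each fibre selects the sheet), and pulling back the universal family on $\tfM_{\Lambda,\tilde{\theta}}$ recovers the given one; the space is only coarse rather than fine because the Azumaya algebras, like the marked pairs, carry nontrivial automorphisms. The main obstacle is assertion (i): deforming $E_0$ over a genuine neighbourhood of $(X_0,\eta_0)$ in $\fM_\Lambda$, not merely along the single twistor line $\PP^1_\omega$, while keeping the family flat and reflexive with the prescribed diagonal singularity. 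This is precisely where Verbitsky's hyperholomorphic deformation theory in the reflexive setting, the flatness Proposition \ref{prop-flatness-of-the-twistor-deformation}, and --- through Conjecture \ref{conj-Ext-1-is-constant} --- the preservation of infinitesimal rigidity must all be combined. Once the forgetful map is known to be a local isomorphism onto $\fM_\Lambda$, the global construction and the coarse moduli property of Theorem \ref{thm-existence-introduction} are formal.
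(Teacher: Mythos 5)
Your overall architecture --- realize $\tfM_{\Lambda,\tilde{\theta}}$ as \'etale over $\fM_\Lambda$ via the forgetful map, prove local existence and uniqueness of the family over a chart, and glue Kuranishi charts as in the construction of $\fM_\Lambda$ --- is exactly the paper's, and your uniqueness step (ii) is in the spirit of Lemma \ref{uniqueness-of-defo} (though the paper must additionally handle the line-bundle ambiguity of the twisted sheaf $F$ via determinants, the relative Picard space, and Artin approximation to pass from formal to convergent agreement, and the relevant rigidity is $\Ext^1(F,F)=0$, not merely $H^1(E)=0$). The genuine gap is in your existence step (i). Twistor deformations produce, fibre by fibre, sheaves along \emph{one-dimensional} twistor lines (or chains of them); the fact that points on twistor lines near $(X_0,\eta_0)$ cover a neighbourhood $B$ of real dimension $2(b_2-2)$ does not assemble these into a single flat \emph{holomorphic} family $\E$ over $\X^d_B$ --- there is no holomorphic structure transverse to the twistor directions, no compatibility between the sheaves produced along different twistor lines through the same point, and no control of flatness over a higher-dimensional base. ``Verbitsky's deformation theory of hyperholomorphic sheaves over a connected component of the moduli space'' in the sense you invoke does not exist as a black box.

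The paper's way around this is the content of Section \ref{sec-local-moduli}, which your proposal does not reproduce: pass to the blow-up $\beta:Y\to X^d$ of the diagonal, where $A=\SheafEnd(\beta^*F/{\rm tor})$ is \emph{locally free} and rigid (Lemma \ref{lemma-A-is-rigid-if-E-is}), and then to the associated projective bundle $f:\PP_A\to Y$. Since $\PP_A$ is a compact complex manifold with $H^0(T_{\PP_A})=0$, it has a \emph{universal} Kuranishi family, and the chain of deformation-functor isomorphisms $\Def(Y,A)\cong\Def(\PP_A,f,Y)\cong\Def(\PP_A)\cong\Def(Y)\cong\Def(X)$ of Lemma \ref{isom-2}, combined with Corollary \ref{cor-varrho-is-an-isomorphism-of-deformation-functors}, identifies $\Def(X,E)$ with $\Def(X)$. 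The family over the full chart is then the push-forward $\wt{\beta}_*\wt{A}$ of the universal deformation of $A$ (Lemmas \ref{flat-push} and \ref{locally-trivial}), with flatness and the fibrewise validity of Condition \ref{cond-open-and-closed} coming from Lemma \ref{lemma-sufficient-conditions-for-reflexivity-over-pi} and Lemma \ref{lemma-condition-involving-W-is-open}, not from constancy of a ``locally rigid singularity type.'' The twistor-line argument you lean on enters only at one point: to show that the Kuranishi map $\D_{\PP_A}\to\D_X$ is \emph{surjective} (equivalently $h^1(T_{\PP_A})=h^1(T_X)$), which is a far weaker statement than constructing the family directly. Without this detour through $\PP_A$ your step (i) does not go through.
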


We denote by $\tfM_{\Lambda}$ the union of $\tfM_{\Lambda,\tilde{\theta}}$, as $\tilde{\theta}$
varies over all elements of order $r$ in $\Lambda^{\oplus d}/r\Lambda^{\oplus d}$. 
Let $\Omega$ be the period domain 
\begin{equation}
\label{eq-period-domain}
\Omega := \{x\in \PP[\Lambda\otimes_\Integers\ComplexNumbers] \ : \ (x,x)=0, \ \mbox{and} \ (x,\bar{x})>0\}
\end{equation}
associated to the lattice $\Lambda$.
We get the period map 
\[
P \ : \ \fM_\Lambda \ \ \ \rightarrow \ \ \ \Omega,
\]
$P(X,\eta)=\eta(H^{2,0}(X))$,
which is a local homeomorphism \cite{beauville-varieties-with-zero-c-1}. 
Given a period $x\in \Omega$,
denote by $\Lambda^{1,1}(x)$ the sub-lattice of $\Lambda$ consisting of classes orthogonal to the line $x$.
Let $\phi:\tfM_\Lambda\rightarrow \fM_\Lambda$ be the morphism sending
the isomorphism class of a triple $(X,\eta,E)$ to that of the marked pair $(X,\eta)$.
Fix a connected component $\tfM_\Lambda^0$ of $\tfM_\Lambda$ and let $\fM_\Lambda^0$
be the connected component of $\fM_\Lambda$ containing $\phi(\tfM_\Lambda^0)$.

\begin{thm} 
\label{thm-introduction-Global-Torelli-for-triples}
(Theorem \ref{main-thm})
Assume that Conjecture \ref{conj-Ext-1-is-constant} holds.
\begin{enumerate}
\item
\label{thm-item-inseparable}
The morphism $\widetilde{P}:=P\circ \phi:\tfM_\Lambda^0\rightarrow \Omega$ 
is surjective and a local homeomorphism. Any two points in the same fiber of $\widetilde{P}$ are inseparable
in $\tfM_\Lambda^0$.
\item 
If $\Lambda^{1,1}(x)=0$,  or if $\Lambda^{1,1}(x)$ is cyclic generated by a class of non-negative self intersection,
then the fiber $\widetilde{P}^{-1}(x)$ consists of a single separable point of $\tfM_\Lambda^0$.
\end{enumerate}
\end{thm}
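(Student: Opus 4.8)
The plan is to reduce the global Torelli statement for triples to the known global Torelli theorem for the marked pairs $(X,\eta)$, using the rigidity in Condition \ref{cond-open-and-closed}(\ref{cond-item-infinitesimally-rigid}) together with the hyperholomorphic structure furnished by Conjecture \ref{conj-Ext-1-is-constant}. First I would establish that $\widetilde P = P\circ\phi$ is a local homeomorphism: the forgetful map $\phi:\tfM_\Lambda^0\to\fM_\Lambda^0$ is a local homeomorphism because the moduli space $\tfM_\Lambda$ of triples was constructed (Theorem \ref{thm-existence-introduction}) as a complex manifold fibering over $\fM_\Lambda$, and the fibers are discrete: infinitesimal rigidity of $E$ over the fixed $X^d$ (part (\ref{cond-item-infinitesimally-rigid}), which gives $T^1_{X^d}(E)=0$ and hence $H^1(X^d,E)=0$) means that in a fixed complex structure there is at most a discrete set of Azumaya algebras in each deformation class. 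Composing with the period map $P:\fM_\Lambda^0\to\Omega$, which is a local homeomorphism by Beauville, gives that $\widetilde P$ is a local homeomorphism. Surjectivity then follows from surjectivity of $P$ on a connected component of $\fM_\Lambda$ (the standard surjectivity of the period map for irreducible holomorphic symplectic manifolds) together with the fact that over each marked pair in $\fM_\Lambda^0$ one can produce a triple: starting from a triple in $\tfM_\Lambda^0$ and deforming $X$ through $\fM_\Lambda^0$, the hyperholomorphic sheaf deforms along, so the image of $\phi$ is open and closed, hence all of $\fM_\Lambda^0$.

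The heart of part (\ref{thm-item-inseparable}) is the inseparability assertion. Here the key mechanism is the twistor family. Given a triple $(X,\eta,E)$ satisfying Condition \ref{cond-open-and-closed}, parts (\ref{cond-item-characteristic-class}) and (\ref{condition-stability}) make $E$ an $\omega$-stable hyperholomorphic Azumaya algebra in Verbitsky's sense, so there is a reflexive Azumaya algebra $\E$ over the fiber product $\X^d_\pi$ of the twistor family $\X\to\PP^1_\omega$, flat over $\PP^1_\omega$ by Proposition \ref{prop-flatness-of-the-twistor-deformation}; assuming Conjecture \ref{conj-Ext-1-is-constant}, $H^1(\X_t^d,\E_t)=0$ for every $t$, so the restricted triple $(\X_t,\eta_t,\E_t)$ again satisfies Condition \ref{cond-open-and-closed}, in particular part (\ref{cond-item-infinitesimally-rigid}). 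Thus the twistor line lifts to a curve in $\tfM_\Lambda^0$. Now take two triples $(X_1,\eta_1,E_1)$ and $(X_2,\eta_2,E_2)$ in the same fiber of $\widetilde P$, so $\eta_1(H^{2,0}(X_1))=\eta_2(H^{2,0}(X_2))=x$. By the (Verbitsky–Markman) global Torelli theorem for the underlying manifolds, $X_1$ and $X_2$ are bimeromorphic, and more precisely the marked pairs $(X_1,\eta_1)$ and $(X_2,\eta_2)$ are connected in $\fM_\Lambda^0$ by chains of twistor lines whose intersections are points with the same period $x$; inseparable points of $\fM_\Lambda$ over $x$ correspond exactly to the birational models. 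Lifting each such twistor line to $\tfM_\Lambda^0$ as above — and checking that at the overlap points the two lifts agree because the hyperholomorphic sheaf on a birational model is uniquely determined by the sheaf together with the birational map, the characteristic class being monodromy-invariant by part (\ref{cond-item-characteristic-class}) and the infinitesimal type along the diagonal being fixed by part (\ref{cond-item-same-homogeneous-bundle}) — shows $(X_1,\eta_1,E_1)$ and $(X_2,\eta_2,E_2)$ are joined by a chain of twistor curves in $\tfM_\Lambda^0$, all meeting over $x$. Two points on a connected such chain lying over the same period are inseparable in the non-Hausdorff manifold $\tfM_\Lambda^0$, which is the claim.

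For part (2), suppose $\Lambda^{1,1}(x)=0$ or $\Lambda^{1,1}(x)$ is cyclic generated by a class $\delta$ with $(\delta,\delta)\geq 0$. In either case every irreducible holomorphic symplectic manifold with period $x$ has no prime exceptional or wall divisors: for $\Lambda^{1,1}(x)=0$ the Picard group is trivial, and for the cyclic case with $(\delta,\delta)\geq 0$ the generator cannot have negative Beauville–Bogomolov square, so there is no reflection that could produce a distinct birational model. Hence the fiber $P^{-1}(x)$ of the ordinary period map consists of a single (separable) point of $\fM_\Lambda^0$, and $(X,\eta)$ is uniquely determined. It then remains to see that the triple is unique over this point, i.e. $\phi^{-1}(X,\eta)\cap\tfM_\Lambda^0$ is a single separable point. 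Uniqueness of the Azumaya algebra is where I would invoke stability: when $\Pic(X)$ is trivial the Brauer class $\theta_{(X,\eta)}$ has order $r$ (Remark \ref{rem-trivial-picard-implies-Brauer-class-has-order-r}), so by Proposition \ref{prop-maximally-twisted-sheaf-is-slope-stable} any rank $r$ Azumaya algebra with that Brauer class is slope-stable with respect to every K\"ahler class, and a stable hyperholomorphic sheaf in a fixed deformation class over a fixed complex structure is unique (any two would be deformation equivalent through hyperholomorphic sheaves and, being rigid by part (\ref{cond-item-infinitesimally-rigid}), the family is constant, forcing an isomorphism); the cyclic case with $(\delta,\delta)\geq0$ is handled similarly since the relevant twists still have maximal order Brauer class. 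Separability of this point then follows because there is no second point in its fiber to which it could be inseparable. The main obstacle I anticipate is the careful bookkeeping in part (\ref{thm-item-inseparable}): matching the lifts of twistor chains across birational models of $X$ and verifying that Condition \ref{cond-open-and-closed}, especially the diagonal singularity constraint of part (\ref{cond-item-same-homogeneous-bundle}), is preserved under the birational maps and under restriction to twistor fibers — this is where Conjecture \ref{conj-Ext-1-is-constant} is doing the essential work.
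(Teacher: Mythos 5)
Your outline of the local-homeomorphism claim and the role of twistor deformations in surjectivity matches the paper's strategy, but there are two genuine gaps in the places where the real work happens.

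First, the inseparability assertion in part (\ref{thm-item-inseparable}). Your chain-of-twistor-lines argument addresses the case where the two triples have distinct underlying marked pairs, but it does not handle the essential case of two triples $(X,\eta,E_1)$ and $(X,\eta,E_2)$ with the \emph{same} marked pair and non-isomorphic Azumaya algebras: there is no birational map relating them, and the step ``the two lifts agree at the overlap points because the hyperholomorphic sheaf on a birational model is uniquely determined'' is exactly the unproven crux, not a checkable bookkeeping item. The paper proceeds differently: it first shows that a triple over a marked pair with trivial Picard group is a \emph{separated} point (Corollary \ref{cor-triple-is-a-separated-point}, which rests on Proposition \ref{prop-non-separated}: non-separated triples produce nonzero homomorphisms $\varphi:F_1\to F_2$, $\psi:F_2\to F_1$ with $\varphi\psi=\psi\varphi=0$, impossible when the order of the Brauer class equals the rank so that $F$ has no proper subsheaves). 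With that and the local homeomorphism in hand it runs Verbitsky's Hausdorff reduction, lifts generic twistor lines to the quotient (Proposition \ref{prop-lifting-of-twistor-lines}), and invokes Verbitsky's theorem that a map compatible with generic hyperk\"ahler lines is a covering of the simply connected $\Omega$, hence an isomorphism. Inseparability of all points in a fiber of $\widetilde{P}$ is then a \emph{consequence} of the injectivity of the induced map on the Hausdorff quotient; it is not proved by directly connecting the two triples. Your separate uniqueness claim over trivial-Picard points (``rigid, so the family is constant, forcing an isomorphism'') is also circular: rigidity only makes the fiber of $\phi$ discrete, and ``deformation equivalent through hyperholomorphic sheaves'' does not give a family over a point of $\fM_\Lambda$ connecting the two.

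Second, part (2) in the cyclic case. Your reduction ``the relevant twists still have maximal order Brauer class'' is false when $\Pic(X)$ is nontrivial; Remark \ref{rem-trivial-picard-implies-Brauer-class-has-order-r} only applies to trivial Picard group, and Proposition \ref{prop-maximally-twisted-sheaf-is-slope-stable} is therefore unavailable. The paper's argument is a slope computation: for a maximal parabolic subsheaf $B\subset E_2$ one has $c_1(B)=kc$ with $k<0$ (since $\int_X\omega_2^{2n-1}c_1(B)<0$ and $\int_X\omega^{2n-1}\alpha>0$ for every K\"ahler $\omega$ and every $\alpha$ in the closure of the positive cone), whence $E_2$ is $\omega_1$-slope-stable for \emph{every} K\"ahler class $\omega_1$; then Lemma \ref{lemma-if-stable-with-respect-to-same-Kahler-class-then-separated} (again built on Proposition \ref{prop-non-separated}) forces the two triples to coincide. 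This positivity mechanism, and the analysis of non-separated points underlying it, is absent from your proposal and cannot be replaced by the Brauer-class-order argument.
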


Note that the above theorem is unconditional when $d=1$, as Conjecture \ref{conj-Ext-1-is-constant} 
is known when $E$ is locally free. 
When $X$ is a $K3$ surface and $d=1$, Theorem 
\ref{thm-introduction-Global-Torelli-for-triples} generalizes the Torelli theorem for 
$K3$ surfaces \cite{PS,burns-rapoport}
and it may be viewed as a global analogue of Mukai's result that 
the moduli space of $H$-stable vector bundles with Mukai vector
$v$ of self intersection $-2$ on a fixed polarized $K3$ surface $(X,H)$ is 
non-empty, smooth, connected, and zero dimensional (a point)
\cite{mukai-hodge}. We consider in our global analogue  only rank $r$ Azumaya algebras (or equivalently 
$\PP^{r-1}$ bundles) with characteristic class of order $r$ in $H^2(X,\mu_r)$, as assumed in 
Condition \ref{cond-open-and-closed} (\ref{cond-item-Brauer-class-is-theta-X-eta}). 
The locally free case of Theorem \ref{thm-introduction-Global-Torelli-for-triples} 
follows {\em easily} from Verbitsky's proof of his global Torelli theorem \cite{verbitsky-torelli}, 
and Verbitsky's work on hyperholomorphic vector bundles \cite{verbitsky-hyperholomorphic}. 
Most of the effort in this paper goes toward the reflexive but non-locally-free cases. 
When the dimension of $X$ is larger than $2$, the invariance of $c_2(E)$ assumed in
Condition \ref{cond-open-and-closed} (\ref{cond-item-characteristic-class}) is highly restrictive,
and the only interesting example known to the authors is the non-locally free example, which we now describe.

\begin{defi}
Let $(X_1,E_1)$ and $(X_2,E_2)$ be two pairs each consisting of an irreducible
holomorphic symplectic manifold $X_i$
and a reflexive sheaf $E_i$ of Azumaya algebras over $X_i^d$. 
We say that the two pairs are 
{\em deformation equivalent},
if there exist markings $\eta_i$, such that the two triples 
$(X_i,\eta_i,E_i)$, $i=1,2$, belong to the same connected component of the moduli space
$\tfM_\Lambda$.
\end{defi}

Following is our main application. In this case we are able to state a version of Theorem
\ref{thm-introduction-Global-Torelli-for-triples} for isomorphism classes of pairs $(X,E)$, forgetting the marking.
Let $S$ be a K\"{a}hler $K3$ surface and $S^{[n]}$ the Hilbert scheme of length $n$ 
subschemes of $S$. Denote by $\U$ the ideal sheaf of the universal subscheme in $S\times S^{[n]}$.
Let $\pi_{ij}$ be the projection from $S^{[n]}\times S\times S^{[n]}$ onto the product of the
$i$-th and $j$-th factors. Let
\begin{equation}
\label{eq-sheaf-F}
F \ : = \ \SheafExt^1_{\pi_{13}}(\pi_{12}^*\U,\pi_{23}^*\U)
\end{equation}
be the relative extension sheaf over $S^{[n]}\times S^{[n]}$. 
$F$ is a reflexive sheaf of rank $2n-2$ \cite[Prop. 4.5]{markman-hodge}. 
Let $\tau$ be the involution of $S^{[n]}\times S^{[n]}$
interchanging the two factors. Then $\tau^*F\cong F^*$, by Grothendieck-Verdier duality 
and the triviality of the canoical line bundle $\omega_{\pi_{13}}$.
Let $E$ be the sheaf $\SheafEnd(F)$ and set $E^*:=\SheafEnd(F^*)$. The two sheaves 
$E$ and $E^*$ are isomorphic as sheaves, but if $n>2$ they are not isomorphic
as sheaves of Azumaya algebras.

\begin{thm}
\label{thm-main-example}
\begin{enumerate}
\item
\label{thm-item-E-satisfies-open-closed-condition}
$E$ is a  sheaf  of Azumaya algebras over $S^{[n]}\times S^{[n]}$,
which satisfies\footnote{In the set-up of the theorem, where the marking
is forgotten, the characteristic class $\tilde{\theta}$ in 
Condition \ref{cond-open-and-closed} (\ref{cond-item-Brauer-class-is-theta-X-eta})
is replaced by a monodromy orbit of characteristic classes
in $\Lambda^{\oplus 2}/(2n-2)\Lambda^{\oplus 2}$, where $\Lambda=H^2(S^{[n]},\Integers)$.
The orbit is determined in Equation (\ref{eq-monodromy-orbit-of-characteristic-class-tilde-theta}).
} 
Condition \ref{cond-open-and-closed}  and is slope-stable with respect to every K\"{a}hler class on $S^{[n]}$, 
if $\Pic(S)$ is trivial. 
\item
\label{thm-item-existence-of-E-over-every-X}
Let $X$ be of $K3^{[n]}$-type. There exists a sheaf $E'$ of Azumaya algebras over $X\times X$, which satisfies 
Condition \ref{cond-open-and-closed}, and such that $(X,E')$ is 
deformation equivalent to $(S^{[n]},E)$.
\item
\label{thm-item-Torelli-without-marking}
Let $X$ be of $K3^{[n]}$-type and 
$E_1$ and $E_2$ be two  sheaves of Azumaya algebras over $X\times X$, such that 
$(X,E_i)$ is deformation equivalent to $(S^{[n]},E)$ and satisfies
Condition \ref{cond-open-and-closed}, $i=1,2$. 
Assume that one of the following conditions hold.
\begin{enumerate}
\item
$Pic(X)$ is trivial, or cyclic generated by a line-bundle of non-negative Beauville-Bogomolov-Fujiki degree.
\item
$E_1$ and $E_2$ are $\omega$-slope-stable with respect to the same K\"{a}hler class $\omega$ on $X$.
\end{enumerate}

\medskip
\noindent
Then $E_1$ is isomorphic to $E_2$ or $E_2^*$, as a sheaf of Azumaya algebras. 
\end{enumerate}
\end{thm}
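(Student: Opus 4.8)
Here is the strategy I would follow. All three parts rest on three inputs: the construction and properties of the reflexive sheaf $F$ of (\ref{eq-sheaf-F}) from \cite{markman-hodge}; Verbitsky's theory of stable hyperholomorphic sheaves; and the global Torelli theorem for triples, Theorem \ref{thm-introduction-Global-Torelli-for-triples}.

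\textbf{Part (1).} I would first record that $F$ is reflexive of rank $r:=2n-2$ by \cite[Prop.~4.5]{markman-hodge}, so $E=\SheafEnd(F)$ is a sheaf of rank $r$ Azumaya algebras over $S^{[n]}\times S^{[n]}$. Parts (\ref{cond-item-characteristic-class}), (\ref{cond-item-Brauer-class-is-theta-X-eta}) and (\ref{cond-item-infinitesimally-rigid}) of Condition \ref{cond-open-and-closed} --- the invariance of $c_2(E)$ under a finite-index subgroup of $Mon(X)$, the identification of the characteristic class with the monodromy orbit (\ref{eq-monodromy-orbit-of-characteristic-class-tilde-theta}), and the infinitesimal rigidity $\Ext^1_0(F,F)=0$ --- are precisely the properties of $F$ established in \cite{markman-hodge}, which I would cite. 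For part (\ref{cond-item-same-homogeneous-bundle}) I would analyse $F$ near the diagonal: for $n=2$ it has rank $2$ and is locally free, whereas for $n>2$ one reads off the local model of $F$ along $\Delta\subset S^{[n]}\times S^{[n]}$ from the local structure of the ideal sheaf $\U$ and checks that it meets the infinitesimal constraints of Section \ref{sec-infinitesimal-constraints-on-singularities-of-a-reflexive-sheaf} --- a local computation. Finally, assuming $\Pic(S)$ trivial, I would compute the order of the Brauer class of $E$: since $H^1(S^{[n]},\StructureSheaf{S^{[n]}})=0$, the lattice $H^2(S^{[n]}\times S^{[n]},\Integers)$ is the K\"unneth double of $H^2(S^{[n]},\Integers)=H^2(S,\Integers)\oplus\Integers\delta$, and modding out by the N\'eron--Severi lattice leaves $\big(H^2(S,\Integers)/rH^2(S,\Integers)\big)^{\oplus 2}$, in which one checks that the class (\ref{eq-monodromy-orbit-of-characteristic-class-tilde-theta}) has order $r$; hence by Proposition \ref{prop-maximally-twisted-sheaf-is-slope-stable} the sheaf $E$ is $\omega$-slope-stable for \emph{every} K\"ahler class $\omega$, which also gives part (\ref{condition-stability}).

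\textbf{Part (2).} By Part (1) the sheaf $E$ is $\omega$-stable hyperholomorphic, and the triple $(S^{[n]},\eta_0,E)$ lies in some connected component $\tfM_\Lambda^0$ of $\tfM_\Lambda$. By Theorem \ref{thm-introduction-Global-Torelli-for-triples} (\ref{thm-item-inseparable}), the map $\widetilde P:\tfM_\Lambda^0\to\Omega$ is surjective; hence for any $X$ of $K3^{[n]}$-type, after choosing a marking $\eta$ with $(X,\eta)$ in the component $\fM_\Lambda^0$ below $\tfM_\Lambda^0$, there is a triple $(X,\eta,E')\in\tfM_\Lambda^0$ over the period of $(X,\eta)$, and by construction $(X,E')$ is deformation equivalent to $(S^{[n]},E)$ and satisfies Condition \ref{cond-open-and-closed}. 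Concretely, $E'$ is obtained by transporting $E$ along a chain of twistor lines joining $S^{[n]}$ to $X$ in $\fM_\Lambda^0$, using the flatness of the twistor deformation, Proposition \ref{prop-flatness-of-the-twistor-deformation}.

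\textbf{Part (3).} I would first observe that the factor-swap involution $\tau$ of $X\times X$ satisfies $\tau^*E\cong E^*$, and that, since $\End(V)^{\mathrm{op}}\cong\End(V^*)$, the Azumaya algebra $E^*$ is the opposite algebra of $E$; consequently global dualization $G\mapsto G^*$ carries $\tfM_\Lambda^0$ to a second connected component, and this is the source of the dichotomy ``$E_2$ or $E_2^*$''. Given $E_1,E_2$ with $(X,E_i)$ deformation equivalent to $(S^{[n]},E)$, I would choose markings $\eta_i$ with $(X,\eta_i,E_i)\in\tfM_\Lambda^0$ --- after possibly replacing $E_2$ by $E_2^*$ so that both triples land in the \emph{same} component. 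The marked pairs $(X,\eta_1),(X,\eta_2)$ lie in $\fM_\Lambda^0$, so they differ by an element of $Mon(X)$; transporting $E_1$ along a path in $\fM_\Lambda^0$ realising this element and lifting through the local homeomorphism $\phi:\tfM_\Lambda^0\to\fM_\Lambda^0$ yields $(X,\eta_2,E_1')\in\tfM_\Lambda^0$ with the same period as $(X,\eta_2,E_2)$. Under hypothesis (a), $\Lambda^{1,1}$ at this period is trivial or cyclic generated by a class of non-negative self-intersection, so by Theorem \ref{thm-introduction-Global-Torelli-for-triples}(2) the fibre of $\widetilde P$ through it is a single separable point; hence $(X,\eta_2,E_1')\cong(X,\eta_2,E_2)$ and $E_1'\cong E_2$ as sheaves of Azumaya algebras. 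Under hypothesis (b) the sheaves $E_1$, $E_1'$, $E_2$ are all slope-stable with respect to the same $\omega$, hence deform over the single twistor line $\PP^1_\omega$; Theorem \ref{thm-introduction-Global-Torelli-for-triples}(\ref{thm-item-inseparable}) shows $(X,\eta_2,E_1')$ and $(X,\eta_2,E_2)$ are inseparable in $\tfM_\Lambda^0$, and the separatedness of the moduli of $(\sum_i\pi_i^*\omega)$-slope-polystable reflexive sheaves on the fixed $X\times X$ upgrades this to $E_1'\cong E_2$. Tracing back through the transport and the opposite-algebra identity then gives $E_1\cong E_2$ or $E_1\cong E_2^*$.

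\textbf{Where the difficulty lies.} The hard part is Part (3): turning the essentially Hodge-theoretic output of Theorem \ref{thm-introduction-Global-Torelli-for-triples} --- that the relevant fibre of $\widetilde P$ is a single, or a single \emph{inseparable}, point of $\tfM_\Lambda^0$ --- into an honest isomorphism of sheaves of Azumaya algebras over the \emph{fixed} $X\times X$, while controlling (i) the $Mon(X)$-ambiguity in the marking, (ii) the passage from ``deformation equivalent through the varying ambient manifold'' to ``isomorphic over the fixed ambient manifold'', and (iii) the global duality $G\mapsto G^*$, which is exactly what produces the alternative $E_2^*$. A secondary technical point is the verification, for $n>2$, that the singularity of $F$ along the diagonal is of the infinitesimal type prescribed in Section \ref{sec-infinitesimal-constraints-on-singularities-of-a-reflexive-sheaf}, needed in Part (1).
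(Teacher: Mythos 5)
Your overall architecture matches the paper's: Part (1) is proved by citing \cite{markman-hodge} and \cite{MM} for most of Condition \ref{cond-open-and-closed}, Part (2) follows from the surjectivity of $\phi:\tfM^0_\Lambda\to\fM^0_\Lambda$, and Part (3) is deduced from the global Torelli theorem after adjusting markings by monodromy. However, two steps of your proposal have genuine gaps.

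First, your stability argument in Part (1) fails. The sheaf $F$ of Equation (\ref{eq-sheaf-F}) is an honest (untwisted) coherent sheaf, so the Brauer class of $E=\SheafEnd(F)$ in $H^2(S^{[n]}\times S^{[n]},\StructureSheaf{S^{[n]}\times S^{[n]}}^*)$ is \emph{trivial}: the characteristic class $\tilde{\theta}=(-w,w)$ of (\ref{eq-monodromy-orbit-of-characteristic-class-tilde-theta}) maps under the Mukai isometry into the N\'eron--Severi lattice, which is nonzero even when $\Pic(S)=0$ because $\Pic(S^{[n]})$ contains the half-diagonal class $\delta$ (indeed $\mu(1,0,n-1)$ is a multiple of $\delta$). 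So the order of the Brauer class is $1$, not $2n-2$, Proposition \ref{prop-maximally-twisted-sheaf-is-slope-stable} gives nothing, and your claim that modding out by N\'eron--Severi leaves $\bigl(H^2(S,\Integers)/rH^2(S,\Integers)\bigr)^{\oplus 2}$ is incorrect. The paper instead invokes the separate result \cite{markman-stability} that $F$ has no nonzero subsheaf of lower rank when $\Pic(S)$ is trivial; this is a substantial independent theorem, not a formal consequence of the order of a characteristic class. Relatedly, your assertion that $F$ is locally free for $n=2$ is false: at a diagonal point $(A,A)$ the fiber is $\Ext^1(I_A,I_A)\cong T_AS^{[n]}$, of dimension $2n>2n-2$, so $F$ is singular along the diagonal for every $n\geq 2$, and Condition \ref{cond-open-and-closed} (\ref{cond-item-same-homogeneous-bundle}) must be verified in all cases; the paper does this via the computation of $\SheafExt^i(F,F)$ in \cite{MM}, the tightness criterion of Lemma \ref{lemma-tightness-criterion}, and the cohomology of $W=\ell^\perp/\ell$ in Lemmas \ref{lemma-cohomology-of-ell-perp-times-ell-to-the-j}--\ref{lemma-W-is-slope-stable}.

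Second, in Part (3) the step ``transporting $E_1$ along a path realising $g\in Mon^2(X)$ yields $(X,\eta_2,E_1')$'' leaves unidentified what $E_1'$ is as an Azumaya algebra over the original $X\times X$. The paper's proof rests on the precise statement that $(X,\eta,E)$ and $(X,\eta\circ g,E^{(*^{cov(g)})})$ lie in the same connected component of $\tfM_\Lambda$ for \emph{every} $g\in Mon^2(X)$, where $cov:Mon^2(X)\rightarrow\Integers/2\Integers$ is a nontrivial character; this is \cite[Theorem 1.2(4)]{markman-monodromy-I} combined with \cite[Theorem 1.2]{markman-integral-constraints}. Without this input the monodromy transport could a priori return a third Azumaya algebra, and the dichotomy ``$E_2$ or $E_2^*$'' does not follow; your heuristic via $\tau^*F\cong F^*$ identifies the right phenomenon but is not the mechanism. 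The rest of Part (3) (reduction of hypothesis (a) to (b) via the positivity of $\int_X\omega^{2n-1}\alpha$, then Lemma \ref{lemma-if-stable-with-respect-to-same-Kahler-class-then-separated}) and Part (2) agree with the paper, although you omit the verification of the enhanced Condition \ref{enhanced-open-and-closed-condition} (surjectivity of $\kappa_x$ and vanishing of $H^1(E_x)$), which the paper needs so that Parts (2) and (3) are conditional only on Conjecture \ref{conj-Ext-1-is-constant-isolated-singularity-case}.
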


The theorem is proven in section \ref{sec-Example}. 
Parts \ref{thm-item-existence-of-E-over-every-X} and \ref{thm-item-Torelli-without-marking} 
of the theorem are conditional on either Conjecture \ref{conj-Ext-1-is-constant} or 
the following conjecture involving only isolated singularities.

\begin{conj}
\label{conj-Ext-1-is-constant-isolated-singularity-case}
Let $X$ be an irreducible holomorphic symplectic manifold, $\omega$ a K\"{a}hler class on $X$, and $E$ an 
$\omega$-stable hyperholomorphic reflexive sheaf on $X$, which is locally free away from a single point $x_0\in X$.
Assume that $H^1(X,E)=0$. Then $H^1(\X_t,\E_t)=0$, for every point $t$ in the base $\PP^1_\omega$ of the
hyperholomorphic deformation $(\X_t,\E_t)$ of $(X,E)$.
\end{conj}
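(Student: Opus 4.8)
\noindent\emph{A strategy toward Conjecture \ref{conj-Ext-1-is-constant-isolated-singularity-case}.}
The plan is to prove that the function $t\mapsto \dim H^1(\X_t,\E_t)$ is constant on the twistor sphere $\PP^1_\omega$. Since its value at the base point $t_0$, where $(\X_{t_0},\E_{t_0})=(X,E)$, equals $\dim H^1(X,E)=0$ by hypothesis, the conjecture then follows. This constancy is exactly Verbitsky's \cite[Cor.~8.1]{verbitsky-hyperholomorphic} in the locally free case, whose mechanism is an $SU(2)$-action, induced by the quaternionic rotation of complex structures, on the space of harmonic $E$-valued forms for the common hyperK\"ahler metric and hyperholomorphic connection; this is strictly stronger than the zero-dimensionality of the deformation space of $\E_t$ recorded in \cite[Theorem~3.27]{kaledin-verbitski-book}, as it controls the tangent space rather than just the reduced moduli. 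The entire difficulty is to run Verbitsky's argument for the reflexive sheaf $E$, which is locally free only on $U:=X\setminus\{x_0\}$.

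First I would fix the analytic model. On $U$ the restriction $E|_U$ is a hyperholomorphic vector bundle, and by the Bando--Siu theory of admissible Hermitian--Einstein metrics, together with its hyperholomorphic refinement in \cite{verbitsky-coherent-sheaves,kaledin-verbitski-book}, it carries an admissible Hermitian--Einstein metric $h$ whose Chern connection $\nabla$ has curvature of type $(1,1)$ with respect to every complex structure $I_t$, $t\in\PP^1_\omega$, and satisfies the requisite $L^2$ bounds near $x_0$. Because $x_0$ is an isolated point, hence of codimension $\dim_{\ComplexNumbers}X\geq 2$, reflexivity gives $H^0(X,E)=H^0(U,E|_U)$, while the local cohomology sequence $H^1(X,E)\to H^1(U,E|_U)\to H^2_{x_0}(E)$ shows that the coherent $H^1$ is in general a \emph{proper} subspace of $H^1(U,E|_U)$. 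The role of the metric is to single out this subspace analytically: the target is, for each $t$, the comparison
\begin{equation}
\label{eq-proposal-L2-comparison}
H^1(\X_t,\E_t)\ \cong\ \mathcal H^{0,1}_{(2),t}(E|_U,h),
\end{equation}
identifying coherent cohomology with the space of $L^2$, $\Delta_{\bar\partial_t}$-harmonic, $E$-valued $(0,1)$-forms on $(U,I_t,h)$, the excess cohomology measured by $H^2_{x_0}(E)$ being non-$L^2$. I would establish \eqref{eq-proposal-L2-comparison} uniformly in $t$, using that $h$ and $\nabla$ are the \emph{same} for all $t$ and that the Kähler identities, with $(1,1)$-curvature, hold fiberwise on $U$.

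With \eqref{eq-proposal-L2-comparison} in hand the conclusion is formal. The rough Laplacian $\Delta_\nabla$ and the hyperK\"ahler metric are independent of $t$, so the full space $\mathcal H^1_{(2)}(E|_U,h)$ of $L^2$, $\nabla$-harmonic $E$-valued $1$-forms is a single vector space carrying the quaternionic $SU(2)$-action of \cite{verbitsky-hyperholomorphic}; hyperholomorphicity makes the curvature endomorphism $SU(2)$-invariant, so the action commutes with $\Delta_\nabla$ and preserves $\mathcal H^1_{(2)}$. Since it permutes the complex structures $I_t$ transitively over $\PP^1_\omega$ while preserving $\nabla$-harmonicity, total degree one, and the $L^2$ condition, it carries the $(0,1)_{I_t}$-subspace onto the $(0,1)_{I_{t'}}$-subspace, intertwining the spaces $\mathcal H^{0,1}_{(2),t}$ for different $t$. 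Hence $\dim H^1(\X_t,\E_t)$ is constant on $\PP^1_\omega$ and equals $\dim H^1(X,E)=0$. As a consistency check, flatness of $\E$ over $\PP^1_\omega$ (Proposition \ref{prop-flatness-of-the-twistor-deformation}) already gives upper semicontinuity of $t\mapsto\dim H^1(\X_t,\E_t)$ and hence vanishing on a dense open set.

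The main obstacle is the $L^2$-comparison \eqref{eq-proposal-L2-comparison}, and within it two interlocking analytic points: that the admissible Hermitian--Einstein metric yields an $L^2$-Hodge decomposition computing coherent cohomology near the \emph{non-positively} curved isolated singularity, where one must prove existence, uniqueness, and extendability of $L^2$-harmonic representatives against the Weitzenb\"ock term; and that the $L^2$/decay condition near $x_0$ is invariant under the quaternionic $SU(2)$-rotation, so that no harmonic class concentrated at $x_0$ is created or destroyed as $I_t$ varies. The restriction to a single isolated singular point is precisely what makes this tractable: the link of $x_0$ is a sphere $S^{2\dim_{\ComplexNumbers}X-1}$, so one may hope to control the local harmonic forms and their growth explicitly and reduce the invariance of the $L^2$ condition to the standard quaternionic structure on the local model. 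This is why Conjecture \ref{conj-Ext-1-is-constant-isolated-singularity-case}, with its isolated singularity, is isolated from the general Conjecture \ref{conj-Ext-1-is-constant}.
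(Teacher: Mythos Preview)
The paper does not prove this statement. Conjecture \ref{conj-Ext-1-is-constant-isolated-singularity-case} is explicitly left open, and the main results that depend on it (Theorem \ref{thm-main-example}, parts \ref{thm-item-existence-of-E-over-every-X} and \ref{thm-item-Torelli-without-marking}) are stated as conditional. There is therefore no proof in the paper to compare your proposal against; the authors neither supply nor sketch one.

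Your strategy is exactly the natural extension of Verbitsky's locally free argument \cite[Cor.~8.1]{verbitsky-hyperholomorphic}, and you have correctly located the genuine obstruction: the $L^2$-comparison \eqref{eq-proposal-L2-comparison}. In the locally free case this is ordinary Hodge theory on a compact K\"ahler manifold with coefficients in a Hermitian bundle, and the $SU(2)$-mechanism is clean. In the reflexive case with an isolated singularity, two things are not established in the literature and are not merely routine. First, the admissible Hermitian--Einstein metric of Bando--Siu controls curvature in $L^p$ near $x_0$, but it is not known that the resulting $L^2$-Dolbeault cohomology on $U$ computes the coherent $H^1(X,E)$; your heuristic that the excess $H^2_{x_0}(E)$ is precisely the non-$L^2$ part is plausible but unproven, and must hold \emph{uniformly in $t$}. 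Second, the K\"ahler identities and Weitzenb\"ock formula relating $\Delta_{\bar\partial_t}$ to $\Delta_\nabla$ involve integration by parts on the incomplete manifold $U$; the boundary contributions near $x_0$ must vanish on $L^2$-harmonic forms, and this depends on the asymptotics of $h$, which are governed by the resolution data on the blow-up (Theorem \ref{thm-verbitsky-8.15}) rather than by a flat local model. The $SU(2)$-invariance of the $L^2$ condition has the same sensitivity.

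You present this honestly as a program with identified gaps, and it is a reasonable one; but it is not a proof, and the paper does not claim the conjecture can be settled this way. It remains open.
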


We will see that Condition \ref{cond-open-and-closed} is open (Lemma \ref{lemma-condition-involving-W-is-open}), and so 
the condition holds more generally for the sheaf $E$ in Theorem \ref{thm-main-example}
(\ref{thm-item-E-satisfies-open-closed-condition}) over $S^{[n]}\times S^{[n]}$ 
for $K3$ surfaces $S$ in a dense open subset of moduli and, in particular, for a generic projective $K3$ surface. 
Let $M$ be a more general smooth and projective connected 
component of the moduli space of sheaves on a projective $K3$ surface $S$
and $\U$ a (possibly twisted) universal sheaf over $S\times M$. Set $n:=\dim(M)/2$ and assume that $n\geq 2$.
We get the rank $2n-2$ reflexive sheaf $F$ over $M\times M$, as above,
and the proof of the above Theorem shows that $E_M:=\SheafEnd(F)$  satisfies Condition
\ref{cond-open-and-closed}, 
except possibly for the stability Condition \ref{cond-open-and-closed}  (\ref{condition-stability}). 
When the Brauer class of $\U$ has order $2n-2$, then $E_M$ satisfies Condition
\ref{cond-open-and-closed} by Proposition \ref{prop-maximally-twisted-sheaf-is-slope-stable}.
Explicit examples are provided in \cite[Theorem 1.5]{markman-hodge}. 

We expect $E_M$ to always satisfy Condition \ref{cond-open-and-closed}, regardless of the order of the Brauer class of 
$\U$. 
Given any smooth and projective component $M$ of the moduli space of sheaves over a $K3$ surface $S_1$, 
the pairs $(M,E_M)$ and
$(S^{[n]},E)$ 
are known to be deformation equivalent, where $S$ and
$E$ are as in Theorem \ref{thm-main-example} above
\cite{yoshioka-abelian-surface}. 
Consequently, 
if $E_M$  satisfies Condition
\ref{cond-open-and-closed} (\ref{condition-stability}), then for every marking $\eta_1$ of $M$,
there exists a marking $\eta_2$ of $S^{[n]}$, such that the triples
$(M,\eta_1,E_M)$ and $(S^{[n]},\eta_2,E)$ belong to the same connected component 
of the moduli space $\tfM_\Lambda$. 

{\bf Acknowledgements:}
We thank D. Toledo for the reference \cite{ramis-ruget-verdier}, and P. Sastry for
pointing us to the commutative algebra result in \cite{matsumura} 
required for the proof of Lemma \ref{lemma-vanishing-of-sheaf-ext-i-Q-O-X}. 
The work of Eyal Markman was partially supported by a grant  from the Simons Foundation (\#245840), and by NSA grant H98230-13-1-0239. This article was begun while 
Sukhendu Mehrotra was visiting the University of Wisconsin; its
hospitality is gratefully acknowledged. 

%
\subsection{Infinitesimal constraints on the singularities of a reflexive sheaf}
\label{sec-infinitesimal-constraints-on-singularities-of-a-reflexive-sheaf}

Let $X$ be a complex manifold, $\Delta\subset X$ a smooth subvariety,  and $\beta:Y\rightarrow X$
the blow-up of $X$ centered along $\Delta$. Let $G$ be a locally free sheaf over $Y$ and set $F:=\beta_*G$.
Recall that the {\em Atiyah class} $At_F$ is the extension class in $\Ext^1(F,F\otimes T^*X)$ of the extension 
\[
0 \rightarrow F\otimes T^*X \rightarrow J^1(F)\rightarrow F \rightarrow 0,
\]
where $J^1(F)$ is the sheaf of first order jets of $F$.
The local extension sheaf $\SheafExt^1(F,F\otimes T^*X)$ is isomorphic to 
$\SheafHom(TX,\SheafExt^1(F,F))$. The image of $At_F$ in $H^0(X,\SheafExt^1(F,F\otimes T^*X)$
may thus be interpreted as a sheaf homomorphism
\begin{equation}
\label{eq-a-F}
a_F \ : \ TX \ \ \ \rightarrow \ \ \ \SheafExt^1(F,F). 
\end{equation}
Given an open Stein subset $U$ of $X$ and a section $\xi$ of $TU$ we get the infinitesimal action morphism 
$\xi:U\times \rm{Spec}(\ComplexNumbers[\epsilon]/(\epsilon^2))\rightarrow U$,
as well as the projection $\pi_1:U\times \rm{Spec}(\ComplexNumbers[\epsilon]/(\epsilon^2))\rightarrow U$. 
The sheaf $\pi_{1_*}\xi^*F$ 
is an extension of $F$ by $F$, whose extension class is $a_F(\xi)$.

Let $\widetilde{\D}^1(G):=\SheafHom(J^1(G),G)$ be the sheaf of  differential operators of order $\leq 1$.
We have the symbol map 
$\sigma_G:\widetilde{\D}^1(G)\rightarrow TY\otimes \SheafEnd(G)$.
Let $\D^1(G)\subset\widetilde{\D}^1(G)$ 
be the subsheaf with scalar symbol. Define $\widetilde{\D}^1(F)$ and $\D^1(F)$ similarly.
There is a natural homomorphism
\begin{equation}
\label{eq-homomorphism-of-sheaves-of-differential-operators}
\beta_* \ : \ \beta_*\D^1(G) \ \ \longrightarrow \ \ \D^1(\beta_*G).
\end{equation}


\begin{defi} 
\label{def-tightness}
We say the the sheaf $F$ is {\em $\beta$-tight}, if 
the homomorphism $a_F$, given in (\ref{eq-a-F}),  is surjective and 
the natural homomorphism $\beta_*:\beta_*\D^1(G)  \longrightarrow  \D^1(F)$
is an isomorphism.
\end{defi}

\hide{
Let $I_\Delta$ be the ideal sheaf of the image $\Delta$ of the diagonal embedding $\iota:X\rightarrow X^d$.
Consider the decreasing filtraten $F^iTX^d:=TX^d\cdot I_\Delta^i$, $i\geq 0$. 
Set $Gr^iTX^d:=F^iTX^d/F^{i+1}TX^d$.
Let $F^i\SheafExt^1(F,F)$ be the image of $F^iTX^d$ via the homomorphism $a_F$. 
Define the graded summands $Gr^i\SheafExt^1(F,F)$ similarly. 
Denote by 
\[
a_F^i \ : \ Gr^iTX^d \ \ \rightarrow \ \ Gr^i\SheafExt^1(F,F)
\]
the homomorphism induced by $a_F$.
Note the isomorphisms
\[
Gr^iTX^d \ \ \cong \ \ \iota^*(TX^d)\otimes \Sym^i(N^*_{\Delta/X^d})
\ \ \cong \ \ T\Delta^{\oplus d}\otimes \Sym^i(T\Delta^{\oplus (d-1)}).
\]

We require the sheaf $F$ to satisfy the following conditions:
\begin{condition} 
\label{cond-atiyah-class}
\begin{enumerate}
\item 
\label{cond-item-Atiyah-homomorphism-is-surjective}
The homomorphism $a_F$ is surjective.
\item
\label{cond-item-kernel-of-Atiyah-homomorphism-is-a-subrepresentation}
The kernel of the homomorphism $a^i_F$ has a trivial determinant line-bundle (over $\Delta$).
\item
\label{cond-item-graded-summands-of-sheaf-Ext-1-F-F-are-locally-free}
The graded summands $Gr^i\SheafExt^1(F,F)$ are locally free $\StructureSheaf{\Delta}$-modules.
\item 
\label{cond-item-global-sections-of-graded-summands-vanish}
$H^0(\Delta,Gr^i\SheafExt^1(F,F))$ vanishes for all $i\geq 0$.
\end{enumerate} 
\end{condition}
}

\begin{defi}\label{inf-rig}
\begin{enumerate}
\item
Let $W$ be a  coherent sheaf over $\PP^n$. 
The pair $(\PP^n,W)$ is said to be {\em infinitesimally rigid}, 
if the differential $\LieAlg{sl}_{n+1}\rightarrow \Ext^1(W,W)$, of the pullback action
of $\Aut(\PP^n)$,
is surjective.
\item
Let $W$ be a coherent sheaf over $\PP(V\otimes \ComplexNumbers^m)$, where $V$ is a symplectic vector space.
The symplectic group $Sp(V)$ acts on the first factor $V$ and the
group $SL(m)$ acts on the second factor
$\ComplexNumbers^m$ yielding an action of $Sp(V)\times SL(m)$ on $V\otimes \ComplexNumbers^m$. 
We say that $W$ is {\em $\LieAlg{sl}(m)$-invariant}, if the kernel of the differential
$\LieAlg{sl}(V\otimes \ComplexNumbers^m)\rightarrow \Ext^1(W,W)$ 
contains $\LieAlg{sl}(m)$. We say that 
$W$ is {\em $\LieAlg{sp}(V)$-equivariant}, if the above kernel is an
$\LieAlg{sp}(V)$-subrepresentation. 
\end{enumerate}
\end{defi}

Let $\pi:\X\rightarrow B$ be a smooth morphism of
complex analytic spaces with conected fibers.
Let $T_\pi$ be the vertical tangent bundle.
Set $\D:=\PP(T_\pi^{\oplus(d-1)})$ and let $p:\D\rightarrow \X$ be the natural morphism.

\begin{defi}
\label{def-vector-bundle-over-D-is-locally-trivial-over-X}
Given a vector bundle $\W$ over $\D$, we consider it as a family over $B$ of pairs 
$(\restricted{\W}{(\pi\circ p)^{-1}(b)},\X_b)$,
where the first term is the restriction of $\W$ to the fiber $\PP((T\X_b)^{\oplus d-1})$ of $\pi\circ p$ over $b\in B$.
We say that the family $\W$ is {\em locally trivial in the topology of} $\X$, if for each point $x\in \X$
there exist open neighborhoods $U$ of $x$ in $\X$ 
and $\overline{U}$ of $\pi(x)$ in $B$, as well as 
an isomorphism $f:U\rightarrow (U\cap \X_{\pi(x)})\times \overline{U}$, 
such that $\pi\circ f^{-1}$ is the projection to $\overline{U}$,
and an isomorphism 
\begin{equation}
\label{eq-trivializing-isomorphism-of-vector-bundles}
\tilde{f}:\pi_1^*\bar{\iota}^*\W\rightarrow \iota^*\W,
\end{equation}
where $\iota:p^{-1}(U)\rightarrow \D$ and $\bar{\iota}:p^{-1}(U\cap \X_{\pi(x)})\rightarrow \D$ are the inclusions,
and the projection $\pi_1:p^{-1}(U)\rightarrow p^{-1}(U\cap \X_{\pi(x)})$, from the projectivised relative tangent bundle
onto the projectivized tangent bundle of $U\cap \X_{\pi(x)}$, is defined using the 
isomorphism $p^{-1}(U)\cong p^{-1}(U\cap \X_{\pi(x)})\times \overline{U}$ induced by $f$.
\end{defi}

Keep the notation of Condition \ref{cond-open-and-closed}. 
Following are the constraints on the singularities along the diagonal 
of the sheaf $E$ of Azumaya algebras over $X^d$ in 
Condition \ref{cond-open-and-closed}
(\ref{cond-item-same-homogeneous-bundle}).
Let $\beta: Y \rightarrow X^d$ be the blow-up of the diagonal 
and let $D\subset Y$ be the exceptional divisor. 
Let $F$ be a reflexive twisted sheaf, such that $E\cong\SheafEnd(F)$.

\noindent
{\bf Condition \ref{cond-open-and-closed} (\ref{cond-item-same-homogeneous-bundle}):}
{\em
The quotient $G:=(\beta^*F)/{\rm torsion}$ is a locally free sheaf and $F$ is $\beta$-tight.
$G$ restricts to each fiber $D_x$ of 
$D\rightarrow X$, which is the projective space
$\PP(T_xX\otimes\ComplexNumbers^{d-1})$, 
as the same\footnote{The condition is needed for flatness of the deformation $\E_t$, $t\in \PP^1_\omega$, 
of the sheaf $E$ over the twistor line $\PP^1_\omega$ 
(Proposition \ref{prop-flatness-of-the-twistor-deformation}) as well as for the condition to be open.
}  
stable unobstructed vector bundle $W$, modulo the action of the automorphism group of the projective space.
The $\Aut(D_x)$-orbit $[W]$ of the isomorphism class of the vector bundle $W$ has the following properties:
\begin{enumerate}[(a)]
\item
\label{condition-item-push-forward-of-V-same-as-that-of-V-minus-mD}
There exist nonnegative integers $m$ and $m'$, such that 
\begin{eqnarray}
\label{eq-push-forward-of-V-same-as-that-of-V-minus-mD}
H^0(W\otimes \StructureSheaf{D_x}(j))&=& 0, \hspace{4ex}  \mbox{for} \ j<m, 
\\
\label{eq-push-forward-of-V-minus-kD-is-flat}
H^i(W\otimes \StructureSheaf{D_x}(j))&=& 0, \hspace{4ex}  \mbox{for} \ i>0, \ 
\mbox{and for} \ j\geq m,
\end{eqnarray}
the analogues of (\ref{eq-push-forward-of-V-same-as-that-of-V-minus-mD})
and (\ref{eq-push-forward-of-V-minus-kD-is-flat}) holds for $W^*$ and $m'$,
and $W\otimes \StructureSheaf{D_x}(m)$ is generated by its global sections.
\item
The pair $(D_x,W)$ is infinitesimally rigid 
and $W$ is $\LieAlg{sl}(d\!\!-\!\!1)$-invariant
 as well as $\LieAlg{sp}(T_xX)$-equivariant.
Finally, there exists a positive integer $k$, such that the traceless endomorphism bundle
$\SheafEnd_0(W)$ satisfies 
\begin{eqnarray}
\label{eq-push-forward-of-End-V-same-as-that-of-End-V-minus-kD}
H^0(\SheafEnd_0(W)\otimes \StructureSheaf{D_x}(j))&=& 0, \hspace{4ex} \mbox{for} \ j <k, 
\\
\label{eq-push-forward-of-End-V-minus-kD-is-flat}
H^i(\SheafEnd_0(W)\otimes \StructureSheaf{D_x}(j))&=& 0, \hspace{4ex}  \mbox{for} \ i>0, \ 
\mbox{and for} \ j\geq 1,
\end{eqnarray}
and $\SheafEnd_0(W)\otimes \StructureSheaf{D_x}(k)$ is generated by its global sections.\footnote{\label{ft}
Caution: There is a subtlety involved in the pullback of sheaves of Azumaya algebras.
The condition
implies that $(\beta^*E)/\rm{torsion}$ is isomorphic to 
$[\SheafEnd_0(G)\otimes \StructureSheaf{Y}(-kD)]\oplus \StructureSheaf{Y}$, where 
$G:=(\beta^*F)/\rm{torsion}$ (see 
Lemma \ref{lemma-sufficient-conditions-for-reflexivity-over-pi} 
(\ref{lemma-item-pullback-of-F-modulo-torsion-is-locally-free})). 
In particular, $(\beta^*E)/\rm{torsion}$ is not a sheaf of Azumaya algebras, if $E$ is not locally free,
as $k>0$.
}
\item
\label{condition-item-local-triviality}
Let $\pi:\X\rightarrow B$ be a smooth and proper family of irreducible holomorphic symplectic 
manifolds over an  analytic space $B$. If a vector bundle $\W$ over $\D:=\PP(T_\pi^{d-1})$ restricts as 
a vector bundle in the orbit $[W]$
to each fiber of $p:\D\rightarrow\X$,
then $\W$ is locally trivializable in the topology of $\X$ in the sense of Definition
(\ref{def-vector-bundle-over-D-is-locally-trivial-over-X}).
\end{enumerate}
}

A simple example of a vector bundle $W$ over a projective space, having the above properties, 
is provided in Equation (\ref{eq-W-is-a-subquotient}). In Condition
\ref{cond-open-and-closed} (\ref{cond-item-same-homogeneous-bundle}\ref{condition-item-local-triviality})
we view the pair $(D_x,W)$ as an infinitesimal structure on the tangent space  $T_xX$ of $X$.
In the case of the vector bundle in Equation (\ref{eq-W-is-a-subquotient}) this structure is equivalent to 
a symplectic structure, up to a scalar, as shown in section \ref{sec-Example}.

The $\beta$-tightness condition  
is needed in Proposition \ref{prop-vanishing-of-global-sections-of-extension-sheaf}.
There we show that the vanishing of $H^1(\X_t^d,\E_t)$ in Conjecture
\ref{conj-Ext-1-is-constant} implies that $\E_t$ is infinitesimally rigid.

%
\section{Families of reflexive sheaves of Azumaya algebras}
We define and study in this section a class of families of reflexive sheaves with good base change properties.
Throughout this section 
$S$ will be an analytic space (not necessarily reduced) and 
$\pi:\X\rightarrow S$ will be a smooth morphism with connected fibers.
In the applications $\pi$ will be proper, but 
we do not assume properness in this section.

%
\subsection{Families of reflexive sheaves}
\label{sec-families-of-reflexive-sheaves}
Given a coherent sheaf $\F$ on $\X$ and a morphism $T\rightarrow S$,
set $\X_T:=T\times_S\X$ and let $\F_T$ be the pullback of $\F$ by
the natural morphism from $\X_T$ to $\X$. 

\begin{defi}
A {\em family $\F$ of reflexive sheaves over} $\pi$ is a coherent  sheaf $\F$ over $\X$
satisfying the following conditions.
\begin{enumerate}
\item
Both $\F$ and its dual $\F^*:=\SheafHom(\F,\StructureSheaf{\X})$ are flat over $S$.
\item 
\label{def-item-reflexivity-of-pullback}
The natural homomorphisms
$(\F^*)_T\rightarrow (\F_T)^*$ 
and 
$\F_T\rightarrow (\F_T)^{**}$
are isomorphisms, for every morphism $T\rightarrow S$.
\end{enumerate}
If $\F$ is a twisted coherent sheaf over $\X$ satisfying the conditions above, we say that $\F$ is a 
{\em family of reflexive twisted sheaves over} $\pi$.
\end{defi}


In the rest of this subsection we provide a construction of families of reflexive sheaves.
Let $\Z\subset \X$ be a subscheme, smooth and proper over $S$, of relative co-dimension $c\geq 2$.
Let $\beta:\Y\rightarrow \X$ be the blow-up of $\X$ with center $\Z$. Denote by
$\D\subset \Y$ the 
exceptional divisor, $e:\D\hookrightarrow \Y$ the closed immersion, 
and let $p:\D\rightarrow \Z$ be the natural morphism.
Note that $p$ is a $\PP^{c-1}$ bundle.
Let $V$ be a locally free sheaf over $\Y$. 
$V$ may be $(\beta^*\theta)$-twisted by the pullback $\beta^*\theta$
of a \v{C}ech $2$-co-cycle $\theta$ for the sheaf $\StructureSheaf{\X}^*$.
In the latter case the higher direct images 
$R^i\beta_*V$ are $\theta$-twisted coherent sheaves, for all $i\geq 0$.

\begin{lem}
\label{lemma-push-forward-is-flat}
Assume that 
${\displaystyle R^ip_*e^*(V(-j\D))=0}$, for $i>0$ and for $j\geq 0$.
Then $R^i\beta_*V=0$, for $i>0$, and 
$\beta_*V$ is flat over $S$.
\end{lem}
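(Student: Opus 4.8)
The plan is to exploit the short exact sequences obtained by restricting $V$ to the exceptional divisor $\D$, together with a descending induction on the power of the $\beta$--ample line bundle $\StructureSheaf{\Y}(-\D)$. For each $j\geq 0$, tensoring the structure sequence $0\to\StructureSheaf{\Y}(-\D)\to\StructureSheaf{\Y}\to e_*\StructureSheaf{\D}\to 0$ with the locally free sheaf $V(-j\D)$ gives
\[
0\longrightarrow V(-(j+1)\D)\longrightarrow V(-j\D)\longrightarrow e_*e^*\bigl(V(-j\D)\bigr)\longrightarrow 0 .
\]
Write $\iota:\Z\hookrightarrow\X$ for the inclusion of the center, so that $\beta\circ e=\iota\circ p$. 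Since $e$ and $\iota$ are closed immersions one has $R^i\beta_*(e_*\G)\cong\iota_*R^ip_*\G$ for every coherent (twisted) sheaf $\G$ on $\D$ and all $i\geq 0$; applied to $\G=e^*(V(-j\D))$ and combined with the hypothesis, this makes $R^i\beta_*\bigl(e_*e^*(V(-j\D))\bigr)$ vanish for $i>0$, the term for $i=0$ being $\iota_*p_*e^*(V(-j\D))$. Finally $\StructureSheaf{\Y}(-\D)$ restricts to $\StructureSheaf{\PP^{c-1}}(1)$ on each fibre of $\beta$ over a point of $\Z$, hence is relatively ample for the projective morphism $\beta$, so relative Serre vanishing yields an integer $j_0$ with $R^i\beta_*(V(-j\D))=0$ for all $i>0$ and all $j\geq j_0$. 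Both assertions of the lemma are local on $\X$, so I would work over a relatively compact open subset of $\X$; likewise the possible twisting by $\beta^*\theta$ is harmless, a twisted sheaf being locally an ordinary one and all functors above being computed locally.

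For the vanishing, apply $R\beta_*$ to the displayed sequence: by the above, the long exact sequence gives isomorphisms $R^i\beta_*(V(-(j+1)\D))\cong R^i\beta_*(V(-j\D))$ for $i\geq 2$ and a surjection $R^1\beta_*(V(-(j+1)\D))\twoheadrightarrow R^1\beta_*(V(-j\D))$; only the range $1\leq i\leq c-1$ matters, as the fibres of $\beta$ have dimension $c-1$. Starting from the vanishing at $j=j_0$ and letting $j$ decrease, these maps propagate it down to $j=0$, so $R^i\beta_*(V(-j\D))=0$ for all $i>0$ and $j\geq 0$; in particular $R^{>0}\beta_*V=0$, the first assertion.

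Once the higher direct images are known to vanish the long exact sequences collapse to short exact sequences $0\to\beta_*V(-(j+1)\D)\to\beta_*V(-j\D)\to\iota_*p_*e^*(V(-j\D))\to 0$. The blow-up $\Y$ is smooth over $S$ (this is local on $\X$: in analytic coordinates adapted simultaneously to $\Z$ and to $\pi$, $\Y$ is the product of an open subset of $\Z$ with the blow-up of a polydisc at the origin), so each $V(-j\D)$ is flat over $S$, and likewise $\iota_*p_*e^*(V(-j\D))$ is flat over $S$, since $p_*e^*(V(-j\D))$ is flat over $\Z$ by base change along the $\PP^{c-1}$--bundle $p$ and $\Z$ is $S$--smooth. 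Because $R^{>0}\beta_*V=0$, $V$ is $S$--flat, and $\Y$ is $S$--flat, Tor-independent base change along the fibre squares identifies $\beta_*V\Lotimes_{\StructureSheaf{S}}k(s)$ with $R(\beta_s)_*(V|_{\Y_s})$ for each $s\in S$; the latter complex has no cohomology in negative degrees, so $\mathrm{Tor}_1^{\StructureSheaf{S}}(\beta_*V,k(s))=0$ for all $s$, and the local criterion of flatness gives the $S$--flatness of $\beta_*V$. (Equivalently one may run a descending induction in the displayed short exact sequences, the case $j\gg 0$ being supplied by the same base-change argument.) I expect the first two paragraphs to be entirely routine; the one place demanding care is this last step, namely the invocation of the base-change and local-flatness theorems in the analytic category with $S$ possibly non-reduced and $\pi$ not assumed proper — precisely the formalism this section is designed to set up — together with the verification that $\Y$ is $S$--smooth, which is where the smoothness of both $\X$ and $\Z$ over $S$ enters.
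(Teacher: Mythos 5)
Your proof is correct, and its second half (flatness) is essentially the paper's own argument: derived base change identifies $\beta_*V\Lotimes_{\StructureSheaf{S}}k(s)$ with $R\beta_{s,*}(V|_{\Y_s})$, forcing the higher Tor's to vanish, and the local criterion of flatness from \cite{matsumura} finishes. For the vanishing of $R^i\beta_*V$, however, you take a genuinely different route. The paper notes that these sheaves are supported on $\Z$ and invokes Grothendieck's comparison theorem to reduce to the vanishing of $R^i\beta_*(V_n)$ for the restrictions $V_n:=V\otimes(\StructureSheaf{\Y}/I^{n+1})$ to the infinitesimal neighborhoods of $\D$, then runs an \emph{ascending} induction on $n$ via the sequences $0\to e_*e^*(V(-n\D))\to V_n\to V_{n-1}\to 0$. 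You instead run a \emph{descending} induction on the twist $j$ in $V(-j\D)$, seeded by relative Serre vanishing for the $\beta$-ample line bundle $\StructureSheaf{\Y}(-\D)$. Both arguments consume the hypothesis in exactly the same way, through $R^i\beta_*\bigl(e_*e^*(V(-j\D))\bigr)\cong\iota_*R^ip_*e^*(V(-j\D))$; the difference lies in the external input. The paper needs the comparison theorem for the proper map $\beta$ in the analytic category; you need relative Serre vanishing for the locally projective map $\beta$ over a possibly non-reduced analytic base, after shrinking to relatively compact opens. Both are legitimate; the paper's version has the mild advantage of not having to justify relative ampleness or restrict to relatively compact subsets, while yours only touches finitely many twists $0\le j\le j_0$ rather than the full formal neighborhood.
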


\begin{proof}
We prove first that $R^i\beta_*V=0$, for $i>0$.
Let $I:=\StructureSheaf{\Y}(-\D)$ be the ideal sheaf of $\D$.
Let $V_n:=V\otimes (\StructureSheaf{\Y}/I^{n+1})$ be the restriction of $V$ to the $n$-th order 
infinitesimal neighborhood of $\D$.
By Grothendieck's comparison theorem, it suffices to prove that 
$R^i\beta_*(V_n)$ vanishes, for all $n\geq 0$. 
Tensor by $V$ the short exact sequence
\[
0\rightarrow I^n/I^{n+1}\rightarrow \StructureSheaf{\Y}/I^{n+1}\rightarrow \StructureSheaf{\Y}/I^n\rightarrow 0
\] 
to get the short exact sequence
\[
0\rightarrow e_*e^*(V(-n\D))\rightarrow V_n\rightarrow V_{n-1}\rightarrow 0.
\]
Now $R^i\beta_*(e_*e^*(V(-n\D)))=R^ip_*e^*(V(-n\D))$ vanishes,
by assumption, and we get the isomorphism 
$R^i\beta_*(V_n)\cong R^i\beta_*(V_{n-1})$, for all $i>0$ and for all $n\geq 1$.
Consequently, $R^i\beta_*(V_n)\cong R^i\beta_*(V_0)=R^i\beta_*(e_*e^*V)=R^ip_*(e^*V)=0,$
for all $i>0$.

It remains to prove that $\beta_*V$ is flat over $S$. 
Given a morphism $g:T\rightarrow S$ we get the cartesian diagram
\begin{equation}
\label{diagram-T-S}
\xymatrix{
\Y_T\ar[r]^h \ar[d]_{\beta_T}& 
\Y\ar[d]^\beta
\\
\X_T\ar[d] \ar[r]^f & 
\X\ar[d]
\\
T\ar[r]^g & S.
}
\end{equation}
The vanishing of $R^i\beta_*V$, for $i>0$, implies that $\beta_{T_*}(h^*V)\cong Lf^*\beta_*V$, by
\cite[Prop. 6.3]{cusp}.
Therefore, $Lf^*(\beta_*V)$ is concentrated in degree $0$.
Considering  a closed point $T$ of $S$ we get that $\SheafTor_i^S(\ComplexNumbers_T,\beta_*V)=0$, for
$i>0$. Flatness of $\beta_*V$ over $S$ now follows from the local criterion for flatness
\cite[Ch. 8 Theorem 49]{matsumura}.
\end{proof}

\begin{lem}
\label{lemma-sufficient-conditions-for-reflexivity-over-pi}
\begin{enumerate}
\item
\label{lemma-item-push-forward-F-is-a-family-of-reflexive-sheaves}
Assume that there exist integers $k\geq 0$ and $\ell\geq 1-c$, such that the following conditions hold.
\begin{enumerate}
\item
\label{assumption-item-push-forward-of-V-and-V-kD-are-equal}
${\displaystyle p_*e^*(V(-j\D))=0,}$ for $j<k$.
\item
\label{assumption-item-flatness-of-F}
${\displaystyle R^ip_*e^*(V(-j\D))=0}$, for $i>0$ and for $j\geq k$.
\item
\label{assumption-item-push-forward-of-V-dual-times-dualizing-sheaf}
${\displaystyle p_*e^*(V^*(-j\D))=0,}$ for $j<\ell$.
\item
\label{assumption-item-f;atness-of-F-dual}
${\displaystyle R^ip_*e^*(V^*(-j\D))=0}$, for $i>0$ and for $j\geq \ell$.
\end{enumerate}
Then $\F:=\beta_*V$ is a family of reflexive sheaves over $\pi$.
\item
\label{lemma-item-pullback-of-F-modulo-torsion-is-locally-free}
Assume (\ref{assumption-item-push-forward-of-V-and-V-kD-are-equal}) and 
(\ref{assumption-item-flatness-of-F}) above and that 
 the counit $p^*p_*\rightarrow \mbox{id}$ for the adjunction $p^*\nolinebreak \vdash \nolinebreak p_*$ induces a surjective
 homomorphism $p^*p_*e^*V(-k\D)\rightarrow e^*V(-k\D)$. 
Then $\beta_*V=\beta_*V(-k\D)$ and
the analogous natural homomorphism
\begin{equation}
\label{eq-pullback-of-F-modulo-torsion-is-locally-free}
\beta^*\F=\beta^*\beta_*V(-k\D) \rightarrow V(-k\D)
\end{equation}
is surjective and its kernel is supported on $\D$.
\end{enumerate}
\end{lem}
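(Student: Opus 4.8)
My plan is to express both $\F=\beta_*V$ and its $\StructureSheaf{\X}$-dual entirely through direct images under the blow-up morphism $\beta$, and to control them with Grothendieck--Verdier duality along $\beta$ (in the analytic category, \cite{ramis-ruget-verdier}), together with Lemma \ref{lemma-push-forward-is-flat} and the base-change result \cite[Prop. 6.3]{cusp} used in its proof. The recurring device is a \emph{twist-shift} computation: tensoring $0\to\StructureSheaf{\Y}(-\D)\to\StructureSheaf{\Y}\to e_*\StructureSheaf{\D}\to 0$ by a locally free sheaf $V'(-n\D)$ and applying $\beta_*$ yields an exact sequence $0\to\beta_*V'(-(n+1)\D)\to\beta_*V'(-n\D)\to p_*e^*V'(-n\D)\to R^1\beta_*V'(-(n+1)\D)\to\cdots$; hence $\beta_*V'(-(n+1)\D)\xrightarrow{\ \sim\ }\beta_*V'(-n\D)$ whenever $p_*e^*V'(-n\D)=0$, and $\beta_*V'(-n\D)\twoheadrightarrow p_*e^*V'(-n\D)$ whenever $R^1\beta_*V'(-(n+1)\D)=0$. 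With $V'=V$, hypothesis (\ref{assumption-item-push-forward-of-V-and-V-kD-are-equal}) gives $\F=\beta_*V=\beta_*V(-\D)=\cdots=\beta_*V(-k\D)$ (this is also the first assertion of part (\ref{lemma-item-pullback-of-F-modulo-torsion-is-locally-free})); and Lemma \ref{lemma-push-forward-is-flat} applied to the locally free sheaf $V(-k\D)$ — whose hypothesis there is precisely (\ref{assumption-item-flatness-of-F}) — gives $R^{>0}\beta_*(V(-k\D))=0$ and flatness of $\F$ over $S$.

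Since then $R\beta_*(V(-k\D))=\F$, Grothendieck--Verdier duality for $\beta$, using $\beta^!\StructureSheaf{\X}=\omega_{\Y/\X}=\StructureSheaf{\Y}((c-1)\D)$ (the blow-up of a smooth center of codimension $c$), yields $R\SheafHom_\X(\F,\StructureSheaf{\X})\cong R\beta_*(V^*((k+c-1)\D))$. Taking $\H^0$, and using the twist-shift identity for $V^*$ with hypothesis (\ref{assumption-item-push-forward-of-V-dual-times-dualizing-sheaf}) — which makes $\beta_*V^*(j\D)$ independent of $j$ for $j\ge -\ell$, a range containing $c-1$ and $k+c-1$ precisely because $\ell\ge 1-c$ — I obtain $\F^*\cong\beta_*(V^*((c-1)\D))=\beta_*(V^*(-\ell\D))$. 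Lemma \ref{lemma-push-forward-is-flat} applied to $V^*(-\ell\D)$, whose hypothesis there is precisely (\ref{assumption-item-f;atness-of-F-dual}), then gives $R^{>0}\beta_*(V^*(-\ell\D))=0$ and flatness of $\F^*$. Dualizing once more gives $R\SheafHom_\X(\F^*,\StructureSheaf{\X})\cong R\beta_*(V((\ell+c-1)\D))$, whose $\H^0$ is $\beta_*V=\F$ by the twist-shift identity (the shift $\ell+c-1\ge 0$, again by $\ell\ge 1-c$), and naturality of the duality isomorphisms identifies the evaluation map $\F\to\F^{**}$ with this isomorphism; hence $\F$ is reflexive over $\X$. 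The key subtlety here is that one must carry $V(-k\D)$, never $V$, through the duality, since $R^{>0}\beta_*V$ may genuinely fail to vanish; the twisted case ($V$ a $\beta^*\theta$-twist) is handled identically, all assertions being local on $\X$.

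To upgrade these statements to an arbitrary morphism $T\to S$ — completing part (\ref{lemma-item-push-forward-F-is-a-family-of-reflexive-sheaves}) — I would use that $\Z_T\hookrightarrow\X_T$ is again a regular embedding of codimension $c$ (both smooth over $T$), so $\Y_T$ is the blow-up of $\X_T$ along $\Z_T$ and $\D_T$, $\StructureSheaf{\Y_T}(-\D_T)$, $V_T:=h^*V$ are the pullbacks of $\D$, $\StructureSheaf{\Y}(-\D)$, $V$. Since $p$ is a projective bundle and the relevant restrictions $e^*(V(\pm j\D))$, $e^*(V^*(\pm j\D))$ are locally free, the vanishing conditions (\ref{assumption-item-push-forward-of-V-and-V-kD-are-equal})--(\ref{assumption-item-f;atness-of-F-dual}) are fibrewise and transfer verbatim to $V_T$ over $T$. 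So the previous two paragraphs, run over $T$, give that $\beta_{T*}V_T=\beta_{T*}V_T(-k\D_T)$ is a family of reflexive sheaves over $\X_T\to T$ with dual $\beta_{T*}(V_T^*(-\ell\D_T))$; that this is the pullback $\F_T$ of $\F$, with dual $(\F^*)_T$, follows from \cite[Prop. 6.3]{cusp} applied to $V(-k\D)$ and $V^*(-\ell\D)$ (vanishing higher direct images, flat zeroth direct image): it gives $\beta_{T*}(V_T(-k\D_T))\cong Lf^*\beta_*(V(-k\D))=f^*\F=\F_T$ and $\beta_{T*}(V_T^*(-\ell\D_T))\cong f^*\F^*=(\F^*)_T$, while the duality computation over $T$ identifies $(\F_T)^*$ with $\beta_{T*}(V_T^*(-\ell\D_T))$ and $(\F_T)^{**}$ with $\F_T$.

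For part (\ref{lemma-item-pullback-of-F-modulo-torsion-is-locally-free}), with $\beta_*V=\beta_*V(-k\D)$ in hand, I would let $\epsilon:\beta^*\beta_*V(-k\D)\to V(-k\D)$ be the adjunction counit; it is an isomorphism over $\Y\setminus\D$, so $\ker\epsilon$ and $\coker\epsilon$ are supported on $\D$, and by Nakayama it is enough to show $e^*\epsilon$ is surjective (note $e^*\coker\epsilon=\coker(e^*\epsilon)$, since $e^*$ is right exact). Now $\beta_*V(-k\D)\twoheadrightarrow p_*e^*V(-k\D)$ by the twist-shift identity together with $R^1\beta_*V(-(k+1)\D)=0$ (Lemma \ref{lemma-push-forward-is-flat} via (\ref{assumption-item-flatness-of-F})), hence $\iota^*\beta_*V(-k\D)\twoheadrightarrow p_*e^*V(-k\D)$; applying $p^*$ and composing with the hypothesised surjection $p^*p_*e^*V(-k\D)\to e^*V(-k\D)$ gives a surjection which a diagram chase with the adjunctions $\beta^*\dashv\beta_*$, $p^*\dashv p_*$ (using $\beta e=\iota p$, so $e^*\beta^*\beta_*V(-k\D)=p^*\iota^*\beta_*V(-k\D)$) identifies with $e^*\epsilon$; so $\coker\epsilon=0$ and $\ker\epsilon$ is supported on $\D$, as claimed. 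The steps I expect to be most delicate are the $\StructureSheaf{\Y}(\D)$-twist bookkeeping in the two duality computations — where the hypothesis $\ell\ge 1-c$ is exactly what forces the relevant shifts to be nonnegative so that Lemma \ref{lemma-push-forward-is-flat} applies and the false assertion $R^{>0}\beta_*V=0$ is never invoked — and the base-change argument over a possibly non-reduced $T$, which rests on blow-ups of regular embeddings commuting with arbitrary base change and on the fibrewise transfer of the vanishing hypotheses; neither is conceptually hard, but both are error-prone.
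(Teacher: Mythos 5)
Your proposal is correct and follows essentially the same route as the paper: the identification $\F=\beta_*V\cong\beta_*V(-k\D)$ via the twist-shift sequences, flatness from Lemma \ref{lemma-push-forward-is-flat}, the computation $\F^*\cong\beta_*(V^*(-\ell\D))$ via Grothendieck--Verdier duality with $\omega_\beta\cong\StructureSheaf{\Y}((c-1)\D)$, base change through $Lf^*R\beta_*\cong R\beta_{T*}h^*$, and the Nakayama argument with the counit surjection $p^*p_*e^*V(-k\D)\twoheadrightarrow e^*V(-k\D)$ for part (\ref{lemma-item-pullback-of-F-modulo-torsion-is-locally-free}). The only cosmetic difference is that you re-run the duality argument over $T$ after transferring the vanishing hypotheses fibrewise, whereas the paper computes $f^*(\F^*)$ and $(\F_T)^{**}$ directly over $T$; both ultimately invoke the same base-change and duality isomorphisms.
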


\begin{proof}
\ref{lemma-item-push-forward-F-is-a-family-of-reflexive-sheaves}) 
We prove first the flatness of $\F$ and $\F^*$.
Assumption (\ref{assumption-item-push-forward-of-V-and-V-kD-are-equal})
implies that $\F$ is isomorphic to $\beta_*(V(-k\D))$. The sheaf
$\beta_*V(-k\D)$ is flat over $S$ and $R\beta_*V(-k\D)$ is isomorphic to $\beta_*V(-k\D)$, by assumption
(\ref{assumption-item-flatness-of-F}) and Lemma
\ref{lemma-push-forward-is-flat}.
We have the isomorphisms
\begin{eqnarray*}
\F^*&\cong& 
\H^0\left[
R\SheafHom(\beta_*V(-k\D),\StructureSheaf{\X})
\right]\cong 
\H^0\left[
R\SheafHom(R\beta_*V(-k\D),\StructureSheaf{\X})
\right]
\\
&\cong&
\H^0\left[
R\beta_*(V^*(k\D)\otimes\omega_\beta)
\right]\cong
\beta_*(V^*((k+c-1)\D))\cong\beta_*(V^*(-\ell\D)),
\end{eqnarray*}
where the first is clear, 
the second follows from the isomorphism $\beta_*V(-k\D)\cong R\beta_*V(-k\D)$
established above, the third is Grothendieck-Verdier Duality \cite{ramis-ruget-verdier},
the fourth follows from the isomorphism $\omega_\beta\cong \StructureSheaf{\X}((c-1)\D)$, 
and the last follows from 
assumption (\ref{assumption-item-push-forward-of-V-dual-times-dualizing-sheaf}).
Now $\beta_*(V^*(-\ell\D))$ is flat over $S$, by 
assumption (\ref{assumption-item-f;atness-of-F-dual}) and Lemma
\ref{lemma-push-forward-is-flat}.

We prove next the isomorphism $f^*(\F^*)\cong(f^*\F)^*$.
We may assume that $k=0$, possibly after replacing $V$ by $V(-k\D)$ and replacing $\ell$ by $\ell+k$. 
Given a morphism $g:T\rightarrow S$ we get the cartesian diagram
(\ref{diagram-T-S}).
We have the isomorphisms
\begin{equation}
\label{eq-pullback-of-F-is-push-forward-of-pullback-of-V}
f^*\beta_*V\cong Lf^*\beta_*V\cong
Lf^*R\beta_*V\cong R\beta_{T_*}(h^*V)\cong \beta_{T_*}(h^*V),
\end{equation}
where the first (leftmost) follows from flatness of $\F$, 
the second by assumption
(\ref{assumption-item-flatness-of-F}), the third by cohomology and base change, 
and the fourth by assumption
(\ref{assumption-item-flatness-of-F}) again. Hence, $f^*\F\cong \beta_{T_*}(h^*V)$.
We also have the isomorphisms
\[
f^*(\F^*)\cong f^*(\beta_*(V^*(-\ell\D)))\cong 
R\beta_{T_*}(h^*[V^*(-\ell\D)])\cong
\beta_{T_*}((h^*V^*(-\ell\D_T))),
\] 
where the left was established above, the middle one follows from a sequence analogous to the
one in equation (\ref{eq-pullback-of-F-is-push-forward-of-pullback-of-V}), and the right 
by assumption (\ref{assumption-item-f;atness-of-F-dual}).
Now, $\beta_{T_*}((h^*V^*(-\ell\D_T)))$ is isomorphic to $\beta_{T_*}(h^*V^*\otimes\omega_{\beta_T})$,
by assumption (\ref{assumption-item-push-forward-of-V-dual-times-dualizing-sheaf}),
the sheaf $\beta_{T_*}(h^*V^*\otimes\omega_{\beta_T})$
is equal to the sheaf $\H^0[R\beta_{T_*}(h^*V\otimes\omega_\beta)]$,
which is isomorphic to
$\H^0[R\SheafHom(R\beta_{T_*}(h^*V),\StructureSheaf{\X_T})]$,
by Grothendieck-Verdier duality, and the latter is isomorphic to 
$\SheafHom(\beta_{T_*}(h^*V),\StructureSheaf{\X_T})$, using the isomorphism 
$R\beta_{T_*}h^*V\cong \beta_{T_*}h^*V$
established above.
We get the isomorphism  $f^*(\F^*)\cong \SheafHom(\beta_{T_*}(h^*V),\StructureSheaf{\X_T})$. We conclude the isomorphism 
$f^*(\F^*)\cong(f^*\F)^*$ from Equation (\ref{eq-pullback-of-F-is-push-forward-of-pullback-of-V}).

It remains to prove that $\F_T$ is reflexive. 
Dualizing the above isomorphisms we get
\[
(\F_T)^{**}=(f^*\F)^{**}\cong (f^*(\F^*))^*\cong\left(\beta_{T_*}((h^*V^*)(-\ell\D_T))\right)^*,
\]
and the right hand side is isomorphic to 
$\H^0\left[R\SheafHom\left(R\beta_{T_*}((h^*V^*)(-\ell\D_T)),\StructureSheaf{\X_T}
\right)\right]$, where we used the isomorphism
$\beta_{T_*}((h^*V^*)(-\ell\D_T))\cong R\beta_{T_*}((h^*V^*)(-\ell\D_T))$
established above. Grothendieck-Verdier duality yields the isomorphism
\[
\H^0\left[R\SheafHom\left(R\beta_{T_*}((h^*V^*)(-\ell\D_T)),\StructureSheaf{\X_T}
\right)\right] \cong
\H^0\left[
R\beta_{T_*}\left(R\SheafHom(h^*V^*(-\ell\D_T),\beta^!\StructureSheaf{\X_T})
\right)\right].
\]
The right hand side is isomorphic to $\beta_{T_*}(h^*V((\ell+c-1)\D_T))$, which in turn is
isomorphic to 
$\beta_{T_*}h^*V$, by assumption
(\ref{assumption-item-push-forward-of-V-and-V-kD-are-equal}). 
The latter is $\F_T$,
by Equation (\ref{eq-pullback-of-F-is-push-forward-of-pullback-of-V}).

\ref{lemma-item-pullback-of-F-modulo-torsion-is-locally-free}) 
Consider the short exact sequence
\[
0\rightarrow V(-(k+1)\D)\rightarrow V(-k\D)\rightarrow e_*e^*V(-k\D)\rightarrow 0.
\]
The sheaf $R^1\beta_*V(-(k+1)\D)$ vanishes, by assumption (\ref{assumption-item-flatness-of-F}).
We get the short exact sequence
\[
0\rightarrow \beta_*V(-(k+1)\D)\rightarrow \beta_*V(-k\D)\rightarrow \beta_*e_*e^*V(-k\D)\rightarrow 0.
\]
Pulling back via $\beta^*$ we get the commutative diagram with right exact top row:
\[
\xymatrix{
\beta^*\beta_*V(-(k+1)\D) \ar[r] &
\beta^*\beta_*V(-k\D) \ar[r] \ar[d]_{ev} &
\beta^*\beta_*e_*e^*V(-k\D) \ar[d]^{\gamma} \ar[r] & 0
\\
& V(-k\D) \ar[r]_{\rho} & e_*e^*V(-k\D).
}
\]
The vertical homomorphism are associated to the co-unit natural transformation $\beta^*\beta_*\rightarrow \mbox{id}$.
Let $\zeta:\Z\hookrightarrow \X$ be the closed immersion. 
Then $\beta\circ e=\zeta\circ p$. Hence, 
$\beta^*\beta_*e_*e^*V(-k\D)=\beta^*\zeta_*p_*e^*V(-k\D)$. Now $p_*e^*V(-k\D)$ is a locally free sheaf over
$\Z$, by assumption (\ref{assumption-item-flatness-of-F}), and
$\beta^*\zeta_*\StructureSheaf{\Z}\cong e_*\StructureSheaf{\D}=e_*p^*\StructureSheaf{\Z}$.
We get the right isomorphism below:
\[
\beta^*\beta_*e_*e^*V(-k\D)=\beta^*\zeta_*p_*e^*V(-k\D)\cong e_*p^*p_*e^*V(-k\D).
\]
The homomorphism $\gamma$ is the composition of the above isomorphism 
with the homomorphism $e_*p^*p_*e^*V(-k\D)\rightarrow e_*e^*V(-k\D)$.
The latter is surjective, by assumption. Hence, the homomorphism $\gamma$ is surjective.
We conclude that the composition 
$\rho\circ ev: \beta^*\beta_*V(-k\D)\rightarrow e_*e^*V(-kD)$
is surjective, by the commutativity of the diagram above. 
The surjectivity of the homomorphism $ev$ on stalks of points of $\D$
follows, by Nakayama's Lemma. The homomorphism $ev$ clearly induces 
an isomorphism on stalks of points outside the exceptional divisor $\D$.
\end{proof}

\begin{rem}
Let $\M\rightarrow S$ be a smooth morphism with connected fibers of dimension $\geq 3$.
Set $\X:=\M\times_S\M$, $\pi:\X\rightarrow S$ the natural morphism,
$\Z\subset \X$ the diagonal, $\beta:\Y\rightarrow \X$ the blow-up along $\Z$, and
$V$ a vector bundle over $\Y$ satisfying the assumptions of 
Lemma \ref{lemma-sufficient-conditions-for-reflexivity-over-pi}.
Then $\F:=\beta_*V$ is a family of reflexive sheaves over $\pi$, by 
Lemma \ref{lemma-sufficient-conditions-for-reflexivity-over-pi}. Let
$\pi_i: \X\rightarrow \M$ be the projection, $i=1,2$. 
We claim that $\F$ is a family of reflexive sheaves over $\pi_i$ as well.
Indeed, $\Z$ is smooth over $\M$ as well being a section of $\pi_i$. 
Hence, Lemma \ref{lemma-sufficient-conditions-for-reflexivity-over-pi}
applies with the morphisms $\pi_i$, $i=1,2$, as well. 
\end{rem}
%
\subsection{Some basic properties of families of reflexive sheaves}

\begin{lem}
\label{lemma-codimension-of-singular-locus-of-family-of-reflexive-sheaves}
Let $\F$ be a family of reflexive sheaves over $\pi$.
Let $U\subset \X$ be the open subset over which $\F$ is locally free and set
$\Z:=\X\setminus U$. Given a closed point $s\in S$, let $\X_s$ and $\Z_s$ be the fibers of $\X$ and $\Z$ over $s$.
Then the co-dimension of $\Z_s$ in $\X_s$ is  at least $3$.
\end{lem}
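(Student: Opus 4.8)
The plan is to reduce the statement to the classical fact that a reflexive sheaf on a smooth variety is locally free in codimension $\leq 2$, i.e.\ its singular locus has codimension $\geq 3$ (see Hartshorne's ``Stable Reflexive Sheaves'', or Okonek--Schneider--Spindler). The only content is that the fiber $\F_s$ of a \emph{family} of reflexive sheaves is itself reflexive on the smooth variety $\X_s$, and that the non-locally-free locus of $\F_s$ coincides with $\Z_s := \Z \cap \X_s$. Both of these follow from the defining properties of a family of reflexive sheaves over $\pi$.

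First I would invoke the defining condition directly: for the morphism $T = \{s\} \hookrightarrow S$, the natural homomorphism $\F_T \to (\F_T)^{**}$ is an isomorphism, so $\F_s$ is a reflexive coherent sheaf on the smooth complex manifold $\X_s$. By the cited structure theory of reflexive sheaves on smooth varieties, the locus $\Sigma \subset \X_s$ where $\F_s$ fails to be locally free is a closed analytic subset of codimension $\geq 3$. Next I would show $\Z_s = \Sigma$ as sets. The inclusion $\Sigma \subseteq \Z_s$ is clear: if $\F$ is locally free near a point $x \in \X_s$, then so is its restriction $\F_s$, hence $x \notin \Sigma$ implies $x \notin \Z_s$ is false-direction --- rather, $x \notin \Z_s$ gives $\F$ locally free at $x$, hence $\F_s$ locally free at $x$, so $x \notin \Sigma$; this gives $\Sigma \subseteq \Z_s$. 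For the reverse inclusion, suppose $x \in \X_s$ with $\F_s$ locally free at $x$; I want to deduce that $\F$ is locally free at $x$, i.e.\ $x \notin \Z_s$. Since $\F$ is flat over $S$ (part (1) of the definition of a family of reflexive sheaves) and $\F_s$ is locally free at $x$, a standard application of the local criterion for flatness together with Nakayama's lemma (choose generators of the stalk $\F_x$ lifting a basis of the fiber $\F_s|_x$, getting a surjection $\StructureSheaf{\X,x}^{\oplus r} \to \F_x$ whose kernel is $S$-flat and vanishes modulo $\fm_s$, hence vanishes) shows $\F_x$ is free; so $x \notin \Z_s$. Thus $\Z_s = \Sigma$ and $\mathrm{codim}_{\X_s}\Z_s \geq 3$.

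The step most deserving of care is the equality $\Z_s = \Sigma$, and specifically the direction $\Z_s \subseteq \Sigma$: one must be sure that ``$\F$ locally free at $x$'' is equivalent to ``$\F_s$ locally free at $x$'', which is exactly where flatness of $\F$ over $S$ enters. One subtlety is whether $\Z$ (the non-locally-free locus of $\F$ over the total space $\X$) is closed and whether its formation commutes with base change to the fiber --- both are handled by the flatness of $\F$ and the semicontinuity of fiber dimension of the stalks, but it is worth remarking that no hypothesis on $\F^*$ beyond what is packaged in the definition is needed here. If $\F$ is only a \emph{twisted} family, the same argument applies \'etale-locally (or in an analytic local trivialization of the twist), since reflexivity, flatness, and local freeness are all local properties unaffected by the gerbe. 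I expect no genuine obstacle; the lemma is essentially a bookkeeping consequence of the definition of a family of reflexive sheaves combined with the classical codimension bound for reflexive sheaves on smooth varieties.
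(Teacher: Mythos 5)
Your proposal is correct and follows essentially the same route as the paper: restrict to the fiber, use reflexivity of $\F_s$ to get the classical codimension-$\geq 3$ bound for its singular locus, and then show that local freeness of $\F_s$ at a point forces local freeness of $\F$ there via Nakayama's lemma and flatness of $\F$ over $S$ (the vanishing of the relevant $\SheafTor_1$). The only cosmetic difference is that you also record the easy inclusion $\Sigma\subseteq\Z_s$, which is not needed for the stated bound.
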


\begin{proof}
The restriction $\F_s$  of $\F$ to $\X_s$ is reflexive, by definition. 
Hence, $\F_s$ is locally free away from a closed analytic subset of co-dimension $\geq 3$.
If $\F_s$ is locally free at a point $x\in \X_s$, then $\F$ is locally free at that point.
Indeed, let $\{f_1, \dots,f_r\}$ be a basis of the stalk $\F_{s,x}$ of $\F_s$ at $x$
and $\phi:\oplus_{i=1}^r\StructureSheaf{\X_{s,x}}\rightarrow \F_{s,x}$ the corresponding isomorphism
of stalks.
Choose a subset $\{\tilde{f}_1, \dots, \tilde{f}_r\}$ of the stalk $\F_x$ of $\F$ at $x$, which maps to 
the above basis, and let $\tilde{\phi}:\oplus_{i=1}^r\StructureSheaf{\X_x}\rightarrow \F_x$ be the
corresponding homomorphism of stalks. Then $\tilde{\phi}$ is surjective, by Nakayama's lemma.
Let $N$ be the kernel of $\tilde{\phi}$. Tensoring the exact sequence
\[
0\rightarrow N \rightarrow \oplus_{i=1}^r\StructureSheaf{\X_x}\RightArrowOf{\tilde{\phi}} \F_x\rightarrow 0
\]
by the stalk $\StructureSheaf{S_s}$ we get the long exact sequence
\[
\SheafTor_1^{\StructureSheaf{\X_x}}(\F_x,\StructureSheaf{S_s})\rightarrow 
N\otimes_{\StructureSheaf{\X_x}}\StructureSheaf{S_s}\rightarrow 
\oplus_{i=1}^r\StructureSheaf{\X_{s,x}}\RightArrowOf{\phi} \F_{s,x}
\rightarrow 0.
\]
Now $\SheafTor_1^{\StructureSheaf{\X_x}}(\F_x,\StructureSheaf{S_s})$ vanishes, by flatness of $\F$ over $S$,
and $\phi$ is an isomorphism. Thus, 
$N$  restricts to zero along $\X_{s,x}$.
Hence, $N=0$, by Nakayama's lemma.
Consequently, $\tilde{\phi}$ is injective as well, and 
$\F$ is locally free at $x$. 
We conclude that the co-dimension of $\Z\cap\X_s$ in $\X_s$ is at least $3$.
\end{proof}

Let $\Z$ be a closed analytic subset of $\X$ and $I$ its ideal sheaf.
Given a point $x\in \X$, denote by $I_x$ the stalk of $I$ at $x$ and by
$\StructureSheaf{\X,x}$ the stalk of $\StructureSheaf{\X}$ at $x$. 
Note that $\StructureSheaf{\X,x}$ is a noetherian ring \cite[Prop. 1]{serre}.
Let $\E$ be a coherent sheaf on $\X$ and $\E_x$ its stalk at $x$. 
Denote by $\depth_{I_x}(\E_x)$ the maximal length of a regular sequence in $I_x$ for the 
$\StructureSheaf{\X,x}$-module $\E_x$. Given a closed analytic subset $\Z\subset\X$,
set $\depth_\Z(\E):=\inf_{x\in\Z}\depth_{I_x}(\E_x)$ 
\cite[Cor. 3.6]{trautmann,hartshorne-local-cohomology}.

\begin{lem}
\label{lemma-vanishing-of-sheaf-ext-i-Q-O-X}
Assume that $\Z$
intersects each fiber of $\pi$ in a subset of codimension $\geq c$.
Then $\depth_{\Z}\left(\StructureSheaf{\X}\right)\geq c.$ Equivalently, 
given a coherent sheaf $Q$ supported set theoretically on $\Z$, the 
extension sheaves $\SheafExt^i(Q,\StructureSheaf{\X})$ vanish, for $i<c$.
\end{lem}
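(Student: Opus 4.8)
The plan is to first establish the equivalence of the two assertions, and then prove the depth estimate. For the equivalence: the depth of $\StructureSheaf{\X}$ along $\Z$ controls the local cohomology $\H^i_\Z(\StructureSheaf{\X})$, which vanishes for $i<c$ precisely when $\depth_\Z(\StructureSheaf{\X})\geq c$; and local cohomology supported on $\Z$ is computed as a direct limit of $\SheafExt^i(\StructureSheaf{\X}/I^n,\StructureSheaf{\X})$. More directly, since the statement is local on stalks, I would fix $x\in\Z$ and a coherent $Q$ supported set-theoretically on $\Z$; then $Q_x$ has support contained in $V(I_x')$ for the radical ideal $I_x'$ cutting out $\Z$, so $I_x'^N\cdot Q_x=0$ for some $N$. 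A standard homological argument (dévissage: filter $Q_x$ by powers of a regular sequence contained in $I_x$, or simply note that $\Ext^i_{R}(N,R)=0$ for $i<\depth_I(R)$ whenever $N$ is a finitely generated module with $I^N N=0$, by induction on the length of a maximal $R$-regular sequence in $I$) gives $\Ext^i_{\StructureSheaf{\X,x}}(Q_x,\StructureSheaf{\X,x})=0$ for $i<\depth_{I_x}(\StructureSheaf{\X,x})$, hence $\SheafExt^i(Q,\StructureSheaf{\X})_x=0$ for $i<\depth_\Z(\StructureSheaf{\X})$. Conversely, taking $Q=\StructureSheaf{\X}/I$ and localizing, the vanishing of $\Ext^i$ for all such $Q$ forces the depth bound, since a nonzero $\Ext^{\depth}$ is always detected by the Koszul/quotient module.

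The substance is therefore the inequality $\depth_\Z(\StructureSheaf{\X})\geq c$ under the hypothesis that $\Z$ meets each fiber of $\pi$ in codimension $\geq c$. I would argue stalk-wise at a point $x\in\Z$ lying over $s\in S$. The key structural input is that $\pi:\X\rightarrow S$ is smooth, so locally $\StructureSheaf{\X,x}$ is a formally smooth (hence flat, with regular fibers) $\StructureSheaf{S,s}$-algebra; in particular, after choosing local coordinates, $\StructureSheaf{\X,x}\cong \StructureSheaf{S,s}[[t_1,\dots,t_n]]$ at the level of completions, or at least $\StructureSheaf{\X,x}$ is flat over $\StructureSheaf{S,s}$ with fiber $\StructureSheaf{\X_s,x}$ a regular local ring of dimension $n=\dim\X_s$. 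The hypothesis says $\depth_{I_x/\fm_s I_x}(\StructureSheaf{\X_s,x})\geq c$, because a regular local ring is Cohen–Macaulay, so depth along any closed subset equals its codimension. I then want to lift a regular sequence from the fiber to $\StructureSheaf{\X,x}$: choose $f_1,\dots,f_c\in I_x$ whose images in $\StructureSheaf{\X_s,x}$ form a regular sequence in the fiber ideal; by flatness of $\StructureSheaf{\X,x}$ over $\StructureSheaf{S,s}$ and the local criterion for flatness (cf. \cite[Ch. 8 Theorem 49]{matsumura}, already invoked in the proof of Lemma \ref{lemma-push-forward-is-flat}), a sequence in $\StructureSheaf{\X,x}$ lying in $I_x$ that is regular on the closed fiber is automatically $\StructureSheaf{\X,x}$-regular. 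This gives $\depth_{I_x}(\StructureSheaf{\X,x})\geq c$, and taking the infimum over $x\in\Z$ finishes the proof.

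The main obstacle I anticipate is the lifting step, i.e. making precise that ``regular on the special fiber plus flatness over the base $\Rightarrow$ regular upstairs'' in the analytic (possibly non-reduced) setting where $S$ need not be reduced and $\StructureSheaf{S,s}$ is an arbitrary noetherian local analytic algebra. The clean way is: if $M$ is flat over a local ring $(R,\fm)$ and $f\in R'$ (with $R\to R'$ local) restricts to a nonzerodivisor on $M/\fm M$, and $\Integers$-flatness... more simply, one reduces to the absolute case by noting $\Integers$--- rather, one uses that $\StructureSheaf{\X,x}/(f_1,\dots,f_j)$ stays flat over $\StructureSheaf{S,s}$ at each stage (since we quotient by a relative regular sequence) and applies the fiberwise criterion repeatedly; each $f_{j+1}$ is a nonzerodivisor on $\StructureSheaf{\X,x}/(f_1,\dots,f_j)$ iff it is so on the fiber, by the local flatness criterion. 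Since $\StructureSheaf{S,s}$ is noetherian \cite[Prop. 1]{serre} and $\pi$ is smooth, the requisite flatness and base-change facts are available, so this reduces to bookkeeping rather than a genuine difficulty.
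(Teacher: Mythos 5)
Your proposal is correct and follows essentially the same route as the paper: the paper also reduces to stalks via $\SheafExt^i(Q,\StructureSheaf{\X})_x\cong\Ext^i(Q_x,\StructureSheaf{\X,x})$, produces a regular sequence of length $c$ in $I_x$ from the smoothness of $\pi$ together with the codimension hypothesis on the fibers (citing \cite[Cor. 20.F]{matsumura}, which is precisely your flatness-lifting step), concludes the $\Ext$-vanishing from the standard depth criterion \cite[Ch. 6 Theorem 28]{matsumura}, and gets the equivalence with the depth inequality from \cite[Prop. 3.3]{hartshorne-local-cohomology}. Your write-up merely unwinds the content of these citations (Cohen--Macaulayness of the regular fiber, the local criterion for flatness applied iteratively), so there is no substantive difference.
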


\begin{proof}
Let $x$ be a point of $\X$ and let $Q_x$ and 
$\SheafExt^i(Q,\StructureSheaf{\X})_x$ be the stalks at $x$.
We have the isomorphism $\SheafExt^i(Q,\StructureSheaf{\X})_x\cong
\Ext^i(Q_x,\StructureSheaf{\X,x}),$ 
by \cite[Prop. III.6.8]{AG}.
There exists a regular sequence $a_1, \dots, a_c$ of length $c$ in $I_x$, by \cite[Cor. 20.F]{matsumura}, 
since $\Z$ has relative co-dimension $c$ in $\X$ over $S$ and $\X$ is smooth over $S$.
Thus, $\Ext^i(Q_x,\StructureSheaf{\X,x})=(0)$, for $i<c$,
by \cite[Ch. 6 Theorem 28]{matsumura}. 
The equivalence with the inequality $\depth_{\Z}\left(\StructureSheaf{\X}\right)\geq c$
follows from \cite[Prop. 3.3]{hartshorne-local-cohomology} 
\end{proof}

\hide{
\begin{rem}
\label{rem-existence-of-a-regular-sequence}
Here is a global argument supporting the vanishing of the extension sheaves
$\SheafExt^i(Q,\StructureSheaf{\X})$, for $i<c$ in the proof above. 
Suppose that $\pi:\X\rightarrow S$ is smooth and projective of relative dimension $n$.
Let $\StructureSheaf{\X}(1)$ be a $\pi$-ample line bundle over $\X$.
Then 
\begin{equation}
\label{eq-using-verdier-duality-to-prove-vanishing-of-extension-sheaves}
R\pi_*\left[R\SheafHom(Q(-d),\omega_\pi)\right]
\cong
R\SheafHom\left(
R\pi_*Q(-d),\StructureSheaf{S}[-n]\right),
\end{equation}
by Grothendieck-Verdier duality.
The left hand side is the limit of a spectral sequence whose $E_2^{p,q}$ 
terms are
$R^p\pi_*\SheafExt^q\left(Q(-d),\omega_\pi\right)$.
These vanish for $p>0$ and $d$ sufficiently large.
For such $d$ we get an isomorphism
\[
\H^i\left(R\pi_*\left[R\SheafHom(Q(-d),\omega_\pi)\right]\right)
\cong 
\pi_*\SheafExt^i\left(Q(-d),\omega_\pi\right).
\]
Furthermore, for $d$ sufficiently large, the sheaf
$\SheafExt^i\left(Q(-d),\omega_\pi\right)$ vanishes, if and only if 
its push-forward vanishes. Thus, it suffices to show 
that the $i$-th sheaf cohomologies of the right hand side of 
(\ref{eq-using-verdier-duality-to-prove-vanishing-of-extension-sheaves})
vanish for $i<c$. The sheaves $R^i\pi_*Q(-d)$ vanish for $i>n-c$, 
since fibers of $\Z\rightarrow S$ have dimension $\leq n-c$. 
Thus, the object $R\pi_*Q(-d)$ in $D(S)$ is quasi-isomorphic to a complex of locally free sheaves
\[
\dots \rightarrow V_{i-1}\rightarrow V_i \rightarrow \cdots \rightarrow V_{n-c}
\]
bounded from above. Dualizing, we get a complex representing 
$R\SheafHom\left(R\pi_*Q(-d),\StructureSheaf{S}\right)$, which is bounded from below at
degree $c-n$.
Thus, the sheaves $\H^i\left\{R\SheafHom\left(
R\pi_*Q(-d),\StructureSheaf{S}\right)
\right\}$
vanish for $i<c-n$.
We conclude that 
$\H^i\left\{R\SheafHom\left(
R\pi_*Q(-d),\StructureSheaf{S}[-n]\right)
\right\}$
vanish for $i<c$, as claimed.
\end{rem}
}

\begin{lem} 
\label{lemma-reflexive-F-is-a-subsheaf-of-a-locally-free-sheaf}
Let $\F$ be a family of reflexive sheaves over $\pi$. 
Then, locally over $\X$, it can be included in an exact sequence
$
0\rightarrow \F \rightarrow \E \rightarrow \G\rightarrow 0,
$
where $\E$ is locally free and $\G$ is a subsheaf of a locally free sheaf.
\end{lem}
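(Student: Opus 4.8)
The plan is to work locally on $\X$, so fix a point $x\in\X$ and an open Stein neighborhood $U$ on which we may assume $\StructureSheaf{\X}^{\oplus N}$ surjects onto $\F^*$ for some $N$. Dualizing the surjection $\StructureSheaf{\X}^{\oplus N}\twoheadrightarrow \F^*$ gives an injection $(\F^*)^*\hookrightarrow \StructureSheaf{\X}^{\oplus N}$. By the defining property of a family of reflexive sheaves (applied with $T=S$, i.e. using that $\F$ itself is reflexive, $\F\cong\F^{**}=(\F^*)^*$), this is the desired inclusion $\F\hookrightarrow \E:=\StructureSheaf{\X}^{\oplus N}$ into a locally free sheaf. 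Set $\G:=\coker(\F\to\E)$; it remains to show $\G$ embeds, locally further if necessary, into a locally free sheaf.

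For the cokernel, the key point is to identify $\SheafHom(\G,\StructureSheaf{\X})$ and a natural map $\G\to\SheafHom(\SheafHom(\G,\StructureSheaf{\X}),\StructureSheaf{\X})$ whose kernel is the torsion of $\G$, and to argue $\G$ is \emph{torsion-free}, so that this natural map is injective and exhibits $\G$ as a subsheaf of the (locally free, after shrinking) double dual of $\SheafHom(\G,\StructureSheaf{\X})$. Torsion-freeness of $\G$ follows because $\F$ is a \emph{second} syzygy: apply $\SheafHom(-,\StructureSheaf{\X})$ to $0\to\F\to\E\to\G\to 0$ to get
\[
0\to\SheafHom(\G,\StructureSheaf{\X})\to\E^*\to\F^*\to\SheafExt^1(\G,\StructureSheaf{\X})\to 0,
\]
and the fact that $\F=(\F^*)^*$ is the dual of the coherent sheaf $\F^*$, hence reflexive, forces $\depth$ conditions (Serre's $S_2$) that kill the low-degree local Ext's: concretely, reflexive sheaves on a regular space satisfy $\depth_{\Z}\F\geq\min(2,\dim)$ along any closed subset, and one deduces $\G$ has no sections supported in codimension $\geq 1$, i.e. $\G$ is torsion-free. (Alternatively one can phrase this via Lemma \ref{lemma-vanishing-of-sheaf-ext-i-Q-O-X}: a torsion subsheaf $Q\subset\G$ would be supported in codimension $\geq 1$, giving $\SheafExt^0(Q,\StructureSheaf{\X})=0$, and chasing the sequence above together with reflexivity of $\F$ yields $Q=0$.)

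The main obstacle I expect is the torsion-freeness step: making precise, in the analytic and possibly non-reduced relative setting of the paper, the commutative-algebra input that the dual of a coherent sheaf over the regular local rings $\StructureSheaf{\X,x}$ is an $S_2$ (hence torsion-free and even reflexive) module, and that consequently a cokernel of an inclusion of a reflexive sheaf into a locally free one is torsion-free. Once torsion-freeness is in hand, the embedding $\G\hookrightarrow\SheafHom(\SheafHom(\G,\StructureSheaf{\X}),\StructureSheaf{\X})$ into a locally free sheaf (after shrinking $U$ so that this double dual is locally free) is routine, completing the proof.
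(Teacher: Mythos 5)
There is a genuine gap, and it sits at the heart of your argument: the claim that the cokernel of an inclusion of a reflexive sheaf into a locally free sheaf is torsion-free is false for an arbitrary such inclusion. For example, on $\ComplexNumbers^2$ the inclusion $x\cdot\StructureSheaf{\ComplexNumbers^2}\subset \StructureSheaf{\ComplexNumbers^2}$ has a locally free (hence reflexive) subsheaf, yet its cokernel is the structure sheaf of the divisor $\{x=0\}$, which is pure torsion. The $S_2$/depth property of $\F$ constrains sections of $\F$ itself (no sections supported in codimension $\geq 1$, extension over codimension $\geq 2$); it says nothing about torsion in a quotient $\E/\F$, and your dualized sequence only computes $\SheafHom(\G,\StructureSheaf{\X})$ and $\SheafExt^1(\G,\StructureSheaf{\X})$, neither of which detects divisorial torsion in $\G$. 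Whether $\G$ is torsion-free depends on which embedding $\F\hookrightarrow\E$ you chose, so your proof as written does not close. (Your particular embedding, obtained by dualizing a surjection $\StructureSheaf{\X}^{\oplus N}\twoheadrightarrow\F^*$ with kernel $K$, does in fact have torsion-free cokernel — but only because that cokernel is visibly the image of $\StructureSheaf{\X}^{\oplus N}\rightarrow K^*$, hence a subsheaf of the torsion-free sheaf $K^*$. Once you observe this you no longer need the torsion-free/double-dual detour at all.)

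This is exactly the paper's route, following the first part of Hartshorne's Prop.~1.1 on stable reflexive sheaves: locally choose a two-term locally free presentation $V_1\RightArrowOf{d} V_0\rightarrow \F^*\rightarrow 0$, dualize to get $0\rightarrow \F^{**}\rightarrow V_0^*\RightArrowOf{d^*} V_1^*$, and set $\E:=V_0^*$ and $\G:={\rm Im}(d^*)$, which is a subsheaf of the locally free sheaf $V_1^*$ by construction; reflexivity $\F\cong\F^{**}$ finishes the proof. A secondary problem in your write-up: even granting torsion-freeness of $\G$, the double dual $\G^{**}$ is only reflexive, and a reflexive sheaf need not be locally free on any neighborhood of a point where it is singular, so "shrink $U$ so that $\G^{**}$ is locally free" is not available; to embed $\G^{**}$ into a locally free sheaf you would again have to dualize a presentation of $\G^*$ — at which point you have reproduced the two-step presentation argument anyway.
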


\begin{proof}
We follow the first part of the proof of 
\cite[Prop. 1.1]{hartshorne-stable-reflexive}.
The statement is local, so we may assume that there exists a right exact sequence
$V_1\RightArrowOf{d} V_0\rightarrow \F^*\rightarrow 0$,
where $V_0$ and $V_1$ are locally free. 
Taking duals we get the left exact sequence
$
0\rightarrow \F^{**}\rightarrow V_0^*\RightArrowOf{d^*} V_1^*.
$
Set $\E:=V_0^*$ and let $\G$ be the image of $d^*$. The isomorphism $\F\cong\F^{**}$ 
yields the desired short exact sequence.
\end{proof}

\begin{lem}
\label{lemma-depth-of-structure-sheaf-at-least-2-implies-same-for-F}
Let $\F$ be a family of reflexive sheaves over $\pi$. 
Let $\Z\subset \X$ be a closed analytic subset such that $\depth_\Z(\StructureSheaf{\X})\geq 2.$
Then $\depth_\Z(\F)\geq 2.$
\end{lem}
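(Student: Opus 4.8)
The plan is to reduce the claim to the standard depth inequality for a short exact sequence, using the local embedding of $\F$ into a locally free sheaf from Lemma \ref{lemma-reflexive-F-is-a-subsheaf-of-a-locally-free-sheaf}. Since $\depth_\Z(\F)=\inf_{x\in\Z}\depth_{I_x}(\F_x)$ is computed pointwise, and the exact sequence produced by that lemma is only local, I would fix a point $x\in\Z$, shrink $\X$ to a neighborhood of $x$, and obtain a short exact sequence $0\to\F\to\E\to\G\to 0$ with $\E$ locally free and $\G$ a subsheaf of a locally free sheaf. Passing to stalks at $x$ gives a short exact sequence $0\to\F_x\to\E_x\to\G_x\to 0$ of $\StructureSheaf{\X,x}$-modules, and it suffices to show $\depth_{I_x}(\F_x)\geq 2$.

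First I would record that $\depth_{I_x}(\E_x)\geq 2$: since $\E_x$ is a free $\StructureSheaf{\X,x}$-module, its $I_x$-depth equals $\depth_{I_x}(\StructureSheaf{\X,x})\geq \depth_\Z(\StructureSheaf{\X})\geq 2$ by hypothesis; in particular $I_x$ contains an element $a$ that is a nonzerodivisor on $\StructureSheaf{\X,x}$. Next I would observe that $\depth_{I_x}(\G_x)\geq 1$: because $\G$ is a subsheaf of a locally free sheaf, $\G_x$ is a submodule of a free $\StructureSheaf{\X,x}$-module, so $a$ remains a nonzerodivisor on $\G_x$. Finally I would invoke the depth lemma for a short exact sequence, obtained from the long exact sequence for $\Ext^\bullet_{\StructureSheaf{\X,x}}(\StructureSheaf{\X,x}/I_x,-)$ (equivalently for local cohomology with supports in $V(I_x)$) applied to $0\to\F_x\to\E_x\to\G_x\to 0$: it yields $\depth_{I_x}(\F_x)\geq \min\{\depth_{I_x}(\E_x),\,\depth_{I_x}(\G_x)+1\}\geq \min\{2,2\}=2$. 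Taking the infimum over $x\in\Z$ gives $\depth_\Z(\F)\geq 2$.

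The argument has no serious obstacle; the only points requiring care are verifying that a submodule of a free module inherits a nonzerodivisor from $I_x$ (so that $\depth_{I_x}(\G_x)\geq 1$ holds, using that $\depth_\Z(\StructureSheaf{\X})\geq 2\geq 1$), and citing the correct form of the depth inequality for a short exact sequence, for which one may refer to the $\Ext$-characterization of depth already used in the proof of Lemma \ref{lemma-vanishing-of-sheaf-ext-i-Q-O-X} (e.g. \cite[Ch.~6]{matsumura}).
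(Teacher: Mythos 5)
Your proof is correct and follows essentially the same route as the paper: both hinge on the local short exact sequence $0\to\F\to\E\to\G\to 0$ from Lemma \ref{lemma-reflexive-F-is-a-subsheaf-of-a-locally-free-sheaf} and transfer depth across it by a long exact sequence of Ext groups. The only difference is bookkeeping — the paper verifies the vanishing of $\SheafExt^i(N,\F)$ for $i\leq 1$ and all coherent $N$ supported on $\Z$, while you apply the equivalent stalkwise inequality $\depth_{I_x}(\F_x)\geq\min\{\depth_{I_x}(\E_x),\,\depth_{I_x}(\G_x)+1\}$.
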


\begin{proof}
We follow the proof of \cite[Prop. 1.3]{hartshorne-stable-reflexive}. 
Recall that $\depth_\Z(\F)\geq k$, if and only if 
the sheaf 
$\SheafExt^i(N,\F)$ vanishes, for every coherent sheaf $N$ supported on $\Z$ and 
for all $i<k$ \cite[Prop. 3.3]{hartshorne-local-cohomology}.
The statement is local and we may assume that there exists a short exact sequence
$0\rightarrow \F\rightarrow \E\rightarrow \G\rightarrow 0$
with $\E$ locally free and $\G$ a subsheaf of a locally free sheaf, by Lemma
\ref{lemma-reflexive-F-is-a-subsheaf-of-a-locally-free-sheaf}. 
Let $N$ be a coherent sheaf supported on $\Z$.
We get the exact sequence 
\[
\SheafHom(N,\G)\rightarrow \SheafExt^1(N,\F)\rightarrow \SheafExt^1(N,\E).
\]
Now $\SheafExt^i(N,W)$ vanishes, for $i\leq 1$, for any  locally free sheaf $W$, since
$\depth_\Z(\StructureSheaf{\X})\geq 2.$ Hence, $\SheafExt^1(N,\E)$ vanishes 
and so do $\SheafHom(N,\F)$ and
$\SheafHom(N,\G),$ since both $\F$ and $\G$ are subsheaves of  locally free sheaves.
We conclude that $\SheafExt^i(N,\F)$ vanishes for $i=0$ and $i=1$.
\end{proof}

\begin{lem}
\label{lemma-F-is-isomorphic-to-the-pushforward-of-its-restriction}
Let $\F$ be a family of reflexive sheaves over $\pi$.
Let $U\subset \X$ be the open subset where $\F$ is locally free, 
and let $\iota:U\rightarrow \X$ be the inclusion. 
Then the natural homomorphism $\F\rightarrow \iota_*\iota^*\F$ is an isomorphism.
\end{lem}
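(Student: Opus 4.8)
The plan is to read off the isomorphism from the depth of $\F$ along the locus where it fails to be locally free, using the standard local cohomology sequence that compares a sheaf with the pushforward of its restriction to an open set. First I would set $\Z:=\X\setminus U$, so that $\iota:U\hookrightarrow\X$ is the complement of this closed analytic subset. By Lemma \ref{lemma-codimension-of-singular-locus-of-family-of-reflexive-sheaves} the set $\Z$ meets each fiber of $\pi$ in a subset of codimension $\geq 3$, in particular of codimension $\geq 2$; hence $\depth_\Z(\StructureSheaf{\X})\geq 2$ by Lemma \ref{lemma-vanishing-of-sheaf-ext-i-Q-O-X}, and then $\depth_\Z(\F)\geq 2$ by Lemma \ref{lemma-depth-of-structure-sheaf-at-least-2-implies-same-for-F}.

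Next I would invoke the four-term exact sequence
\[
0\rightarrow \H^0_\Z(\F)\rightarrow \F\rightarrow \iota_*\iota^*\F\rightarrow \H^1_\Z(\F)\rightarrow 0,
\]
where $\H^i_\Z(-)$ are the local cohomology sheaves with supports in $\Z$; it results from the distinguished triangle $R\Gamma_\Z(\F)\rightarrow \F\rightarrow R\iota_*\iota^*\F\rightarrow$ together with the vanishing of $\H^1$ of the sheaf $\F$ (which is concentrated in degree $0$), and its middle arrow is the canonical homomorphism of the statement. Since $\depth_\Z(\F)\geq 2$ is equivalent to the vanishing of $\H^0_\Z(\F)$ and $\H^1_\Z(\F)$ — the local cohomology form of the $\SheafExt$-vanishing criterion recalled in the proof of Lemma \ref{lemma-depth-of-structure-sheaf-at-least-2-implies-same-for-F}, see \cite[Prop. 3.3]{hartshorne-local-cohomology} — the middle arrow is an isomorphism, which is the assertion.

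A second, more hands-on route, closer to the style of the neighbouring lemmas, runs as follows. The assertion is local on $\X$, so by Lemma \ref{lemma-reflexive-F-is-a-subsheaf-of-a-locally-free-sheaf} I may fit $\F$ into a short exact sequence $0\rightarrow \F\rightarrow \E\rightarrow \G\rightarrow 0$ with $\E$ locally free and $\G$ a subsheaf of a locally free sheaf $W$. From $\depth_\Z(\StructureSheaf{\X})\geq 2$ one gets that $\E\rightarrow \iota_*\iota^*\E$ is an isomorphism, while $\G\rightarrow \iota_*\iota^*\G$ is injective because it factors the injection $\G\hookrightarrow W\hookrightarrow \iota_*\iota^*W$ (the last inclusion using only $\depth_\Z(\StructureSheaf{\X})\geq 1$). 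Applying the left exact functor $\iota_*\iota^*$ to the sequence and chasing the resulting commutative ladder then forces $\F\rightarrow \iota_*\iota^*\F$ to be an isomorphism.

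I do not expect a genuine obstacle here: all the weight is carried by the depth estimates established just above. The only delicate points are bookkeeping ones — identifying the middle map of the local cohomology sequence with the natural homomorphism (standard), and, in the second argument, the injectivity of $\G\rightarrow \iota_*\iota^*\G$ together with the commutativity of the ladder.
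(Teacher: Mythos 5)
Your first route is exactly the paper's proof: establish $\depth_\Z(\StructureSheaf{\X})\geq 2$ from Lemmas \ref{lemma-codimension-of-singular-locus-of-family-of-reflexive-sheaves} and \ref{lemma-vanishing-of-sheaf-ext-i-Q-O-X}, deduce $\depth_\Z(\F)\geq 2$ from Lemma \ref{lemma-depth-of-structure-sheaf-at-least-2-implies-same-for-F}, and conclude via the four-term local cohomology sequence $0\rightarrow \H^0_\Z(\F)\rightarrow \F\rightarrow \iota_*\iota^*\F\rightarrow \H^1_\Z(\F)\rightarrow 0$. The argument is correct and matches the paper; the second, more hands-on route is a harmless alternative you do not need.
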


\begin{proof}
Set $\Z:=\X\setminus U$.
Recall that $\depth_\Z(\F)\geq k$, if and only if 
the cohomology sheaves
$\H^i_\Z(\F)$ with support along $\Z$ vanish for $i<k$
\cite[Theorem 3.8]{trautmann,hartshorne-local-cohomology}. 
We know that $\depth_\Z(\StructureSheaf{\X})\geq 2$,
by Lemmas \ref{lemma-codimension-of-singular-locus-of-family-of-reflexive-sheaves} and 
\ref{lemma-vanishing-of-sheaf-ext-i-Q-O-X}.
Hence, $\depth_\Z(\F)\geq 2$, by Lemma \ref{lemma-depth-of-structure-sheaf-at-least-2-implies-same-for-F},
and the sheaves 
$\H^i_\Z(\F)$ vanish for $i=0$ and $i=1$. The statement thus follows from the
long exact sequence 
\[
0\rightarrow \H^0_\Z(\F) \rightarrow \F \rightarrow \iota_*\iota^*\F\rightarrow 
\H^1_\Z(\F)\rightarrow 0
\]
\cite[Section 3]{serre}.
\end{proof}

%
\subsection{Families of Azumaya algebras}
Keep the notation of section
\ref{sec-families-of-reflexive-sheaves}.
In particular, $\Z$ is a subscheme of $\X$, smooth of relative co-dimension $c$ over $S$,
and $\beta:\Y\rightarrow \X$ is the blow-up centered at $\Z$. Assume that $c\geq 3$. 
Let $V_1$ and $V_2$ be coherent sheaves over $\Y$. Assume that $V_1$ is locally free.
Given an open subset $U$ of $\X$ we get the homomorphism
\[
\Gamma(\beta_*(V_1^*\otimes V_2),U):=\Gamma(V_1^*\otimes V_2,\beta^{-1}(U))=
\Gamma(\SheafHom(V_1,V_2),\beta^{-1}(U))
\RightArrowOf{\beta_*} \Gamma(\SheafHom(\beta_*V_1,\beta_*V_2),U).
\]
We denote the corresponding sheaf homomorphism by
\[
\beta_*:\beta_*(V_1^*\otimes V_2)\rightarrow \SheafHom(\beta_*V_1,\beta_*V_2).
\]
The above homomorphism is induced on the cohomology in degree zero by the following 
natural transformation  of exact functors from $D^b(\Y)$ to $D^b(\X)$.
Let $\eta:id\rightarrow \beta^!R\beta_*$ be the unit of the adjunction $R\beta_*\vdash\beta^!$.
We get
\[
R\beta_*R\SheafHom(V_1,V_2)\rightarrow
R\beta_*[R\SheafHom(V_1,\beta^!R\beta_*V_2)]\cong
R\SheafHom(R\beta_*V_1,R\beta_*V_2),
\]
where the left arrow is $R\beta_*R\SheafHom(V_1,\eta_{V_2})$ and the right isomorphism is provided by 
Grothendieck-Verdier duality.

Taking $V_1=V_2=V$, we get the homomorphism
\begin{equation}
\label{eq-beta-*}
\beta_*:\beta_*(V^*\otimes V)\rightarrow \SheafHom(\F,\F).
\end{equation}

\begin{lem}
\label{lemma-push-forward-of-End-V-is-End-F}
Assume that the sheaves $\F:=\beta_*V$ and  $\beta_*(V^*\otimes V)$ are reflexive over $\pi$. 
Then the homomorphism $\beta_*$ given in (\ref{eq-beta-*}) is an isomorphism. 
\end{lem}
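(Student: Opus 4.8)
The plan is to reduce the statement to the already-established isomorphism $f^*(\F^*) \cong (f^*\F)^*$ together with the local description of reflexive sheaves as kernels of maps of free sheaves, and then verify the isomorphism $\beta_*$ by checking it on the locus $U \subset \X$ where $\F$ is locally free, using that both source and target are reflexive (so determined by their restriction to $U$).

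First I would observe that, since $\beta_*(V^* \otimes V) = \SheafEnd_\beta(V)$-type sheaf and $\SheafHom(\F,\F) = \SheafEnd(\F)$ are both assumed reflexive over $\pi$, Lemma \ref{lemma-F-is-isomorphic-to-the-pushforward-of-its-restriction} applies to each: both are canonically isomorphic to $\iota_*\iota^*$ of their restrictions to the open set $U \subset \X$ over which $\F$ is locally free. (Here one uses that $\depth_\Z(\StructureSheaf{\X}) \geq 2$ along the complement $\Z$ of $U$, which holds since $\Z$ has codimension $\geq 3$ in each fiber by Lemma \ref{lemma-codimension-of-singular-locus-of-family-of-reflexive-sheaves} and Lemma \ref{lemma-vanishing-of-sheaf-ext-i-Q-O-X}; strictly, for the source one should separately confirm its singular locus is contained in $\Z$, which follows because $\F$ is locally free on $U$ and $\beta$ is an isomorphism over $U$.) Hence it suffices to prove that $\beta_*$ in (\ref{eq-beta-*}) restricts to an isomorphism over $U$.

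Over $U$, the morphism $\beta$ is an isomorphism away from $\Z$ — more precisely $\beta^{-1}(U) \to U$ is an isomorphism outside $\D$, and over $U$ the sheaf $\F = \beta_* V$ is locally free by hypothesis. The key point is that over $U$ we have $\F|_U$ locally free, so $\SheafHom(\F,\F)|_U = \F|_U^* \otimes \F|_U$, while on the other side $\beta_*(V^* \otimes V)|_U$ can be computed by base change: using the cartesian diagram over the open immersion $U \hookrightarrow \X$ and the vanishing $R^i\beta_* = 0$ established in Lemma \ref{lemma-push-forward-is-flat} (which holds after the relevant twist by $\StructureSheaf{\Y}(-k\D)$, by the assumptions of Lemma \ref{lemma-sufficient-conditions-for-reflexivity-over-pi}), one gets that $\beta_*(V^* \otimes V)$ commutes with restriction to $U$. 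Then the natural transformation defining $\beta_*$, which is built from the unit $\eta: \mathrm{id} \to \beta^! R\beta_*$ and Grothendieck-Verdier duality as spelled out in the excerpt, becomes, after restriction to $U$, precisely the canonical map $\beta_*(V^*) \otimes \beta_*(V) \to \SheafHom(\beta_*V, \beta_*V)$; I would identify this with the evaluation map $\F|_U^* \otimes \F|_U \to \SheafEnd(\F|_U)$, which is an isomorphism because $\F|_U$ is locally free.

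The main obstacle I anticipate is the bookkeeping in the second step: one must ensure that all the cohomology-and-base-change and $R^i\beta_*$-vanishing statements, which in the excerpt are stated for $V(-k\D)$ (and its dual), can be applied to the tensor sheaf $V^* \otimes V$ rather than to $V$ itself — in particular that $\beta_*(V^* \otimes V)$ genuinely has the good base-change behavior needed, which is why the hypothesis that $\beta_*(V^* \otimes V)$ be reflexive over $\pi$ is included rather than derived. A secondary subtlety is checking that the natural transformation $R\beta_* R\SheafHom(V,\eta_V) $ followed by duality really does restrict over the open set $\beta^{-1}(U) \setminus \D$ to the identity, i.e.\ that the Grothendieck-Verdier duality isomorphism is compatible with open restriction; this is standard but should be stated. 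Once these compatibilities are in place, the restriction of $\beta_*$ to $U$ is an isomorphism of reflexive sheaves that are each recovered as the pushforward from $U$, and therefore $\beta_*$ is globally an isomorphism.
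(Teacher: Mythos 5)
Your overall strategy --- check the map on the complement of the blow-up center, where it is essentially tautological, and then extend using depth/reflexivity --- is the same one the paper uses, but your execution has a concrete gap at the first step. You write that $\beta_*(V^*\otimes V)$ and $\SheafHom(\F,\F)$ are ``both assumed reflexive over $\pi$,'' so that Lemma \ref{lemma-F-is-isomorphic-to-the-pushforward-of-its-restriction} applies to each. That is a misreading of the hypotheses: only $\F=\beta_*V$ and $\beta_*(V^*\otimes V)$ are assumed reflexive over $\pi$; nothing is assumed about $\SheafHom(\F,\F)$, and Lemma \ref{lemma-F-is-isomorphic-to-the-pushforward-of-its-restriction} is stated only for families of reflexive sheaves over $\pi$. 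To run your argument you must separately establish $\depth_\Z(\SheafHom(\F,\F))\geq 2$ (e.g.\ by resolving the first argument locally by free sheaves and applying the depth estimate of Lemma \ref{lemma-depth-of-structure-sheaf-at-least-2-implies-same-for-F} to the second argument), and this is precisely the point the paper's proof is engineered to avoid: for injectivity it uses only that the \emph{source} $\beta_*(V^*\otimes V)$ is reflexive, hence has no nonzero subsheaf supported on $\Z$; for surjectivity it shows the cokernel $Q$ is supported on $\Z$, splits off locally because $\SheafExt^1(Q,\beta_*(V^*\otimes V))=0$, and then kills $Q$ by observing that a local section of $\SheafHom(\F,\F)$ supported over $\Z$ has image a subsheaf of $\F$ supported over $\Z$, contradicting reflexivity of $\F$.

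A second, smaller problem: you take $U$ to be the locus where $\F$ is locally free, which may meet $\Z$, and over $U\cap\Z$ your identification of the restriction of $\beta_*(V^*\otimes V)$ with $\F^*\otimes\F$ is unjustified ($\beta$ is not an isomorphism there, and pushforward does not commute with tensor product). The fix is to work throughout with $U=\X\setminus\Z$, where $\beta$ is an isomorphism and the map is visibly the identity; since $c\geq 3$ in this subsection, $\depth_\Z(\StructureSheaf{\X})\geq 3$ by Lemma \ref{lemma-vanishing-of-sheaf-ext-i-Q-O-X}, so the extension argument still applies. With these two repairs your proof goes through, but as written the key normality claim for $\SheafHom(\F,\F)$ is asserted rather than proved.
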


\begin{proof}
The homomorphism $\beta_*$ restricts as an isomorphism over $\U:=\X\setminus \Z$.
Hence, the kernel of $\beta_*$ is supported over $\Z$. 
But the reflexive sheaf $\beta_*(V^*\otimes V)$ does not have a non-zero 
subsheaf supported over $\Z$, by Lemma 
\ref{lemma-depth-of-structure-sheaf-at-least-2-implies-same-for-F}. 
Hence, $\beta_*$ is injective.

Let $Q$ be the co-kernel of $\beta_*$ and consider the short exact sequence:
\[
0\rightarrow \beta_*(V^*\otimes V)\rightarrow 
\SheafHom(\F,\F)\rightarrow Q\rightarrow 0.
\]
$Q$ is supported, set theoretically, on a subset of $\Z$.
Hence, 
the sheaf $\SheafExt^1(Q,\beta_*(V^*\otimes V))$ vanishes, 
by Lemma \ref{lemma-depth-of-structure-sheaf-at-least-2-implies-same-for-F}, and 
the above short exact sequence splits, locally over $\X$.
But any local section of $\SheafHom(\F,\F)$, supported over $\Z$, has an image subsheaf of
$\F$, supported over $\Z$. Now $\F$ is assumed reflexive, hence
it does not have non-trivial subsheaves supported over $\Z$. Hence, $Q$ vanishes and $\beta_*$ is an isomorphism.
\end{proof}

\hide{
\begin{proof}
\underline{Injectivity:}
Associated to the co-unit
$\epsilon:L\beta^*R\beta_*\rightarrow id$ we get the natural transformation 
$\omega_\beta\otimes \epsilon:\beta^!R\beta_*\cong 
\omega_\beta\otimes L\beta^*R\beta_*\rightarrow (\omega_\beta\otimes id)$. 
Let $\eta:id\rightarrow \beta^!R\beta_*$ be the unit.
The composite
natural transformation $(\omega_\beta\otimes\epsilon)\eta:id\rightarrow (\omega_\beta\otimes id)$ 
corresponds to tensorization by the section $\det(d\beta)$  of $\omega_\beta$. 

$R\beta_*R\SheafHom(V,\eta_V)$ induces on zero cohomology the homomorphism $\beta_*$ given in
(\ref{eq-beta-*}). It suffices to show that 
the composite natural transformation
$R\beta_*R\SheafHom(V,(\omega_\beta\otimes\epsilon_V)\circ\eta_V):$
\begin{eqnarray*}
R\beta_*R\SheafHom(V,V) &\RightArrowOf{\eta_V} &R\beta_*[R\SheafHom(V,\beta^!R\beta_*V)] =
R\beta_*[R\SheafHom(V,\omega_\beta\otimes L\beta^*R\beta_*V)]
\\
&\RightArrowOf{\epsilon_V} &
R\beta_*R\SheafHom(V,\omega_\beta\otimes V)
\end{eqnarray*}
induces an injective  homomorphism
\[
\beta_*(V^*\otimes V) \rightarrow \beta_*(V^*\otimes V\otimes\omega_\beta)
\]
on cohomology in degree zero, in order to prove the injectivity of $\beta_*$.
This is the case, since tensorization by $\det(d\beta)$ induces 
an injective homomorphism $V^*\otimes V\rightarrow V^*\otimes V\otimes\omega_\beta$ and
the functor $\beta_*$ is left exact.

\underline{surjectivity:}
Let $Q$ be the co-kernel of $\beta_*$ and consider the short exact sequence:
\[
0\rightarrow \beta_*(V^*\otimes V)\rightarrow 
\SheafHom(\F,\F)\rightarrow Q\rightarrow 0.
\]
$Q$ is supported, set theoretically, on a subset of $\Z$, so 
 $\SheafHom(Q,\StructureSheaf{\X})$ and 
$\SheafExt^1(Q,\StructureSheaf{\X})$ vanish, by Lemma
\ref{lemma-vanishing-of-sheaf-ext-i-Q-O-X}. 
Hence, 
$[\beta_*(V^*\otimes V)]^*\rightarrow \SheafHom(\F,\F)^*$ is an isomorphism and
so $\SheafHom(\F,\F)^*$ is reflexive over $\pi$. Consequently,
the homomorphism $[\beta_*(V^*\otimes V)]^{**}\rightarrow \SheafHom(\F,\F)^{**}$ is an isomorphism as well. 
We get the isomorphism $[\beta_*(V^*\otimes V)]\rightarrow \SheafHom(\F,\F)^{**}$,
using the assumption that $\beta_*(V^*\otimes V)$ is reflexive.
The latter isomorphism factors through the homomorphism $\beta_*$, given in (\ref{eq-beta-*}), so 
it remains to prove that the natural homomorphism
$\SheafHom(\F,\F)\rightarrow \SheafHom(\F,\F)^{**}$
is injective. Its kernel is supported on a subset of $\Z$. Now $\F$ 
does not have a subsheaf, which is supported on a subscheme of positive relative co-dimension,
since $\F$ is assumed to be reflexive. Hence, $\SheafHom(\F,\F)$ does not have
a subsheaf, which is supported on a subscheme of positive relative co-dimension.
The natural homomorphism
$\SheafHom(\F,\F)\rightarrow \SheafHom(\F,\F)^{**}$ is thus injective.
\end{proof}
}

\begin{defi}
\label{def-families-of-reflexive-azumaya-algebras}
A {\em family $\A$ of reflexive sheaves of  Azumaya algebras over $\pi$} is a
family of reflexive sheaves over $\pi$, with an associative multiplication 
$m:\A\times \A\rightarrow \A$ and a unit section $1$, such that locally over $\X$, $(\A,m,1)$
is isomorphic to $(\SheafEnd(\F),m',id)$, for some reflexive sheaf $\F$ over $\pi$,
where the multiplication  $m':\SheafEnd(\F)\otimes \SheafEnd(\F)\rightarrow \SheafEnd(\F)$ is 
given by composition.
\end{defi}

\begin{cor}
\label{cor-pushforward-commutes-with-End}
Let $V$ be a locally free sheaf over $\Y$ satisfying the 
assumptions of Lemma \ref{lemma-sufficient-conditions-for-reflexivity-over-pi}.
Assume further that there exists an integer $t\geq 0$, such that $V^*\otimes V$
satisfies the conditions of Lemma \ref{lemma-sufficient-conditions-for-reflexivity-over-pi}
with both $k$ and $\ell$ replaced by $t$. Then $\beta_*(\SheafEnd(V))$ is a family of 
reflexive sheaves of  Azumaya algebras over $\pi$, which is isomorphic to $\SheafEnd(\beta_*V)$.
\end{cor}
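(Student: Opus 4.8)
The plan is to verify that $V^*\otimes V = \SheafEnd(V)$ satisfies all the hypotheses of Lemma \ref{lemma-sufficient-conditions-for-reflexivity-over-pi}, so that $\beta_*(\SheafEnd(V))$ is a family of reflexive sheaves over $\pi$, and then to invoke Lemma \ref{lemma-push-forward-of-End-V-is-End-F} to identify it with $\SheafEnd(\F)$, where $\F := \beta_*V$. First I would observe that, by hypothesis, $V$ itself satisfies the assumptions of Lemma \ref{lemma-sufficient-conditions-for-reflexivity-over-pi}, so $\F = \beta_*V$ is a family of reflexive sheaves over $\pi$; and the added hypothesis says precisely that $\SheafEnd(V)$ satisfies assumptions (a)--(d) of Lemma \ref{lemma-sufficient-conditions-for-reflexivity-over-pi}(\ref{lemma-item-push-forward-F-is-a-family-of-reflexive-sheaves}) with $k=\ell=t$ — here one uses that $\SheafEnd(V)$ is self-dual, $(V^*\otimes V)^* \cong V^*\otimes V$, so conditions (c)--(d) for $(V^*\otimes V)^*$ are literally conditions (a)--(b) for $V^*\otimes V$, and the constraint $\ell \geq 1-c$ is met since $t\geq 0 \geq 1-c$ (as $c\geq 3$). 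Hence Lemma \ref{lemma-sufficient-conditions-for-reflexivity-over-pi}(\ref{lemma-item-push-forward-F-is-a-family-of-reflexive-sheaves}) gives that $\beta_*(\SheafEnd(V))$ is a family of reflexive sheaves over $\pi$.

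Next I would apply Lemma \ref{lemma-push-forward-of-End-V-is-End-F}: both $\F = \beta_*V$ and $\beta_*(V^*\otimes V)$ are now known to be reflexive over $\pi$, so the natural homomorphism $\beta_*$ of Equation (\ref{eq-beta-*}), $\beta_*(V^*\otimes V)\to \SheafHom(\F,\F)$, is an isomorphism. It remains to check that this isomorphism is compatible with the algebra structures, i.e. carries the multiplication on $\beta_*(\SheafEnd(V))$ induced by composition on $\SheafEnd(V)$ to composition on $\SheafEnd(\F)$, and carries the unit to the unit. Since $\beta_*$ is a local statement and restricts to the identity isomorphism over $\U = \X\setminus\Z$ (where $\beta$ is an isomorphism), the map $\beta_*$ intertwines the two multiplications over $\U$; as both source and target are reflexive and agree away from $\Z$ (which has relative codimension $\geq 3$), the two bilinear maps $m\circ(\beta_*\otimes\beta_*)$ and $\beta_*\circ m'$ from $\beta_*(\SheafEnd V)^{\otimes 2}$ to $\SheafHom(\F,\F)$ agree on $\U$ and hence everywhere, by Lemma \ref{lemma-F-is-isomorphic-to-the-pushforward-of-its-restriction} applied to the (reflexive) target. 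The same reasoning handles the unit. Finally, to see that $\beta_*(\SheafEnd(V))$ is a family of reflexive sheaves of Azumaya algebras in the sense of Definition \ref{def-families-of-reflexive-azumaya-algebras}, one uses the isomorphism with $\SheafEnd(\F)$ just established: $\SheafEnd(\F)$ is, locally over $\X$, of the required form $(\SheafEnd(\F),m',\mathrm{id})$ with $\F$ the local reflexive model.

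The only real content, and the step I expect to require the most care, is the verification that the isomorphism $\beta_*$ is multiplicative — but even this is essentially formal once one notes that $\beta_*$ is a genuine pushforward on an open set where $\beta$ is an isomorphism, and that the comparison of the two multiplications is a map between a reflexive sheaf and another sheaf that agrees with it in codimension $\geq 3$. Everything else is bookkeeping: tracking that the numerical hypotheses of Lemma \ref{lemma-sufficient-conditions-for-reflexivity-over-pi} are met for $V^*\otimes V$ with the single parameter $t$, and invoking the earlier lemmas in the right order.
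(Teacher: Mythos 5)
Your proposal is correct and follows the paper's own route exactly: verify that $V^*\otimes V$ meets the hypotheses of Lemma \ref{lemma-sufficient-conditions-for-reflexivity-over-pi} so that both $\beta_*V$ and $\beta_*(V^*\otimes V)$ are families of reflexive sheaves, then apply Lemma \ref{lemma-push-forward-of-End-V-is-End-F} to identify $\beta_*(\SheafEnd(V))$ with $\SheafEnd(\beta_*V)$. The paper leaves the multiplicativity of the isomorphism implicit; your extra verification via agreement on the complement of $\Z$ together with reflexivity is the right way to fill that in and matches the codimension arguments used throughout the section.
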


\begin{proof}
Both $\F:=\beta_*V$ and $\beta_*(V^*\otimes V)$ are reflexive over $\pi$, by Lemma 
\ref{lemma-sufficient-conditions-for-reflexivity-over-pi}. The sheaf $\beta_*(V^*\otimes V)$
is globally isomorphic to $\SheafEnd(\F)$, by Lemma \ref{lemma-push-forward-of-End-V-is-End-F}.
\end{proof}

%
\subsection{Every family of reflexive Azumaya algebras comes from a family of reflexive sheaves}
Keep the notation of the previous subsection. Assume that the co-dimension $c$ of $\Z$ in $\X$ is 
at least $3$. Let $\nu_X\subset \StructureSheaf{\X}$ be the nilpotent radical ideal subsheaf.
Assume that $\nu_X^m=0$, for some positive integer $m$.

\begin{lem}
\label{lemma-isomorphism-of-Picards}
Let  $W$ be a Stein open subspace of $\X$, set $U:=\X\setminus \Z$, and let 
$\iota:[W\cap U]\hookrightarrow W$ be the inclusion. Denote by 
$\Pic(W\cap U)'$ the subset of $\Pic(W\cap U)$ consisting of line bundles $L$,
such that $\iota_*(L)$ is a coherent $\StructureSheaf{W}$-module. 
Then $\iota^*:\Pic(W)\rightarrow \Pic(W\cap U)'$ is an isomorphism.
\end{lem}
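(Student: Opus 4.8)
The plan is to produce a two-sided inverse of $\iota^*$, given on invertible sheaves by $L\mapsto\iota_*L$; the one substantial point, which I expect to be the main obstacle, is that $\iota_*L$ is again an invertible $\StructureSheaf{W}$-module. First the preliminaries. Since $\Z$ meets each fiber of $\pi$ in codimension $c\geq 3$, Lemma \ref{lemma-vanishing-of-sheaf-ext-i-Q-O-X} gives $\depth_\Z(\StructureSheaf{\X})\geq c\geq 2$, and hence $\depth_\Z(E)\geq 2$ for every locally free $\StructureSheaf{W}$-module $E$; the local cohomology sequence $0\to\H^0_\Z(E)\to E\to\iota_*\iota^*E\to\H^1_\Z(E)\to 0$ used in the proof of Lemma \ref{lemma-F-is-isomorphic-to-the-pushforward-of-its-restriction} then shows $E\cong\iota_*\iota^*E$ for every such $E$, in particular for $\StructureSheaf{W}$ itself. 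Since the inverse of a nowhere vanishing function on $W\cap U$ also extends over $W$, it follows that $\iota_*(\StructureSheaf{W\cap U}^*)=\StructureSheaf{W}^*$.

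Now the two directions. If $L\in\Pic(W)$ then $\iota_*\iota^*L\cong L$ is coherent, so $\iota^*L\in\Pic(W\cap U)'$; and if $\iota^*L_1\cong\iota^*L_2$ then $L_1\cong\iota_*\iota^*L_1\cong\iota_*\iota^*L_2\cong L_2$, so $\iota^*$ is injective. For surjectivity, let $L\in\Pic(W\cap U)'$ and set $\tilde L:=\iota_*L$, which is coherent by hypothesis. Since $\iota$ is an open immersion, $\iota^*\tilde L=L$ automatically, so it remains only to show $\tilde L$ is an invertible $\StructureSheaf{W}$-module. This is a local question on $W$, and for an open immersion one has $(\iota_*L)|_V=\iota'_*(L|_{V\cap U})$, where $\iota':V\cap U\hookrightarrow V$ is the inclusion, for every open $V\subseteq W$; hence it suffices to prove that every point of $\Z\cap W$ has a contractible open Stein neighborhood $V$ with $\Pic(V\cap U)=0$, for then $L|_{V\cap U}\cong\StructureSheaf{V\cap U}$ and, by the first paragraph applied over $V$, $\tilde L|_V\cong\iota'_*\StructureSheaf{V\cap U}\cong\StructureSheaf{V}$.

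It remains to establish this local vanishing. With $V$ contractible Stein and small, $\Z\cap V$ has complex codimension $\geq c\geq 3$ in $V$, hence real codimension $\geq 6$, and $V\cap U=V\setminus\Z$. From the exponential sequence $0\to\Integers\to\StructureSheaf{V\cap U}\to\StructureSheaf{V\cap U}^*\to 0$ — which is exact even with the non-reduced structure of $\X$, since $1+\nu_X$ lies in the image of $\exp$ because $\nu_X$ is nilpotent — we get $H^1(V\cap U,\StructureSheaf{V\cap U})\to\Pic(V\cap U)\to H^2(V\cap U,\Integers)$. The right-hand term vanishes because $V\setminus\Z$ is obtained from the contractible space $V$ by removing a set of real codimension $\geq 6$, so $H^2(V\cap U,\Integers)=H^2(V,\Integers)=0$. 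The left-hand term vanishes because $H^1(V,\StructureSheaf{V})=0$ ($V$ Stein) and $H^i_\Z(V,\StructureSheaf{V})=0$ for $i<3$ — this last by Lemma \ref{lemma-vanishing-of-sheaf-ext-i-Q-O-X}, which gives $\depth_\Z(\StructureSheaf{\X})\geq c\geq 3$, together with the characterization of depth through local cohomology — so the local cohomology sequence of the pair $(V,V\cap U)$ forces $H^1(V\cap U,\StructureSheaf{V\cap U})=0$. Hence $\Pic(V\cap U)=0$, which completes the proof. The points I anticipate needing the most care are checking that the relative codimension hypothesis yields the \emph{absolute} codimension bound on $\Z$ used in the topological and local cohomology vanishings, the exactness of the exponential sequence over the non-reduced base, and a careful treatment of the base-change and open-immersion identities for pushforwards used throughout.
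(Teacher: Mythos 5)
Your proof is correct, but it takes a genuinely different route from the paper's. The paper argues in two stages: for reduced $S$ it compares the exponential sequences of $W$ and $W\cap U$ globally (using $\depth_\Z(\StructureSheaf{\X})\geq 3$ from Lemma \ref{lemma-vanishing-of-sheaf-ext-i-Q-O-X} and the topological codimension bound to identify $H^i(\,\cdot\,,\Integers)$ for $i\leq 4$ and $H^i(\,\cdot\,,\StructureSheaf{})$ for $i\leq 1$) and concludes by the five lemma; for non-reduced $S$ it first invokes Ballico's isomorphism $\Pic(W)\cong\Pic(W_{red})$ to produce a candidate $\widetilde{L}$ from the reduced case, and then lifts the isomorphism over $[W\cap U]_{red}$ to a map $\widetilde{L}\rightarrow\iota_*(L)$ by killing the obstruction in $H^1(W,\widetilde{L}^{-1}\otimes\iota_*(\nu_L))$ — and it is precisely here, in applying Theorem B to $\iota_*(\nu_L)$, that the coherence hypothesis defining $\Pic(W\cap U)'$ is used. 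You instead localize: you show $\iota_*L$ is invertible by proving $\Pic(V\cap U)=0$ on a basis of contractible Stein neighborhoods $V$ of points of $\Z$, running the exponential sequence directly on the possibly non-reduced space $V\cap U$. Both arguments draw on the same two inputs (the depth bound of Lemma \ref{lemma-vanishing-of-sheaf-ext-i-Q-O-X} and the real-codimension-$\geq 6$ topological vanishing), and your extension/pushforward identities are exactly Lemma \ref{lemma-F-is-isomorphic-to-the-pushforward-of-its-restriction}. What your route buys is uniformity (no reduced/non-reduced case split, no appeal to Ballico) and a strictly stronger conclusion: the coherence of $\iota_*L$ is never used, so your argument shows $\Pic(W\cap U)'=\Pic(W\cap U)$. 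What it costs is that everything now rests on the exactness of the exponential sequence over the non-reduced structure sheaf; your justification is the right one ($\exp$ is surjective because a unit is, locally, $\exp$ of a lift of a logarithm of its reduction times $1+(\mbox{nilpotent})$, and the kernel is still the constant sheaf since $\exp(n)=1$ forces $n=0$ for nilpotent $n$), but this is the delicate point the paper deliberately sidesteps by passing to $W_{red}$ first, and it deserves the careful write-up you flag.
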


\begin{proof}
The pullback homomorphism $\iota^*:\Pic(W)\rightarrow \Pic(W\cap U)$ is injective, and its image is
contained in $\Pic(W\cap U)'$, since $\iota_*\iota^*(\widetilde{L})$ is isomorphic to $\widetilde{L}$,
by Lemma \ref{lemma-F-is-isomorphic-to-the-pushforward-of-its-restriction}.
Assume first that $S$ is reduced. Then so is $\X$.
The inequality $\depth_\Z(\StructureSheaf{\X})\geq 3$ holds, by Lemma
\ref{lemma-vanishing-of-sheaf-ext-i-Q-O-X}. Hence, $H^i_{W\cap\Z}(\StructureSheaf{W})$ vanishes, 
for $i\leq 2$, and the homomorphism
$\iota^*:H^i(W,\StructureSheaf{W})\rightarrow H^i(W\cap U,\StructureSheaf{W\cap U})$
is an isomorphism, for $i\leq 1$, by \cite[Theorem 3.8]{hartshorne-local-cohomology}.
The space $H^i_{W\cap\Z}(W,\Integers)$ vanishes, for $i\leq 5$, 
by the last example in chapter 1 of \cite{hartshorne-local-cohomology},
where we use the assumption that both $\Z$ and $\X$ are smooth over the base $S$.
Hence, the homomorphism $\iota^*:H^i(W,\Integers)\rightarrow H^i(W\cap U,\Integers)$
is an isomorphism, for $i\leq 4$.  
Considering the long exact sequences associated to the exponential sequence 
we get the commutative diagram:
\[
\xymatrix{
H^1(W,\Integers)\ar[r]\ar[d]_{\cong} & 
H^1(W,\StructureSheaf{W})\ar[r]\ar[d]_{\cong} &
H^1(W,\StructureSheaf{W}^*)\ar[r]\ar[d]&
H^2(W,\Integers)\ar[d]_{\cong}
\\
H^1(W\cap U,\Integers)\ar[r]& 
H^1(W\cap U,\StructureSheaf{W})\ar[r]&
H^1(W\cap U,\StructureSheaf{W}^*)\ar[r]&
H^2(W\cap U,\Integers).
}
\]
We conclude that 
the homomorphism $\iota^*:\Pic(W)\rightarrow \Pic(W\cap U)$ is an isomorphism. 

We prove the general case next. 
Let $L$ be a line bundle on $W\cap U$ and assume that $\iota_*(L)$ is coherent.
Let $\nu_L$ be the subsheaf of $L$ analogous to $\nu_W$. It is well defined, since 
$L$ is determined by some $1$-co-cycle for $\StructureSheaf{W}^*$ and 
a gluing transition function maps (locally) the subsheaf $\nu_W$ of $\StructureSheaf{W}$ to itself. 
We get the short exact sequence of coherent $\StructureSheaf{W}$-modules
\[
0\rightarrow \iota_*(\nu_L)\rightarrow \iota_*(L)\rightarrow \iota_*(L/\nu_L)\rightarrow 0.
\]
Set $\overline{L}:=L/\nu_L$.
Note that $\overline{L}$ is isomorphic to the restriction of $L$ to the reduced subscheme $[W\cap U]_{red}$
with structure sheaf $\StructureSheaf{W\cap U}/\nu_{W\cap U}$.

The restriction homomorphism $\Pic(W)\rightarrow \Pic(W_{red})$ is an isomorphism, by
\cite[Prop. 1]{ballico}.
There exists a line bundles $\widetilde{L}$ over $W$, and an isomorphism $\bar{g}$
from the restriction of $\widetilde{L}$ to $[W\cap U]_{red}$ onto $\overline{L}$, by the reduced case
proven above. We get the short exact sequence
\[
0\rightarrow \widetilde{L}^{-1}\otimes \iota_*(\nu_L)\rightarrow
\widetilde{L}^{-1}\otimes \iota_*(L)\rightarrow 
\widetilde{L}^{-1}\otimes \iota_*(\overline{L}) \rightarrow 0
\]
and the long exact sequence
\[
H^0(W,\widetilde{L}^{-1}\otimes \iota_*(L))\rightarrow
H^0(W,\widetilde{L}^{-1}\otimes \iota_*(\overline{L}))\rightarrow
H^1(W,\widetilde{L}^{-1}\otimes \iota_*(\nu_L)).
\]
The right hand space vanishes, since $W$ is Stein.
Hence, there exists a lift of $\bar{g}$ to a homomorphism $g:\widetilde{L}\rightarrow \iota_*(L)$
of $\StructureSheaf{W}$-modules.
The restriction of $g$ to $W\cap U$ is an isomorphism of line-bundles, since it further
restricts to an isomorphism $\bar{g}$ over the reduced subscheme. 
It follows that $\iota_*(L)$ is isomorphic to $\iota_*\iota^*(\widetilde{L})$, which in turn
is isomorphic to $\widetilde{L}$, by Lemma \ref{lemma-F-is-isomorphic-to-the-pushforward-of-its-restriction}. 
We conclude that $\iota^*$ maps $\Pic(W)$ onto $\Pic(W\cap U)'$.
\end{proof}

\begin{prop}
\label{prop-azumaya-algebra-is-End-F}
Let $\A$ be a family of reflexive sheaves of Azumaya algebras over $\pi$.
There exists a family $\F$ of twisted reflexive sheaves over $\pi$, such that $\A$
is isomorphic to $\SheafEnd(\F)$ as $\StructureSheaf{\X}$-algebras. Furthermore, 
$\F$ is unique up to tensorization by a line bundle.
\end{prop}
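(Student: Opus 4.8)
The plan is to descend $\A$ to the open locus $U\subset\X$ over which it is locally free --- there it is the endomorphism algebra of a twisted locally free sheaf by the classical Brauer-theoretic correspondence --- and then push that twisted sheaf back onto $\X$ via $\iota_*$, where $\iota:U\hookrightarrow\X$ denotes the inclusion. Write $\Z:=\X\setminus U$. By Lemmas \ref{lemma-codimension-of-singular-locus-of-family-of-reflexive-sheaves} and \ref{lemma-vanishing-of-sheaf-ext-i-Q-O-X} the set $\Z$ meets each fibre of $\pi$ in codimension $\geq 3$, so $\depth_\Z(\StructureSheaf{\X})\geq 2$; in particular, by (the depth criterion underlying) Lemma \ref{lemma-F-is-isomorphic-to-the-pushforward-of-its-restriction}, every family of reflexive sheaves over $\pi$ --- and more generally every coherent sheaf of depth $\geq 2$ along $\Z$ --- is recovered as $\iota_*$ of its restriction to $U$. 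In particular $\A\cong\iota_*(\restricted{\A}{U})$.

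\textbf{Construction of $\F$.} Refine the cover in Definition \ref{def-families-of-reflexive-azumaya-algebras} to a cover of $\X$ by Stein opens $\{W_\alpha\}$ equipped with $\StructureSheaf{}$-algebra isomorphisms $\restricted{\A}{W_\alpha}\cong\SheafEnd(\F_\alpha)$, the $\F_\alpha$ reflexive over $\pi$. Over $W_{\alpha\beta}\cap U$ the sheaves $\F_\alpha$ are locally free, and the induced algebra isomorphism $\SheafEnd(\F_\alpha)\cong\SheafEnd(\F_\beta)$ lifts, by Morita equivalence, to an isomorphism $\F_\beta\cong\F_\alpha\otimes L_{\alpha\beta}$ for some invertible $L_{\alpha\beta}$ on $W_{\alpha\beta}\cap U$. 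Now $L_{\alpha\beta}$ splits off as a direct summand of $\restricted{\SheafHom(\F_\alpha,\F_\beta)}{W_{\alpha\beta}\cap U}$, and $\SheafHom(\F_\alpha,\F_\beta)$ --- being locally the kernel of a homomorphism $\F_\beta^{\oplus a}\rightarrow\F_\beta^{\oplus b}$ between powers of the reflexive sheaf $\F_\beta$ --- has depth $\geq 2$ along $\Z$ by Lemma \ref{lemma-depth-of-structure-sheaf-at-least-2-implies-same-for-F}, hence coincides with $\iota_*$ of its restriction to $U$ and is coherent; therefore $\iota_*L_{\alpha\beta}$ is coherent, so $L_{\alpha\beta}\in\Pic(W_{\alpha\beta}\cap U)'$ and extends to $\widetilde L_{\alpha\beta}\in\Pic(W_{\alpha\beta})$ by Lemma \ref{lemma-isomorphism-of-Picards}. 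On triple overlaps the discrepancy between composite liftings is an automorphism of $\SheafEnd(\F_\alpha)$, inner on fibres by Skolem--Noether, and the resulting scalars assemble into a \v{C}ech $2$-cocycle $\theta$ for $\StructureSheaf{\X}^*$ representing the Brauer class of $\A$. This data defines a $\theta$-twisted locally free sheaf $\F_U$ over $U$; set $\F:=\iota_*\F_U$. Locally on $W_\alpha$ one has $\F\cong\iota_*\iota^*(\F_\alpha\otimes\widetilde L_\alpha)\cong\F_\alpha\otimes\widetilde L_\alpha$ by Lemma \ref{lemma-F-is-isomorphic-to-the-pushforward-of-its-restriction}; since the defining clauses of a family of reflexive sheaves over $\pi$ are local and stable under tensoring by a line bundle, $\F$ is a family of reflexive $\theta$-twisted sheaves over $\pi$.

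\textbf{The isomorphism and uniqueness.} As sheaves of algebras $\restricted{\SheafEnd(\F)}{U}=\SheafEnd(\F_U)=\restricted{\A}{U}$, and $\SheafEnd(\F)$ --- a $\SheafHom$ into a reflexive sheaf, hence of depth $\geq 2$ along $\Z$ --- equals $\iota_*$ of its restriction to $U$; therefore $\SheafEnd(\F)\cong\iota_*(\restricted{\A}{U})\cong\A$, where every map is a natural restriction or counit morphism and so is compatible with the multiplications and unit sections. This is the twisted analogue of Lemma \ref{lemma-push-forward-of-End-V-is-End-F}. If $\F'$ is another such family with $\SheafEnd(\F')\cong\A$, then over $U$ the two locally free Azumaya algebras $\SheafEnd(\F_U)$ and $\SheafEnd(\F'_U)$ are isomorphic, so $\F'_U\cong\F_U\otimes L$ for some $L\in\Pic(U)$; as above $\iota_*L$ is coherent, $L$ extends to $\widetilde L\in\Pic(\X)$ by Lemma \ref{lemma-isomorphism-of-Picards}, and $\F'=\iota_*\F'_U\cong\iota_*\iota^*(\F\otimes\widetilde L)\cong\F\otimes\widetilde L$ by Lemma \ref{lemma-F-is-isomorphic-to-the-pushforward-of-its-restriction}.

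\textbf{Main obstacle.} The heart of the matter is the construction step: one must simultaneously keep track of the local reflexive sheaves $\F_\alpha$, the transition line bundles $L_{\alpha\beta}$, and the twisting cocycle $\theta$, and verify both the coherence of the $\iota_*L_{\alpha\beta}$ (so that Lemma \ref{lemma-isomorphism-of-Picards} applies) and that the pushforward $\iota_*\F_U$ genuinely satisfies every clause in the definition of a family of reflexive twisted sheaves over $\pi$, rather than merely being coherent. Each of these reduces to the depth estimates and the pushforward-from-$U$ isomorphism already available, but the bookkeeping that links the algebra structure, the Brauer twisting, and relative reflexivity is the delicate part; the Morita passage from algebra isomorphisms to line-bundle twists over $U$, and the final check of $\SheafEnd$-compatibility, are then routine.
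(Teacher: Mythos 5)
Your proposal is correct and follows essentially the same route as the paper: the locally free case of C\u{a}ld\u{a}raru over the locus $U$ where $\A$ is locally free, extension of the transition line bundles across the singular set via Lemma \ref{lemma-isomorphism-of-Picards}, and recovery of reflexive sheaves from $U$ by $\iota_*\iota^*$ (Lemma \ref{lemma-F-is-isomorphic-to-the-pushforward-of-its-restriction}). The only differences are in packaging — the paper glues the local models $\F_\alpha$ directly over $\X$ into a sheaf on a gerbe, extending the nowhere-vanishing sections $\theta_{\alpha\beta\gamma}$ across $\Z$ using $\depth_\Z(\StructureSheaf{\X})\geq 2$, and establishes coherence of $\iota_*L_{\alpha\beta}$ by exhibiting $L_{\alpha\beta}$ as the restriction of an explicit kernel rather than as a direct summand of $\SheafHom(\F_\alpha,\F_\beta)$ — and these are mathematically equivalent.
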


The above statement  is well known when $\A$ is locally free
\cite[Theorem 1.3.5]{caldararu}
or when $S$ is a point.

\begin{proof}
\underline{Step 1:}
Let $W$ be an open subset of $\X$ and $\F_1$, $\F_2$ two families of reflexive sheaves over the
restriction of $\pi$ to $W$. Let $\iota:[W\cap U]\hookrightarrow W$ be the inclusion,
where $U:=\X\setminus \Z$ is the open subset where $\F$ is locally free. 
Assume that there exist isomorphisms $\varphi_1:\SheafEnd(\F_1)\rightarrow (\A\restricted{)}{W}$
and $\varphi_2:\SheafEnd(\F_2)\rightarrow (\A\restricted{)}{W}$
of unital algebras.
Then there exists a line bundle $L$ over $W\cap U$ and an isomorphism
$g:\iota^*\F_1\rightarrow \iota^*\F_2\otimes L$, such that 
$Ad_g:\iota^*\SheafEnd(\F_1)\rightarrow \iota^*\SheafEnd(\F_2)$
is the restriction of $\varphi_2^{-1}\varphi_1$, by 
the locally free case of the proposition \cite[Theorem 1.3.5]{caldararu}.

The push-forward of $L$ to $W$ is coherent, since $L$ is the restriction of the  kernel of
the homomorphism
\[
\SheafHom(\F_2,\F_1)\rightarrow 
\SheafHom(\restricted{\A}{W},\SheafHom(\F_2,\F_1))
\]
sending a homomorphism $f$ to the homomorphism $\tilde{f}$ given by 
$\tilde{f}(a):=f\varphi_2^{-1}(a)-\varphi_1^{-1}(a)f$.
Denote by the same letter $L$ also a the line bundle 
on $W$ restricting to $L$ on $W\cap U$, which exists by Lemma
\ref{lemma-isomorphism-of-Picards}.
We get the isomorphisms
\[
\F_1\cong \iota_*\iota^*\F_1\cong \iota_*\iota^*[\F_2\otimes L]\cong \F_2\otimes L,
\]
where the first and last isomorphisms follow from Lemma 
\ref{lemma-F-is-isomorphic-to-the-pushforward-of-its-restriction}
and the middle one is $\iota_*(g)$.

\underline{Step 2:}
There exists an open covering $\{W_\alpha\}$ of $\X$ and a family $\F_\alpha$, of reflexive
sheaves over the restriction of $\pi$ to $W_\alpha$, each
admitting an isomorphism $\varphi_\alpha:\SheafEnd(\F_\alpha)\rightarrow \A_\alpha$,
where $\A_\alpha$ is the restriction of $\A$ to $W_\alpha$, by Definition 
\ref{def-families-of-reflexive-azumaya-algebras}.
Set $W_{\alpha\beta}:=W_\alpha\cap W_\beta$ and $U_{\alpha\beta}:=W_{\alpha\beta}\cap U$.
Let $\iota_{\alpha\beta}:U_{\alpha\beta}\rightarrow W_{\alpha\beta}$ be the inclusion. 
There exist line bundles $L_{\alpha\beta}$ over $W_{\alpha\beta}$
and isomorphisms 
\[
g_{\alpha\beta}:(\F_\alpha\restricted{)}{W_{\alpha\beta}}
\rightarrow (\F_\beta\restricted{)}{W_{\alpha\beta}}\otimes L_{\alpha\beta},
\]
such that $Ad_{g_{\alpha\beta}}:\SheafEnd(\F_\alpha\restricted{)}{W_{\alpha\beta}}
\rightarrow \SheafEnd(\F_\beta\restricted{)}{W_{\alpha\beta}}$ and
$\varphi_\alpha^{-1}\varphi_\beta$ agree over $U_{\alpha\beta}$,
by step 1. The equality
\begin{equation}
\label{eq-Ad-g-equals-coboundary-of-varphi}
Ad_{g_{\alpha\beta}}=\varphi_\alpha^{-1}\varphi_\beta 
\end{equation}
over $W_{\alpha\beta}$ follows, by Lemma \ref{lemma-F-is-isomorphic-to-the-pushforward-of-its-restriction}.

We may choose the line bundle $L_{\beta\alpha}$ to be $L_{\alpha\beta}^{-1}$.
We claim that 
\begin{enumerate}
\item
the collection $\{L_{\alpha\beta}\}$ defines a gerbe over $\X$ \cite[Sec. 1.2]{hitchin}, and 
\item
the collection $\{(\F_\alpha,g_{\alpha\beta})\}$ defines a sheaf on this gerbe (i.e., a twisted sheaf
\cite{caldararu}).
\end{enumerate}
The first statement means  that the line bundles
$L_{\alpha\beta\gamma}:=L_{\alpha\beta}L_{\beta\gamma}L_{\gamma\alpha}$
over $W_{\alpha\beta\gamma}$
come with nowhere vanishing global sections $\theta_{\alpha\beta\gamma}$.
Furthermore, $\delta\theta$ is trivial, i.e., 
\begin{equation}
\label{eq-coboundary-of-theta-is-trivial}
\theta_{\alpha\beta\gamma}^{-1}\theta_{\alpha\beta\delta}
\theta_{\alpha\gamma\delta}^{-1}\theta_{\beta\gamma\delta}=1,
\end{equation}
as a section of the naturally
trivial line bundle $L_{\alpha\beta\gamma}^{-1}L_{\alpha\beta\delta}
L_{\alpha\gamma\delta}^{-1}L_{\beta\gamma\delta}$
over $W_{\alpha\beta\gamma\delta}$. 

The sections $\theta_{\alpha\beta\gamma}$ arrise as follows.
Set
\[
(\delta g)_{\alpha\beta\gamma}:=g_{\alpha\beta}g_{\beta\gamma}g_{\gamma\alpha}:
(\F_\alpha\restricted{)}{W_{\alpha\beta\gamma}} \rightarrow (\F_\alpha\restricted{)}{W_{\alpha\beta\gamma}} 
\otimes L_{\alpha\gamma\delta}.
\]
Equation (\ref{eq-Ad-g-equals-coboundary-of-varphi}) implies that 
$Ad_{(\delta g)_{\alpha\beta\gamma}}$ restricts to $U_{\alpha\beta\gamma}$
as the identity endomorphism of $(\F_\alpha\restricted{)}{U_{\alpha\beta\gamma}}$.
Hence, the isomorphism $(\delta g)_{\alpha\beta\gamma}$ restricts to $U_{\alpha\beta\gamma}$ 
as tensorization by a nowhere vanishing section $\theta_{\alpha\beta\gamma}$
of $L_{\alpha\beta\gamma}$. The section extends 
to a nowhere vanishing section of $L_{\alpha\beta\gamma}$ over $W_{\alpha\beta\gamma}$, 
denoted again by $\theta_{\alpha\beta\gamma}$, since $\depth_\Z(\StructureSheaf{\X})\geq 2$.
It follows that $(\delta g)_{\alpha\beta\gamma}$ is equal to tensorization by $\theta_{\alpha\beta\gamma}$
over $W_{\alpha\beta\gamma}$, by Lemma 
\ref{lemma-F-is-isomorphic-to-the-pushforward-of-its-restriction}.
The proof of Equation 
(\ref{eq-coboundary-of-theta-is-trivial}) is similar.

The fact that $(\F_{\alpha},g_{\alpha\beta})$
is a sheaf over the gerbe means that $(\delta g)_{\alpha\beta\gamma}$ is tensorization by 
$\theta_{\alpha\beta\gamma}$. This holds, by construction of $\theta_{\alpha\beta\gamma}$.
\end{proof}
%
\subsection{Condition \ref{cond-open-and-closed} 
(\ref{cond-item-same-homogeneous-bundle}) is open}

We keep the notation of section \ref{sec-families-of-reflexive-sheaves}.
Let $\A$ be a locally free Azumaya algebra over $\Y$. Set $\E:=\beta_*\A$. 
Given a point $s\in S$, denote the fibers of $\X$, $\Y$, $\Z$, $\D$ and $\beta:\Y\rightarrow \X$ over $s$ 
by $\X_s$, $\Y_s$, $\Z_s$, $\D_s$ and $\beta_s:\Y_s\rightarrow \X_s$.
Let $A_s$ and $E_s$ be the restrictions of $\A$ and $\E$ to the fibers over $s$.

\begin{lem}
\label{lemma-condition-involving-W-is-open}
Assume that $E_0$ satisfies Condition \ref{cond-open-and-closed} 
(\ref{cond-item-same-homogeneous-bundle}) for some point $0\in S$
and the morphism $\pi$ is proper.
Then there exists an open neighborhood $U$ of $0$ in $S$, such that $\E$ is a family of reflexive 
sheaves of Azumaya algebras over $\pi:\restricted{\X}{U}\rightarrow U$ and 
$E_s$ satisfies Condition \ref{cond-open-and-closed} 
(\ref{cond-item-same-homogeneous-bundle}), for all $s\in U$. 
\end{lem}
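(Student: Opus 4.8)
The plan is to reduce Condition \ref{cond-open-and-closed} (\ref{cond-item-same-homogeneous-bundle}) for the nearby fibres to two ingredients: semicontinuity for the cohomological hypotheses of Lemma \ref{lemma-sufficient-conditions-for-reflexivity-over-pi} and Corollary \ref{cor-pushforward-commutes-with-End}, and openness of the orbit $[W]$ inside the deformation space of a bundle on $\PP^{c-1}$; properness of $\pi$ then upgrades pointwise openness over the compact locus $\Z_0$ to an honest neighbourhood of $0$ in $S$. Throughout I write $\A=\SheafEnd(\G)$ for a $(\beta^*\theta)$-twisted locally free sheaf $\G$ on $\Y$, and observe that $\beta$ restricts to an isomorphism over $\X\setminus\Z$, so $\E=\beta_*\A$ is automatically locally free there and hence each $E_s$ is locally free away from $\Z_s$. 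If $E_0$ is locally free the orbit data in Condition \ref{cond-open-and-closed} (\ref{cond-item-same-homogeneous-bundle}) is vacuous and only the openness of local freeness in a flat family is needed, so I assume below that $E_0$ is not locally free along $\Z_0$.

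For the first assertion, I would observe that, read off the $\PP^{c-1}$-bundle $p:\D\to\Z$, Condition \ref{cond-open-and-closed} (\ref{cond-item-same-homogeneous-bundle}) for $E_0$ is precisely the statement that the hypotheses of Lemma \ref{lemma-sufficient-conditions-for-reflexivity-over-pi} and Corollary \ref{cor-pushforward-commutes-with-End} hold for $\G$ and for $\SheafEnd(\G)=\A$ over the central fibre $\Y_0$ --- these being the vanishings (\ref{eq-push-forward-of-V-same-as-that-of-V-minus-mD})--(\ref{eq-push-forward-of-End-V-minus-kD-is-flat}) together with the asserted global generations of $W$, $W^*$ and $\SheafEnd_0(W)$. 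Each such hypothesis is a cohomology-vanishing or global-generation statement along the fibres of $p$; after using Castelnuovo--Mumford regularity to replace the infinitely many twists by finitely many vanishings, semicontinuity and cohomology and base change produce an open subset $\V\subset\D$ containing $\D_0=p^{-1}(\Z_0)$ on which all these hypotheses hold fibrewise. Since $\pi$, and hence $\D\to S$, is proper, $U_1:=S\setminus\mathrm{image}(\D\setminus\V)$ is an open neighbourhood of $0$ with $\D|_{U_1}\subseteq\V$, and over $U_1$ Corollary \ref{cor-pushforward-commutes-with-End} shows that $\E|_{U_1}$ is a family of reflexive sheaves of Azumaya algebras over $\pi|_{U_1}$, isomorphic to $\SheafEnd(\F)$ with $\F:=\beta_*\G$; this is the first assertion. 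By base change together with Lemma \ref{lemma-push-forward-is-flat} and Lemma \ref{lemma-sufficient-conditions-for-reflexivity-over-pi} (\ref{lemma-item-pullback-of-F-modulo-torsion-is-locally-free}), for $s\in U_1$ one has $E_s\cong\SheafEnd(F_s)$ with $F_s:=\F|_{\X_s}$ reflexive twisted, and $G:=(\beta^*\F)/\mathrm{torsion}$ is locally free over $\Y|_{U_1}$ with $G|_{\Y_s}=(\beta_s^*F_s)/\mathrm{torsion}=:G_s$ again locally free.

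Next I would show that the orbit is constant along the family. The vector bundle $G|_{\D}$, viewed through the $\PP^{c-1}$-bundle $\D|_{U_1}\to\Z|_{U_1}$, is a family of bundles on $\PP^{c-1}$ parametrised by $\Z|_{U_1}$. Infinitesimal rigidity of $(D_x,W)$ means that $\LieAlg{sl}(c)\to\Ext^1(W,W)$ is surjective, so $\Ext^1(W,W)$ equals the tangent space at $W$ of its $\Aut(D_x)$-orbit; since $W$ is stable and unobstructed, its deformation space is smooth of that dimension, whence the orbit $[W]$ is open in it. Therefore the locus $\Omega:=\{x\in\Z|_{U_1}\ :\ G|_{D_x}\in[W]\}$ is open, and it contains $\Z_0$ by hypothesis; as $\Z\to S$ is proper, $U:=U_1\setminus\mathrm{image}(\Z|_{U_1}\setminus\Omega)$ is an open neighbourhood of $0$ over which $\Z$ lies entirely inside $\Omega$, so $G_s|_{D_x}\in[W]$ for every $s\in U$ and every $x\in\Z_s$. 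In particular each $G_s|_{D_x}$ is again stable and unobstructed, and properties (a), (b), (c) of Condition \ref{cond-open-and-closed} (\ref{cond-item-same-homogeneous-bundle}) hold for $E_s$ because they are properties of the single orbit $[W]$, which has not changed.

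It then remains to verify that $F_s$ is $\beta_s$-tight for $s\in U$, that is (Definition \ref{def-tightness}), that $a_{F_s}:T\X_s\to\SheafExt^1(F_s,F_s)$ is surjective and $\beta_{s*}:\beta_{s*}\D^1(G_s)\to\D^1(F_s)$ is an isomorphism. I would form the relative Atiyah homomorphism $a_\F:T_\pi\to\SheafExt^1_\pi(\F,\F)$ and the relative homomorphism $\beta_*:\beta_*\D^1(G)\to\D^1(\F)$ over $\X|_{U_1}$; the cokernel of the first, and the kernel and cokernel of the second, are coherent sheaves on $\X|_{U_1}$ whose restrictions to $\X_0$ compute the corresponding objects for $F_0$ --- using that the Atiyah class and the sheaves $\D^1$ are compatible with restriction to fibres over the locus where the relevant vanishings hold --- and these restrictions vanish by $\beta_0$-tightness of $F_0$. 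Hence those coherent sheaves vanish in a neighbourhood of $\X_0$, and, shrinking $U$ once more using properness of $\pi$, on all of $\X|_U$, so that $F_s$ is $\beta_s$-tight for every $s\in U$. The step I expect to be the main obstacle is the constancy of the orbit: one must establish that infinitesimal rigidity together with unobstructedness forces $[W]$ to be genuinely \emph{open} --- not merely locally closed --- in the moduli of sheaves on $\PP^{c-1}$, so that lying in $[W]$ is an open condition, and then feed this through properness of $\Z\to S$; the base-change compatibility of the relative Atiyah class needed in the $\beta_s$-tightness step is a secondary technical point, handled with the base-change results established above.
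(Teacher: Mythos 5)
Your proposal is correct and follows essentially the same route as the paper: openness of the orbit condition $[W_x]=[W]$ is deduced from stability, unobstructedness and infinitesimal rigidity, properness of $\pi$ upgrades this to a neighborhood of $0$ in $S$, and Lemma \ref{lemma-sufficient-conditions-for-reflexivity-over-pi} together with Corollary \ref{cor-pushforward-commutes-with-End} then identifies $\E$ as a family of reflexive Azumaya algebras. The only notable difference is that you supply an explicit argument (via coherent cokernels vanishing on the central fibre) for the openness of $\beta$-tightness, which the paper dismisses as ``clearly open.''
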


\begin{proof}
Let $\V$ be a (possibly twisted) locally free sheaf over $\Y$, such that $\A\cong \SheafEnd(\V)$. 
Denote its restrictions to the fibers over $s\in S$ by $V_s$.
Let $F_0$ be a reflexive sheaf such that $E_0\cong\SheafEnd(F_0)$. 
The assumption that $E_0$ satisfies Condition
\ref{cond-open-and-closed} (\ref{cond-item-same-homogeneous-bundle})
implies that $(\beta_0^*F_0)/\rm{torsion}$ is locally free over $\Y_0$ and
$\SheafEnd(V_0)\cong \SheafEnd((\beta_0^*F_0)/\rm{torsion})$.
We may thus assume that the restrictions of $\V$ to fibers $\D_{x_0}$ of $p:\D\rightarrow \X$ over $x_0\in\X_0$
are isomorphic to the restrictions of $(\beta_0^*F_0)/\rm{torsion}$ to $\D_{x_0}$,
possibly after replacing $\V$ by $\V(n\D)$, for some integer $n$. 
We conclude that $E_0$ is isomorphic to $\SheafEnd(\beta_{0,*}V_0)$, by
Corollary \ref{cor-pushforward-commutes-with-End} (applied in the case where the base $S$ is a point). 
We may thus replace $F_0$ by $\beta_{0,*}V_0$ and assume their equality. 

Set $\F:=\beta_*\V$.  
Given a point $x\in \X$, denote by 
by $W_x$ the restriction of $\V$ to $D_x$. Fix a point $x_0\in\X_0$.
We know that 
$W_{x_0}$ has the cohomological properties listed in Condition \ref{cond-open-and-closed} 
(\ref{cond-item-same-homogeneous-bundle}). 
Hence, there exists an open subset $\widetilde{U}$  of $\X$, containing $x_0$, such that 
the pairs $(D_{x},W_{x})$ and $(D_{x_0},W_{x_0})$ are isomorphic, 
for every $x\in \widetilde{U}$, 
since $W_{x_0}$ is assumed to be stable and unobstructed and the pair 
$(D_{x_0},W_{x_0})$ is assumed to be infinitesimally rigid in Condition \ref{cond-open-and-closed} 
(\ref{cond-item-same-homogeneous-bundle}).
The fiber $\X_0$ is contained in $\widetilde{U}$, since $E_0$ satisfies Condition \ref{cond-open-and-closed} 
(\ref{cond-item-same-homogeneous-bundle}).
Hence, there exists an open neighborhood $U$ of $0$ in $S$,
such that the pairs $(D_{x},W_{x})$ and $(D_{x_0},W_{x_0})$ are isomorphic, 
for every $x\in \X_s$, for all $s\in U$, since the morphism $\pi$ is proper.

$\F:=\beta_*\V$ is a family of reflexive sheaves over $\restricted{\X}{U}\rightarrow U$,
by Lemma \ref{lemma-sufficient-conditions-for-reflexivity-over-pi}
(\ref{lemma-item-push-forward-F-is-a-family-of-reflexive-sheaves}),
and $\beta_*\A=\beta_*\SheafEnd(\V)$ is isomorphic to $\SheafEnd(\E):=\SheafEnd(\beta_*\V)$,
by Corollary \ref{cor-pushforward-commutes-with-End}.
Furthermore, the homomorphism
$\beta^*\beta_*\V\rightarrow \V(-m\D)$ is surjective, where $m$ is the integer in 
Equation (\ref{eq-push-forward-of-V-same-as-that-of-V-minus-mD}), by
Lemma \ref{lemma-sufficient-conditions-for-reflexivity-over-pi}
(\ref{lemma-item-pullback-of-F-modulo-torsion-is-locally-free}).
It follows that $\beta_s^*F_s/(\mbox{torsion})$ is isomorphic to $V_s(-m D_s)$ and is hence locally free.

The $\beta$-tightness condition is clearly open.
\end{proof}

%
\section{Condition \ref{cond-open-and-closed} is preserved under twistor deformations}
\label{flatness-and-cond-1.7-on-twistor-lines} 

%
\subsection{Hyperholomorphic sheaves}
Let $M$ be a hyper-K\"{a}hler manifold. 
Then $M$ is endowed with a metric and an action of the algebra $\HH$  of quaternions on $T^{\RR} M$,
such that the group of unit quaternions acts via integrable complex structures, and the metric is K\"{a}hler
with respect to each of these complex structures. We will refer to these as {\em induced complex structures}.
The unit quaternions form a $2$-sphere $S^2$ and 
the induced complex structures on $M$ come from a complex structure on $\M:=M\times S^2$.
The projection on the second factor is holomorphic, yielding the {\em twistor family}
$\M\rightarrow \PP^1$ \cite{HKLR}. A point of $M$ determines a section of the projection $M\times S^2\rightarrow S^2$,
which is a holomorphic section of the twistor family $\M\rightarrow \PP^1$, called a {\em horizontal section}
\cite[Sec. 3(F)]{HKLR}.
If $M$ is an irreducible holomorphic symplectic manifold and 
$\omega$ a K\"{a}hler class on $M$, then the K\"{a}hler metric associated to $\omega$ is 
a hyper-K\"{a}hler metric \cite{beauville-varieties-with-zero-c-1}. 
In this case we will denote the twistor family by $\M\rightarrow \PP^1_\omega$ to emphasize 
 the dependence on the K\"{a}hler class $\omega$.

Let $M$ be a hyper-K\"{a}hler manifold, $I_0$ one of the induced complex structures, and $V$
a holomorphic vector bundle over $(M,I_0)$.
A {\em hyperholomorphic} connection on $V$ is a Hermitian connection $\nabla$, whose
curvature form 
is of Hodge type $(1,1)$ with respect to all the induced complex structures
\cite{kaledin-verbitski-book}. This means that the $(0,1)$-part of the connection, with respect to
every induced complex structure on $M$, is an integrable complex structure on $V$,
and $V$ extends to a holomorphic bundle $\V$ over the twistor family $\M$.

Given a reflexive 
coherent sheaf $F_{I_0}$
on $(M,I_0)$, Verbitsky defines the notion of 
a {\em hyperholomorphic} connection on $F_{I_0}$ as a 
connection over the open subset $U\subset M$ where $F_{I_0}$ is locally free, 
with a condition on the curvature and the associated metric along the singular locus 
$Z:=M\setminus U$ \cite[Definition 3.15]{kaledin-verbitski-book}. 
If the sheaf $F_{I_0}$ is stable, with respect to the 
K\"{a}hler form associated to $I_0$, and the first two Chern classes of $F_{I_0}$ remain of Hodge type 
with respect to all the induced complex structures, then $F_{I_0}$ admits a hyperholomorphic connection
\cite[Theorem 3.19]{kaledin-verbitski-book}. The singular locus $Z$ of  $F_{I_0}$
remains analytic with respect to all induced complex structures
\cite[Claim 3.16]{kaledin-verbitski-book}, and 
the reflexive coherent sheaf $F_I$ over $(M,I)$, 
for each induced complex structure $I$, is determined by the $(0,1)$-part with respect to $I$
of the hyperholomorphic connection over $U$.

Let $F_{I_0}$ be a reflexive coherent sheaf admitting a hyperholomorphic connection. 
Assume that $F_{I_0}$ has an isolated singularity at a point $x$ of $M$. 
We get a reflexive coherent sheaf $F_I$ on $(M,I)$, with an isolated singularity at $x$, 
for each complex structure $I$
in the twistor family. Let $\beta_I:\widetilde{M}_I\rightarrow (M,I)$ be the blow-up at the point $x$ and
$D_I:=\PP(T_x(M,I))$ the exceptional divisor in $\widetilde{M}_I$.

Denote by $\HH$ the subalgebra of global endomorphisms of the real tangent bundle $T^{\RR} M$,
which is isomorphic to the algebra of quaternions, associated to the hyper-K\"{a}hler structure. 
Note that $I$ is an element of $\HH$.
Let $\HH^*$ be the multiplicative group of invertible elements of $\HH$.
Given $g\in \HH^*$, set $g(I):=g I g^{-1}$. Then $g:(T^{\RR}_xM,I)\rightarrow (T^{\RR}_xM,g(I))$
is an isomorphism of complex vector spaces, which descends to an isomorphism
\[
\bar{g} : D_I \rightarrow D_{g(I)}
\]
of the exceptional divisors of $\widetilde{M}_I$ and $\widetilde{M}_{g(I)}$.

\begin{thm}
\label{thm-verbitsky-8.15}
\cite[Theorems 6.1 and  8.15]{kaledin-verbitski-book}
The sheaf $\widetilde{F}_I:=(\beta_I^*F_I)/{\rm torsion}$ is locally free, for all induced complex structures $I$.
Furthermore, $\bar{g}^*(\widetilde{F}_{g(I)}\restricted{)}{D_{g(I)}}$ is isomorphic to 
$(\widetilde{F}_I\restricted{)}{D_I}$, for all $g\in H^*$.
\end{thm}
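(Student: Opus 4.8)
The plan is to reduce both assertions to a local analysis near the isolated singular point $x$, where the hyperholomorphic connection affords strong control, and then to read off the compatibility statement from the quaternionic symmetry acting on $T^{\RR}_xM$. First I would pass to a small ball $B$ around $x$, so that $F_I$ is a reflexive sheaf on $(B,I)$, locally free on $B\setminus\{x\}$, carrying there the admissible Hermitian--Yang--Mills metric and connection $\nabla$ induced by the hyperholomorphic connection. The decisive property of $\nabla$ is that its curvature is of type $(1,1)$ with respect to every induced complex structure, equivalently $SU(2)$-invariant. Since $\beta_I$ is an isomorphism over $B\setminus\{x\}$, where $F_I$ is locally free, the sheaf $\widetilde{F}_I=(\beta_I^*F_I)/{\rm torsion}$ is automatically locally free away from the exceptional divisor $D_I=\PP(T_x(M,I))$; the whole content of the first assertion is local freeness of $\widetilde{F}_I$ in a neighbourhood of $D_I$.

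For this I would study $\nabla$ on the shrinking links $S_\varepsilon^{4m-1}$ of $x$, where $\dim_{\ComplexNumbers}M=2m$. Using the admissibility of the Hermitian--Yang--Mills metric, via Bando--Siu, together with Uhlenbeck-type compactness, I expect the restriction of $(F_I,\nabla)$ to $S_\varepsilon^{4m-1}$ to stabilise as $\varepsilon\to 0$ to a limiting datum on the link $S^{4m-1}\subset T^{\RR}_xM$, and the $SU(2)$-invariance of the curvature to force this limit to be invariant under the Hopf $S^1$-action of $S^{4m-1}\to\PP(T_x(M,I))=D_I$; equivalently, invariant under the fibrewise $\ComplexNumbers^*$-dilation of $\StructureSheaf{D_I}(-1)$, a neighbourhood of whose zero section is biholomorphic to a neighbourhood of $D_I$ in $\widetilde{M}_I$. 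Such a limit descends to a holomorphic bundle $W_I$ on $D_I$, so $\widetilde{F}_I$ restricted to every infinitesimal neighbourhood of $D_I$ becomes locally free and isomorphic to a pullback of $W_I$. Grothendieck's comparison theorem would then upgrade this to local freeness of $\widetilde{F}_I$ on an analytic neighbourhood of $D_I$, with $\widetilde{F}_I|_{D_I}\cong W_I$.

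For the compatibility statement I would exploit that $\nabla$ is literally a single Hermitian connection on the bundle underlying $F_{I_0}$ over $B\setminus\{x\}$, whose $(0,1)$-part with respect to each induced $I$ defines the holomorphic structure of $F_I$. For $g\in\HH^*$ normalised to a unit quaternion, $g\colon (T^{\RR}_xM,I)\to (T^{\RR}_xM,g(I))$ is an isometry carrying $\nabla$ with its $I$-holomorphic structure to $\nabla$ with its $g(I)$-holomorphic structure, and it conjugates the Hopf $S^1$ for $I$ to the one for $g(I)$; hence it is intertwined with the two blow-ups by $\bar g\colon D_I\to D_{g(I)}$. Consequently $\bar g$ carries the limiting datum over $D_I$ to the one over $D_{g(I)}$, and restriction to the exceptional divisors yields $\bar g^*(\widetilde{F}_{g(I)}|_{D_{g(I)}})\cong\widetilde{F}_I|_{D_I}$; equivalently, the family $\{\widetilde{F}_I|_{D_I}\}_{I\in\PP^1}$ is a single $\HH^*$-homogeneous bundle on $\PP(T_xM)$, well defined up to the action of $\Aut(\PP(T_xM))$.

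The step I expect to be the main obstacle is the analytic one: establishing that the hyperholomorphic structure stabilises near the isolated singularity and becomes $SU(2)$-equivariant on small links, so that a single blow-up genuinely produces a locally free sheaf rather than merely a reflexive sheaf on $\widetilde{M}_I$. This is exactly where the fine theory of admissible Hermitian--Yang--Mills metrics and the rigidity imposed by $SU(2)$-invariance of the curvature are indispensable; the descent along the Hopf fibration, the passage from formal to analytic via Grothendieck's comparison theorem, and the $\HH^*$-equivariance are all formal once this local structure is in hand.
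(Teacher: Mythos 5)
The statement is, in the paper, essentially a citation: local freeness of $\widetilde{F}_I$ is Verbitsky's Theorem 6.1, and the $\bar g$-compatibility is extracted from his Theorem 8.15, which gives the explicit description
\[
(\widetilde{F}_I\restricted{)}{D_I}\ \cong\ \bigoplus_j \bigl(q_I^*B_j\bigr)\otimes\StructureSheaf{D_I}(k_j),
\]
where the $(B_j,\nabla_j)$ are bundles with connection on the quaternionic projective space $Q=(T^{\RR}_xM\setminus 0)/\HH^*$ and $q_I:D_I\rightarrow Q$ is the quotient map; the compatibility is then the one-line computation $q_I=q_{g(I)}\circ\bar g$, whence $\bar g^*q_{g(I)}^*(B_j,\nabla_j)=q_I^*(B_j,\nabla_j)$, together with $\bar g^*\StructureSheaf{D_{g(I)}}(k_j)\cong\StructureSheaf{D_I}(k_j)$ because $\bar g$ is induced by a linear map. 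Your proposal instead tries to reprove Verbitsky's theorems from scratch, and the steps you yourself flag as ``the main obstacle'' --- stabilization of the admissible Hermitian--Yang--Mills connection on shrinking links and the equivariance of the limit --- are precisely the content of the cited results. Deferring them leaves the argument with no content beyond the citation it was meant to replace.

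Beyond that, two steps of your sketch would fail if carried out literally. First, $SU(2)$-invariance of the curvature does not make the limiting datum descend to $D_I$ as a plain bundle $W_I$ with $\widetilde{F}_I$ a pullback of $W_I$ near $D_I$: the correct statement is that the limit decomposes into $\ComplexNumbers^*$-weight pieces for the dilation action, each a pullback from the $I$-independent space $Q$ (not from $D_I$) twisted by $\StructureSheaf{D_I}(k_j)$ with $k_j$ in general nonzero --- in the paper's main example the relevant twist is $k=2$ for $\SheafEnd_0(W)$, and the footnote to Condition \ref{cond-open-and-closed} (\ref{cond-item-same-homogeneous-bundle}) stresses that $k>0$ whenever $E$ is not locally free. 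It is exactly the fact that the $B_j$ live on $Q$ that makes the $\bar g$-statement formal; descent merely to $D_I$ would give you no map relating $D_I$ to $D_{g(I)}$. Second, $g\in\HH^*$ acts on $T^{\RR}_xM$ but not on $M$ near $x$, so the assertion that $g$ ``carries $\nabla$ with its $I$-holomorphic structure to $\nabla$ with its $g(I)$-holomorphic structure'' does not typecheck for the actual connection on $B\setminus\{x\}$; it only makes sense after passing to the tangent cone, and the $\HH^*$-equivariance of the limiting datum there is not formal --- it is the substance of Theorem 8.15. Either prove these two facts, or do what the paper does: cite Theorems 6.1 and 8.15 and conclude from $q_I=q_{g(I)}\bar g$.
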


\begin{proof}
The sheaf $\widetilde{F}_I$ is locally free, by \cite[Theorem 6.1]{kaledin-verbitski-book}.
Set $Q:=(T^{\RR}_xM\setminus 0)/\HH^*$. $Q$ is isomorphic to the quaternionic projective space. 
Let $\CC^*_I$ be the subgroup of $\HH^*$ associated to the complex structure $I$,
so that $D_I= (T^{\RR}_xM\setminus 0)/\CC^*_I$.
The quotient map $(T^{\RR}_xM\setminus 0)\rightarrow Q$ factors through the quotient map
\[
q_I : D_I\rightarrow Q.
\]
Verbitsky constructs a finite set of  complex vector bundles $\{B_j\}$ over $Q$, with special connections $\nabla_j$, 
and non-negative integers $k_j$  with the following properties. 
Endow the pullback $q_I^*B_j$ with the holomorphic structure associated to $q_I^*\nabla_j$. Then 
the direct sum $\oplus_{j}(q_I^*B_j)\otimes \StructureSheaf{D_I}(k_j)$
is isomorphic to $(\widetilde{F}_I\restricted{)}{D_I}$, for every complex structure $I$ in the twistor family
\cite[Theorem 8.15]{kaledin-verbitski-book}.
The isomorphism between 
$(\beta_I^*F_I\restricted{)}{D_I}$ and 
$\bar{g}^*(\beta^*_{g(I)} F_{g(I)}\restricted{)}{D_{g(I)}}$
would thus follow from the equality 
of $q_I^*(B_j,\nabla_j)$ and $\bar{g}^*q_{g(I)}^*(B_j,\nabla_j)$. The latter equality follows from 
the equality
$q_I=q_{g(I)}\bar{g}$.
\end{proof}

\hide{
\begin{lem}
\label{lemma-reflexivity-criterion}
Let $X$ be a smooth complex variety, $Z\subset X$ a smooth subvariety of co-dimension $\geq 3$,
$\beta:Y\rightarrow X$ the blow-up of $X$ with center $Z$, $D\subset Y$ the exceptional divisor, and 
$p:D\rightarrow Z$ the projection. Denote by $D_z$ the fiber of $p$ over a point $z\in Z$.
Let $V$ be a locally free sheaf over $Y$ satisfying the following condition
\begin{equation}
\label{eq-vanishing-of-global-sections-of-V-kD}
H^0(D_z,V(kD\restricted{)}{D_z})=0,
\end{equation}
for every point $z\in Z$ and for every positive integer $k$.
Then $\beta_*V$ is a reflexive sheaf over $X$. Assume, in addition, 
that the restriction of $V$ to $D_z$ is generated by $H^0(D_z,\restricted{V}{D_z})$.
Then the natural homomorphism $ev:(\beta^*\beta_*V)/{\rm torsion}\rightarrow V$ is an isomorphism.
\end{lem}

\begin{proof}
$V$ is torsion free and $\beta$ is surjective, so $\beta_*V$ is torsion free. 
It suffices to prove that the sheaf $\beta_*V$ is normal, by \cite[Prop. 1.6]{hartshorne-stable-reflexive}.
Given any open subset $U$ of $X$ and any  closed subset $Z'\subset U$, of co-dimension $\geq 2$,
we need to show that the restriction homomorphism 
$(\beta_*V)(U)\rightarrow (\beta_*V)(U\setminus Z')$ is an isomorphism. 
Let $U$ be an open subset of $X$. 
Now $\beta_*V$ is locally free over the complement of $Z$. Hence, it suffices to consider $Z'=Z\cap U$.
We have the equalities 
$(\beta_*V)(U)=V(\beta^{-1}(U))$
and $(\beta_*V)(U\setminus Z)=V(\beta^{-1}(U)\setminus D)$. 
It suffices to prove that the restriction homomorphism
\[
V(\beta^{-1}(U)) \rightarrow 
V(\beta^{-1}(U)\setminus D)
\]
is an isomorphism. Now $V(\beta^{-1}(U\setminus D)$ is the union of $[V\otimes\StructureSheaf{Y}(kD)](\beta^{-1}(U))$,
for all $k\geq 0$, and the union is equal to $V(\beta^{-1}(U))$, by the vanishing in Equality
(\ref{eq-vanishing-of-global-sections-of-V-kD}) above. The homomorphism $ev$ is injective,
as it is an isomorphism over $Y\setminus D$, and it is surjective, by the assumption that 
$\restricted{V}{D_z}$ is generated by its global sections.
\end{proof}
}

%
\subsection{Preservation of Condition \ref{cond-open-and-closed} (\ref{cond-item-same-homogeneous-bundle})}
\begin{prop}
\label{prop-flatness-of-the-twistor-deformation}
Let $E$ be a reflexive sheaf of Azumaya algebras over $X^d$ satisfying Condition 
\ref{cond-open-and-closed} (\ref{cond-item-same-homogeneous-bundle}).
Assume that $E$ is $\omega$-hyperholomorphic. 
Let $\pi:\X\rightarrow \PP^1_\omega$ be the twistor deformation of $X$.
Then the hyperholomorphic deformation $\E$ of $E$ is a (flat) family of reflexive sheaves of Azumaya algebras 
over  $\X^d_\pi\rightarrow \PP^1_\omega$ (Definition \ref{def-families-of-reflexive-azumaya-algebras}).
Furthermore, $\E_t$ satisfies Condition 
\ref{cond-open-and-closed} (\ref{cond-item-same-homogeneous-bundle}), for all $t\in \PP^1_\omega$.
\end{prop}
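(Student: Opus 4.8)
The plan is to reduce the assertion to the construction of a single (twisted) locally free sheaf on a blow-up of the total space $\X^d_\pi$, and then to apply the results of Section \ref{sec-families-of-reflexive-sheaves}. When $d=1$ there is no diagonal, $E$ is locally free by Condition \ref{cond-open-and-closed}(\ref{cond-item-same-homogeneous-bundle}), and the assertion reduces to Verbitsky's theorem on hyperholomorphic vector bundles; so assume $d\geq2$, in which case the diagonal $\Delta\subset X^d$ has codimension $(d-1)\dim_\CC X\geq4$ and we are in the set-up of Section \ref{sec-families-of-reflexive-sheaves}. Write $E\cong\SheafEnd(F)$ for a reflexive twisted sheaf $F$. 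Condition \ref{cond-open-and-closed}(\ref{cond-item-same-homogeneous-bundle}) says that $G:=(\beta^*F)/\mathrm{torsion}$ is locally free over the blow-up $\beta\colon Y\to X^d$ of the diagonal, that $F$ is $\beta$-tight, and that $G$ restricts to each exceptional fibre $D_x=\PP(T_xX\otimes\CC^{d-1})$ as a bundle in the orbit $[W]$. I would aim to extend $G$ to a (twisted) locally free sheaf $\V$ over the blow-up $\beta\colon\Y\to\X^d_\pi$ of the \emph{relative} diagonal $\Delta\cong\X$ (restricting over $0\in\PP^1_\omega$ to the blow-up just described), so that $\V$ and $\SheafEnd(\V)$ satisfy the cohomological hypotheses of Corollary \ref{cor-pushforward-commutes-with-End} over all of $\PP^1_\omega$ and $\beta_*\SheafEnd(\V)\cong\E$. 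Granting this, Corollary \ref{cor-pushforward-commutes-with-End} shows that $\beta_*\SheafEnd(\V)\cong\SheafEnd(\beta_*\V)$ is a flat family of reflexive sheaves of Azumaya algebras over $\PP^1_\omega$, and this sheaf coincides with $\E$, the two being reflexive on the smooth space $\X^d_\pi$ and agreeing away from the relative singular locus $\Z$.

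To produce $\V$ I would work fibrewise first. For every $t\in\PP^1_\omega$ the sheaf $\E_t$ is an $\omega$-stable hyperholomorphic reflexive sheaf on $X_t^d$ — the hyper-K\"{a}hler structure of $X^d$ being the same for all $t$, only the distinguished complex structure rotating in the twistor family — and its singular locus lies along the diagonal, a hyper-K\"{a}hler submanifold. The desingularization results of Verbitsky and Kaledin \cite{kaledin-verbitski-book} (Theorem \ref{thm-verbitsky-8.15} is the isolated-singularity case, to which one reduces via the local product structure of the product metric of $X^d$ transverse to the diagonal) then yield $\E_t\cong\SheafEnd(F_t)$ with $G_t:=(\beta_t^*F_t)/\mathrm{torsion}$ locally free over $Y_t$, and show that the restriction of $G_t$ to each exceptional fibre $D_{t,x}$ is assembled from the \emph{same} finite family of bundles (with connections and twists) on the quaternionic projective space — a family intrinsic to the hyper-K\"{a}hler structure, hence independent of $t$ and $x$ — so this restriction lies in the orbit $[W]$ for every $t$ and $x$. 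Since \cite{kaledin-verbitski-book} produces these desingularizations compatibly over the twistor line, together with the hyperholomorphic connection, I would assemble $\V$ over $\Y$ by pulling the connection back to $\Y\setminus\D\cong\X^d_\pi\setminus\Z$ and patching across the exceptional divisor $\D=\PP(T_\pi^{\oplus(d-1)})$ by means of the local trivializations supplied by the local-triviality clause of Condition \ref{cond-open-and-closed}(\ref{cond-item-same-homogeneous-bundle}), applied to the bundle over $\D$ formed from the $G_t$, which restricts to each fibre of $p\colon\D\to\X$ as a bundle in $[W]$. The cohomological hypotheses of Lemma \ref{lemma-sufficient-conditions-for-reflexivity-over-pi} needed for $\V$ and $\SheafEnd(\V)$ then follow over $\PP^1_\omega$ from cohomology and base change, since on each fibre they reduce to the vanishing statements of Condition \ref{cond-open-and-closed}(\ref{cond-item-same-homogeneous-bundle}) for the bundles $W$, $W^*$ and $\SheafEnd_0(W)$.

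For the second assertion, $\E_t\cong\SheafEnd(\beta_{t,*}\V_t)$ by cohomology and base change, $(\beta_t^*F_t)/\mathrm{torsion}\cong\V_t(-mD_t)$ is locally free for a suitable $m\geq0$ by Lemma \ref{lemma-sufficient-conditions-for-reflexivity-over-pi}(\ref{lemma-item-pullback-of-F-modulo-torsion-is-locally-free}) (as in the proof of Lemma \ref{lemma-condition-involving-W-is-open}), and it restricts to each $D_{t,x}$ as a bundle in $[W]$ by the construction of $\V$. Parts (a) and (b) of Condition \ref{cond-open-and-closed}(\ref{cond-item-same-homogeneous-bundle}) concern only the fixed bundles $W$, $W^*$ and $\SheafEnd_0(W)$, hence hold for all $t$, while part (c) is a statement about arbitrary smooth proper families, hence automatic. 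It remains to see that $F_t$ is $\beta$-tight; surjectivity of the Atiyah homomorphism $a_{F_t}$ and the isomorphism $\beta_*\colon\beta_*\D^1(G_t)\to\D^1(F_t)$ depend only on the local analytic isomorphism type of $F_t$ along the diagonal, which is independent of $t$ by the above, so $\beta$-tightness propagates from $F_0=F$ to every $F_t$; alternatively, $\beta$-tightness is an open condition (as in the proof of Lemma \ref{lemma-condition-involving-W-is-open}) and is lower semicontinuous on the connected base $\PP^1_\omega$, hence holds everywhere since it holds at $t=0$. Equivalently, once the locally free Azumaya algebra $\SheafEnd(\V)$ over $\Y$ with $\beta_*\SheafEnd(\V)\cong\E$ and $\V|_{\D_x}\in[W]$ for all $x\in\X$ is in hand, one may simply invoke Lemma \ref{lemma-condition-involving-W-is-open} together with connectedness of $\PP^1_\omega$.

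The hard part, I expect, is the construction of the global sheaf $\V$ over $\Y$: promoting the fibrewise desingularizations $G_t$ to one family over the whole twistor line, rather than over an analytic neighbourhood of $t=0$ only. A blow-up carries no hyper-K\"{a}hler metric, so Verbitsky's twistor machinery does not apply directly to $\Y$; instead one must patch the fibrewise data using the hyperholomorphic connection on the locally free locus together with the local triviality furnished by Condition \ref{cond-open-and-closed}(\ref{cond-item-same-homogeneous-bundle}), and verify that this patching is globally consistent. A secondary point requiring care is the reduction of the isolated-singularity results of Verbitsky and Kaledin to the non-isolated singularity along the diagonal, which rests on the local product structure of the product hyper-K\"{a}hler metric of $X^d$ transverse to the diagonal.
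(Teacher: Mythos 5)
Your overall target is the right one --- reduce everything to a locally free sheaf $\V$ on the blow-up $\Y$ of the relative diagonal and invoke Corollary \ref{cor-pushforward-commutes-with-End} --- and you correctly identify Theorem \ref{thm-verbitsky-8.15} as the key external input. But the step you yourself flag as ``the hard part,'' namely promoting the fibrewise desingularizations $G_t$ to a single sheaf over all of $\Y$, is left unresolved, and the route you sketch for it does not work as stated. You propose to patch the $G_t$ across $\D$ using the local-triviality clause (c) of Condition \ref{cond-open-and-closed}(\ref{cond-item-same-homogeneous-bundle}), ``applied to the bundle over $\D$ formed from the $G_t$''; but clause (c) takes as input an already-constructed vector bundle over $\D$ restricting fibrewise to $[W]$, so assembling that bundle from fibrewise data is exactly the gluing problem you are trying to solve, and the local triviality of the family over $\PP^1_\omega$ (Proposition \ref{prop-local-triviality}) is in the paper a \emph{consequence} of the flatness statement, not an available tool for proving it. The paper avoids the gluing problem entirely: $\E$ already exists as a single global reflexive sheaf on $\X^d_\pi$ by Verbitsky's construction, so one works with the globally defined coherent sheaf $\tilde\beta^*\E_0/{\rm torsion}$ and proves its local freeness pointwise, by restricting $\E_0$ along the horizontal sections $f_x$ to the sub-twistor families $\X^{d-1}_\pi$ (where the diagonal meets each fiber in a single point, so the isolated-singularity Theorem \ref{thm-verbitsky-8.15} applies directly) and then applying Nakayama. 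This also replaces your ``local product structure of the metric transverse to the diagonal'' reduction, which is never made precise.

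A second, independent gap: you pass from $\E$ to $\SheafEnd(\V)$ as if $\tilde\beta^*\E/{\rm torsion}$ were itself a sheaf of Azumaya algebras, but it is not when $E$ is singular along the diagonal (see footnote \ref{ft}): the correct object is $\A=\A_0\oplus\StructureSheaf{\Y}$ with $\A_0=(\tilde\beta^*\E_0/{\rm torsion})\otimes\StructureSheaf{\Y}(kD)$, $k>0$, and one must prove that the multiplication on $\E$ extends to a regular multiplication on $\A$ across the exceptional divisor. This is Step 2 of the paper's proof and requires a genuine argument (triviality of $\det\A_0$, irreducibility of $\D$, and a Grothendieck--Verdier duality computation to extend the second component of the product); only after that can one extract $\V$ with $\A\cong\SheafEnd(\V)$ and feed it into Corollary \ref{cor-pushforward-commutes-with-End}. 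Your treatment of $\beta$-tightness is closest to the mark --- the paper does prove it for all $t$ by showing that both the tight and the non-tight loci are open in the connected base $\PP^1_\omega$, the latter via Proposition \ref{prop-local-triviality} --- but that local-triviality result is itself a substantial argument (Flenner--Kosarew plus the cohomological vanishing of Lemma \ref{lemma-vanishing-of-sheaf-of-differential-operators}), not something that follows from tightness being ``an open condition'' alone.
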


\begin{proof}
The sheaf $\E$ is reflexive, by Verbitsky's construction. 
$\E$ admits the structure of a reflexive sheaf of Azumaya algebras, extending that of $E$, 
by \cite[Lemma 6.5 (3)]{markman-hodge}.

\underline{Step 1:} We prove first the flatness of $\E$.
Let $\E_0\subset \E$ and $E_0\subset E$ be the kernels of the trace homomorphisms.
Flatness of $\E$ would follow once we prove that $\E_0$ is flat over $\PP^1_\omega$.

Let $F$ and $W$ be as in Condition 
\ref{cond-open-and-closed} (\ref{cond-item-same-homogeneous-bundle}).
Set 
\begin{equation}
\label{eq-locally-free-Azumaya-albebra-is-end-of-pullback}
A_0:=\SheafEnd_0(\beta^*F/{\rm torsion}).
\end{equation} 
Then
$\beta_*A_0$ is reflexive, by Lemma \ref{lemma-sufficient-conditions-for-reflexivity-over-pi} 
(\ref{lemma-item-push-forward-F-is-a-family-of-reflexive-sheaves}).
Now $E_0$ and $\beta_*A_0$ are isomorphic over the complement of $\Delta$.
Hence, $\beta_*A_0$ is isomorphic to $E_0$, as both are reflexive.

The vanishing in equation (\ref{eq-push-forward-of-End-V-same-as-that-of-End-V-minus-kD}) 
implies that the homomorphism 
$\beta_*(A_0\otimes\StructureSheaf{Y}(-kD))\rightarrow \beta_*A_0$ is an isomorphism.
Furthermore, the co-unit of the adjunction $\beta^*\vdash \beta_*$ induces an isomorphism
\begin{equation}
\label{eq-computation-of-pullback-of-E}
(\beta^*E_0)/{\rm torsion} = (\beta^*\beta_*[A_0\otimes \StructureSheaf{Y}(-kD)])/{\rm torsion} 
\rightarrow A_0\otimes \StructureSheaf{Y}(-kD),
\end{equation}
by Lemma \ref{lemma-sufficient-conditions-for-reflexivity-over-pi}
(\ref{lemma-item-pullback-of-F-modulo-torsion-is-locally-free}), and the assumption that 
$\SheafEnd_0(W)\otimes\StructureSheaf{D}(k)$ is generated by
global sections.

Let $\widetilde{\Delta}\subset \X^d_\pi$ be the relative diagonal and 
$\tilde{\beta}: \Y\rightarrow \X^d_\pi$ the blow-up of $\X^d_\pi$ centered at $\widetilde{\Delta}$.
Given a complex structure $I\in \PP^1_\omega$, denote by 
$\beta_I:\Y_I\rightarrow \X_I^d$ the blow-up of the diagonal $\Delta_I$ in $\X_I^d$.
Denote by $\D\subset \Y$ the exceptional divisor and let $D_I$ be its fiber over $I$.
Denote by $\E_{0,I}$ the restriction of $\E_0$ to $\X_I^d$.

The fiber $f_x$  of the differentiable projection $\X\cong X\times S^2\rightarrow X$
over a point $x\in X$ is a holomorphic section of the twistor family $\pi:\X\rightarrow \PP^1_\omega$, 
called a {\em horizontal section}
\cite[Sec. 3(F)]{HKLR}. 
The preimage of $f_x$, via the projection $\X^d_\pi\rightarrow \X$ onto the $d$-th factor, is the twistor deformation 
$\X^{d-1}_\pi\rightarrow \PP^1_\omega$. 
The restriction of $E_0$ to $X^{d-1}\times \{x\}$ is a hyperholomorphic reflexive sheaf
and the restriction of $\E_0$ to $\tilde{\pi}^{-1}(f_x)$ is its hyperholomorphic deformation along 
$\X^{d-1}_\pi\rightarrow \PP^1_\omega$.
The projection $\tilde{\pi}:\X^d_\pi\rightarrow \X$ onto the $d$-th factor is a smooth and proper morphism
with hyperk\"{a}hler fibers, each intersecting transversally the singular locus $\widetilde{\Delta}$ of $\E$
at a single point. 
Theorem \ref{thm-verbitsky-8.15} states that 
$\left[(\tilde{\beta}^*\E_0\restricted{)}{\tilde{\beta}^{-1}(\tilde{\pi}^{-1}(f_x))}\right]/\rm{torsion}$
is locally free. It follows that $(\tilde{\beta}^*\E_0)/\rm{torsion}$ is locally free over an open subset 
of $\X_\pi^d$ containing $\tilde{\beta}^{-1}(\tilde{\pi}^{-1}(f_x))$, by Nakayama's lemma. 
Theorem \ref{thm-verbitsky-8.15} thus applies to conclude that 
the pullback $\tilde{\beta}^*(\E_0)/{\rm torsion}$ is locally free, and 
the isomorphism class of 
its restriction to each fiber $D_z$, of $\D$ over $z\in \widetilde{\Delta}$, 
belongs to the same\footnote{$\Aut(D_z)$ acts via pullback, and 
an $\Aut(D_{z_1})$-orbit is equal to an $\Aut(D_{z_2})$-orbit, if there exists 
an isomorphism $D_{z_2}\rightarrow D_{z_1}$, which pulls back the  restriction of $\tilde{\beta}^*(\E_0)/{\rm torsion}$
to $D_{z_1}$ to the  restriction of $\tilde{\beta}^*(\E_0)/{\rm torsion}$
to $D_{z_2}$. The equality of the two orbits follows directly from Theorem \ref{thm-verbitsky-8.15}
when $z_1$ and $z_2$ both belong to $f_x$, for the same point $x$ in the special fiber $X$ of the twistor family.
The equality of the two orbits holds, by assumption, if $z_1$ and $z_2$ both belong to the diagonal $\Delta$ of $X^d$,
since $E$ is assumed to satisfy Condition 
\ref{cond-open-and-closed} (\ref{cond-item-same-homogeneous-bundle}). 
Hence it holds for all $z_1$ and $z_2$ in $\widetilde{\Delta}$.
} 
$\Aut(D_z)$-orbit.
This restriction is isomorphic to $\SheafEnd_0(W)\otimes \StructureSheaf{D_z}(k)$, 
by the computation for the special fiber in Equation (\ref{eq-computation-of-pullback-of-E}). 
It follows that the restriction of each $\E_{0,I}$, $I\in \PP^1_\omega$, to the fibers of $p:\D\rightarrow \widetilde{\Delta}$ 
has the properties in 
Condition \ref{cond-open-and-closed} (\ref{cond-item-same-homogeneous-bundle})
with the same $W$. Set
\begin{equation}
\label{eq-traceless-summand-of-A-is-a-twist-of-pullback-of-that-of-E}
\A_0:=(\tilde{\beta}^*\E_0/{\rm torsion}) \otimes \StructureSheaf{\Y}(kD).
\end{equation}
We get that both $\tilde{\beta}_*\A_0$ and $\tilde{\beta}_*[\A_0\otimes \StructureSheaf{\Y}(-kD)]$ 
are isomorphic to $\E_0$, by the same argument used 
for the special fiber.
The higher sheaf cohomologies of 
$\SheafEnd_0(W)\otimes \StructureSheaf{D_z}(k)$  vanish, by Equation  (\ref{eq-push-forward-of-End-V-minus-kD-is-flat}).
Hence, the higher direct image sheaves $R^i\tilde{\beta}_*[\A_0\otimes \StructureSheaf{\Y}(-kD)]$ vanish, for $i>0$,
and $\E_0$ is flat over $\PP^1_\omega$, by Lemma
\ref{lemma-push-forward-is-flat}.

\underline{Step 2:} 
We lift next the structure of an Azumaya algebra from $\E$ to the locally free sheaf $\A:=\A_0\oplus \StructureSheaf{\Y}$.
Such a structure exists over $U:=\Y\setminus \D$, as $\A$ coincides with the pullback of $\E$ over that open set.
We need to show that the multiplication homomorphism 
$m:(\A\otimes\A\restricted{)}{U}\rightarrow \restricted{\A}{U}$ extends to a regular homomorphism 
$\tilde{m}$ over $\Y$. The extension is clear for the restriction of $m$ to the summands $\A_0\otimes \StructureSheaf{\Y}$,
$\StructureSheaf{\Y}\otimes \A_0$, and $\StructureSheaf{\Y}\otimes \StructureSheaf{\Y}$.
Hence, it suffices to prove that the restriction of $m$ to 
$\A_0\otimes \A_0$ extends. The latter decomposes as the sum of two meromorphic sections:
\begin{eqnarray*}
\tilde{m}_1  :  \A_0\otimes \A_0 &\rightarrow & \StructureSheaf{\Y},
\\
\tilde{m}_2  :  \A_0\otimes \A_0 &\rightarrow & \A_0.
\end{eqnarray*}

Denote by $m_i$ the corresponding summand of $m$ over $U$. 
Then $m_1$ corresponds to the isomorphism $m_1:\E\rightarrow \E^*$.
The regularity of $\tilde{m}_1$ is equivalent to an extension of $m_1$ to an isomorphism 
$\tilde{m}_1:\A_0\rightarrow \A_0^*$. Note that the determinant line bundle $\det(\A_0)$
is trivial, as it restricts to a trivial line bundle over $U$ and over the special fiber $X^d$,
by the isomorphism in Equation (\ref{eq-computation-of-pullback-of-E}). 
Furthermore, the polar divisor of $\tilde{m}_1$ is disjoint from $U$ and from the special fiber.
The polar divisor must be empty, since $\D$ is irreducible and meets the special fiber.
Hence, $\tilde{m}_1$ extends to an isomorphism  over the whole of $\Y$.

The summand $m_2$ comes from a homomorphism $m_2$ in $\Hom(\E_0,\SheafHom(\E_0,\E_0))$.
Dualizing, we get the homomorphism $m_2^*$ in $\Hom(\SheafHom(\E_0,\E_0),\E_0)$.
The equality 
(\ref{eq-traceless-summand-of-A-is-a-twist-of-pullback-of-that-of-E}) induces a homomorphism
$\tilde{\beta}^*(m_2^*)$ in 
\[
\Hom(\tilde{\beta}^*\SheafHom(\E_0,\E_0),\A_0(-k\D)).
\]
Hence, the regularity of the extension  $\tilde{m}_2$ would follow, once we construct a regular homomorphism
\begin{equation}
\label{eq-a-regular-homomorphism-extending-the-identity}
\SheafHom(\A_0,\A_0(-k\D)) \rightarrow \tilde{\beta}^*\SheafHom(\E_0,\E_0)
\end{equation}
extending the identity homomorphism over $U$. We have the isomorphisms
\[
\Hom\left(\SheafHom(\A_0,\A_0(-k\D)), \tilde{\beta}^*\SheafHom(\E_0,\E_0)\right)\cong
\Hom\left(\tilde{\beta}_*\left[\omega_{\tilde{\beta}}\otimes \SheafHom(\A_0,\A_0(-k\D))\right],
\SheafHom(\E_0,\E_0)\right),
\]
by Grothendieck-Verdier duality. Now, $\omega_{\tilde{\beta}}$ 
is isomorphic to $\StructureSheaf{\Y}((c-1)\D)$, where $c$ is 
the codimension $(d-1)\dim(X)$ of $\Delta$ in $X^d$.
We have the isomorphisms
\begin{eqnarray*}
\tilde{\beta}_*\left[\SheafHom(\A_0,\A_0(c-1-k)\D)\right] &\cong &
\tilde{\beta}_*\left[\SheafHom(\tilde{\beta}^*\E_0,\A_0((c-1-2k)\D))\right]
\cong
\\
\SheafHom(\E_0,\tilde{\beta}_*[\A_0((c-1-2k)\D)]),
\end{eqnarray*}
where the first follows from Equation 
(\ref{eq-traceless-summand-of-A-is-a-twist-of-pullback-of-that-of-E}) and the second from the adjunction
$\tilde{\beta}^*\vdash\tilde{\beta}_*$. We have a regular homomorphism 
\[
\SheafHom(\E_0,\tilde{\beta}_*[\A_0((c-1-2k)\D)])\rightarrow 
\SheafHom(\E_0,\E_0),
\]
since $\tilde{\beta}_*[\A_0((j)\D)]$ is a subsheaf of $\E_0$ for every integer $j$, by 
Equation (\ref{eq-push-forward-of-End-V-same-as-that-of-End-V-minus-kD}).
This completes the construction of the homomorphism (\ref{eq-a-regular-homomorphism-extending-the-identity}),
and hence the proof of the regularity of $\tilde{m}_2$, as well as the construction of the multiplication $\tilde{m}$.
The axioms of an Azumaya algebra are satisfied by $\tilde{m}$, since they hold over the dense open subset $U$.
Hence, $\A$ is an Azumaya algebra over $\Y$.

\underline{Step 3:} We show next that $\E$ arrises via the construction of 
Corollary \ref{cor-pushforward-commutes-with-End}. This will complete the proof that $\E$ 
is a family of reflexive sheaves of Azumaya algebras over $\X^d_\pi\rightarrow \PP^1_\omega$.
Let $\V$ be a (possibly twisted) locally free sheaf over $\Y$, such that $\A$ is isomorphic to
$\SheafEnd(\V)$. Let $\W$ be the restriction of $\V$ to $\D$. 
Let $z$ be a point in the diagonal of the special fiber $X^d$ and $D_z$ the fiber of 
$\D$ over $z$. 
The assumption that $E$ satisfies Condition 
\ref{cond-open-and-closed} (\ref{cond-item-same-homogeneous-bundle})
states that the sheaf $\beta^*(F)/\rm{torsion}$ is  locally free over $Y$. 
The Azumaya algebra $A:=\SheafEnd(\beta^*(F)/\rm{torsion})$ is, 
by construction, the restriction of the Azumaya algebra 
$\A$ to $Y$. Hence, the restriction $W_z$ of $\V$ to $D_z$ is isomorphic to 
$W\otimes\StructureSheaf{D_z}(j)$, for some integer $j$, 
where $W$ is the restriction of $\beta^*(F)/\rm{torsion}$ to $D_z$. 
We may assume that $j=0$, possibly after replacing $\V$ by $\V(-j\D)$.
We have already seen that $\A$ restricts as the same Azumaya algebra $\SheafEnd(W)$
to the fibers of $p:\D\rightarrow \widetilde{\Delta}$, modulo pullback by automorphisms of the fibers.
Hence, $\V$ restricts as the same vector bundle $W$ to fibers of $p$, again 
modulo pullback by automorphisms of the fibers. 
The vector bundle $W$ is assumed to have the properties of 
Condition 
\ref{cond-open-and-closed} (\ref{cond-item-same-homogeneous-bundle}).
We conclude that $\V$ satisfies the assumptions of Corollary \ref{cor-pushforward-commutes-with-End}.
Set $\F:=\tilde{\beta}_*\V$. We conclude that $\F$ is a family of reflexive sheaves over
$\X^d_\pi\rightarrow \PP^1_\omega$ and $\beta_*\A\cong\SheafEnd(\F)$, by
Corollary \ref{cor-pushforward-commutes-with-End}. 
We already know that $\E$ is isomorphic to $\tilde{\beta}_*\A$. Consequently,
$\E$ is a family of reflexive sheaves of Azumaya algebras over 
$\X^d_\pi\rightarrow \PP^1_\omega$ (Definition \ref{def-families-of-reflexive-azumaya-algebras}).

The fibers
$\E_t$ satisfy Condition 
\ref{cond-open-and-closed} (\ref{cond-item-same-homogeneous-bundle}), 
except possibly the tightness condition,
for all $t\in \PP^1_\omega$,
since $(\tilde{\beta}^*\F)/\rm{torsion}$ is isomorphic to $\V(-m\D)$, 
by Lemma \ref{lemma-sufficient-conditions-for-reflexivity-over-pi}
(\ref{lemma-item-pullback-of-F-modulo-torsion-is-locally-free}). 
The tightness condition will be proven in section \ref{sec-local-triviality}.
\end{proof}

%
\subsection{Locally trivial families of reflexive Azumaya algebras}
\label{sec-local-triviality}
\hide{
\begin{lem}
\label{lemma-polystable-vector-bundles-of-the-same-slope}
Let $X$ be a K\"{a}hler manifold with  K\"{a}hler class $\omega$.
Let $V_1$ and $V_2$ be two $\omega$-polystable vector bundles of the same slope $\mu$ and
$f:V_1\rightarrow V_2$ a homomorphism. Then $\rm{Im}(f)$ and $\coker(f)$ are polystable vector bundles
of slope $\mu$. Furthermore, the short exact sequence 
$0\rightarrow \rm{Im}(f)\rightarrow V_2\rightarrow \coker(f)\rightarrow 0$ splits.
\end{lem}

\begin{proof}
The Lemma is well known. The case where $V_1$ is stable is \cite[Ch. 4 Prop. 7]{friedman}.
The general case follows by induction on the number of stable direct summands of $V_1$.
\end{proof}

\begin{lem}
\label{lemma-filtered-subsheaf-is-polystable}
Let $X$ be a K\"{a}hler manifold with  K\"{a}hler class $\omega$.
Let $V$ be an $\omega$-polystable vector bundle of slope $\mu$.
Let $\Gamma$ be a subsheaf of $V$ admitting a decreasing filtration 
\[
\Gamma=\Gamma_0\supset \Gamma_1 \supset \cdots \supset \Gamma_n\supset \Gamma_{n+1}=0,
\]
such that each graded summand $\Gamma_j/\Gamma_{j+1}$ is the image of a homomorphism
\[
\gamma_j:P_j\rightarrow V/\Gamma_{j+1},
\]
where $P_j$ is an $\omega$-polystable vector bundle of the same slope $\mu$ of $V$.
Then $\Gamma$ and $V/\Gamma$ are $\omega$-polystable vector bundles of slope $\mu$, 
and the following short exact sequence admits a splitting:
\begin{equation}
\label{eq-split-short-exact-seq-of-polystable-bundles}
0\rightarrow \Gamma\rightarrow V \rightarrow V/\Gamma\rightarrow 0.
\end{equation}
\end{lem}

\begin{proof}
It suffices to prove that $\Gamma$ is locally free and $\omega$-polystable of slope $\mu$. 
The statement would then follow from 
Lemma \ref{lemma-polystable-vector-bundles-of-the-same-slope} applied to the inclusion $\Gamma\rightarrow V$.
We prove these properties of $\Gamma$ by induction on the length $n$ of the filtration. 
The case $n=1$ follows from Lemma \ref{lemma-polystable-vector-bundles-of-the-same-slope}.
That Lemma also implies that $\Gamma_n$ and $V/\Gamma_n$ are $\omega$-polystable 
vector bundles of slope $\mu$ and 
the short exact sequence 
\[
0\rightarrow \Gamma_n\rightarrow V\rightarrow V/\Gamma_n\rightarrow 0
\]
admits a splitting $a_n:V/\Gamma_n\rightarrow V$.
Now $V/\Gamma_n$ admits a filtration of length $n-1$ with the above properties.
Hence, $\Gamma/\Gamma_n$ and 
$V/\Gamma\cong(V/\Gamma_n)/(\Gamma/\Gamma_n)$ are $\omega$-polystable of slope $\mu$
and the short exact sequence
\[
0\rightarrow \Gamma/\Gamma_n\rightarrow V/\Gamma_n\rightarrow V/\Gamma\rightarrow 0
\]
admits a splitting $a_{n-1}:V/\Gamma\rightarrow V/\Gamma_n$, by the induction hypothesis.
The composite homomorphism $a_n\circ a_{n-1}:V/\Gamma\rightarrow V$ provides the desired splitting of 
the short exact sequence (\ref{eq-split-short-exact-seq-of-polystable-bundles}).
Furthermore, $a_n:V/\Gamma_n\rightarrow V$ maps $\Gamma/\Gamma_n$ into $\Gamma$ providing a splitting of 
$0\rightarrow \Gamma_n\rightarrow\Gamma\rightarrow \Gamma/\Gamma_n\rightarrow 0$.
Hence, $\Gamma$ is locally free and $\omega$-polystable.
\end{proof}
}

Let $X$ be an irreducible holomorphic symplectic manifold,
$F$ a (possibly twisted) reflexive sheaf over $X^d$, and let $\beta:Y\rightarrow X^d$ be
the blow-up of the diagonal image $\Delta$ of $X$. 
Assume that $F$ satisfies Condition
\ref{cond-open-and-closed} (\ref{cond-item-same-homogeneous-bundle}), 
with the possible exception of the $\beta$-tightness condition.
Then $G:=(\beta^*F)/\rm{torsion}$ is locally free. 
Let $\D^1(G)$ be the sheaf of differential operators of order $\leq 1$ with scalar symbol.

\begin{lem}
\label{lemma-vanishing-of-sheaf-of-differential-operators}
\begin{enumerate}
\item
\label{lemma-item-sheaf-of-differential-operators-vanishes}
The sheaf $R^1\beta_*(\D^1(G))$ vanishes.
\item
\label{lemma-item-R-1-beta-pushforward-of-End-G-does-not-have-global-sections}
The vector space $H^0(X^d,R^1\beta_*\SheafEnd(G))$ vanishes.
\end{enumerate}
\end{lem}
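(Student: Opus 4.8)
The plan is to compute $R^{1}\beta_{*}$ of both sheaves fibrewise over the diagonal $\Delta\cong X$, by the method used in the proof of Lemma~\ref{lemma-push-forward-is-flat}. Set $I:=\StructureSheaf{Y}(-D)$, so $I|_{D}\cong\StructureSheaf{D/\Delta}(1)$ and $p\colon D\cong\PP(N_{\Delta/X^{d}})\to\Delta$. For a locally free sheaf $\mathcal{E}$ on $Y$, tensoring $0\to I^{n}/I^{n+1}\to\StructureSheaf{Y}/I^{n+1}\to\StructureSheaf{Y}/I^{n}\to 0$ by $\mathcal{E}$ and applying $R\beta_{*}$ gives $R^{i}\beta_{*}(\mathcal{E}_{n})\cong R^{i}\beta_{*}(\mathcal{E}_{n-1})$ for $i>0$ whenever $R^{i}p_{*}\bigl(\mathcal{E}|_{D}\otimes\StructureSheaf{D/\Delta}(n)\bigr)=0$ for all $i>0$; since $\mathcal{E}_{0}=\mathcal{E}|_{D}$ and $R^{1}\beta_{*}\mathcal{E}$ is coherent and set-theoretically supported on $\Delta$, Grothendieck's comparison theorem reduces the vanishing $R^{1}\beta_{*}\mathcal{E}=0$ to that of $R^{i}p_{*}\bigl(\mathcal{E}|_{D}\otimes\StructureSheaf{D/\Delta}(n)\bigr)$ for all $i>0$ and all $n\geq 0$. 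On a fibre $D_{x}\cong\PP(T_{x}X\otimes\ComplexNumbers^{d-1})$ one has $G|_{D_{x}}\cong W$ modulo $\Aut(D_{x})$ by Condition~\ref{cond-open-and-closed}(\ref{cond-item-same-homogeneous-bundle}), hence $\SheafEnd(G)|_{D_{x}}\cong\SheafEnd(W)=\SheafEnd_{0}(W)\oplus\StructureSheaf{D_{x}}$, while $\D^{1}(G)|_{D_{x}}$ sits in the restriction to $D_{x}$ of the scalar-symbol sequence, $0\to\SheafEnd(W)\to\D^{1}(G)|_{D_{x}}\to TY|_{D_{x}}\to 0$, and $TY|_{D_{x}}$ has a three-step filtration with graded pieces $TD_{x}$, $N_{D_{x}/D}\cong\StructureSheaf{D_{x}}^{\oplus\dim X}$ and $N_{D/Y}|_{D_{x}}\cong\StructureSheaf{D_{x}}(-1)$.

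For part~(\ref{lemma-item-sheaf-of-differential-operators-vanishes}): the twists by $\StructureSheaf{D_{x}}(n)$ with $n\geq 1$ have vanishing higher cohomology — for $\SheafEnd_{0}(W)$ by Equation~(\ref{eq-push-forward-of-End-V-minus-kD-is-flat}), and for $\StructureSheaf{D_{x}}$ and the graded pieces of $TY|_{D_{x}}$ by projective-space cohomology — so $R^{i}p_{*}\bigl(\D^{1}(G)|_{D}\otimes\StructureSheaf{D/\Delta}(n)\bigr)=0$ for $i>0$ and $n\geq 1$. For $n=0$ the sequence above yields an exact piece $H^{0}(TY|_{D_{x}})\to H^{1}(\SheafEnd(W))\to H^{1}(\D^{1}(G)|_{D_{x}})\to H^{1}(TY|_{D_{x}})=0$; the first map is surjective because its precomposition with $H^{0}(TD_{x})\hookrightarrow H^{0}(TY|_{D_{x}})$ is, by functoriality of the Atiyah class under restriction to $D_{x}$, the differential $H^{0}(TD_{x})\to\Ext^{1}(W,W)$ of the $\Aut(D_{x})$-action on $W$, which is surjective precisely by infinitesimal rigidity of $(D_{x},W)$. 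Hence $H^{1}(\D^{1}(G)|_{D_{x}})=0$, so $R^{1}p_{*}(\D^{1}(G)|_{D})=0$ by Grauert's base-change theorem, and $R^{1}\beta_{*}\D^{1}(G)=0$.

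For part~(\ref{lemma-item-R-1-beta-pushforward-of-End-G-does-not-have-global-sections}): the same reduction, using $H^{i}(\SheafEnd_{0}(W)(n))=0$ for $i>0$, $n\geq 1$ and $R^{i}p_{*}\StructureSheaf{D}=0$ for $i>0$, identifies $R^{1}\beta_{*}\SheafEnd(G)$ with $\zeta_{*}\mathcal{R}$, where $\zeta\colon X\cong\Delta\hookrightarrow X^{d}$ and $\mathcal{R}:=R^{1}p_{*}\SheafEnd_{0}(G|_{D})$ is locally free (Grauert again) with fibre $\Ext^{1}(W,W)\cong H^{1}(\SheafEnd_{0}(W))$ at each point; it remains to prove $H^{0}(X,\mathcal{R})=0$. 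First, $\Ext^{1}(W,W)$ has no trivial $\LieAlg{sp}(T_{x}X)$-summand: by infinitesimal rigidity and $\LieAlg{sl}(d-1)$-invariance it is a quotient of $\LieAlg{sl}(T_{x}X\otimes\ComplexNumbers^{d-1})/\bigl(\mathrm{id}\otimes\LieAlg{sl}(d-1)\bigr)$, and the decomposition $\End(T_{x}X)=\LieAlg{sp}(T_{x}X)\oplus\ComplexNumbers\cdot\mathrm{id}\oplus\wedge^{2}_{0}T_{x}X$ shows this last module to be isomorphic, as an $\LieAlg{sp}(T_{x}X)$-module, to $\LieAlg{sp}(T_{x}X)^{\oplus(d-1)^{2}}\oplus(\wedge^{2}_{0}T_{x}X)^{\oplus(d-1)^{2}}$, which has no trivial summand; by $\LieAlg{sp}(T_{x}X)$-equivariance the relevant kernel is an $\LieAlg{sp}(T_{x}X)$-submodule, so semisimplicity passes the property to the quotient. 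Second, the local triviality Condition~\ref{cond-open-and-closed}(\ref{cond-item-same-homogeneous-bundle}\ref{condition-item-local-triviality}), applied to the family $G|_{D}$ over $X$, identifies $\mathcal{R}$ with the holomorphic bundle associated to $T_{X}$ and its symplectic form $\sigma$ by the $\LieAlg{sp}(T_{x}X)$-module $\Ext^{1}(W,W)$; hence $\mathcal{R}$ is a direct sum of copies of $S^{2}T_{X}$ and of the primitive subbundle $\wedge^{2}_{0}T_{X}\subset\wedge^{2}T_{X}$, with no summand isomorphic to $\StructureSheaf{X}$. Finally $H^{0}(X,S^{2}T_{X})\cong H^{0}(X,S^{2}\Omega^{1}_{X})=0$ and $H^{0}(X,\wedge^{2}_{0}T_{X})\cong H^{0}(X,\wedge^{2}_{0}\Omega^{1}_{X})=0$ for an irreducible holomorphic symplectic $X$ — the latter because $H^{2,0}(X)$ is spanned by $\sigma$, the former because holomorphic tensors on a compact Ricci-flat K\"{a}hler manifold are parallel and hence $\LieAlg{sp}$-invariant — so $H^{0}(X,\mathcal{R})=0$.

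The formal-functions reduction and part~(\ref{lemma-item-sheaf-of-differential-operators-vanishes}) are routine once infinitesimal rigidity is used. The main obstacle lies in part~(\ref{lemma-item-R-1-beta-pushforward-of-End-G-does-not-have-global-sections}): the step that promotes the fibrewise $\LieAlg{sp}(T_{x}X)$-decomposition of $\Ext^{1}(W,W)$ to the global identification of $\mathcal{R}$ as a bundle tensorially built from $(T_{X},\sigma)$ — which is precisely what the local triviality condition is designed to supply — together with the verification that the resulting $\LieAlg{sp}$-homogeneous summands $S^{2}T_{X}$ and $\wedge^{2}_{0}T_{X}$ carry no nonzero holomorphic section.
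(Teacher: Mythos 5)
Your part~(\ref{lemma-item-sheaf-of-differential-operators-vanishes}) is correct but follows a different route from the paper's: you push everything to the fibres $D_x$ and invoke formal functions, whereas the paper works with the global exact sequence of direct images
$0\to\beta_*\SheafEnd(G)\to\beta_*\D^1(G)\to\beta_*TY\to R^1\beta_*\SheafEnd(G)\to R^1\beta_*\D^1(G)\to 0$
on $X^d$ and proves the connecting map $\gamma$ is surjective. Both arguments turn on the same input, namely that infinitesimal rigidity of $(D_x,W)$ makes $H^0(TD_x)\to\Ext^1(W,W)$ surjective, and your fibrewise version is legitimate (note that only the $i=1$ vanishing of $R^ip_*$ is actually needed for the induction, which is all you verify at the untwisted level). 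The paper's formulation has the practical advantage of producing the surjection $a:\SheafEnd_0(N_{\Delta/X^d})\to R^1p_*(\SheafEnd(e^*G))$ on the nose, which is reused in part~(\ref{lemma-item-R-1-beta-pushforward-of-End-G-does-not-have-global-sections}) and again in Proposition~\ref{prop-vanishing-of-global-sections-of-extension-sheaf}.

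Part~(\ref{lemma-item-R-1-beta-pushforward-of-End-G-does-not-have-global-sections}) has a genuine gap at the globalization step. Your fibrewise computation — that $\Ext^1(W,W)$ is a quotient of $\bigl(\LieAlg{sp}(T_xX)\oplus\wedge^2_0T_xX\bigr)^{\oplus(d-1)^2}$ by an $\LieAlg{sp}(T_xX)$-submodule and therefore contains no trivial summand — is correct. But the assertion that Condition \ref{cond-open-and-closed} (\ref{cond-item-same-homogeneous-bundle}\ref{condition-item-local-triviality}) identifies $\mathcal{R}$ with the bundle associated to $(T_X,\sigma)$ by the module $\Ext^1(W,W)$, hence that $\mathcal{R}$ is a direct sum of copies of $S^2T_X$ and $\wedge^2_0T_X$, does not follow: that condition concerns trivializations of $\W$ in the direction of the base $B$ of a family $\pi:\X\to B$, and for a single $X$ (base a point) it is vacuous; it says nothing about how the fibrewise isotypic decomposition of $\Ext^1(W_x,W_x)$ glues over $X$, nor does it supply a reduction of the structure group of $G|_D$ to $Sp(T_xX)$. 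What you actually know is that $\mathcal{R}=\Im(a)$ is a locally free quotient of the polystable slope-zero bundle $\bigl(S^2T_X\oplus\wedge^2_0T_X\bigr)^{\oplus(d-1)^2}$, and a quotient of a polystable slope-zero bundle with no global sections can perfectly well acquire sections — unless the quotient again has slope zero, in which case it splits off as a direct summand. The missing ingredient is therefore $\deg\ker(a)=0$, and this is exactly how the paper closes the argument: $\SheafEnd_0(N_{\Delta/X^d})$ is polystable of slope zero because $TX$ is stable; $\ker(a)$ has degree zero (this is where the $\LieAlg{sp}(T_xX)$-equivariance is used); hence $\SheafEnd_0(N_{\Delta/X^d})=\ker(a)\oplus\Im(a)$, and since every global section of $\SheafEnd_0(N_{\Delta/X^d})$ lies in the trivial subbundle $\LieAlg{sl}(d-1)\subseteq\ker(a)$ (again by stability of $TX$), one gets $H^0(\Im(a))=0$. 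Your final vanishings $H^0(S^2T_X)=H^0(\wedge^2_0T_X)=0$ are fine, but they do not suffice without the splitting.
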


\begin{proof}
(\ref{lemma-item-sheaf-of-differential-operators-vanishes})
Consider the short exact sequence of the symbol map
\[
0\rightarrow \SheafEnd(G)\rightarrow \D^1(G)\rightarrow TY\rightarrow 0.
\]
Let $e:D\rightarrow Y$ be the inclusion of the exceptional divisor $D:=\PP (N_{\Delta/X^d})$
and let $p:D\rightarrow \Delta$ be the natural morphism.
Let $\iota:\Delta\rightarrow X^d$ be the inclusion.
We have the short exact sequence 
\begin{equation}
\label{eq-short-exact-seq-of-differential-of-beta}
0\rightarrow  TY \rightarrow \beta^*T[X^d]\rightarrow e_*Q\rightarrow 0,
\end{equation}
where $Q:=(p^*N_{\Delta/X^d})/\StructureSheaf{D}(-1)$ is the tautological quotient bundle
(this is well known and it will be proven in detail in Diagram (\ref{diag-1})). 
Hence, $\beta_*(e_*Q)$ is isomorphic to $\iota_*N_{\Delta/X^d}$
and the homomorphism $\beta_*(\beta^*T[X^d])\rightarrow \beta_*(e_*Q)$ is surjective. 
The sheaf $R^1\beta_*(\beta^*T[X^d])$ vanishes.
The sheaf $R^1\beta_*TY$ thus vanishes. 
We get the exact sequences
\begin{equation}
\label{eq-gamma}
0\rightarrow \beta_*\SheafEnd(G)\rightarrow \beta_*\D^1(G)\rightarrow
\beta_*TY\RightArrowOf{\gamma} R^1\beta_*(\SheafEnd(G))\rightarrow 
R^1\beta_*(\D^1(G)) \rightarrow 0,
\end{equation}
\[
0\rightarrow \beta_*TY \rightarrow T[X^d]\rightarrow \iota_*N_{\Delta/X^d}\rightarrow 0.
\]
The latter 
is obtained via push-forward of the short exact sequence 
(\ref{eq-short-exact-seq-of-differential-of-beta}) above. 

It remains to prove that the homomorphism $\gamma$ in Equation (\ref{eq-gamma}) 
above is surjective. 
Consider the commutative diagram with short exact rows:
\[
\xymatrix{
0 \ar[r] & (\beta_*TY)\cdot I_\Delta \ar[d]\ar[r] & T[X^d]\otimes I_\Delta\ar[r]\ar[d]&
\iota_*\left[N_{\Delta/X^d}\otimes N^*_{\Delta/X^d}\right]\ar[r]\ar[d]& 0
\\
0 \ar[r] & \beta_*TY \ar[r] & T[X^d] \ar[r]&\iota_* N_{\Delta/X^d}\ar[r] & 0.
}
\]
The left and middle vertical homomorphisms are injective, and the right one vanishes. 
The snake lemma yields the long exact sequence
\[
0\rightarrow \iota_*\left[N_{\Delta/X^d}\otimes N^*_{\Delta/X^d}\right] 
\rightarrow \iota_*\iota^*\beta_*TY\rightarrow \iota_*\iota^*T[X^d]\rightarrow \iota_* N_{\Delta/X^d}\rightarrow 0.
\]
We get the short exact sequence over $\Delta$
\begin{equation}
\label{eq-short-exact-seq-of-push-forward-of-TY-restricted-to-the-diagonal}
0\rightarrow N_{\Delta/X^d}\otimes N^*_{\Delta/X^d}
\rightarrow \iota^*\beta_*TY\rightarrow T\Delta
\rightarrow 0.
\end{equation}

The restriction homomorphism $TY\rightarrow e_*e^*TY$ induces the homomorphism
\begin{equation}
\label{eq-pushforward-of-TY-via-beta-surjects}
\beta_*(TY)\rightarrow \beta_*e_*e^*TY=\iota_*p_*e^*TY.
\end{equation}
We claim that the latter homomorphism is surjective. Indeed $e^*TY$ fits in the short exact sequence 
\[
0\rightarrow TD\rightarrow e^*TY\rightarrow \StructureSheaf{D}(D)\rightarrow 0.
\]
Now $p_*(\StructureSheaf{D}(D))$ vanishes. Hence, $p_*e^*TY$ is isomorphic to
$p_*TD$. The tangent bundle of the exceptional divisor fits in
\[
0\rightarrow T_p\rightarrow TD \rightarrow p^*T\Delta\rightarrow 0.
\]
The sheaf $R^1p_*T_p$ vanishes. We get the short exact sequence
\[
0\rightarrow p_*T_p\rightarrow p_*e^*TY\rightarrow T\Delta\rightarrow 0.
\]
Note that $p_*T_p$ is naturally isomorphic
to $\SheafEnd_0(N_{\Delta/X^d})$.
We conclude the surjectivity of (\ref{eq-pushforward-of-TY-via-beta-surjects})
from comparison of the short exact sequence above to the short exact sequence
(\ref{eq-short-exact-seq-of-push-forward-of-TY-restricted-to-the-diagonal}).

The sheaf $R^i\beta_*(\SheafEnd(G)(-D))$ vanishes for $i>0$, by Equation 
(\ref{eq-push-forward-of-End-V-minus-kD-is-flat}) in
Condition \ref{cond-open-and-closed}. 
We get the isomorphism
\begin{equation}
\label{eq-first-higher-direct-image-of-End-G-via-beta-and-via-p-are-equal}
R^1\beta_*\SheafEnd(G)\cong \iota_*R^1p_*(\SheafEnd(e^*G))
\end{equation}
from the short exact sequence 
\[
0\rightarrow \SheafEnd(G)(-D)
\rightarrow \SheafEnd(G) \rightarrow e_*e^*\SheafEnd(G)\rightarrow 0.
\]
We conclude that the homomorphism $\gamma$ factors through the pushforward $\iota_*(\tilde{\gamma})$ of 
a homomorphism
$\tilde{\gamma}: p_*e^*TY\rightarrow R^1p_*(\SheafEnd(e^*G))$, via the surjective homomorphism 
(\ref{eq-pushforward-of-TY-via-beta-surjects}). It suffices to prove that $\tilde{\gamma}$ is surjective.
We have seen that $p_*e^*TY$ is naturally isomorphic to $p_*TD$. 
Denote by
\[
\tilde{a}:p_*T\D\rightarrow R^1p_*(\SheafEnd(e^*G))
\]
the pullback of $\tilde{\gamma}$. Clearly, $\tilde{a}$ corresponds to the infinitesimal pullback action on $e^*G$
of local automorphisms of the exceptional divisor $D$.

Let $W_z$ be the restriction of $G$ to the fiber $D_z$ of $p$ over $z\in \Delta$.
The assumed infinitesimal rigidity of the pairs $(D_z,W_z)$ in 
Condition \ref{cond-open-and-closed} (\ref{cond-item-same-homogeneous-bundle})
implies that $R^1p_*(\SheafEnd(e^*G))$ is locally free and 
the homomorphism
\begin{equation}
\label{eq-differential-a}
a:\SheafEnd_0(N_{\Delta/X^d})\RightArrowOf{} R^1p_*(\SheafEnd(e^*G))
\end{equation}
is surjective, where $a$ is the infinitesimal action of the automorphism groups of the fibers of $p$. 
Now $a$ is the restriction of $\tilde{a}$ and so $\tilde{a}$ is surjective as well. 
It follows that $\tilde{\gamma}$ is surjective, which implies that so is $\gamma$, by the reduction
established above. 

(\ref{lemma-item-R-1-beta-pushforward-of-End-G-does-not-have-global-sections})
The vector bundle $N_{\Delta/X^d}$ is isomorphic to $T\Delta\otimes_\ComplexNumbers \ComplexNumbers^{d-1}$
and thus  $SL(d-1)$ naturally embeds in the automorphism group of the normal bundle.
The bundle $\ker(a)$ contains the trivial $\LieAlg{sl}(d-1)$ subbundle of 
$\SheafEnd_0(N_{\Delta/X^d})$, by the assumed invariance of the bundles $W_z$
in Condition \ref{cond-open-and-closed} (\ref{cond-item-same-homogeneous-bundle}).
It follows that the homomorphism $H^0(\Delta,\ker(a))\rightarrow H^0(\Delta,\SheafEnd_0(N_{\Delta/X^d}))$
is surjective. The vector bundle $TX$ of an irreducible holomorphic symplectic manifold is stable
(with respect to every K\"{a}hler class). 
Hence, 
$\SheafEnd_0(N_{\Delta/X^d})$ is polystable. The kernel $\ker(a)$ is a subbundle of degree $0$,
by the $\LieAlg{sp}(T_xX)$-equivariance assumption in 
Condition \ref{cond-open-and-closed} (\ref{cond-item-same-homogeneous-bundle}).
Thus, $\SheafEnd_0(N_{\Delta/X^d})$ decomposes as the direct sum of the kernel and image of $a$. 
We conclude the vanishing of $H^0(R^1p_*(\SheafEnd(e^*G)))$. The vanishing of
$
H^0(X^d,R^1\beta_*\left(\SheafEnd(G)\right))
$
follows from Equation (\ref{eq-first-higher-direct-image-of-End-G-via-beta-and-via-p-are-equal}).
\hide{
The sheaf $R^1p_*(\SheafEnd(e^*G))$ is a polystable vector bundle over $\Delta$,
by the splitting of the short exact sequence (\ref{eq-differential-a}).
We have seen that $R^1\beta_*(\SheafEnd(G))$ is isomorphic to $\iota_*R^1p_*(\SheafEnd(e^*G))$.
Hence, the homomorphism $\gamma:\beta_*TY\rightarrow R^1\beta_*(\SheafEnd(G))$,
given in Equation (\ref{eq-gamma}), is equal to the pushforward $\iota_*(\tilde{\gamma})$,
for some  homomorphism
$\tilde{\gamma}:\iota^*(\beta_*TY)\rightarrow R^1p_*(\SheafEnd(e^*G))$. 
In particular, the image of $\gamma$ is the push-forward of the image of $\tilde{\gamma}$.
The vanishing of $H^0(X^d,R^1\beta_*(\D^1(G)))$ would follow form that of 
$H^0(\Delta,R^1p_*(\SheafEnd(e^*G)))$, 
once we show that the image of
$\tilde{\gamma}$ is a direct summand of $R^1p_*(\SheafEnd(e^*G))$.

Consider the descending filtration of the sheaf 
$\beta_*TY$ by 
\[
\beta_*TY\supset T[X^d]\otimes I_\Delta \supset T[X^d]\otimes I_\Delta^2 \supset \cdots 
\]
}
\end{proof}

We keep the notation of Proposition \ref{prop-flatness-of-the-twistor-deformation}.
In particular, $\E$ is the hyperholomorphic deformation of $E$ over the $d$-th fiber product of the twistor family
$\pi:\X\rightarrow\PP^1_\omega$. Let $\F$ be a family of reflexive sheaves over
$\varphi:\X^d_\pi\rightarrow \PP^1_\omega$, such that $\E\cong \SheafEnd(\F)$.
Let $0\in \PP^1_\omega$ be the point corresponding to the complex structure of $X$,
so that $\E_0$ is $E$. We have the blow-up morphism
$\tilde{\beta}:\Y\rightarrow \X^d_\pi$ and $\F$ is isomorphic to
$\tilde{\beta}_*\V$, where $\V$ is a locally free sheaf over $\Y$, such that $\tilde{\beta}^*\F/{\rm torsion}$
is isomorphic to $\V(-m\D)$, where $\D$ is the exceptional divisor and $m$ is the integer in
Equation (\ref{eq-push-forward-of-V-same-as-that-of-V-minus-mD}). Let $\W$ be the restriction of $\V$ to $\D$.

\begin{prop}
\label{prop-local-triviality}
Assume that $E$ satisfies Condition \ref{cond-open-and-closed}
(\ref{cond-item-same-homogeneous-bundle}), with the possible exception of the $\beta$-tightness condition. 
Then the family $\E$ is locally, in the topology of $\X^d_\pi$, trivial over $\PP^1_\omega$.
In other words, for every point $x\in\X^d_\pi$ there are open neighborhoods $U$ of $x$ and
$\overline{U}$ of $\varphi(x)$, an isomorphism $f:U\rightarrow \overline{U}\times (U\cap \X_{\varphi(x)}^d)$, 
such that $\varphi\circ f^{-1}$ is the projection to $\overline{U}$, 
and an isomorphism
\[
\tilde{f}:\iota_U^*\E\rightarrow \pi_2^*\iota_{\overline{U}}^*\E,
\]
where $\iota_U:U\rightarrow \X^d_\pi$ and $\iota_{\overline{U}}:U\cap \X_{\varphi(x)}^d\rightarrow \X^d_\pi$
are the inclutions and $\pi_2$ is the projection from $\overline{U}\times (U\cap \X_{\varphi(x)}^d)$ onto the second factor.
\end{prop}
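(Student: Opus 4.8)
The statement is local on $\X^d_\pi$, so fix $x$ and set $t_0:=\varphi(x)$. First I would record the local geometry: the relative diagonal $\widetilde{\Delta}\subset\X^d_\pi$ is smooth and proper over $\PP^1_\omega$ and is canonically the twistor family $\X\rightarrow\PP^1_\omega$ of $X$, with normal bundle $T_\pi^{\oplus(d-1)}$; hence the exceptional divisor $\D$ of $\tilde{\beta}:\Y\rightarrow\X^d_\pi$ is $\PP(T_\pi^{\oplus(d-1)})$ over $\widetilde{\Delta}\cong\X$, with structure morphism $p:\D\rightarrow\widetilde{\Delta}$ — exactly the situation of Condition \ref{cond-open-and-closed} (\ref{cond-item-same-homogeneous-bundle}\ref{condition-item-local-triviality}). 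If $x\notin\widetilde{\Delta}$, then near $x$ the sheaf $\E$ is locally free and hyperholomorphic, and the parallel transport of its hyperholomorphic connection along the twistor directions simultaneously trivializes the family $\X^d_\pi\rightarrow\PP^1_\omega$ and produces the isomorphism $\widetilde{f}$ of $\E$ with the constant family; this is the locally free case, which is essentially Verbitsky's theory of hyperholomorphic bundles. I would therefore concentrate on $x\in\widetilde{\Delta}$.

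For such $x$ the plan is to choose a small Stein $U\ni x$ and a disk $\overline{U}\ni t_0$ and apply Ehresmann's theorem to the pair $(\X^d_\pi,\widetilde{\Delta})$ to obtain $f:U\rightarrow\overline{U}\times(U\cap\X^d_{t_0})$ compatible with $\varphi$ and carrying $\widetilde{\Delta}\cap U$ to the corresponding product; since blowing up commutes with this product structure, $f$ lifts to $\hat{f}:\tilde{\beta}^{-1}(U)\rightarrow\overline{U}\times(\tilde{\beta}^{-1}(U)\cap\Y_{t_0})$ carrying $\D$ to a product. By Proposition \ref{prop-flatness-of-the-twistor-deformation} one has $\E\cong\tilde{\beta}_*\SheafEnd(\V)$ with $\V$ locally free over $\Y$, and $\tilde{\beta}_*$ commutes with base change along $f$ by Lemma \ref{lemma-sufficient-conditions-for-reflexivity-over-pi} (\ref{lemma-item-push-forward-F-is-a-family-of-reflexive-sheaves}) (as in the proof of Lemma \ref{lemma-push-forward-of-End-V-is-End-F}). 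It thus suffices to produce, compatibly with $\hat{f}$, an isomorphism of $\V$ on a neighborhood of $\D$ with the $\hat{f}$-constant family (the pullback of $\V|_{\tilde{\beta}^{-1}(U)\cap\Y_{t_0}}$) over $\overline{U}$, working in a chart where the twist of $\V$ is trivial — harmless, since the conclusion concerns the honest sheaf $\tilde{\beta}_*\SheafEnd(\V)$ — and then to push the isomorphism forward.

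There are two sources of such a trivialization. On $\tilde{\beta}^{-1}(U)\setminus\D=U\setminus\widetilde{\Delta}$ the sheaf $\SheafEnd(\V)=\E$ is locally free hyperholomorphic, so Verbitsky's connection yields an isomorphism $\psi_1$ of $\V$ with the constant family there, compatible with $\hat{f}$. On $\D$, the restriction $\W:=\V|_\D$ restricts to each fibre of $p$ as a bundle in the orbit $[W]$ — this is Step 3 of the proof of Proposition \ref{prop-flatness-of-the-twistor-deformation} — so Condition \ref{cond-open-and-closed} (\ref{cond-item-same-homogeneous-bundle}\ref{condition-item-local-triviality}), applied to the twistor family $\X\rightarrow\PP^1_\omega$, gives after shrinking $U$ an isomorphism $\psi_0$ of $\W$ with the constant family, compatible with $\hat{f}|_\D$. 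Theorem \ref{thm-verbitsky-8.15} shows that $\psi_1$ and $\psi_0$ both descend from the same bundles on quaternionic projective space pulled back along the maps $q_I$; in particular $\V|_\D$ and the restriction of the constant family to $\D$ are identified without a twist by $\StructureSheaf{\Y}(\D)$, so $\psi_1$, regarded as a meromorphic isomorphism between $\V$ and the constant family near $\D$, has no pole along $\D$ and extends across it, restricting on $\D$ to $\psi_0$. The higher-order matching along $\D$ and the uniqueness of the extension are controlled by the vanishings $R^ip_*(\SheafEnd(\W)\otimes\StructureSheaf{\D}(-j\D))=0$ for $i>0$, $j\geq1$ — Equation (\ref{eq-push-forward-of-End-V-minus-kD-is-flat}) — and by the relative versions of $R^1\tilde{\beta}_*\D^1(\V)=0$ and $H^0(\widetilde{\Delta},R^1\tilde{\beta}_*\SheafEnd(\V))=0$ from Lemma \ref{lemma-vanishing-of-sheaf-of-differential-operators}, which is where the $\LieAlg{sl}(d-1)$-invariance and $\LieAlg{sp}$-equivariance hypotheses enter; since $\tilde{\beta}^{-1}(U)\rightarrow U$ is proper with $U$ Stein, the formal extension is effective. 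Applying $\SheafEnd$, pushing forward by $\tilde{\beta}$ and invoking the base-change compatibility once more finishes the case $x\in\widetilde{\Delta}$.

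The main obstacle is this patching across $\D$: fusing the trivialization $\psi_0$ of $\W=\V|_\D$ coming from Condition \ref{cond-open-and-closed} (\ref{cond-item-same-homogeneous-bundle}\ref{condition-item-local-triviality}) with the trivialization $\psi_1$ of $\V$ off $\D$ coming from Verbitsky into one trivialization of $\V$ on a neighborhood of $\D$. Bounding the pole order of $\psi_1$ along $\D$ genuinely uses the precise form of Theorem \ref{thm-verbitsky-8.15}, and the higher-order terms together with the exit from the formal category use precisely the cohomological vanishings of Condition \ref{cond-open-and-closed} (\ref{cond-item-same-homogeneous-bundle}) and of Lemma \ref{lemma-vanishing-of-sheaf-of-differential-operators}. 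The locally free case, the Ehresmann lift to the blow-up, and the push-forward are routine, and the $\beta$-tightness hypothesis plays no role.
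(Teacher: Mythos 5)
Your overall shape is recognizably the right one — reduce to diagonal points, feed in the trivialization of $\W$ along $\D$ supplied by Condition \ref{cond-open-and-closed} (\ref{cond-item-same-homogeneous-bundle}\ref{condition-item-local-triviality}), and use cohomological vanishing to propagate it from $\D$ into $\Y$ — but the mechanism you propose for the propagation has two genuine gaps. First, the trivialization $\psi_1$ you want off $\D$ does not exist as described: the hyperholomorphic connection identifies the underlying $C^\infty$ bundles of the $\E_t$, but its $(0,1)$-parts vary with $t$, so parallel transport is not a holomorphic isomorphism with the constant family. (Local triviality of $\E$ away from $\widetilde{\Delta}$ is true, but only for the soft reason that a locally free sheaf is locally isomorphic to $\StructureSheaf{}^{\oplus r^2}$; there is no canonical global-on-$U\setminus\widetilde{\Delta}$ trivialization to extend across $\D$, and even granting a meromorphic one, Theorem \ref{thm-verbitsky-8.15} identifies the restrictions to $\D$ only up to a twist by $\StructureSheaf{\Y}(j\D)$ and up to automorphisms of the fibers of $p$, which does not bound the pole order of a \emph{given} map.) Second, your exit from the formal category — ``since $\tilde{\beta}^{-1}(U)\rightarrow U$ is proper with $U$ Stein, the formal extension is effective'' — is not a theorem. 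The paper needs the Flenner--Kosarew result \cite{flenner-kosarew} that the maximal trivializing locus in the base is an analytic subspace, together with one-dimensionality of $\overline{U}$, to convert triviality to all finite orders into triviality over an open neighborhood; some such input (or an Artin-approximation argument) is unavoidable here.

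The paper's actual argument replaces your ``extend $\psi_1$ across $\D$'' step by an induction on the order $k$ of the infinitesimal neighborhood $S_k$ of $t$: the restriction of $\V$ to $\widetilde{U}_k$ is a class $c_k$ in a nonabelian $H^1$, the fibers of the restriction $H^1(\psi_{k,*}GL_r)\rightarrow H^1(\psi_{k-1,*}GL_r)$ are torsors under $H^1(\Y_t\cap\widetilde{U},\SheafEnd(\V_0))$, the difference class between $c_k$ and the constant class dies on $\D$ because of the trivialization $\tilde{f}_1$ of $\W$, and the restriction $H^1(\Y_t\cap\widetilde{U},\SheafEnd(\V_0))\rightarrow H^1(\D_t\cap\widetilde{U},\SheafEnd(\W_0))$ is \emph{injective} because $U$ is Stein and $R^1\beta_*\SheafEnd(\V_0)\cong\iota_*R^1p_*\SheafEnd(\W_0)$ (Equation (\ref{eq-first-higher-direct-image-of-End-G-via-beta-and-via-p-are-equal})). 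That injectivity is the substantive cohomological input; it is not Lemma \ref{lemma-vanishing-of-sheaf-of-differential-operators}, which is a global statement over the compact $X^d$ used for rigidity and plays no role in this local argument. You are right that $\beta$-tightness is not used. To repair your proof you would need to (i) drop the connection-based $\psi_1$ and instead compare cocycles order by order, and (ii) supply the formal-to-convergent step via \cite{flenner-kosarew} or an equivalent.
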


\begin{proof}
$\E$ is locally free away from the diagonal. Hence, it suffices to prove the statement locally around
diagonal points of $\X^d_\pi$.
For every point $x\in \X$ there exists an open neighborhood $U_1\subset \X$, an open contractible 
neighborhood $\overline{U}\subset \PP^1_\omega$ of $\pi(x)$, an isomorphism
$f_1:\overline{U}\times (U_1\cap \X_{\pi(x)})\rightarrow U_1$, 
such that $\pi\circ f_1$ is the projection onto $\overline{U}$,
and an isomorphism
\begin{equation}
\label{eq-local-trivialization-of-W}
\tilde{f}_1:\pi_1^*\left(\restricted{\W}{p^{-1}(U_1\cap\X_{\pi(x)})}\right)\rightarrow \restricted{\W}{p^{-1}(U_1)}
\end{equation}
over the open subset $p^{-1}(U_1)$ of $\D$, as in Equation
(\ref{eq-trivializing-isomorphism-of-vector-bundles}), by Condition \ref{cond-open-and-closed} 
(\ref{cond-item-same-homogeneous-bundle}\ref{condition-item-local-triviality}).
We get the open neighborhood $U:=(U_1)^d_\pi$ of the diagonal image of $x$ in $\X^d_\pi$. 
We may assume that $U_1$ and $\overline{U}$ are Stein. Then so is $U$.

A theorem of Flenner and Kosarew states that
for each $t\in \overline{U}$ there exists a maximal analytic subspace $S\subset \overline{U}$ containing $t$, 
such that the restriction of $\E$ to the subspace $\varphi^{-1}(S)\cap U$ is isomorphic to a trivial
family, i.e., to the pullback of a coherent sheaf over $\X_t^d\cap U$ via the projection
$U\rightarrow \X_t^d\cap U$ induced by $f_1$ \cite[Remark 5.4(2)]{flenner-kosarew}. Now $\overline{U}$
is smooth and one-dimensional. Hence, either $S$ is a fat subscheme supported on the point $t$,
or $S$ contains an open neighborhood of $t$ in $\overline{U}$. 
Let $z$ be a local parameter at $t$ in $\overline{U}$ and let
$S_k\subset \overline{U}$ be ${\rm Spec}(\ComplexNumbers[z]/(z^{k+1}))$.
It suffices to prove that the restriction of $\E$ to the subspace $\varphi^{-1}(S_k)\cap U$ 
is isomorphic to a trivial family, for all $k>0$.


Set $\widetilde{U}:=\tilde{\beta}^{-1}(U)$. The trivialization $f_1$ of $U_1\rightarrow \overline{U}$
induces a trivialization of the blow-up $f:\overline{U}\times (\Y_{t}\cap \widetilde{U})\rightarrow \widetilde{U}$,
such that $\varphi\circ\tilde{\beta}\circ f$ is the projection onto $\overline{U}$.
Let $\psi:\widetilde{U}\rightarrow \Y_t\cap \widetilde{U}$ be the projection induced by $f$.
We have the left exact sequence of set valued sheaves
\[
0\rightarrow 
H^1(\Y_t\cap \widetilde{U},\psi_*GL_r(\StructureSheaf{\widetilde{U}})) \rightarrow 
H^1(\widetilde{U},GL_r(\StructureSheaf{\widetilde{U}})) \rightarrow 
H^0(\Y_t\cap \widetilde{U},R^1\psi_*GL_r(\StructureSheaf{\widetilde{U}})).
\]
The sheaf $R^1\psi_*GL_r(\StructureSheaf{\widetilde{U}}))$ is trivial, since the inverse image
via $\psi$ of a contractible Stein open subset is contractible and Stein, and every vector bundle over a 
contractible Stein manifold is trivial, by Grauert's Theorem \cite[Theorem A]{cartan}. 
Let $\V$ be a vector bundle over $\Y$, such that $\F$ is isomorphic to $\beta_*\V$ and $\W$ 
is the restriction of $\V$ to $\D$. 
We conclude that the restriction of $\V$ to $\widetilde{U}$ is represented by a cohomology class in 
$H^1(\Y_t\cap \widetilde{U},\psi_*GL_r(\StructureSheaf{\widetilde{U}}))$.

Set $\widetilde{U}_k:=\widetilde{U}\cap (\varphi\circ\tilde{\beta})^{-1}(S_k)$. 
Denote the restriction of $\psi$ by 
$
\psi_k:\widetilde{U}_k\rightarrow (\Y_t\cap \widetilde{U}).
$
Then the restriction of $\V$ to $\widetilde{U}_k$ is represented by a class $c_k$ in 
$H^1(\Y_t\cap \widetilde{U},\psi_{k,*}GL_r(\StructureSheaf{\widetilde{U}_k}))$.
It suffices to prove that $c_k$ belongs to the image of 
$H^1(\Y_t\cap \widetilde{U},GL_r(\StructureSheaf{\Y_t\cap\widetilde{U}}))$
via the pullback sheaf homomorphism
\[
\psi_k^* : GL_r(\StructureSheaf{\Y_t\cap\widetilde{U}})\rightarrow 
\psi_{k,*}GL_r(\StructureSheaf{\widetilde{U}_k}).
\]
The proof is by induction on $k$. The case $k=0$ is clear. 
Assume that $k>0$ and the statement holds for $k-1$.
Denote by $\V_0$ the restriction of $\V$ to $\Y_t\cap\widetilde{U}$. Note that $\V_0$ 
is represented by the class $c_0$.
We have the short exact sequence
\[
0\rightarrow 
z^k\cdot \LieAlg{gl}_r(\StructureSheaf{\widetilde{U}})\rightarrow 
\psi_{k,*}GL_r(\StructureSheaf{\widetilde{U}_k})\rightarrow
\psi_{k-1,*}GL_r(\StructureSheaf{\widetilde{U}_{k-1}}) \rightarrow 0.
\]
Two $1$-cocycles $\theta_k$ and $\tilde{\theta}_k$ 
of the sheaf $\psi_{k,*}GL_r(\StructureSheaf{\widetilde{U}_k})$
map to the same cocycle $\theta_{k-1}$ 
of the sheaf $\psi_{k-1,*}GL_r(\StructureSheaf{\widetilde{U}_{k-1}})$,
which restricts to $\Y_t\cap\widetilde{U}$ as 
a $1$-cocycle $\theta_0$ representing $\V_0$, 
if and only if  $\theta_k$ and $\tilde{\theta}_k$
differ by a $z^k$ multiple of a $1$-cocycle of $\SheafEnd(\V_0)$. On the level of cohomology one checks 
that fibers of the restriction map
\[
H^1(\Y_t\cap\widetilde{U},\psi_{k,*}GL_r(\StructureSheaf{\widetilde{U}_k}))\rightarrow
H^1(\Y_t\cap\widetilde{U},\psi_{k-1,*}GL_r(\StructureSheaf{\widetilde{U}_{k-1}}))
\]
are $H^1(\Y_t\cap\widetilde{U},\SheafEnd(\V_0))$-torsors.
The classes $c_k$ and $\psi_k^*c_0$ belong to the same fiber, by the induction hypothesis.

Let $\bar{\psi}_k:\D\cap \widetilde{U}_k\rightarrow \D_t\cap \widetilde{U}_k$ be the restriction of
$\psi_k$.
Let $\bar{c}_k$ be the restriction of the class $c_k$ to 
$H^1(\D_t\cap \widetilde{U},\bar{\psi}_{k,*}GL_r(\StructureSheaf{\D\cap \widetilde{U}}))$. 
The classes $\bar{c}_k$ and $\bar{\psi}_k^*\bar{c}_0$ are {\em equal}, since 
we have the trivialization $\tilde{f}_1$ given in (\ref{eq-local-trivialization-of-W}).
Hence, the difference between $c_k$ and $\psi_k^*c_0$ is a class in the kernel of the restriction homomorphism 
\[
H^1(\Y_t\cap\widetilde{U},\SheafEnd(\V_0))\rightarrow
H^1(\D_t\cap\widetilde{U},\SheafEnd(\W_0)).
\]
The latter fits as the middle vertical homomorphism in the following commutative diagram with left exact rows:
\[
\xymatrix{
H^1(\X_t^d\cap U,\beta_*\SheafEnd(\V_0)) \ar[r]\ar[d] &
H^1(\Y_t\cap \widetilde{U},\SheafEnd(\V_0)) \ar[r]\ar[d] &
H^0(\X_t^d\cap U,R^1\beta_*\SheafEnd(\V_0))\ar[d]
\\
H^1(\Delta_t\cap U,p_*\SheafEnd(\W_0))\ar[r] &
H^1(\D_t\cap\widetilde{U},\SheafEnd(\W_0)) \ar[r]&
H^0(\Delta_t\cap U, R^1p_*\SheafEnd(\W_0)).
}
\]
The spaces in the left column vanish, since $\Delta_t\cap U$ and $\X_t^d\cap U$ are Stein.
The right vertical homomorphism is an isomorphism, 
as shown in Equation (\ref{eq-first-higher-direct-image-of-End-G-via-beta-and-via-p-are-equal}).
It follows that the middle vertical homomorphism is injective, and the
classes $c_k$ and $\psi_k^*c_0$ are equal.
\end{proof}

{\bf Completion of the proof of Proposition \ref{prop-flatness-of-the-twistor-deformation}:}
It remain to prove that $\F_t$ is $\beta_t$-tight (Definition \ref{def-tightness}). 
Now the set of $t\in \PP^1_\omega$, where $\F_t$ is $\beta_t$-tight is open and non-empty,
since $E$ is assumed to satisfy Condition Condition \ref{cond-open-and-closed}
(\ref{cond-item-same-homogeneous-bundle}). The set of $t\in \PP^1_\omega$, where 
$\F_t$ is not $\beta_t$-tight, is open as well, by the
local triviality Proposition \ref{prop-local-triviality}. 
The latter set must be empty, since $\PP^1_\omega$ is connected.
\EndProof

%
\subsection{Completion of the proof of the preservation of Condition
\ref{cond-open-and-closed}}
Keep the notation of Proposition \ref{prop-flatness-of-the-twistor-deformation}.

\begin{prop}
\label{prop-vanishing-of-global-sections-of-extension-sheaf}
Assume that $E$ satisfies Condition \ref{cond-open-and-closed}
(\ref{cond-item-same-homogeneous-bundle}). Then
$H^0(\X_t^d,\SheafExt^1(\F_t,\F_t))$ vanishes, for all $t\in \PP^1_\omega$.
\end{prop}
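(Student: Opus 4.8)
The plan is to compute the local extension sheaf $\SheafExt^1(\F_t,\F_t)$ by pulling back along the blow-up $\tilde\beta_t:\Y_t\to\X_t^d$ of the diagonal. Write $G_t:=(\beta_t^*\F_t)/{\rm torsion}$, which is locally free by Condition \ref{cond-open-and-closed} (\ref{cond-item-same-homogeneous-bundle}) and Proposition \ref{prop-flatness-of-the-twistor-deformation}; in fact $G_t\cong \W_t(-m\D_t)$, where $\W_t$ is the restriction of $\V$ to the fiber $\D_t$, twisted appropriately. Since $\F_t$ is locally free away from the diagonal, $\SheafExt^1(\F_t,\F_t)$ is supported on $\Delta_t$, and the first step is to identify it. The Atiyah-class discussion in Section \ref{sec-infinitesimal-constraints-on-singularities-of-a-reflexive-sheaf} gives the homomorphism $a_{\F_t}:T\X_t^d\to \SheafExt^1(\F_t,\F_t)$, which is surjective because $\F_t$ is $\beta_t$-tight (this tightness is exactly what was established at the end of the proof of Proposition \ref{prop-flatness-of-the-twistor-deformation} via Proposition \ref{prop-local-triviality}). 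Thus $\SheafExt^1(\F_t,\F_t)$ is a quotient of $T\X_t^d$ supported on $\Delta_t$, hence a quotient of $\iota_*\iota^*T\X_t^d$; more precisely the graded pieces are controlled by $\iota^*(T\X_t^d)\otimes\Sym^\bullet(N^*_{\Delta_t/\X_t^d})$ as in the commented-out filtration remark.

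The cleaner route, which I would follow, is to relate $\SheafExt^1(\F_t,\F_t)$ directly to $R^1\beta_{t,*}\SheafEnd(G_t)$. Indeed the tightness condition — specifically the isomorphism $\beta_{t,*}\D^1(G_t)\xrightarrow{\ \cong\ }\D^1(\F_t)$ — together with the local computation $(\beta_t^*E_t)/{\rm torsion}\cong \SheafEnd_0(G_t)(-kD_t)\oplus\StructureSheaf{\Y_t}$ (footnote \ref{ft}) lets one express the infinitesimal deformation sheaves of $\F_t$ in terms of sheaves on $\Y_t$, and then push forward. The key input is Lemma \ref{lemma-vanishing-of-sheaf-of-differential-operators}: part (\ref{lemma-item-sheaf-of-differential-operators-vanishes}) gives $R^1\beta_{t,*}\D^1(G_t)=0$, and part (\ref{lemma-item-R-1-beta-pushforward-of-End-G-does-not-have-global-sections}) gives $H^0(\X_t^d, R^1\beta_{t,*}\SheafEnd(G_t))=0$. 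Note that Lemma \ref{lemma-vanishing-of-sheaf-of-differential-operators} is stated for $X^d$ with its given complex structure, but its proof uses only Condition \ref{cond-open-and-closed} (\ref{cond-item-same-homogeneous-bundle}) for the pair $(\X_t^d,\F_t)$ — infinitesimal rigidity, $\LieAlg{sl}(d-1)$-invariance, $\LieAlg{sp}$-equivariance of the fiber bundle $W$, and stability of $T\X_t$ for the irreducible holomorphic symplectic manifold $\X_t$ — all of which hold for every $t\in\PP^1_\omega$ by Proposition \ref{prop-flatness-of-the-twistor-deformation}. So the lemma applies verbatim with $X$ replaced by $\X_t$.

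Concretely: from the local identification $\SheafExt^1(\F_t,\F_t)\cong R^1\beta_{t,*}\SheafEnd(G_t)$ — which follows by combining the exact sequence (\ref{eq-gamma}) (whose relevant term is now the cokernel computing $\SheafExt^1$) with the surjectivity of $a_{\F_t}$, i.e.\ with the vanishing $R^1\beta_{t,*}\D^1(G_t)=0$ and the surjectivity of $\gamma$ proven in Lemma \ref{lemma-vanishing-of-sheaf-of-differential-operators}(\ref{lemma-item-sheaf-of-differential-operators-vanishes}) — one reduces $H^0(\X_t^d,\SheafExt^1(\F_t,\F_t))$ to $H^0(\X_t^d,R^1\beta_{t,*}\SheafEnd(G_t))$, which vanishes by Lemma \ref{lemma-vanishing-of-sheaf-of-differential-operators}(\ref{lemma-item-R-1-beta-pushforward-of-End-G-does-not-have-global-sections}). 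I would also use Equation (\ref{eq-first-higher-direct-image-of-End-G-via-beta-and-via-p-are-equal}), $R^1\beta_{t,*}\SheafEnd(G_t)\cong\iota_*R^1p_*\SheafEnd(e^*G_t)$, so the computation is really local along the fibers $D_z$ of $p:\D_t\to\Delta_t$, where $G_t$ restricts to (a twist of) the infinitesimally rigid bundle $W$.

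The main obstacle is bookkeeping the twist and the trace: one must carry $\SheafEnd_0$ versus $\SheafEnd$, and the $-k\D_t$ twist coming from footnote \ref{ft}, through the push-forward, checking that the twist does not revive any $R^1$ or any global sections — this is guaranteed by Equations (\ref{eq-push-forward-of-End-V-same-as-that-of-End-V-minus-kD}) and (\ref{eq-push-forward-of-End-V-minus-kD-is-flat}) in Condition \ref{cond-open-and-closed} (\ref{cond-item-same-homogeneous-bundle}), but the argument must invoke them in the right order. A secondary subtlety is that $\F_t$ is only twisted-reflexive; however $\SheafEnd(\F_t)=\E_t$ is an honest sheaf, so $\SheafExt^1(\F_t,\F_t)$ and all the identifications above are untwisted, and no difficulty arises. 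Once the twist is handled, the vanishing is immediate from Lemma \ref{lemma-vanishing-of-sheaf-of-differential-operators}.
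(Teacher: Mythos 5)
Your overall strategy is the same as the paper's: compare the four-term exact sequence on $\Y_t$ coming from $\D^1(G_t)$ with the one on $\X_t^d$ coming from $\D^1(\F_t)$, use $\beta$-tightness (established at the end of the proof of Proposition \ref{prop-flatness-of-the-twistor-deformation}) to identify the first two columns, and feed in Lemma \ref{lemma-vanishing-of-sheaf-of-differential-operators}. But there is a genuine error in the key identification: $\SheafExt^1(\F_t,\F_t)$ is \emph{not} isomorphic to $R^1\beta_{t,*}\SheafEnd(G_t)$. The third vertical arrow in Diagram (\ref{eq-commutative-diagram-related-to-tightness}), namely $\beta_*TY\rightarrow TX^d$, is injective but not surjective — its cokernel is $\iota_*N_{\Delta/X^d}$, by the pushforward of the blow-up sequence (\ref{eq-short-exact-seq-of-differential-of-beta}). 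Applying the Snake Lemma therefore yields the short exact sequence (\ref{eq-filtration-of-SheafExt-1-F-F}),
\[
0\rightarrow R^1\beta_*\SheafEnd(G)\rightarrow \SheafExt^1(\F_t,\F_t)\rightarrow \iota_*N_{\Delta/X^d}\rightarrow 0,
\]
so $\SheafExt^1(\F_t,\F_t)$ is a nontrivial extension: the quotient $\iota_*N_{\Delta/X^d}$ records deformations of the singular locus itself, and it does not go away. Conceptually, surjectivity of $a_{\F_t}$ makes $\SheafExt^1(\F_t,\F_t)$ the quotient $TX^d/\ker(\gamma)$ (via $\beta_*TY\hookrightarrow TX^d$), not $\beta_*TY/\ker(\gamma)$.

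The gap is fixable in one line, but as written your argument only kills the subsheaf and says nothing about the quotient: you must also observe that $H^0(\Delta,N_{\Delta/X^d})=0$, which holds because $N_{\Delta/X^d}\cong T\X_t^{\oplus(d-1)}$ and an irreducible holomorphic symplectic manifold has no global holomorphic vector fields. With that added, your reduction to Lemma \ref{lemma-vanishing-of-sheaf-of-differential-operators} (\ref{lemma-item-R-1-beta-pushforward-of-End-G-does-not-have-global-sections}) completes the proof exactly as in the paper. Your side remarks — that the lemma applies fiberwise for all $t$ because $\F_t$ satisfies Condition \ref{cond-open-and-closed} (\ref{cond-item-same-homogeneous-bundle}) by Proposition \ref{prop-flatness-of-the-twistor-deformation}, and that twisting issues are harmless since $\SheafEnd(\F_t)$ is untwisted — are both correct and consistent with the paper.
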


\begin{proof} $\F_t$ satisfies Condition \ref{cond-open-and-closed}
(\ref{cond-item-same-homogeneous-bundle}), for all $t\in\PP^1_\omega$, by Proposition
\ref{prop-flatness-of-the-twistor-deformation}.
Set $F:=\F_t$.
Keep the notation of Lemma \ref{lemma-vanishing-of-sheaf-of-differential-operators}.
The functor $R\beta_*:D^b(Y)\rightarrow D^b(X^d)$ maps the object $G\otimes\StructureSheaf{Y}(-mD)$
to $F$, by Condition 
\ref{cond-open-and-closed} 
(\ref{cond-item-same-homogeneous-bundle}\ref{condition-item-push-forward-of-V-same-as-that-of-V-minus-mD})
and Lemma \ref{lemma-push-forward-is-flat}.
We get the induced homomorphism 
$\Hom(G\otimes\StructureSheaf{Y}(-mD),G\otimes\StructureSheaf{Y}(-mD)[1])\rightarrow \Hom(F,F[1])$,
and hence the homomorphism
$\Ext^1(G,G)\rightarrow \Ext^1(F,F)$. 
The analogous homomorphisms, over open subsets of $X^d$, induce the sheaf homomorphism
$\beta^1_*:R^1\beta_*(\SheafEnd(G))\rightarrow \SheafExt^1(F,F).$
The latter fits in the following commutative diagram with exact rows:
\begin{equation}
\label{eq-commutative-diagram-related-to-tightness}
\xymatrix{
0\ar[r] & 
\beta_*\SheafEnd(G) \ar[r]\ar[d]^{\cong} &
\beta_*\D^1(G) \ar[r]^{\beta_*(\sigma_G)}\ar[r] \ar[d]^{\beta_*}&
\beta_*TY \ar[r]^-\gamma\ar[d]_-{\beta_*(d\beta)} &
R^1\beta_*(\SheafEnd(G)) \ar[r] \ar[d]_{\beta_*^1}& 0
\\
0\ar[r] & 
\SheafEnd(F) \ar[r] &
\D^1(F) \ar[r]_{\sigma_F} &
TX^d \ar[r]_{a_F} & 
\SheafExt^1(F,F) \ar[r] & 0.
}
\end{equation}
The left vertical homomorphism is an isomorphism, by Lemma 
\ref{lemma-push-forward-of-End-V-is-End-F}.
The homomorphism $\gamma$ is surjective, by Lemma
\ref{lemma-vanishing-of-sheaf-of-differential-operators}(\ref{lemma-item-sheaf-of-differential-operators-vanishes}).
The equality $\ker(a_F)={\rm Im}(\sigma_F)$ is seen as follows.
We have the long exact sequence
\[
0\rightarrow \SheafEnd(F)\longrightarrow \widetilde{\D}^1(F)\LongRightArrowOf{\tilde{\sigma}_F} \SheafEnd(F)\otimes TX^d
\LongRightArrowOf{\tilde{a}_F} \SheafExt^1(F,F),
\]
by definition of $\widetilde{\D}^1(F)$ as $\SheafHom(J^1(F),F)$. Now, 
\[
{\rm Im}(\sigma_F)={\rm Im}(\tilde{\sigma}_F)\cap id_F\cdot TX^d =
\ker(\tilde{a}_F)\cap id_F\cdot TX^d =\ker(a_F).
\] 
The homomorphism $a_F$ is surjective and the middle vertical homomorphism $\beta_*$
in the above diagram is an isomorphism, by the $\beta$-tightness assumption in 
Condition \ref{cond-open-and-closed}
(\ref{cond-item-same-homogeneous-bundle}). 
Note that the three left vertical arrows are injective and the left two are surjective. 
The Snake Lemma (applied to the quotient of 
Diagram (\ref{eq-commutative-diagram-related-to-tightness}) by its left column)
yields that $\beta_*^1$ is injective and its co-kernel is $\iota_*N_{\Delta/X^d}$.
\begin{equation}
\label{eq-filtration-of-SheafExt-1-F-F}
0\rightarrow R^1(\beta_*\SheafEnd(G))\LongRightArrowOf{\beta^1_*} 
\SheafExt^1(F,F)\longrightarrow \iota_*N_{\Delta/X^d}\rightarrow 0.
\end{equation}
Now $H^0(N_{\Delta/X^d})$ vanishes, and $H^0(X^d,R^1(\beta_*\SheafEnd(G)))$
vanishes, by Lemma \ref{lemma-vanishing-of-sheaf-of-differential-operators}
(\ref{lemma-item-R-1-beta-pushforward-of-End-G-does-not-have-global-sections}). 
We conclude the desired vanishing of 
$H^0(X^d,\SheafExt^1(F,F))$.
\end{proof}

The following tightness criterion will be needed in Section \ref{sec-Example}.
\begin{lem}
\label{lemma-tightness-criterion}
Assume that $F$ satisfies Condition \ref{cond-open-and-closed}
(\ref{cond-item-same-homogeneous-bundle}), with the possible exception of $\beta$-tightness.
Then $F$ is $\beta$-tight, if and only if the sheaf $\SheafExt^1(F,F)$ fits in the short exact sequence 
(\ref{eq-filtration-of-SheafExt-1-F-F}), and the resulting isomorphism 
$\psi:\coker(\beta^1_*)\rightarrow \iota_*N_{\Delta/X^d}$ is the inverse of the homomorphism
$\bar{a}_F:\iota_*N_{\Delta/X^d}\rightarrow \coker(\beta^1_*)$ induced by the homomorphism $a_F$
in Diagram  (\ref{eq-commutative-diagram-related-to-tightness}).
\end{lem}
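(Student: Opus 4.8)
The plan is to extract everything from the commutative diagram (\ref{eq-commutative-diagram-related-to-tightness}), but this time read \emph{without} presupposing $\beta$-tightness, keeping careful track of which of its features are unconditional. First I would record the facts that do not use tightness: the left vertical homomorphism $\beta_*\SheafEnd(G)\to\SheafEnd(F)$ is an isomorphism (Lemma \ref{lemma-push-forward-of-End-V-is-End-F}); the top row is exact, since $R^1\beta_*\D^1(G)=0$ and $\gamma$ is surjective (Lemma \ref{lemma-vanishing-of-sheaf-of-differential-operators}(\ref{lemma-item-sheaf-of-differential-operators-vanishes})); the homomorphism $\beta_*(d\beta)\colon\beta_*TY\to TX^d$ is injective with cokernel $\iota_*N_{\Delta/X^d}$ (the short exact sequence obtained in the proof of Lemma \ref{lemma-vanishing-of-sheaf-of-differential-operators}); in the bottom row $\ker(\sigma_F)=\SheafEnd(F)$ and $\ker(a_F)=\Im(\sigma_F)$ (the $\widetilde{\D}^1(F)$ argument in the proof of Proposition \ref{prop-vanishing-of-global-sections-of-extension-sheaf}); and, finally, commutativity of the rightmost square together with surjectivity of $\gamma$ gives $\Im(\beta^1_*)=\Im\bigl(a_F\circ\beta_*(d\beta)\bigr)\subseteq\Im(a_F)$, so that $a_F$ descends to a well-defined homomorphism $\bar a_F\colon\iota_*N_{\Delta/X^d}=TX^d/\Im(\beta_*(d\beta))\longrightarrow \SheafExt^1(F,F)/\Im(\beta^1_*)=\coker(\beta^1_*)$, which is the $\bar a_F$ of the statement.

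For the forward implication I would reuse the end of the proof of Proposition \ref{prop-vanishing-of-global-sections-of-extension-sheaf}: if $F$ is $\beta$-tight then $a_F$ is surjective and the vertical homomorphism $\beta_*\colon\beta_*\D^1(G)\to\D^1(F)$ is an isomorphism, so both rows of (\ref{eq-commutative-diagram-related-to-tightness}) become four-term exact sequences; dividing out the (isomorphic) left column produces a morphism of short exact sequences whose left vertical is an isomorphism and whose middle vertical is $\beta_*(d\beta)$, injective with cokernel $\iota_*N_{\Delta/X^d}$. The Snake Lemma then yields $\ker(\beta^1_*)=0$ together with an isomorphism $\iota_*N_{\Delta/X^d}\to\coker(\beta^1_*)$; this is precisely the short exact sequence (\ref{eq-filtration-of-SheafExt-1-F-F}). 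The one thing to check explicitly is that this connecting isomorphism agrees with $\bar a_F$, which is immediate from the description of the Snake Lemma boundary map (it carries the class of $\xi\in TX^d$ to the class of $a_F(\xi)$). Hence the resulting $\psi$ equals $\bar a_F^{-1}$.

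For the converse I would run the same bookkeeping in reverse. The assumption that $\SheafExt^1(F,F)$ fits in (\ref{eq-filtration-of-SheafExt-1-F-F}) means $\beta^1_*$ is injective with cokernel isomorphic to $\iota_*N_{\Delta/X^d}$, and the assumption $\psi=\bar a_F^{-1}$ means exactly that $\bar a_F$ is an isomorphism. First, surjectivity of $\bar a_F$ forces $\Im(a_F)+\Im(\beta^1_*)=\SheafExt^1(F,F)$, and since $\Im(\beta^1_*)\subseteq\Im(a_F)$ we conclude $a_F$ is surjective. Thus both rows of (\ref{eq-commutative-diagram-related-to-tightness}) are four-term exact, and I split each at its symbol map as
\[
0\to\beta_*\SheafEnd(G)\to\beta_*\D^1(G)\to Z'\to 0,\qquad 0\to Z'\to\beta_*TY\to R^1\beta_*\SheafEnd(G)\to 0,
\]
\[
0\to\SheafEnd(F)\to\D^1(F)\to Z''\to 0,\qquad 0\to Z''\to TX^d\to\SheafExt^1(F,F)\to 0,
\]
where $Z'$ and $Z''$ are the images of the symbol maps and $v\colon Z'\to Z''$ is the induced vertical. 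Applying the Snake Lemma to the second pair — whose two outer verticals $\beta_*(d\beta)$ and $\beta^1_*$ are injective with cokernel $\iota_*N_{\Delta/X^d}$ — gives $\ker(v)=0$ and an exact sequence $0\to\coker(v)\to\iota_*N_{\Delta/X^d}\to\coker(\beta^1_*)\to 0$ whose right-hand map is $\bar a_F$; hence $\coker(v)=\ker(\bar a_F)=0$. Applying the Snake Lemma to the first pair — whose left vertical is the isomorphism $\beta_*\SheafEnd(G)\to\SheafEnd(F)$ — gives $\ker(\beta_*)\cong\ker(v)=0$ and $\coker(\beta_*)\cong\coker(v)=0$, so $\beta_*\colon\beta_*\D^1(G)\to\D^1(F)$ is an isomorphism. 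Together with the surjectivity of $a_F$ already obtained, this is the definition of $\beta$-tightness.

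The step I expect to be the main obstacle is bookkeeping rather than conceptual: matching the Snake Lemma connecting map in the forward direction with the Atiyah-induced homomorphism $\bar a_F$, and, in the converse, keeping the two halves of the hypothesis doing their separate jobs — the short exact sequence (\ref{eq-filtration-of-SheafExt-1-F-F}) supplies the injectivity of $\beta^1_*$ and the precise cokernel needed to run both Snake Lemma applications, while the compatibility $\psi=\bar a_F^{-1}$ is the only input forcing $\bar a_F$ to be invertible, hence $\ker(\bar a_F)=0$, which is what finally kills $\coker(\beta_*)$. Everything else is diagram chasing on (\ref{eq-commutative-diagram-related-to-tightness}).
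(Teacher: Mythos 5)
Your proof is correct and follows essentially the same route as the paper: both directions come down to the Snake Lemma applied to Diagram (\ref{eq-commutative-diagram-related-to-tightness}) after removing its (isomorphic) left column, with the connecting/induced map identified with $\bar a_F$. The only difference is presentational — you split each four-term row at the symbol image and apply the Snake Lemma twice, and you spell out the unconditional facts (well-definedness of $\bar a_F$, $\Im(\beta^1_*)\subseteq\Im(a_F)$, hence surjectivity of $a_F$) that the paper leaves implicit.
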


\begin{proof}
If $F$ is $\beta$-tight, then the sheaf $\SheafExt^1(F,F)$ fits in the short exact sequence 
(\ref{eq-filtration-of-SheafExt-1-F-F}), by the proof of 
Proposition \ref{prop-vanishing-of-global-sections-of-extension-sheaf}. Assume that 
$\SheafExt^1(F,F)$ fits in the short exact sequence 
(\ref{eq-filtration-of-SheafExt-1-F-F}) and $\psi^{-1}=\bar{a}_F$. 
Then $a_F$ is surjective and $\beta^1_*$ is injective. It follows 
the $\sigma_F$ induces an isomorphism from the 
cokernel of $\beta_*$ onto the kernel of 
$\bar{a}_F$, by the Snake Lemma applied to the quotient of
Diagram (\ref{eq-commutative-diagram-related-to-tightness}) by its left column.
The kernel of $\bar{a}_F$ vanishes, since $\bar{a}_F$ is an isomorphism.
Hence, $\beta_*:\beta_*\D^1(G)\rightarrow \D^1(F)$ is surjective.
\end{proof}

\hide{
\begin{proof}
Let $T_\varphi$ be the vertical tangent bundle of $\varphi$ and let $\widetilde{\Delta}\subset \X^d_\pi$
be the diagonal image of $\X$. 
The Atiyah class of $\F$ induces a sheaf homomorphism from $T[\X^d_\pi]$ to 
$\SheafExt^1(\F,\F)$, which restricts to a homomorphism
\[
a_\F \ : \ T_\varphi \  \rightarrow \ \SheafExt^1(\F,\F).
\]
We get the decreasing filtration 
$F^iT_\varphi:=T_\varphi\cdot I^i_{\widetilde{\Delta}}$, which is the relative analogue 
of the filtration $F^iTX^d$ used in Condition \ref{cond-atiyah-class}. 
We set 
$F^i\SheafExt^1(\F,\F)$ to be the image of $F^iT_\varphi$ via $a_\F$. 
The sheaves $\SheafExt^1(\F,\F)$ and $Gr^i\SheafExt^1(\F,\F)$ are flat over 
$\PP^1_\omega$ and they restrict to the fiber $\X^d_t$ of $\varphi$ as
$\SheafExt^1(\F_t,\F_t)$ and $Gr^i\SheafExt^1(\F_t,\F_t)$, by the local triviality statement in Proposition
\ref{prop-local-triviality}. $Gr^i\SheafExt^1(\F_0,\F_0)$ is assumed to be a locally free
$\StructureSheaf{\Delta_0}$-module,
by Condition 
\ref{cond-open-and-closed} (\ref{cond-item-same-homogeneous-bundle})
(see Condition \ref{cond-atiyah-class} (\ref{cond-item-graded-summands-of-sheaf-Ext-1-F-F-are-locally-free})).
Hence, $Gr^i\SheafExt^1(\F,\F)$ is a locally free
$\StructureSheaf{\widetilde{\Delta}}$-module, by Proposition
\ref{prop-local-triviality}. The homomorphism
$a_{\F_t}$ is assumed surjective for $t=0$, by Condition 
\ref{cond-open-and-closed} (\ref{cond-item-same-homogeneous-bundle})
(see Condition \ref{cond-atiyah-class} (\ref{cond-item-Atiyah-homomorphism-is-surjective})).
Hence, the homomorphism $a_\F$ is surjective and so is $a_{\F_t}$, for all $t\in \PP^1_\omega$,
by Proposition \ref{prop-local-triviality}. The graded summands $a_\F^i$ are thus surjective as well
and $\ker(a_\F^i)$ is a subbundle of $Gr^iT_\varphi$. Let $\tilde{\iota}:\X\rightarrow \widetilde{\Delta}$ be the 
diagonal embedding.
Note the isomorphism
\begin{equation}
\label{eq-graded-summands-of-tangent-sheaf-are-polystable}
Gr^iT_\varphi \ \ \cong \ \ \tilde{\iota}_*\left[T_\pi^{\oplus d}\otimes \Sym^i\left(T_\pi^{\oplus (d-1)}\right)\right].
\end{equation}
The kernel $\ker(a^i_{\F_t})$ has a trivial determinant line bundle,
by Condition \ref{cond-atiyah-class} (\ref{cond-item-kernel-of-Atiyah-homomorphism-is-a-subrepresentation}).
We have already seen that 
$\ker(a^i_\F)$ is a locally free $\StructureSheaf{\widetilde{\Delta}}$-module.
Hence, 
$\ker(a^i_{\F_t})$ has a trivial determinant line bundle, for all $t\in\PP^1_\omega$, by the triviality of $\Pic^0(\X_t)$. 
The vector bundle $Gr^iT\X_t^d$ is slope-polystable of slope $0$ (with respect to any K\"{a}hler class). 
Hence, $\ker(a^i_{\F_t})$ is a direct sum of slope-stable subbundles of $Gr^iT\X_t^d$ of slope $0$. 
It follows that the short exact sequence
\[
0\rightarrow \ker(a^i_{\F_t})\rightarrow Gr^iT\X_t^d \rightarrow 
Gr^i\SheafExt^1(\F_t,\F_t)\rightarrow 0
\]
splits, and $Gr^i\SheafExt^1(\F_t,\F_t)$ is also isomorphic to the direct sum of
slope-stable subbundles of $Gr^iT\X_t^d$ of slope $0$.
The isomorphism classes of slope-stable subbundles of $Gr^iT\X_t^d$ of slope $0$
are in one-to-one correspondence with irreducible representations of the 
holonomy group $Sp(\dim(\X_t))$ of $T\X_t$, by the isomorphism 
(\ref{eq-graded-summands-of-tangent-sheaf-are-polystable}) and the fact that $\X_t$ is hyper-K\"{a}hler. 
In particular, the isomorphism classes of 
slope-stable subbundles of $Gr^iT\X_t^d$ of slope $0$ form a finite set, constant with respect to $t\in \PP^1_\omega$.
The subset of isomorphism classes of slope-stable subbundles of $Gr^i\SheafExt^1(\F_t,\F_t)$ of slope $0$
is hence also constant with respect to $t\in \PP^1_\omega$. 
The vector space $H^0(\widetilde{\Delta}_t,Gr^i\SheafExt^1(\F_t,\F_t))$
vanishes, if and only if the trivial line bundle is {\em not} one of the slope-stable
direct summands of $Gr^i\SheafExt^1(\F_t,\F_t)$.
Hence, the vanishing of $H^0(\widetilde{\Delta}_t,Gr^i\SheafExt^1(\F_t,\F_t))$, for $t=0$,
implies the vanishing for all $t\in\PP^1_\omega$. The latter vanishing is assumed for $t=0$ in
Condition \ref{cond-atiyah-class} (\ref{cond-item-global-sections-of-graded-summands-vanish}).
Hence, $H^0(\X_t^d,\SheafExt^1(\F_t,\F_t))$ vanishes, for all $t\in \PP^1_\omega$.
\end{proof}
}

\begin{prop}
\label{prop-condition-remains-valid-under-twistor-deformations}
(Conditional on Conjecture \ref{conj-Ext-1-is-constant}). 
Let $E$ be a reflexive sheaf of Azumaya algebras satisfying Condition \ref{cond-open-and-closed}, and
$\omega$ a K\"{a}hler class, such that $E$ is $\omega$-slope-stable. Then every 
member $\E_t$, $t\in \PP^1_\omega$, of the twistor deformation of $E$ satisfies Condition
\ref{cond-open-and-closed}. 
\end{prop}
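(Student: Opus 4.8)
The plan is to verify each of the five parts of Condition \ref{cond-open-and-closed} for every fiber $\E_t$, $t\in\PP^1_\omega$. Part (\ref{cond-item-same-homogeneous-bundle}) is already available: it is precisely the content of Proposition \ref{prop-flatness-of-the-twistor-deformation}, whose proof was completed via the local triviality Proposition \ref{prop-local-triviality}; in particular $\E$ is a flat family of reflexive sheaves of Azumaya algebras over $\X^d_\pi\to\PP^1_\omega$, so each $\E_t$ is a rank $r$ reflexive sheaf of Azumaya algebras. For parts (\ref{cond-item-characteristic-class}) and (\ref{cond-item-Brauer-class-is-theta-X-eta}) I would use parallel transport along the twistor line. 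Since $\PP^1_\omega$ is simply connected, the local systems $R^q\pi^d_*\Integers$ and $R^2\pi^d_*\mu_r$ on $\PP^1_\omega$, where $\pi^d:\X^d_\pi\to\PP^1_\omega$, are trivial, giving canonical identifications $H^*(\X_t^d,\Integers)\cong H^*(X^d,\Integers)$ and $H^2(\X_t^d,\mu_r)\cong H^2(X^d,\mu_r)$ compatible with restriction from $\X^d_\pi$. Under these identifications $c_2(\E_t)$ and the characteristic class of $\E_t$ coincide with $c_2(E)$ and the characteristic class of $E$; since $\E_t$ is hyperholomorphic, $c_2(\E_t)$ moreover stays of Hodge type $(2,2)$. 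The parallel transport operators lie in the monodromy group (the twistor family is a family of irreducible holomorphic symplectic manifolds deforming $\X_t$) and identify $Mon(X)$ with $Mon(\X_t)$, so the invariance of $c_2(E)$ under the diagonal action of a finite index subgroup of $Mon(X)$ transports to the corresponding statement for $c_2(\E_t)$, giving part (\ref{cond-item-characteristic-class}). Taking the marking $\eta_t$ of $\X_t$ to be the parallel transport of the marking $\eta$ witnessing part (\ref{cond-item-Brauer-class-is-theta-X-eta}) for $E$, one gets $\bar{\eta}_t=\bar{\eta}$ under the above identification, so the characteristic class of $\E_t$ is $\exp\!\left(\tfrac{-2\pi i}{r}\bar{\eta}_t^{-1}(\tilde{\theta})\right)$ and its image $\theta_{(\X_t,\eta_t)}$ is the Brauer class of $\E_t$; this is part (\ref{cond-item-Brauer-class-is-theta-X-eta}).

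Part (\ref{cond-item-infinitesimally-rigid}) is where Conjecture \ref{conj-Ext-1-is-constant} is used. Write $\E_t\cong\SheafEnd(\F_t)$ for a twisted reflexive sheaf $\F_t$ (Proposition \ref{prop-azumaya-algebra-is-End-F}). The conjecture gives $H^1(\X_t^d,\SheafEnd(\F_t))=H^1(\X_t^d,\E_t)=0$, and Proposition \ref{prop-vanishing-of-global-sections-of-extension-sheaf}, which applies because $\E_t$ satisfies part (\ref{cond-item-same-homogeneous-bundle}) by Proposition \ref{prop-flatness-of-the-twistor-deformation}, gives $H^0(\X_t^d,\SheafExt^1(\F_t,\F_t))=0$. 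The five term exact sequence of the local-to-global Ext spectral sequence,
\[
0\to H^1(\X_t^d,\SheafEnd(\F_t))\to \Ext^1(\F_t,\F_t)\to H^0(\X_t^d,\SheafExt^1(\F_t,\F_t)),
\]
then forces $\Ext^1(\F_t,\F_t)=0$, hence $T^1_{\X_t^d}(\E_t)=\Ext^1_0(\F_t,\F_t)=0$, which is part (\ref{cond-item-infinitesimally-rigid}).

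For the stability part (\ref{condition-stability}) I would work with the K\"ahler class $\omega_t$ of the hyper-K\"ahler structure induced on $\X_t$ by $\omega$, and the class $\sum_{i=1}^d\pi_i^*\omega_t$ on $\X_t^d$. Parts (\ref{cond-item-characteristic-class}) and (\ref{condition-stability}) of Condition \ref{cond-open-and-closed} at $t=0$ make $\E$ an $\omega$-stable hyperholomorphic sheaf, so by Verbitsky's theory each $\E_t$, equivalently $\F_t$, is $(\sum\pi_i^*\omega_t)$-slope-polystable, which is the polystability half of Definition \ref{def-slope-stability}. Write $\F_t=\bigoplus_j\F_{t,j}$ with $\F_{t,j}$ stable of slope $\mu(\F_t)$. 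The essential point is that this polystable decomposition is compatible with the hyperholomorphic connection, so each $\F_{t,j}$ is again hyperholomorphic and spreads over all of $\PP^1_\omega$; hence if the decomposition is non-trivial at some $t$ it is non-trivial at $t=0$, i.e.\ $F$ is strictly polystable. But $E=\SheafEnd(F)$ being $\omega$-slope-stable forces $F$ to be $\omega$-stable: a splitting $F=F_1\oplus F_2$ into summands of equal slope yields the sheaf of endomorphisms preserving $F_1$, a non-trivial sheaf of maximal parabolic subalgebras of $E$ of slope $0$, contradicting Definition \ref{def-slope-stability}. Therefore $\F_t$ is $(\sum\pi_i^*\omega_t)$-stable for every $t$, and then every non-trivial sheaf of maximal parabolic subalgebras of $\E_t\cong\SheafEnd(\F_t)$, which corresponds to a subsheaf $\F'\subsetneq\F_t$ of lower rank, has degree $-\deg\SheafHom(\F',\F_t/\F')$, negative by stability of $\F_t$. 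Thus $\E_t$ is $(\sum\pi_i^*\omega_t)$-slope-stable and part (\ref{condition-stability}) holds with the K\"ahler class $\omega_t$. The step I expect to be the main obstacle is exactly this last one: one must rule out strict polystability developing anywhere on the twistor line, which I would derive from the fact, due to Verbitsky, that the stable decomposition of a polystable hyperholomorphic sheaf is itself hyperholomorphic and therefore extends over the entire twistor family, reducing the matter to the stability hypothesis at $t=0$; the remaining parts are bookkeeping with parallel transport and the spectral sequence, with Conjecture \ref{conj-Ext-1-is-constant} entering only in part (\ref{cond-item-infinitesimally-rigid}).
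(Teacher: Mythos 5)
Your proposal is correct and follows essentially the same route as the paper: part (\ref{cond-item-same-homogeneous-bundle}) from Proposition \ref{prop-flatness-of-the-twistor-deformation}, parts (\ref{cond-item-characteristic-class}) and (\ref{cond-item-Brauer-class-is-theta-X-eta}) by topological invariance along the (simply connected) twistor line, part (\ref{cond-item-infinitesimally-rigid}) via the left exact sequence $0\to H^1(\X_t^d,\E_t)\to \Ext^1(\F_t,\F_t)\to H^0(\X_t^d,\SheafExt^1(\F_t,\F_t))$ together with Conjecture \ref{conj-Ext-1-is-constant} and Proposition \ref{prop-vanishing-of-global-sections-of-extension-sheaf}, and part (\ref{condition-stability}) from Verbitsky's preservation of stability along twistor deformations. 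The only difference is one of exposition: where the paper simply cites \cite[Construction 6.7]{markman-hodge} and \cite[Cor. 6.10]{kaledin-verbitski-book} for parts (\ref{cond-item-Brauer-class-is-theta-X-eta}) and (\ref{condition-stability}), you reconstruct the underlying arguments (parallel transport of the marking, and hyperholomorphicity of the stable decomposition of a polystable hyperholomorphic sheaf), both of which are sound.
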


\begin{proof}
(\ref{cond-item-same-homogeneous-bundle}) 
This part was established in Proposition \ref{prop-flatness-of-the-twistor-deformation}.

(\ref{cond-item-characteristic-class}) The condition is topological, hence remains valid under 
the flat twistor deformation (Proposition \ref{prop-flatness-of-the-twistor-deformation}).

(\ref{cond-item-Brauer-class-is-theta-X-eta})
See \cite[Construction 6.7]{markman-hodge}.

(\ref{cond-item-infinitesimally-rigid}) 
Let $\F_t$ be a reflexive sheaf over $\X^d_t$ such that $\E_t\cong\SheafEnd(\F_t)$. 
We have the left exact sequence
\[
0\rightarrow H^1(\X_t^d,\E_t)\rightarrow \Ext^1(\F_t,\F_t)\rightarrow H^0(\X_t^d,\SheafExt^1(\F_t,\F_t)).
\]
The vanishing of $Ext^1(\F_t,\F_t)$ 
follows from Conjecture \ref{conj-Ext-1-is-constant}
and Proposition \ref{prop-vanishing-of-global-sections-of-extension-sheaf}.

(\ref{condition-stability}) This fact is due to Verbitsky \cite[Cor. 6.10]{kaledin-verbitski-book,markman-hodge}.
\end{proof}

We provide next a criterion for the reduction of Conjecture \ref{conj-Ext-1-is-constant}
to Conjecture \ref{conj-Ext-1-is-constant-isolated-singularity-case} in case $d=2$.
Let $E$ be a reflexive sheaf of Azumaya algebras over $X\times X$, satisfying Condition
\ref{cond-open-and-closed}. Let $F$ be a twisted reflexive sheaf, such that $E\cong\SheafEnd(F)$.
Given a point $x\in X$, denote by $F_x$ the restriction of $F$ to $\{x\}\times X$.
Let 
\begin{equation}
\label{eq-Kodaira-Spencer-homomorphism}
\kappa_x:T_xX\rightarrow \Ext^1(F_x,F_x)
\end{equation} 
be the Kodaira-Spencer homomorphism.

\begin{lem}
\label{lem-surjectivity-of-Kodaira-Spencer-implies-reduction-of-conjecture}
\begin{enumerate}
\item
\label{lemma-item-surjectivity-of-Kodaira-Spencer-implies-vanishing}
Assume that $E$ is not locally free along the diagonal and that
the Kodaira-Spencer homomorphism $\kappa_x$ 
 is surjective, for all $x\in X$. Then $H^1(E_x)$ vanishes for all $x\in X$.
\item
\label{lemma-item-vanishing-of-H-1-for-E-x-implies-that-of-E}
Assume that $F_x$ is $\omega$-slope stable 
and $H^1(E_x)$ vanishes for all $x\in X$.
Then Conjecture \ref{conj-Ext-1-is-constant} for $E$ follows from Conjecture 
\ref{conj-Ext-1-is-constant-isolated-singularity-case}.
\end{enumerate}
\end{lem}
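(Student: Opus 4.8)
For part (\ref{lemma-item-surjectivity-of-Kodaira-Spencer-implies-vanishing}), the plan is to compare deformations of $E$ over $X\times X$ with deformations of the restrictions $E_x$ over $\{x\}\times X\cong X$, using the fact that $E$ is infinitesimally rigid (Condition \ref{cond-open-and-closed}(\ref{cond-item-infinitesimally-rigid})), i.e. $T^1_{X\times X}(E)=\Ext^1_0(F,F)=0$, and in fact $H^1(X\times X,E)=0$. First I would use the projection $q:X\times X\to X$ onto the first factor and the Leray spectral sequence for $q_*\SheafEnd(F)=q_*E$. Since $\SheafExt^1(F,F)$ has no global sections on each fiber (Proposition \ref{prop-vanishing-of-global-sections-of-extension-sheaf} gives the vanishing of $H^0$ of the sheaf-Ext on $X\times X$, and the local-triviality statement allows fiberwise control), the sheaf $R^0q_*E$ restricts fiberwise to $H^0(E_x)$, which is one-dimensional (spanned by the identity, as $F_x$ is simple), so $R^0q_*E\cong\StructureSheaf{X}$. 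The Kodaira-Spencer homomorphism $\kappa_x$ is, by construction, the fiberwise description of the connecting map coming from the Atiyah class of $F$ along the first factor; its surjectivity for all $x$, together with $\Ext^2$-type obstruction vanishing encoded in the local-triviality of the family, forces $R^1q_*E$ to vanish. Then the Leray five-term sequence
\[
0\to H^1(X,R^0q_*E)\to H^1(X\times X,E)\to H^0(X,R^1q_*E)\to H^2(X,R^0q_*E)
\]
reads $0\to H^1(X,\StructureSheaf{X})\to 0\to H^0(X,R^1q_*E)\to\cdots$, and since $H^1(X,\StructureSheaf{X})=0$ for an irreducible holomorphic symplectic manifold, together with $R^1q_*E=0$ this is consistent; the point I actually need is the reverse: from $H^1(X\times X,E)=0$ and $\kappa_x$ surjective I deduce that $R^1q_*E$ has no sections supported at any point, hence (being the cokernel sheaf of the fiberwise surjection $\StructureSheaf{X}\otimes T X\to R^1q_*E$ built from $\kappa$) it vanishes, giving $H^1(E_x)=0$ for all $x$ by base change. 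The cleanest route is: surjectivity of $\kappa_x$ says the fiberwise $a_F$ applied to the normal directions of $\{x\}\times X$ surjects onto the relevant summand of $\SheafExt^1(F_x,F_x)$; combined with the filtration (\ref{eq-filtration-of-SheafExt-1-F-F}) of $\SheafExt^1(F,F)$ restricted to a fiber and the fact that $H^1(X\times X,E)=0$, a diagram chase in the Leray spectral sequence yields $H^1(X,E_x)=0$.

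For part (\ref{lemma-item-vanishing-of-H-1-for-E-x-implies-that-of-E}), the idea is to run the hyperholomorphic twistor deformation fiberwise. Since $E$ is $\omega$-stable hyperholomorphic and satisfies Condition \ref{cond-open-and-closed}, Proposition \ref{prop-flatness-of-the-twistor-deformation} gives a flat family $\E$ of reflexive Azumaya algebras over $\X^2_\pi\to\PP^1_\omega$, and Proposition \ref{prop-local-triviality} shows the family is locally trivial in the topology of $\X^2_\pi$. The restriction of $\E$ to the preimage of a horizontal section $f_x$ (the twistor deformation of $\{x\}\times X$, via the projection onto the second factor) is precisely the hyperholomorphic deformation of $E_x$ over $\PP^1_\omega$, and $E_x$ is $\omega$-stable hyperholomorphic, reflexive, locally free away from the single point where the diagonal meets $\{x\}\times X$, with $H^1(X,E_x)=0$ by hypothesis. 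Thus Conjecture \ref{conj-Ext-1-is-constant-isolated-singularity-case} applies to each such $(X,E_x)$ and gives $H^1(\X_t,(\E_x)_t)=0$ for all $t$. Then I would recover the vanishing of $H^1(\X_t^2,\E_t)$ from the fiberwise vanishing: apply the Leray spectral sequence for the projection $q_t:\X_t^2\to\X_t$ onto the first factor. By the local-triviality Proposition \ref{prop-local-triviality} and cohomology-and-base-change, $R^0q_{t*}\E_t\cong\StructureSheaf{\X_t}$ and $R^1q_{t*}\E_t$ has fibers $H^1((\E_t)_{x})=0$ (the restriction of $\E_t$ to $\{x\}\times\X_t$ being exactly $(\E_x)_t$ up to the identification of twistor parameters), so $R^1q_{t*}\E_t=0$; combined with $H^1(\X_t,\StructureSheaf{\X_t})=0$ the five-term sequence gives $H^1(\X_t^2,\E_t)=0$, which is Conjecture \ref{conj-Ext-1-is-constant} for $E$.

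The main obstacle is the bookkeeping in part (\ref{lemma-item-surjectivity-of-Kodaira-Spencer-implies-vanishing}): one must be careful that the fiberwise restriction $R^1q_*E|_x$ really is $H^1(X,E_x)$ (this needs base change, which requires either flatness of $q_*E$ in the relevant degree or a dimension-count argument using that $\SheafExt^1(F,F)$ is controlled on each fiber), and that the map $\StructureSheaf{X}\otimes TX\to R^1q_*E$ induced by the relative Atiyah class is genuinely fiberwise $\kappa_x$. The subtlety is that $F$ is only reflexive, not locally free, so the Atiyah-class arguments must be made on the locus where $F$ is locally free and extended using $\depth\geq 2$ (Lemmas \ref{lemma-depth-of-structure-sheaf-at-least-2-implies-same-for-F} and \ref{lemma-F-is-isomorphic-to-the-pushforward-of-its-restriction}); the filtration (\ref{eq-filtration-of-SheafExt-1-F-F}) and the $\beta$-tightness already packaged in Proposition \ref{prop-vanishing-of-global-sections-of-extension-sheaf} should handle this, but verifying that the Kodaira-Spencer map of $F_x$ sees the full fiber of $R^1q_*E$ (rather than a proper subsheaf) is where the real work lies. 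In part (\ref{lemma-item-vanishing-of-H-1-for-E-x-implies-that-of-E}) the only delicate point is matching the two twistor parameters — the twistor line $\PP^1_\omega$ of $X$ and the twistor line used for $E_x$ as a sheaf on $X$ — which is exactly the content of Theorem \ref{thm-verbitsky-8.15} and the compatibility of horizontal sections, so this should be routine given the earlier results.
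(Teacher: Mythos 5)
Your part (\ref{lemma-item-vanishing-of-H-1-for-E-x-implies-that-of-E}) is essentially the paper's argument: restrict $\E$ along the horizontal sections $\iota_x:\X\hookrightarrow\X^2_\pi$, apply Conjecture \ref{conj-Ext-1-is-constant-isolated-singularity-case} to each $(X,E_x)$ (which is stable hyperholomorphic with an isolated singularity and $H^1=0$ by hypothesis), and then run the Leray sequence for the projection onto the first factor, using $\pi_{1,*}\E_t\cong\StructureSheaf{\X_t}$ from stability and $R^1\pi_{1,*}\E_t=0$ from the fiberwise vanishing. That part is in order.

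Part (\ref{lemma-item-surjectivity-of-Kodaira-Spencer-implies-vanishing}) has a genuine gap. You propose to produce a surjection $\StructureSheaf{X}\otimes TX\to R^1q_*E$ ``built from $\kappa$'' and to kill $R^1q_*E$ using $H^1(X\times X,E)=0$; neither half works. First, $\kappa_x$ takes values in $\Ext^1(F_x,F_x)$, and $H^1(E_x)=H^1(X,\SheafEnd(F_x))$ sits inside $\Ext^1(F_x,F_x)$ precisely as the kernel of the local-to-global map $\psi:\Ext^1(F_x,F_x)\to H^0(X,\SheafExt^1(F_x,F_x))$; there is no induced map from $TX$ onto $R^1q_*E$, and surjectivity of $\kappa_x$ onto $\Ext^1(F_x,F_x)$ says nothing a priori about the subspace $\ker\psi$. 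Second, the five-term sequence $0\to H^1(R^0q_*E)\to H^1(E)\to H^0(R^1q_*E)\to H^2(R^0q_*E)$ with $H^1(E)=0$ only shows that $H^0(R^1q_*E)$ injects into $H^2(X,\StructureSheaf{X})$, which is nonzero for a holomorphic symplectic $X$, so it gives no control of $R^1q_*E$ (and even $H^0(R^1q_*E)=0$ would not force the sheaf to vanish). The paper's actual mechanism, which your closing sentence gestures at but does not carry out, is purely fiberwise and uses no global cohomology of $E$: $\beta$-tightness and the exact sequence (\ref{eq-filtration-of-SheafExt-1-F-F}) identify $\coker(\beta^1_*)$ with $\iota_*N_{\Delta/X^2}$, and restricting to $\{x\}\times X$ one checks that the composite $T_xX\RightArrowOf{\kappa_x}\Ext^1(F_x,F_x)\RightArrowOf{\psi}H^0(\SheafExt^1(F_x,F_x))\rightarrow H^0\bigl(\SheafExt^1(F_x,F_x)/\beta^1_*[R^1\beta_*\SheafEnd(G_x)]\bigr)$ equals the isomorphism $T_xX\cong(N_{\Delta/X^2})_x$. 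Hence $\kappa_x$ is injective, so by your hypothesis bijective, and therefore $\psi$ is injective on all of $\Ext^1(F_x,F_x)$, not merely on the image of $\kappa_x$; the left-exactness of the local-to-global sequence then gives $H^1(X,\SheafEnd(F_x))=\ker\psi=0$. The essential role of the surjectivity of $\kappa_x$ is exactly to transfer injectivity of the composite to injectivity of $\psi$ itself --- this is the step missing from your plan.
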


\begin{proof}
(\ref{lemma-item-surjectivity-of-Kodaira-Spencer-implies-vanishing})
Let $\beta:Y\rightarrow X\times X$ be the blow-up of the diagonal $\Delta$. 
$F$ is $\beta$-tight, by Condition \ref{cond-open-and-closed}.
Let $e_1:T\Delta\rightarrow (TX^2\restricted{)}{\Delta}$ be the inclusion as the first direct summand
({\em not} as the subbundle tangent to the diagonal).
Set $G:=\beta^*F/\mbox{torsion}$. Let
$\beta^1_*$ be the homomorphism in Diagram (\ref{eq-commutative-diagram-related-to-tightness}).
The composition 
\[
T\Delta\RightArrowOf{e_1} 
(TX^2\restricted{)}{\Delta} \rightarrow N_{\Delta/X^2}
\IsomRightArrow \coker(\beta^1_*)
\]
is an isomorphism, since the right homomorphism is an isomorphism, 
by the short exact sequence (\ref{eq-filtration-of-SheafExt-1-F-F}).
Restriction of the above isomorphism to $\{x\}\times X$ yields 
the isomorphism
\[
T_xX\IsomRightArrow  (N_{\Delta/X^2})_x \IsomRightArrow
H^0(X,\SheafExt^1(F_x,F_x)/\beta^1_*[R^1\beta_*\SheafEnd(G_x)]),
\]
where $G_x$ is the restriction of $G$ to the strict transform in $Y$ of $\{x\}\times X$
and $\beta$ now denotes the blow-up of $X$ at $x$. 
The above isomorphism coincides with the composition
\[
T_xX\RightArrowOf{\kappa_x} \Ext^1(F_x,F_x)\RightArrowOf{\psi} H^0(X,\SheafExt^1(F_x,F_x))\rightarrow 
H^0(X,\SheafExt^1(F_x,F_x)/\beta^1_*[R^1\beta_*\SheafEnd(G_x)]).
\]
Hence, $\kappa_x$ is injective, and so an isomorphism. It follows that 
the natural homomorphism $\psi$ above is injective. The cohomology 
$H^1(X,\SheafEnd(F_x))$ thus vanishes, by the left exactness of the sequence
\[
0\rightarrow H^1(X,\SheafEnd(F_x))
\rightarrow Ext^1(F_x,F_x)\RightArrowOf{\psi} H^0(X,\SheafExt^1(F_x,F_x)).
\]

(\ref{lemma-item-vanishing-of-H-1-for-E-x-implies-that-of-E})
Set $E_x:=\SheafEnd(F_x)$.
The class $c_2(E_x)$ is invariant with respect to a finite index subgroup of $Mon(X)$, since $c_2(E)$ is
by Condition \ref{cond-open-and-closed}. Hence, $E_x$ is $\omega$-polystable hyperholomorphic
as a sheaf, and $\omega$-stable as an Azumaya algebra, by the assumed slope-stability of $F_x$.
Let $\E$ be the sheaf of Azumaya algebras over the fiber square $\X^2_\pi$
of the twistor family $\pi:\X \rightarrow \PP^1_\omega$.
The point $x\in X$ determines a holomorphic section $\PP^1_\omega\rightarrow\X$ and hence also
a holomorphic embedding  $\iota_x:\X\hookrightarrow\X^2_\pi$ \cite[Sec. 3(F)]{HKLR}.
Let $\E_{x,t}$ be the restriction of $\iota_x^*(\E)$ to the fiber $\X_t$
over $t\in \PP^1_\omega$. We have seen above that  $H^1(X,E_x)$ vanishes.
Conjecture \ref{conj-Ext-1-is-constant-isolated-singularity-case} implies that 
$H^1(\X_t,\E_{x,t})$ vanishes, for all $t\in \PP^1_\omega$. 

Let $\pi_1:\X_t\times\X_t\rightarrow\X_t$ be the projection onto the first factor. We 
get the left exact sequence
\[
0\rightarrow H^1(\X_t,\pi_{1,*}\E_t) \rightarrow 
H^1(\X_t^2,\E_t)\rightarrow 
H^0(\X_t,R^1\pi_{1,*}\E_t).
\]
The sheaf $R^1\pi_{1,*}\E_t$ vanishes, by the vanishing of $H^1(\X_t,\E_{x,t})$, for all $x\in \X_t$.
The sheaf $\pi_{1,*}\E_t$ is the trivial line bundle, by the $\omega_t$-slope stability of $\E_{x,t}$ as a
reflexive sheaf of Azumaya algebras (which follows from the fact that $F_x$ is
$\omega$-stable hyperholomorphic). Hence, $H^1(\X_t,\pi_{1,*}\E_t)$ vanishes.
We conclude the desired vanishing of $H^1(\X_t^2,\E_t)$, for all $t\in \PP^1_\omega$.
\end{proof}

\begin{condition}
\label{enhanced-open-and-closed-condition}
$E$ is a reflexive sheaf of Azumaya algebras over $X\times X$, which satisfies Condition \ref{cond-open-and-closed}, 
the restriction $E_x$ of $E$ to $\{x\}\times X$  is $\omega$-slope-stable 
for all $x\in X$, for some K\"{a}hler class $\omega$ on $X$, 
and $H^1(X,E_x)$ vanishes, for all $x\in X$. 
The class $\tilde{\theta}$ in $(\Lambda/r\Lambda)^{\oplus 2}$, used in 
Equation (\ref{eq-theta-X-eta}), projects to a class of order $r$ in the second direct summand.
\end{condition}

\begin{rem}
\label{rem-enhanced-condition}
When $d=2$ and the sheaf $E$ is not locally free along the diagonal 
we can require Condition \ref{enhanced-open-and-closed-condition}, which is an 
enhancement of  Condition \ref{cond-open-and-closed}.
Then the proof of Proposition \ref{prop-condition-remains-valid-under-twistor-deformations}
combines with  Lemma \ref{lem-surjectivity-of-Kodaira-Spencer-implies-reduction-of-conjecture}
(\ref{lemma-item-vanishing-of-H-1-for-E-x-implies-that-of-E}) 
to show that the enhanced Condition \ref{enhanced-open-and-closed-condition}
is preserved under twistor deformations,
provided Conjecture \ref{conj-Ext-1-is-constant-isolated-singularity-case} holds.
The condition about the class $\tilde{\theta}$ will be used later 
to show that Condition \ref{enhanced-open-and-closed-condition}
is preserved along generic twistor paths.
\end{rem}
%
\section{Existence of a coarse moduli space of rigid Azumaya algebras}
\label{sec-existence-of-coarse-moduli-of-triples}
We present a proof in this section of the 
existence of the coarse moduli space
$\tfM_\Lambda$ consisting of isomorphism classes of triples 
$(X,\eta,E)$ satisfying  Condition \ref{cond-open-and-closed}. 
Our construction is rather {\it ad-hoc}. No doubt it would be more satisfying to 
realize $\tfM_\Lambda$ as an open subset of a moduli space of 
simple Azumaya algebras, but we were unable to find a reference for such a 
construction in our analytic setting. 

%
\subsection{Local moduli of locally free Azumaya algebras over the blow-up of the diagonal}
\label{sec-local-moduli}

\begin{defi}
Let $f:V \to W$ be a morphism of smooth and compact analytic spaces. Define 
$\Def(V,f,W):  ({\mathbf{GAn}})\to ({\mathbf{Sets}})$ to be the deformation functor from 
the category of complex analytic germs to sets, which associates 
to a germ $S$ the set of isomorphism classes of deformations $\wt{f}:\wt{V}\to\wt{W}$ over $S$ of
$f$.
\end{defi}

We shall consistently use the following notation below. For a complex analytic manifold $V$,  
$\Def(V)$
will stand for the set-valued functor of flat deformations of $V$ over complex analytic germs.
The Kuranishi deformation space of $V$ will be denoted $\D_V$.

\begin{lem}
\label{isom-1}
Let $f:V \to W$ be a morphism as above satisfying the conditions
\begin{enumerate}
\item 
$R^0f_*\ko_V \cong \ko_W$, $R^if_*\ko_V =0$ if $i>0$;
\item 
$H^0(W,T_W)=0$.
\end{enumerate}
Assume that the Kuranishi deformation family of  $V$ is universal. 
Then, the forgetful morphism $\alpha: \Def(V,f,W)\rightarrow \Def(V)$
of the two deformation functors  is an isomorphism. In particular, $\Def(V,f,W)$
is representable by a universal deformation family.
\end{lem}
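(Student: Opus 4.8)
The plan is to show that $\alpha$ is an isomorphism of functors by producing an inverse, i.e. by showing that every flat deformation $\wt V$ of $V$ over a germ $S$ extends, uniquely up to isomorphism, to a deformation $\wt f:\wt V\to\wt W$ over $S$. The natural candidate for $\wt W$ is $\wt W:=\mathrm{Spec}_S(R^0\wt f_*\ko_{\wt V})$ (in the analytic category, the relative analytic spectrum of the pushforward structure sheaf), with $\wt f$ the canonical morphism $\wt V\to\wt W$; condition (1) is exactly what makes this behave well. Concretely, I would first argue that for each such $\wt V\to S$, the sheaf of $\ko_S$-algebras $R^0\wt f_*\ko_{\wt V}$ is $S$-flat and its formation commutes with base change $T\to S$, and that the higher direct images $R^i\wt f_*\ko_{\wt V}$ vanish; this follows from cohomology and base change together with the hypotheses $R^0f_*\ko_V\cong\ko_W$, $R^if_*\ko_V=0$ for $i>0$ on the central fibre. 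One must take a little care because $W$ (and hence $\wt W$) need not be smooth over $S$ — but $\wt W$ is flat over $S$ by construction, and its central fibre is $W$, which is what is needed. Then $\wt f:\wt V\to\wt W$ is a deformation of $f$, giving a candidate map $\beta:\Def(V)\to\Def(V,f,W)$, and $\alpha\circ\beta=\mathrm{id}$ is immediate.

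For $\beta\circ\alpha=\mathrm{id}$, the point is that \emph{any} deformation $\wt f':\wt V\to\wt W'$ of $f$ over $S$ must have $\wt W'\cong\mathrm{Spec}_S(R^0\wt f'_*\ko_{\wt V})$ canonically: since $R^0f_*\ko_V\cong\ko_W$ on the central fibre and everything is flat, the adjunction unit $\ko_{\wt W'}\to R^0\wt f'_*\ko_{\wt V}$ is an isomorphism by Nakayama / base change (it is an isomorphism on the central fibre, and both sides are $S$-flat). Hence $\wt W'$ is recovered from $\wt V$ together with $\wt f'$, and in fact from $\wt V$ alone, so the two functors agree. The only thing still to be pinned down is the \emph{isomorphism classes}: one must check that an isomorphism of deformations $\wt V_1\cong\wt V_2$ over $S$ induces a compatible isomorphism $\wt W_1\cong\wt W_2$, which again follows from functoriality of $\mathrm{Spec}_S(R^0(-)_*\ko)$.

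Now one invokes the last two hypotheses. The assumption that the Kuranishi family of $V$ is universal means $\Def(V)$ is pro-representable by $\D_V$ with no nontrivial automorphisms; transporting this through the isomorphism $\alpha$ gives that $\Def(V,f,W)$ is representable by a universal deformation family, proving the ``in particular'' clause. The hypothesis $H^0(W,T_W)=0$ is what I expect to be used precisely here: it guarantees that the deformation $\wt f$ of the morphism $f$ (equivalently, of the pair with its map) has no infinitesimal automorphisms coming from automorphisms of the target, so that rigidifying the target does not lose or gain automorphisms and the forgetful map is an isomorphism on the nose rather than merely an equivalence of groupoids. I would make this precise by comparing the tangent-obstruction theories: the tangent space to $\Def(V,f,W)$ sits in an exact sequence involving $H^0(W,T_W)$, $H^0(V,T_V)$ and $H^1$ of the relevant complexes, and $H^0(W,T_W)=0$ collapses it to $H^1(V,T_V)=T\Def(V)$; similarly for obstructions, using $R^{>0}f_*\ko_V=0$ to kill the cross terms.

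The main obstacle I anticipate is the careful base-change bookkeeping in the analytic category: establishing that $R^0\wt f_*\ko_{\wt V}$ is $S$-flat with base-change-compatible formation and vanishing higher direct images, when $W$ is only assumed smooth and compact (not, say, smooth over a point with $W$ rigid), and then verifying that $\wt W:=\mathrm{Spec}_S(R^0\wt f_*\ko_{\wt V})$ is genuinely a flat deformation of $W$ with $\wt f$ proper (or at least with the right formal properties) so that the pair $(\wt V,\wt f,\wt W)$ is a legitimate object of $\Def(V,f,W)$. Everything else — the two natural transformations, their mutual inverseness, and the passage to representability — is then essentially formal.
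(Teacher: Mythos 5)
There is a genuine gap, and it sits at the heart of your argument: the proposed inverse $\beta$ is circular. You define the target of the deformed morphism as $\wt W:=\mathrm{Spec}_S\bigl(R^0\wt f_*\ko_{\wt V}\bigr)$, but $\wt f$ is exactly the datum you are trying to construct; given only a flat deformation $\wt V\to S$ of $V$, there is no morphism along which to push $\ko_{\wt V}$ forward ($V\to S$ itself has connected compact fibres, so its Stein factorization just returns $S$). Your Stein-factorization observation does prove that a deformation $\wt f':\wt V\to\wt W'$ of $f$, \emph{if it exists}, determines $\wt W'$ canonically from $\wt V$ and $\wt f'$ — i.e.\ it gives injectivity of $\alpha$ on the target side — but it contributes nothing to essential surjectivity, which is the substantive claim: that every deformation of $V$ is induced by a deformation of the morphism $f$. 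That step cannot be obtained for free from condition (1); it is where the actual deformation theory of maps enters.

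The paper closes exactly this gap by quoting Ran's theorem: condition (1) makes the forgetful morphism $\alpha$ formally smooth, and its relative dimension is $\dim H^0(V,f^*T_W)$, which equals $\dim H^0(W,T_W)=0$ by the projection formula and condition (1) again — so $\alpha$ is formally \'etale. (This is also the correct role of hypothesis (2): $H^0(V,f^*T_W)$ is the tangent space to the fibre of $\alpha$, i.e.\ to deformations of $f$ with source and target fixed, not an automorphism group as you suggest.) To pass from formal \'etaleness to an actual isomorphism represented by a convergent family, the paper then invokes the Koll\'ar--Mori construction: the component of the relative Douady space of $\V\times\W/\D_V\times\D_W$ containing the graph of $f$ admits an analytic section over $\D_V$, producing a family of morphisms over the Kuranishi space of $V$; any deformation of $f$ over a base $T$ gives another section, which must agree with the pullback of the canonical one because $\alpha$ is formally \'etale. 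You would need some substitute for both of these inputs — the formal smoothness and the convergent section — for your outline to become a proof.
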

\begin{proof}
 The morphism $\alpha$ is formally smooth, by hypothesis (1) and  \cite[Theorem 3.3]{ran}. 
Furthermore, the relative dimension of $\alpha$ 
is  $\dim (H^0(V,f^*T_W))$ (\cite[(2.2)]{ran}), which is
$0$ by hypothesis (2).

Denote by $\D_V$ and $\D_W$ the Kuranishi deformation spaces and 
let $v: \V \to \D_V$ and $w: \W \to \D_W$ be the Kuranishi families of $V$ and $W$. According to 
\cite[Prop. 11.4]{flips}, there are natural morphisms $F$ and $\hat{f}$ making the following
diagram commutative
\begin{equation}\label{family}
\xymatrix{
\V \ar[d]_{v} \ar[r]^{\hat{f}} & \W \ar[d]^{w}
\\
\D_V \ar[r]^{F} & \D_W,
}
\end{equation}
and such that $\hat{f}|_V=f$. 
Consequently, we get a family of morphisms 
\[
\xymatrix{
\V \ar[rr]^{\wt{f}} \ar[rd]& & F^*\W \ar[ld]\\
 &\D_V.&
}
\]
We claim that the above family 
represents $\Def(V, f, W)$, which, of course, also implies the lemma. To see this, let us 
recall the construction of the family (\ref{family}). Denote by $\Gamma\subset V\times W$ 
the
graph of the morphism $f$, and let $D$ be the component of the relative Douady space
of $\V\times \W / \D_V\times \D_W$ parametrizing graphs of morphisms which contains 
$\Gamma$. Kollar-Mori argue that the  projection morphism 
$$pr: [\Gamma]\in D \to 0 \in \D_V$$
admits an analytic section $\sigma:\D_V\rightarrow D$, which yields (\ref{family}). Now, let
$\wt{V}\stackrel{g}{\to} \wt{W} \to T$ be a deformation of the morphism $f$ over a base $T$.
This corresponds to a section $\tilde{\sigma}$ of $pr$ over $T$, in the sense that it corresponds to a 
commutative diagram
\[
\xymatrix{& D\ar[d]^{pr}\\
T\ar[ru]^{\tilde{\sigma}} \ar[r]_\kappa& \D_V}
\]
But, as the morphism
$\alpha$ is formally smooth of relative dimension zero, the section $\tilde{\sigma}$
must coincide with $\sigma\circ \kappa$ in the formal neighborhood of $0\in T$. Thus, the family
$\wt{V}\stackrel{g}{\to} \wt{W} \to T$ is isomorphic to the pullback of the family (\ref{family}) via
the classifying morphism. This completes the proof of the claim.
\end{proof}

As in section \ref{sec-introduction}, let $E$ be a reflexive sheaf of Azumaya algebras on $X^d$ 
satisfying Condition \ref{cond-open-and-closed}, 
$F$ a reflexive twisted sheaf representing $E$, 
and $A=\E nd (\beta^*F/tor)$  the 
locally-free sheaf of Azumaya algebras on $Y$, the blow-up of $X^d$
along the small diagonal. 

\begin{lem}
\label{lemma-A-is-rigid-if-E-is}
$A$ is rigid as a sheaf of Azumaya algebras. 
\end{lem}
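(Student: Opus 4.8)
The statement to prove is that $A := \SheafEnd(\beta^*F/\mathrm{tor})$ is rigid as a sheaf of Azumaya algebras over $Y$, i.e.\ that $H^1(Y,\SheafEnd_0(A)) = T^1_Y(A)$ vanishes, where $G := \beta^*F/\mathrm{tor}$ and $\SheafEnd_0(A)\cong\SheafEnd_0(G)$ is the traceless endomorphism sheaf. The plan is to transport the rigidity of $E$ over $X^d$ — guaranteed by Condition \ref{cond-open-and-closed}(\ref{cond-item-infinitesimally-rigid}), which gives $H^1(X^d,E)=0$, together with the vanishing $H^0(X^d,\SheafExt^1(F,F))=0$ from Proposition \ref{prop-vanishing-of-global-sections-of-extension-sheaf} — up to $Y$ via the Leray spectral sequence for $\beta:Y\rightarrow X^d$. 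Concretely, I would first apply the projection formula and the computation $(\beta^*E)/\mathrm{tor}\cong \SheafEnd_0(G)\otimes\StructureSheaf{Y}(-kD)\oplus\StructureSheaf{Y}$ (footnote \ref{ft} in Condition \ref{cond-open-and-closed}(\ref{cond-item-same-homogeneous-bundle})) to reduce to a statement about $H^1$ of $\SheafEnd_0(G)\otimes\StructureSheaf{Y}(-kD)$ and then, since twisting by $\StructureSheaf{Y}(jD)$ for $0\le j\le k$ only changes things along $D$ in a controlled way governed by Equations (\ref{eq-push-forward-of-End-V-same-as-that-of-End-V-minus-kD}) and (\ref{eq-push-forward-of-End-V-minus-kD-is-flat}), reduce to a statement about $\SheafEnd_0(G)$ itself.

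The key steps, in order, are: (i) Write down the Leray five-term exact sequence
\[
0\rightarrow H^1(X^d,\beta_*\SheafEnd_0(G))\rightarrow H^1(Y,\SheafEnd_0(G))\rightarrow H^0(X^d,R^1\beta_*\SheafEnd_0(G))\rightarrow H^2(X^d,\beta_*\SheafEnd_0(G)).
\]
(ii) Identify $\beta_*\SheafEnd_0(G)$: using the vanishing $R^i p_*(\SheafEnd_0(e^*G)(-jD))=0$ for $i>0$, $j\ge k$ and the isomorphism $\beta_*(\SheafEnd_0(G)\otimes\StructureSheaf{Y}(-kD))\cong\beta_*\SheafEnd_0(G)$, conclude that $\beta_*\SheafEnd_0(G)$ is the traceless part $E_0$ of $E$ (this is exactly the identification $\beta_*A_0\cong E_0$ used in Step 1 of the proof of Proposition \ref{prop-flatness-of-the-twistor-deformation}). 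Then $H^1(X^d,\beta_*\SheafEnd_0(G))=H^1(X^d,E_0)=0$ by Condition \ref{cond-open-and-closed}(\ref{cond-item-infinitesimally-rigid}). (iii) Show the edge map to $H^0(X^d,R^1\beta_*\SheafEnd_0(G))$ lands in a space that vanishes: this is precisely Lemma \ref{lemma-vanishing-of-sheaf-of-differential-operators}(\ref{lemma-item-R-1-beta-pushforward-of-End-G-does-not-have-global-sections}), which gives $H^0(X^d,R^1\beta_*\SheafEnd(G))=0$ (and the traceless summand is a direct summand). Combining (i)–(iii) yields $H^1(Y,\SheafEnd_0(G))=0$, hence $A$ is rigid as a sheaf of Azumaya algebras.

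I would also need a small compatibility remark: the infinitesimal deformations of $A$ as a sheaf of Azumaya algebras over $Y$ are $T^1_Y(A)=\Ext^1_0(G,G)$, and since $H^0(Y,\SheafExt^1(G,G))=0$ (because $G$ is locally free, so this sheaf vanishes outright), one has $\Ext^1_0(G,G)=H^1(Y,\SheafEnd_0(G))$; so the cohomological vanishing above is literally the rigidity statement. This last point is in fact easier here than in the reflexive case on $X^d$, precisely because $G$ is locally free on $Y$.

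The main obstacle will be Step (ii): verifying cleanly that $\beta_*\SheafEnd_0(G)$ is honestly isomorphic to the traceless part $E_0$ of $E$, i.e.\ that pushing forward the locally free model on $Y$ recovers the reflexive sheaf on $X^d$ and not something larger. This requires the interplay of the vanishing statements in Condition \ref{cond-open-and-closed}(\ref{cond-item-same-homogeneous-bundle}b) — specifically that $\SheafEnd_0(W)\otimes\StructureSheaf{D_x}(k)$ is globally generated with vanishing higher cohomology, so that $\beta_*(\SheafEnd_0(G)\otimes\StructureSheaf{Y}(-jD))$ is independent of $j$ in the range $0\le j\le k$ and is reflexive — together with the fact that $E_0$ is reflexive and agrees with $\beta_*\SheafEnd_0(G)$ away from the diagonal. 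All the needed input is already assembled in the proofs of Lemma \ref{lemma-sufficient-conditions-for-reflexivity-over-pi} and Proposition \ref{prop-flatness-of-the-twistor-deformation}, so this amounts to quoting those computations rather than redoing them.
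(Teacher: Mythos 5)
Your proposal is correct and follows essentially the same route as the paper: the Leray five-term exact sequence for $\beta$, the identification $\beta_*A\cong E$ (Corollary \ref{cor-pushforward-commutes-with-End}), the vanishing $H^1(X^d,E)=0$ from Condition \ref{cond-open-and-closed}(\ref{cond-item-infinitesimally-rigid}), and the vanishing $H^0(X^d,R^1\beta_*\SheafEnd(G))=0$ from Lemma \ref{lemma-vanishing-of-sheaf-of-differential-operators}(\ref{lemma-item-R-1-beta-pushforward-of-End-G-does-not-have-global-sections}). The paper simply works with the full endomorphism sheaf $A=\SheafEnd(G)$ rather than its traceless part, which is equivalent here since $H^1(\StructureSheaf{Y})=0$.
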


\begin{proof}
$E$ is rigid by assumption, and
$\beta_*A=E$ from Corollary \ref{cor-pushforward-commutes-with-End}. 
The space $H^0(X^d,R^1\beta_*A)$
vanishes, by Lemma \ref{lemma-vanishing-of-sheaf-of-differential-operators} 
(\ref{lemma-item-R-1-beta-pushforward-of-End-G-does-not-have-global-sections}).
The Leray spectral sequence yields the left exact sequence
\[
0\rightarrow H^1(X^d,\beta_*A)\rightarrow H^1(Y,A)\rightarrow H^0(X^d,R^1\beta_*A).
\]
We conclude the vanshing of $H^1(Y,A)$.
\end{proof}


\begin{defi}
Let $A=\E nd (\beta^*F/tor)$ be as above. Define
$\Def(Y,A): ({\mathbf{GAn}})\to ({\mathbf{Sets}})$ to be the deformation functor 
from the category of complex analytic germs to sets, which associates 
to a germ $S$ the isomorphism classes of deformations $(\wt{Y},\wt{A})$ over $S$ of 
the pair $(Y,A)$.
\end{defi}


\begin{lem}\label{isom-2}
The natural morphism $\Def(X)\rightarrow\Def(Y)$ is an isomorphism, as is 
the forgetful morphism $\Def(Y,A)\rightarrow \Def(Y)$. 
Consequently, we get the isomorphism 
$\Def(Y,A)\rightarrow  \Def(X)$ of the two deformation functors.
\end{lem}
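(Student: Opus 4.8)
\emph{Plan.} The statement reduces to two claims: that the natural morphism $\Def(X)\to\Def(Y)$ is an isomorphism, and that the forgetful morphism $\Def(Y,A)\to\Def(Y)$ is an isomorphism; composing the inverse of the first with the second will give $\Def(Y,A)\cong\Def(X)$. I would prove each separately, both by exploiting Lemma \ref{isom-1}.

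\emph{The first claim.} I would apply Lemma \ref{isom-1} to the blow-down $\beta:Y\to X^d$. Its hypotheses hold: $R^0\beta_*\ko_Y=\ko_{X^d}$ and $R^{i}\beta_*\ko_Y=0$ for $i>0$ since $\beta$ blows up a smooth centre; $H^0(X^d,T_{X^d})=0$ since $T_{X^d}=\bigoplus_i\pi_i^*T_X$ and $H^0(X,T_X)=0$ (by K\"unneth); and the Kuranishi family of $Y$ is universal, because $H^0(Y,T_Y)=H^0(X^d,\beta_*T_Y)=0$, using the inclusion $\beta_*T_Y\hookrightarrow T_{X^d}$ in the exact sequence $0\to\beta_*T_Y\to T_{X^d}\to\iota_*N_{\Delta/X^d}\to0$ from the proof of Lemma \ref{lemma-vanishing-of-sheaf-of-differential-operators}. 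This gives $\Def(Y)\cong\Def(Y,\beta,X^d)$. It then remains to identify $\Def(Y,\beta,X^d)$ with $\Def(X)$. A deformation of the morphism $\beta$ carries along the exceptional divisor $D$ — which deforms uniquely, since $N_{D/Y}=\ko_D(-1)$ satisfies $p_*\ko_D(-1)=R^1p_*\ko_D(-1)=0$ along $p:D\to\Delta$, whence $H^0(D,\ko_D(-1))=H^1(D,\ko_D(-1))=0$ — and hence deforms its image $\Delta\cong X$ inside $X^d$; conversely one blows up. So $\Def(Y,\beta,X^d)\cong\Def(X^d,\Delta)$, the functor of deformations of the embedded diagonal. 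Finally $\Def(X^d)$ splits as $\Def(X)^{\times d}$ (no mixing, again because $H^0(X,T_X)=0$), and $\Delta$ deforms along a given deformation of $X^d$ precisely when its $d$ factors agree: the relevant obstruction is the composite $H^1(X^d,T_{X^d})=H^1(X,T_X)^{\oplus d}\to H^1(\Delta,N_{\Delta/X^d})=H^1(X,T_X)^{\oplus(d-1)}$, the ``difference'' map whose kernel is the diagonal copy of $H^1(X,T_X)$. Thus $\Def(X^d,\Delta)\cong\Def(X)$, which proves the first claim.

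\emph{The second claim.} I would write $A=\SheafEnd(\V)$ with $\V:=\beta^*F/\mathrm{torsion}$, a locally free (twisted) sheaf on $Y$, so that $\Def(Y,A)$ is the functor of deformations of the $\PP^{r-1}$-bundle $q:\PP(\V)\to Y$. Applying Lemma \ref{isom-1} to $q$ — its hypotheses hold: $R^0q_*\ko_{\PP(\V)}=\ko_Y$, $R^{i}q_*\ko_{\PP(\V)}=0$ for $i>0$; $H^0(Y,T_Y)=0$ by the previous paragraph; and $H^0(\PP(\V),T_{\PP(\V)})=0$, since $q_*T_{\PP(\V)/Y}=\SheafEnd_0(\V)$, $R^{i}q_*T_{\PP(\V)/Y}=0$ for $i>0$, and $H^0(Y,\SheafEnd_0(\V))=H^0(X^d,E_0)=0$ ($F$ being simple) — yields $\Def(Y,A)\cong\Def(\PP(\V))$. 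From the relative tangent sequence of $q$ and the vanishing $R^{i}q_*T_{\PP(\V)/Y}=0$ ($i>0$) one extracts exact sequences $H^i(Y,\SheafEnd_0(\V))\to H^i(\PP(\V),T_{\PP(\V)})\to H^i(Y,T_Y)\to H^{i+1}(Y,\SheafEnd_0(\V))$. Since $H^1(Y,\SheafEnd_0(\V))=0$ — by Lemma \ref{lemma-A-is-rigid-if-E-is} together with $H^1(Y,\ko_Y)=H^1(X^d,\ko_{X^d})=0$ — the forgetful map realises $\Def(\PP(\V))$ as the subfunctor of $\Def(Y)$ cut out by the vanishing of the obstruction to lifting $A$, which lies in $H^2(Y,\SheafEnd_0(\V))$. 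Hence the claim becomes equivalent to the assertion that $A$ deforms along every deformation of $Y$; and, since $\beta_*A=E$ (Lemma \ref{lemma-A-is-rigid-if-E-is}) and, by the first claim, deformations of $Y$ are exactly deformations of $X^d$ preserving the diagonal, it suffices to show that the rigid reflexive Azumaya algebra $E$ on $X^d$ deforms along every such deformation.

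\emph{The main obstacle} is precisely this last assertion: upgrading the formal rigidity of $E$ over the \emph{fixed} $X^d$ — the vanishing $\Ext^1_0(F,F)=0$ of Condition \ref{cond-open-and-closed} (\ref{cond-item-infinitesimally-rigid}) — to the existence of deformations of $E$ along deformations of $X^d$ preserving $\Delta$, equivalently the vanishing of the obstruction map $H^1(X,T_X)\to\Ext^2_0(F,F)$. A crude dimension count will not do this, since $H^2(Y,\SheafEnd_0(G))$ need not vanish; instead one must use that the characteristic class of $E$ in $H^2(X^d,\mu_r)$ is topological — so its underlying $\mu_r$-gerbe extends canonically over any deformation — and that the residual twisted-sheaf obstruction dies thanks to the fine structure of $F$ along $\Delta$ encoded by the $\beta$-tightness hypothesis and the short exact sequence (\ref{eq-filtration-of-SheafExt-1-F-F}), together with the unobstructedness of the fibre bundle $W$ in Condition \ref{cond-open-and-closed} (\ref{cond-item-same-homogeneous-bundle}). (Alternatively, as $E$ is $\omega$-slope-stable and hyperholomorphic, Verbitsky's construction gives the extension explicitly over each twistor line through $X$, and such lines sweep out a neighbourhood of $[X]$ in the Kuranishi space of $X$, which is smooth by Bogomolov--Tian--Todorov.)
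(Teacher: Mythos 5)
Your overall architecture is essentially the paper's: both halves are run through Lemma \ref{isom-1} (the paper applies it to $\PP_A\to Y$ and handles $\Def(Y)\cong\Def(X)$ by a direct cohomological computation with the sheaf of vector fields tangent to the diagonal, which amounts to your identification $\Def(Y,\beta,X^d)\cong\Def(X^d,\Delta)\cong\Def(X)$), and your reduction of the second claim to ``$A$ deforms along every deformation of $Y$'' is exactly the surjectivity statement the paper must also establish. The vanishings you invoke ($H^0(T_Y)=0$, $H^1(Y,\SheafEnd_0(\V))=0$ via Lemma \ref{lemma-A-is-rigid-if-E-is}, simplicity of $F$) are correct and are the ones the paper uses.

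The genuine gap is that you do not prove the step you yourself identify as the main obstacle, and the one concrete mechanism you offer for it is wrong as stated. The union of the twistor lines through the point $[X]$ does \emph{not} sweep out a neighbourhood of $[X]$ in $\D_X$: each Kähler class contributes a real $2$-dimensional curve whose tangent direction at the origin lies in the image of the (totally real) Kähler cone inside $H^1(T_X)\cong\Hom(H^{2,0},H^{1,1})$, so the union is a real submanifold of dimension roughly $h^{1,1}+1$, far short of $2h^{1,1}$. What the paper actually uses is \cite[Prop.~3.10]{huybrechts-torelli}: every point of $\D_X$ is joined to the origin by a connected \emph{path of generic twistor lines} contained in $\D_X$. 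One then deforms $E$ hyperholomorphically along each link of the chain, and --- this is the second piece of substance your sketch omits --- one must know that Condition \ref{cond-open-and-closed} (in particular slope-stability and the singularity structure along the diagonal) is preserved at every node of the path so that the next twistor deformation can be performed; that is Proposition \ref{prop-condition-remains-valid-under-twistor-deformations}, whose proof occupies Section \ref{flatness-and-cond-1.7-on-twistor-lines}. Your alternative first sketch (topological gerbe plus $\beta$-tightness killing the residual obstruction in $\Ext^2_0(F,F)$) is not an argument: nothing in Condition \ref{cond-open-and-closed} gives the vanishing of the obstruction map $H^1(T_X)\to\Ext^2_0(F,F)$ directly, and the paper never proves such a vanishing --- it sidesteps it precisely by the twistor-path surjectivity argument combined with the dimension count $h^1(T_{\PP_A})\le h^1(T_Y)=h^1(T_X)$ coming from $H^1(T_f)=0$.
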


\begin{proof} 
Denote by $f:\PP_A\to Y$ the projective bundle on $Y$ associated to $A$. 
We shall construct isomorphisms  between the following pairs of deformation functors:
\begin{enumerate}
\item 
\label{1}
$\Def(Y)$ and $\Def(X)$,
\item 
\label{3}
$\Def(Y,A)$ and $\Def(\PP_A,f,Y)$,
\item 
\label{4}
$\Def(\PP_A, f, Y)$ and $\Def(\PP_A)$,
\item 
\label{5}
$\Def(\PP_A)$ and  $\Def(Y)$,
\end{enumerate}
such that the composition 
$\alpha_{\ref{1}}\circ\alpha_{\ref{5}}\circ\alpha_{\ref{4}}\circ\alpha_{\ref{3}}:\Def(Y,A)\rightarrow \Def(X)$ is the forgetful morphism, where $\alpha_i$ denotes the morphism from the first deformation functor to the second in
the $i$-th pair.


(\ref{1}) The obvious morphism from $\Def(X)$ to $\Def(Y)$ will be shown to
be an isomorphism. Most of the proof involves the cohomological identification of the 
differential $d\alpha_1$.
Let $K$ be  the sheaf of  germs of tangent vectors to $X^d$ that are tangent to the small
diagonal $\gd$. The sheaf $K$ controls the
deformations of the embedding $\gd \subset X^d$: $H^1(X^d,K)$ is the 
tangent space of $\Def(\gd, i, X^d)$, and $H^2(X^d,K)$ its obstruction space
\cite[3.4.4]{sernesi}. 
The Cartesian square
$$
\xymatrix{
D  \ar[r] \ar[d]_{p}& Y \ar[d]^{\beta}
\\
\gd \ar[r]^{i} & X^d
}
$$
gives rise to the diagram
\begin{equation}\label{diag-1}
\xymatrix{
 & & & 0\ar[d] &\\
  &0 \ar[d] & & \ko(D)|_D\ar[d] &\\
0 \ar[r] & \beta^*K/tor \ar[r] \ar[d] & \beta^*T_{X^d} \ar[r] \ar@{=}[d] & 
p^* N_{\gd/X^d} \ar[d] \ar[r] & 0\\
0 \ar[r] & T_Y \ar[r] \ar[d]&  \beta^*T_{X^d} \ar[r]  & \E xt ^1(\Omega_\beta, \ko_Y) 
\ar[r] \ar[d]& 0\\ 
& \ko(D)|_D \ar[d]& & 0 & \\
 & 0. & & & 
}
\end{equation}
Note that $\Omega_\beta$ fits in the short exact sequence 
$0\rightarrow \beta^*\Omega_{X^d}
\RightArrowOf{d^*\beta}\Omega_Y\rightarrow \Omega_\beta\rightarrow 0$,
which explains the lower horizontal sequence above. 
Taking cohomology, (\ref{diag-1}) yields the diagram \footnote{Note that the canonical morphism
$K\to\beta_*(\beta^*K/tor)$ is an isomorphism.} 

\begin{equation}\label{diag-2}
\xymatrix{
0 \ar[r] & H^1(K) \ar[r]\ar[d]_{\cong} & H^1(T_{X^d}) \ar[r] \ar@{=}[d] & 
H^1(N_{\gd/X^d}) \ar[d]^{\cong} \ar[r] & 0\\
0 \ar[r] & H^1(T_Y) \ar[r]  & H^1(T_{X^d}) \ar[r] & 
H^1(\E xt ^1(\Omega_\beta, \ko_Y)) \ar[r]  & 0
}
\end{equation}
with exact rows, as we now explain.
The  top row is exact, since $H^0(D,p^*N_{\gd/X^d})$ is isomorphic to 
$H^0(\gd,N_{\gd/X^d})$, which vanishes, and the homomorphism 
$H^1(X^d,T_{X^d})\rightarrow H^1(\gd,N_{\gd/X^d})\cong H^1(X,T_X)^{d-1}$ 
is surjective, by the natural decomposition of $T_{X^d}$.
The left and right vertical arrows in diagram (\ref{diag-2}) are isomorphisms,  
since $\ko(D)|_D$ is cohomologically trivial. We conclude that the bottom row is exact as well.
Unfurling the bottom row further thus gives the inclusion
$$0 \to H^2(T_Y) \to H^2(T_{X^d})=0,$$
whence $\Def(Y)$ is unobstructed.
One also easily reads off from the diagram (\ref{diag-2}) 
that each of the projections 
$H^1(T_{X^d})=H^1(T_X)^d\rightarrow H^1(T_X)$
composes with the inclusion $H^1(T_Y)\rightarrow H^1(T_{X^d})$ to an isomorphism
$d\alpha_{\ref{1}}:H^1(T_Y)\rightarrow H^1(T_X)$,
yielding the desired identification of the differential of $\alpha_1$. 
We conclude that $\alpha_1$ is an isomorphism, as $\Def(X)$ and $\Def(Y)$ are unobstructed.

(\ref{3}) 
There are obvious morphisms of functors in both
directions which are inverse to each other.

(\ref{4})  
We use Lemma \ref{isom-1} to prove that the forgetful morphism 
$\alpha_{\ref{4}}:\Def(\PP_A, f, Y)\rightarrow \Def(\PP_A)$ is an isomorphism.
Clearly hypotheses (1) and (2) of Lemma \ref{isom-1}
hold. Let us verify that 
$H^0(T_{\PP_A})=0$, so that the Kuranishi family of $\PP_A$ is universal \cite[Thm 10.5]{BPV}. 
The following
sequence is the push-forward of the relative Euler sequence of $f$ on $Y$:
\begin{equation}\label{diag-3}
0 \to \ko_Y \to A \to f_*T_f \to 0.
\end{equation}
This implies that $H^0(T_f)$ consists of traceless sections of $A$. But $A$ is simple, so
 $H^0(T_f)=0$. The vanishing of $H^0(T_{\PP_A})$ now follows from the sequence
 \begin{equation}\label{diag-4}
 0 \to T_f \to T_{\PP_A} \to f^*T_Y \to 0,
 \end{equation}
 using the fact that $T_Y$ has no nontrivial global sections.
 
 (\ref{5}) Composing the isomorphism $\alpha_{\ref{4}}^{-1}:\Def(\PP_A)\rightarrow \Def(\PP_A, f, Y)$ with 
 the forgetful morphism, we get a natural morphism
 $\alpha_{\ref{5}}: \Def(\PP_A) \to \Def(Y)$, whose differential is the following map  
 in the long exact cohomology sequence of (\ref{diag-4}):
 $$
 \to H^1(T_f) \to H^1(T_{\PP_A}) \stackrel{d\alpha_{\ref{5}}}{\to} H^1(T_Y)\to
 $$
 \cite[(1.5)]{Wa}.
Now, $H^1(T_f)=H^1(A)$ by sequence (\ref{diag-3}),  and $H^1(A)$ vanishes, 
by Lemma \ref{lemma-A-is-rigid-if-E-is}. 
Thus, $d\alpha_{\ref{5}}$ is injective and 
$h^1(T_{\PP_A})\leq h^1(T_Y)=h^1(T_X)$, where the
equality uses the isomorphism of the pair (\ref{1}). 
We claim that the composition $\alpha_{\ref{1}}\alpha_{\ref{5}}: \Def(\PP_A) \to \Def(X)$
induces a surjective morphism  $\xi:\D_{\PP_A}\rightarrow\D_X$
between the Kuranishi deformation spaces. 
Indeed, any point 
$x$ in $\D_X$ can be joined to the origin by an open connected subset $C^0$ of a 
path $C$ of generalized twistor lines that is
wholly contained within $\D_X$ by \cite[Prop. 3.10]{huybrechts-torelli}. Then, 
given our assumptions on $E=\beta_*A$, the
construction of Verbitsky \cite[Cor. 6.10]{kaledin-verbitski-book,markman-hodge} 
(see Section \ref{flatness-and-cond-1.7-on-twistor-lines}) produces a 
projective bundle satisfying Condition
(\ref{cond-open-and-closed}) over this path. Thus, the point $x$ lies in the image of $\xi$.
The conclusion is that $h^1(T_X)\leq h^1(T_{\PP_A})$, so that these two dimensions are
equal and $d\alpha_{\ref{5}}$ is an isomorphism.
The result follows, as we have already established that both functors $\Def(\PP_A)$ and $\Def(Y)$
are unobstructed.
\end{proof}

%
\subsection{Local moduli space of reflexive Azumaya algebras}

Keep the notation of the previous subsection. 
Let $A=\E nd(\beta^*F/tor)$ be the locally free Azumaya algebra over $Y$ as above. For each point 
$x\in X \hookrightarrow X^d$ in the diagonal, 
let $D_x:=\beta^{-1}(x)\cong \PP^{2n-1}$, and write $W_x$ for the restriction of 
$\beta^*F/tor$ to this fiber.  
Let us reiterate the
vanishing in Condition \ref{cond-open-and-closed}.
There exists a non-negative integer $k$, such that the traceless endomorphism bundle
$\SheafEnd_0(W)=(A_0)_x$ satisfies 
\begin{eqnarray}
\label{az-push-fwd-1}
H^0((A_0)_x \otimes \StructureSheaf{D_x}(j))&=& 0, \hspace{4ex} \mbox{for} \ j<k, 
\\
\label{az-push-fwd-2}
H^i((A_0)_x)\otimes \StructureSheaf{D_x}(j))&=& 0, \hspace{4ex}  \mbox{for} \ i>0, \; j\geq 1
\end{eqnarray}
and $(A_0)_x\otimes \StructureSheaf{D_x}(k)$ is generated by its global sections.
We note that these conditions are {\em open} in families $(\wt{Y}, \wt{A})$.

\begin{lem}\label{flat-push}
Let $\wt{A}$ be a family of locally free sheaves of Azumaya algebras as above, satisfying
the conditions of the previous paragraph. Then
$\wt{\beta}_*\wt{A}$ is a flat family of sheaves of reflexive 
Azumaya algebras on $\wt{X}\times_S\dots\times_S\wt{X}$.
\end{lem}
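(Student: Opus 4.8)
The plan is to split $\wt A$ into its scalar and traceless summands, push each forward, and reassemble the result using the reflexivity machinery of Section~2; this is essentially the argument already contained in the proof of Lemma \ref{lemma-condition-involving-W-is-open}, now run over the given germ $S$. First I would write $\wt A\cong\SheafEnd(\wt V)$ over $\wt Y$, with $\wt V$ a (possibly $\wt\beta^*\theta$-twisted) locally free sheaf whose restriction $\restricted{\wt V}{D_x}$ to each fiber $D_x\cong\PP^{2n-1}$ of $p:\wt D\to\wt\Delta$ lies in the orbit $[W]$ of Condition \ref{cond-open-and-closed}(\ref{cond-item-same-homogeneous-bundle}); this is possible because that condition holds for the central fiber $A$ and is open, by Lemma \ref{lemma-condition-involving-W-is-open}. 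Working in characteristic zero, the trace gives a base-change-compatible direct sum decomposition $\wt A=\StructureSheaf{\wt Y}\oplus\wt A_0$ of locally free sheaves, where $\wt A_0:=\SheafEnd_0(\wt V)$ is the traceless part; note that $\wt A_0$ is self-dual, since the form $(a,b)\mapsto\mathrm{tr}(ab)$ is non-degenerate on it.

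I would then treat the two summands separately. For the scalar summand, $\wt\beta_*\StructureSheaf{\wt Y}=\StructureSheaf{\wt X^d_S}$ with vanishing higher direct images (blow-up of a smooth center), so it is a flat family of locally free, hence reflexive, sheaves over $\pi$. For $\wt A_0$, I would apply Lemma \ref{lemma-sufficient-conditions-for-reflexivity-over-pi}(\ref{lemma-item-push-forward-F-is-a-family-of-reflexive-sheaves}) with $V=\wt A_0$ and $\ell=k$ (permissible by self-duality). Its four cohomological hypotheses are checked on the fibers: $e^*(\wt A_0(-j\wt D))$ restricts to $D_x$ as $(A_0)_x\otimes\StructureSheaf{D_x}(j)$, so the vanishing of $p_*$ for $j<k$ is (\ref{az-push-fwd-1}) and the vanishing of $R^{>0}p_*$ for $j\geq k$ is (\ref{az-push-fwd-2}) when $k\geq1$; and when $k=0$ the algebra $E$ is locally free, $(A_0)_x$ is then a trivial bundle on $\PP^{2n-1}$, whose higher cohomology in twist $0$ vanishes. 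Since $\wt A_0$ is flat over $\wt\Delta$, these fiberwise vanishings yield the asserted vanishings of $R^ip_*$ and persist under base change. Hence $\wt\beta_*\wt A_0$, and therefore $\wt\beta_*\wt A=\StructureSheaf{\wt X^d_S}\oplus\wt\beta_*\wt A_0$, is a flat family of reflexive sheaves over $\pi$.

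Finally I would put the Azumaya structure back on. Over $U:=\wt X^d_S\setminus\wt\Delta$, where $\wt\beta$ is an isomorphism, $\wt\beta_*\wt A$ inherits the multiplication and unit of the locally free Azumaya algebra $\wt A$; since $\wt\beta_*\wt A$ is reflexive, $\wt\beta_*\wt A=\iota_*\iota^*(\wt\beta_*\wt A)$ for $\iota:U\hookrightarrow\wt X^d_S$ by Lemma \ref{lemma-F-is-isomorphic-to-the-pushforward-of-its-restriction}, so these extend uniquely to all of $\wt X^d_S$, and associativity and the unit axiom hold because $\wt\beta_*\wt A$ is torsion free and they hold on the dense open $U$. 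To see it is a family of reflexive Azumaya algebras in the sense of Definition \ref{def-families-of-reflexive-azumaya-algebras}, note that $\restricted{\wt V}{D_x}=W_x\in[W]$ satisfies the conditions (\ref{eq-push-forward-of-V-same-as-that-of-V-minus-mD})--(\ref{eq-push-forward-of-V-minus-kD-is-flat}) and their analogues for $W_x^*$, so $\wt V$ satisfies the hypotheses of Lemma \ref{lemma-sufficient-conditions-for-reflexivity-over-pi}(\ref{lemma-item-push-forward-F-is-a-family-of-reflexive-sheaves}); thus $\wt\F:=\wt\beta_*\wt V$ is a family of reflexive twisted sheaves over $\pi$, and since both $\wt\F$ and $\wt\beta_*(\wt V^*\otimes\wt V)=\wt\beta_*\wt A$ are reflexive over $\pi$, Lemma \ref{lemma-push-forward-of-End-V-is-End-F} identifies $\wt\beta_*\wt A$ locally over $\wt X^d_S$ with $\SheafEnd(\wt\F)$.

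The main obstacle is the cohomological bookkeeping in the middle step: deriving all four hypotheses of Lemma \ref{lemma-sufficient-conditions-for-reflexivity-over-pi} for $\wt A_0$ from the two hypotheses (\ref{az-push-fwd-1})--(\ref{az-push-fwd-2}), which forces one to exploit the self-duality of $\wt A_0$ and to treat the locally free case $k=0$ apart. It is essential to peel off the scalar summand first rather than apply Corollary \ref{cor-pushforward-commutes-with-End} to $\wt V$ directly, since $\SheafEnd(W_x)$ generally has nonzero $H^1$ in twist $0$ (the bundle $W_x$ has $\Ext^1(W_x,W_x)\neq0$), so $\wt V^*\otimes\wt V$ does not meet the hypotheses of that corollary.
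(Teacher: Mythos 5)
Your proof is correct and rests on the same pivot as the paper's: reduce everything to Lemma \ref{lemma-sufficient-conditions-for-reflexivity-over-pi} (\ref{lemma-item-push-forward-F-is-a-family-of-reflexive-sheaves}) with $k=\ell$, using self-duality to convert the two vanishings (\ref{az-push-fwd-1})--(\ref{az-push-fwd-2}) into all four hypotheses of that lemma. The paper's own proof is two sentences: it invokes $\wt{A}\cong\wt{A}^*$ and applies the lemma directly to $\wt{A}$.

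Your version differs in two ways that are worth recording. First, you split off the scalar summand and apply the lemma only to $\wt{A}_0$. This is not mere pedantry: the vanishings (\ref{az-push-fwd-1})--(\ref{az-push-fwd-2}) are stated for the traceless bundle $(A_0)_x$, and hypothesis (\ref{assumption-item-push-forward-of-V-and-V-kD-are-equal}) of Lemma \ref{lemma-sufficient-conditions-for-reflexivity-over-pi} fails for the full $\wt{A}$ when $k\geq 1$, since $H^0(D_x,\StructureSheaf{D_x}(j))\neq 0$ for $0\leq j<k$; so the paper's literal application needs exactly the decomposition you perform, with the trivial summand handled separately. Second, you actually verify that the pushforward carries the structure of a family of reflexive Azumaya algebras (extending the multiplication from the complement of the diagonal via Lemma \ref{lemma-F-is-isomorphic-to-the-pushforward-of-its-restriction}, and identifying it locally with $\SheafEnd(\wt{\F})$ via Lemma \ref{lemma-push-forward-of-End-V-is-End-F}); the paper's proof only addresses flatness and reflexivity of the underlying sheaf and leaves the algebra structure implicit. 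Your closing observation -- that Corollary \ref{cor-pushforward-commutes-with-End} cannot be applied to $\wt{V}$ with a single twist $t$, because $H^1(\SheafEnd(W_x))\neq 0$ in twist $0$ -- is also accurate and explains why the summand-by-summand route is the right one. In short: same strategy, executed with a level of care that the paper's proof presupposes but does not spell out.
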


\begin{proof}
This is an immediate consequence of Lemma 
\ref{lemma-sufficient-conditions-for-reflexivity-over-pi}
(\ref{lemma-item-push-forward-F-is-a-family-of-reflexive-sheaves}).
Indeed, $\wt{A}\cong \wt{A}^*$, so (\ref{az-push-fwd-1}) and (\ref{az-push-fwd-2}) 
furnish the hypotheses of that result (with $k=l$).
\end{proof}

\begin{rem}\label{correct-base-change}
Suppose $\wt{\beta}: \wt{Y} \to \wt{X}\times_S \dots\times_S\wt{X}$ 
is a deformation of $\beta: Y \to X^d$
over $S$. Denote by the subscript $T$ 
base-changes under any morphism $i:T \to S$. 
At several points below, we shall use without comment  the isomorphism 
$i^*\wt{\beta}_*\wt{A }\cong (\wt{\beta}_T)_*(\wt{A }_T)$ established in 
equation (\ref{eq-pullback-of-F-is-push-forward-of-pullback-of-V})
of the proof of Lemma \ref{lemma-sufficient-conditions-for-reflexivity-over-pi}.
\end{rem}
 
\begin{lem}\label{locally-trivial}
Let $E$ be a reflexive sheaf of Azumaya algebras on $X^d$
satisfying Condition  \ref{cond-open-and-closed}, and
$\wt{A}$ a sideways deformation of $A=\E nd (\beta^*F/tor)$, 
over $\wt{Y}\to S$. Then
$\wt{E}:=\wt{\beta}_*\wt{A}$ is a family of reflexive sheaves of Azumaya algebras over $S$ which 
satisfies Condition  \ref{cond-open-and-closed} fiberwise
over an open neighborhood of $0$ in $S$.
\end{lem}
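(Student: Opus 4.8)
The plan is to combine the flatness and base-change results already established in Section 2 with the openness of Condition \ref{cond-open-and-closed} proven in Lemma \ref{lemma-condition-involving-W-is-open}. First I would observe that, by Lemma \ref{lemma-A-is-rigid-if-E-is}, the locally free Azumaya algebra $A=\SheafEnd(\beta^*F/tor)$ on $Y$ is rigid, so any sideways deformation $\wt{A}$ over a germ $S$ exists and, shrinking $S$, we may assume $\wt{A}$ is a locally free sheaf of Azumaya algebras on $\wt{Y}$ whose restriction to the central fiber is $A$. The relative diagonal $\wt{\Delta}\subset \wt{X}\times_S\cdots\times_S\wt{X}$ is smooth and proper over $S$ of relative codimension $c=(d-1)\dim X\geq 3$, and $\wt\beta$ is its blow-up, so the geometric set-up of Section \ref{sec-families-of-reflexive-sheaves} applies with base $S$.

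Next I would verify that the cohomological vanishing statements (\ref{az-push-fwd-1}) and (\ref{az-push-fwd-2}), which hold on the central fiber by Condition \ref{cond-open-and-closed} (\ref{cond-item-same-homogeneous-bundle}), persist fiberwise over a neighborhood of $0$: these are open conditions in the flat family $\wt A$ by semicontinuity (applied on the $\PP^{c-1}$-bundle $\wt\D\to\wt\Delta$), exactly as noted in the paragraph preceding Lemma \ref{flat-push}. With this in hand, Lemma \ref{flat-push} (equivalently Lemma \ref{lemma-sufficient-conditions-for-reflexivity-over-pi} (\ref{lemma-item-push-forward-F-is-a-family-of-reflexive-sheaves}) with $k=\ell$) gives that $\wt{E}:=\wt\beta_*\wt A$ is a family of reflexive sheaves over $S$, and Corollary \ref{cor-pushforward-commutes-with-End} (applied relatively over $S$) together with Lemma \ref{lemma-push-forward-of-End-V-is-End-F} shows $\wt E\cong \SheafEnd(\wt\F)$ for a family of reflexive (twisted) sheaves $\wt\F$, hence $\wt E$ is a family of reflexive sheaves of Azumaya algebras over $S$ in the sense of Definition \ref{def-families-of-reflexive-azumaya-algebras}. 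Via the base-change isomorphism recalled in Remark \ref{correct-base-change}, the fiber $(\wt E)_s$ over a closed point $s$ is $(\wt\beta_s)_*(\wt A_s)$.

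It then remains to check that $(\wt E)_s$ satisfies Condition \ref{cond-open-and-closed} (\ref{cond-item-characteristic-class})--(\ref{cond-item-same-homogeneous-bundle}) fiberwise for $s$ in a neighborhood of $0$. Part (\ref{cond-item-characteristic-class}) is topological, hence constant in the connected family; part (\ref{cond-item-Brauer-class-is-theta-X-eta}) likewise follows since the characteristic class in $H^2(\cdot,\mu_r)$ is locally constant. Part (\ref{cond-item-infinitesimally-rigid}), the vanishing $T^1_{\X_s^d}((\wt E)_s)=0$, is open because rigidity is an open condition in flat families (upper semicontinuity of $\dim\Ext^1$, together with the identification $T^1=\Ext^1_0$); alternatively it follows from $H^1(\wt Y_s,\wt A_s)=0$, which is open, via the Leray argument of Lemma \ref{lemma-A-is-rigid-if-E-is}. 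Part (\ref{condition-stability}), slope-stability with respect to some K\"ahler class, is the standard openness of slope-stability in flat families (here one uses that $\X_s^d$ carries a K\"ahler class varying continuously with $s$ and that stability of a sheaf of Azumaya algebras is detected by subsheaves of maximal parabolic subalgebras). Finally part (\ref{cond-item-same-homogeneous-bundle}) is precisely Lemma \ref{lemma-condition-involving-W-is-open}, applied to the locally free Azumaya algebra $\wt A$ on $\wt Y$ and its pushforward $\wt E=\wt\beta_*\wt A$. Shrinking $S$ to the intersection of these finitely many open neighborhoods of $0$ completes the proof.

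\textbf{Main obstacle.} I expect the delicate point to be part (\ref{condition-stability}): openness of slope-stability for a \emph{reflexive}, non-locally-free sheaf of Azumaya algebras along a family whose K\"ahler class also moves. One must ensure that the restriction of a K\"ahler class on $\X\to S$ to each fiber is K\"ahler and varies appropriately, and invoke the (by now standard, but care-demanding) openness of slope-stability for torsion-free sheaves in a flat family with varying polarization, then translate it into the statement for subsheaves of maximal parabolic subalgebras as in Definition \ref{def-slope-stability}. All the other parts are either topological, already proven as openness lemmas in Section 2, or reduce to upper semicontinuity of cohomology.
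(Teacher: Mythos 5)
Your proposal is correct and follows essentially the same route as the paper: openness of the cohomological vanishings (\ref{az-push-fwd-1})--(\ref{az-push-fwd-2}), Lemma \ref{flat-push} for flatness and the reflexive Azumaya structure, topological invariance for parts (\ref{cond-item-characteristic-class})--(\ref{cond-item-Brauer-class-is-theta-X-eta}), semicontinuity for (\ref{cond-item-infinitesimally-rigid}), standard openness of slope-stability for (\ref{condition-stability}), and Lemma \ref{lemma-condition-involving-W-is-open} for (\ref{cond-item-same-homogeneous-bundle}). The only difference is that you spell out the details (in particular flagging the openness of stability as the delicate point) where the paper simply cites these facts as known.
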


\begin{proof}
As noted above, Conditions
(\ref{az-push-fwd-1}) and (\ref{az-push-fwd-2}) are open, so may be assumed to hold fiberwise over $S$.
The previous lemma shows that $\wt{\beta}_*\wt{A}$ is a flat family of reflexive sheaves.
Items (\ref{cond-item-characteristic-class}) and  (\ref{cond-item-Brauer-class-is-theta-X-eta}) 
are open in families, being topological, while
(\ref{cond-item-infinitesimally-rigid}) is open by semi-continuity. Item
(\ref{condition-stability}) is well-known to be open. 
Thus, these conditions hold over $S$.
The family $\wt{E}$
satisfies item (\ref{cond-item-same-homogeneous-bundle}) of
Condition \ref{cond-open-and-closed} over an open neighborhood of $0$ in $S$, by
Lemma \ref{lemma-condition-involving-W-is-open}.
\end{proof}

 Let $\gL_0$ be an Artin local ring, 
$\fm_0\subset \gL_0$ its maximal ideal,
and $\CC$ its residue field. Let $(\gL,\fm)\twoheadrightarrow (\gL_0,\fm_0)$ be an
extension with kernel $I$ satisfying $I\fm=0$, so that $I$ is a $\CC$-vector space.
Suppose $p: Z \to \mbox{Spec}\,\gL$ 
is a flat and proper morphism of analytic spaces or schemes, 
and  $G_0$ a coherent twisted sheaf on 
$Z_0:=Z\otimes_\gL \gL_0$ that is $\gL_0$-flat. Denote by $Z_{00}$ the reduction
$Z\otimes_\gL \CC$; in general, let the subscript ``$00$'' denote restriction
of an object to the closed fiber of $p$. 

\begin{prop}\label{defo-thry}
There is an obstruction class $o(G_0)$ in 
$I\otimes_\CC \Ext_{Z_{00}}^2(G_{00}, G_{00})$ 
such that $o(G_0)=0$ if and only if $G_0$ has a deformation $F$ as a 
twisted sheaf over $Z$.
If the obstruction class vanishes, the set of isomorphism classes of deformations is
an affine space over $I\otimes_\CC \nolinebreak\Ext_{Z_{00}}^1(G_{00}, G_{00})$. 
If a deformation $G$ exists, then the automorphism group 
$Aut(G/G_0)$ is naturally isomorphic to $I\otimes_\CC \Hom_{Z_{00}}(G_{00}, G_{00})$.
\end{prop}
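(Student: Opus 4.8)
The statement is the standard deformation theory of coherent (twisted) sheaves over an Artinian base, relative to a flat proper family. The plan is to reduce to the classical situation by the \emph{coherence of the problem}: since the obstruction, the torsor of lifts, and the automorphism group are all computed on the closed fiber $Z_{00}$, the presence of a nontrivial Artinian base $\gL$ is only a bookkeeping device, and the twisting (by a fixed \v{C}ech $2$-cocycle for $\StructureSheaf{Z}^*$) does not affect the $\Ext$-computations because the relevant sheaves $\SheafHom$, $\SheafExt^i$ between two $\theta$-twisted sheaves are honest (untwisted) coherent sheaves.

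First I would recall the local-to-global setup. Choose an affine (or Stein) open cover $\{U_\alpha\}$ of $Z_{00}$, with $G_{00}$ presented on each $U_\alpha$; a deformation of $G_0$ over $Z$ is the data of liftings $G_0|_{U_\alpha}$ to $\StructureSheaf{U_\alpha}\otimes_{\gL_0}\gL$-modules together with gluing isomorphisms over $U_{\alpha\beta}$, and the twisting cocycle is transported verbatim (as in the proof of Proposition \ref{prop-azumaya-algebra-is-End-F}, transition functions act on the structure sheaf and hence on the twisting cocycle). Because $I\fm=0$, the kernel $I$ is a $\CC$-vector space and $I\cdot\StructureSheaf{Z}=I\cdot\StructureSheaf{Z_{00}}$, so every ``difference of two liftings'' takes values in $I\otimes_\CC(-)$ evaluated on $Z_{00}$. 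The obstruction to gluing local lifts into a global one over $U_{\alpha\beta\gamma}$ is a \v{C}ech $2$-cocycle with values in $I\otimes_\CC\SheafHom_{Z_{00}}(G_{00},G_{00})$, refined through the standard Gerstenhaber/Illusie argument to land in $I\otimes_\CC\Ext^2_{Z_{00}}(G_{00},G_{00})$; this is $o(G_0)$, independent of all choices. Its vanishing is then equivalent to existence of a global lift $G$, again by the standard argument: a coboundary correction of the local lifts produces compatible gluing data. The set of lifts, once nonempty, is acted on simply transitively by $I\otimes_\CC\Ext^1_{Z_{00}}(G_{00},G_{00})$ (modifying gluing isomorphisms by $1$-cocycles, modulo those coming from changing local trivializations, i.e.\ $1$-coboundaries). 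Finally, given a lift $G$, an automorphism of $G$ reducing to the identity on $G_0$ is of the form $\mathrm{id}+\phi$ with $\phi\in I\otimes_\CC\End_{Z_{00}}(G_{00},G_{00})$, and since $I^2=0$ this assignment is a group isomorphism $\Aut(G/G_0)\cong I\otimes_\CC\Hom_{Z_{00}}(G_{00},G_{00})$.

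I would then justify carefully that \emph{all $\Ext$-groups may be computed on $Z_{00}$}: flatness of $p$ and $\gL_0$-flatness of $G_0$, together with $I\fm=0$, give a base-change isomorphism identifying $I\otimes_\CC\SheafExt^i_{Z_{00}}(G_{00},G_{00})$ with the sheaf $\SheafExt^i$ over $Z$ of the ``$I$-part,'' and the hypercohomology spectral sequence produces the $\Ext$ groups appearing in the statement. The twisted case is handled uniformly, since for a fixed cocycle $\theta$ a $\theta$-twisted sheaf and another $\theta$-twisted sheaf have untwisted local $\SheafHom$ and $\SheafExt^i$, which glue to genuine coherent sheaves on $Z$, and $\Ext^i_{Z_{00}}(G_{00},G_{00})$ means the hypercohomology of these; the cocycle conditions for the deformation problem are exactly the ones for the untwisted problem, plus the fixed $2$-cocycle which is carried along without obstruction.

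The main obstacle is the bookkeeping needed to see that the obstruction class is \emph{well-defined} (independent of the cover and of the choices of local lifts) and \emph{functorial} enough for later use, and in the twisted setting to check that transporting the twisting cocycle through the deformation does not introduce extra obstructions or extra automorphisms --- i.e.\ that the twisted deformation functor over a fixed Brauer class really is controlled by the same $\Ext$-groups. This is routine but must be done with care; I would either cite Lieblich's or C\u{a}ld\u{a}raru's treatment of twisted sheaves for the comparison \cite{caldararu}, adapting it to the analytic Artinian-base setting, or give the direct \v{C}ech argument sketched above. Everything else (base change, the spectral sequence, the simply transitive action) is standard once the framework is set up, using that $\gL$ is Artinian so all cohomologies are finite-dimensional and all the relevant functors are exact on the finite-length modules involved.
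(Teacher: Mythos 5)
Your proposal is correct, and it ends by offering exactly what the paper does: the published proof is a one-line citation to Section 2.2.5 of Lieblich's thesis, with the remark that the argument is general enough to apply in the analytic setting. Your additional \v{C}ech-theoretic sketch is a faithful outline of what that reference contains, and you rightly flag the one genuinely delicate point for non-locally-free sheaves --- that the obstruction is not merely a $2$-cocycle valued in $I\otimes_\CC\SheafHom(G_{00},G_{00})$ but must be assembled via the local-to-global spectral sequence (equivalently Illusie's machinery) to land in the full $\Ext^2$ --- so nothing essential is missing.
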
 
\begin{proof}
This may be found in Section 2.2.5 of Lieblich's thesis \cite{thesis}. 
The proof is very general and works in our analytic setting too.
\end{proof}

The following theorem of Artin will be used in the next result.

\begin{thm}\cite[Theorem 1.5]{Art}\label{convergence}
Let
\begin{equation}
\xymatrix{
U \ar[dr] \ar@{-->}[rr] & & V \ar[dl]
\\
 & W &
}
\end{equation}
be a diagram of analytic spaces, where the solid arrows are given maps.  Consider the
problem of finding a map $\phi: U \to V$ locally at a point $x\in U$ which makes the 
diagram commute. If a formal lift $\overline{\phi}$ exists, then for any integer $c>0$,
there is a map $\phi$ locally, which agrees with $\overline{\phi}$ (modulo $\hat{\fm}^c_{U,x})$.
\end{thm}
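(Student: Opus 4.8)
The plan is simply to point out that Theorem~\ref{convergence} is the geometric formulation of Artin's approximation theorem, so there is nothing to prove beyond quoting \cite[Theorem 1.5]{Art}; what I would spell out is only the routine reduction of the diagram-lifting problem to a system of analytic equations, after which the cited result applies verbatim.

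First I would localize. Let $y:=\overline\phi(x)\in V$ and let $w$ be the common image of $x$ in $W$; pass to the analytic local rings $\ko_{U,x}$, $\ko_{V,y}$, $\ko_{W,w}$. Choose a closed embedding of germs $(V,y)\hookrightarrow(\CC^m,0)$, so that $\ko_{V,y}=\CC\{t_1,\dots,t_m\}/(g_1,\dots,g_r)$, and choose coordinates $(s_1,\dots,s_\ell)$ on $(W,w)$. The given map $V\to W$ is then encoded by a tuple $h=(h_1,\dots,h_\ell)$ of convergent power series in $t$ (lifts to $\CC\{t\}$ of the images of the $s_i$), and the given map $U\to W$ by a tuple $a=(a_1,\dots,a_\ell)\in\ko_{U,x}^\ell$. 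A morphism $\phi:(U,x)\to(V,y)$ is then exactly a tuple $\phi=(\phi_1,\dots,\phi_m)\in\ko_{U,x}^m$ with $g_j(\phi)=0$ for all $j$, and the commutativity of the diagram is the additional system $h_i(\phi)=a_i$ for $1\le i\le\ell$. Altogether this is a finite system of analytic equations in the unknowns $\phi_1,\dots,\phi_m$, with coefficients convergent power series in the coordinates of $(U,x)$. The hypothesis that a formal lift $\overline\phi$ exists says precisely that this system has a solution in $\hat\ko_{U,x}^m$.

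Second I would invoke Artin's theorem. Since the analytic local ring $\ko_{U,x}$ satisfies Artin's approximation property \cite[Theorem 1.5]{Art}, for every $c>0$ the above system has an analytic solution $\phi\in\ko_{U,x}^m$ with $\phi\equiv\overline\phi\pmod{\hat\fm_{U,x}^c}$. By construction $\phi$ defines an analytic map $(U,x)\to(V,y)$ making the diagram commute and agreeing with $\overline\phi$ modulo $\hat\fm_{U,x}^c$, which is the assertion. (If the base germ $U$ is singular, one carries this out after a further embedding $(U,x)\hookrightarrow(\CC^n,0)$ and adjoins the defining equations of $U$ to the system, with no change in the argument.)

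The only genuine difficulty lies inside the cited result, and I would not reproduce it: Artin's theorem is proved by induction on $\dim\ko_{U,x}$, using Weierstrass division to eliminate variables, Tougeron's implicit function theorem for the unobstructed step, and nested approximation to descend a formal solution through an auxiliary system in the general step. The heart of that argument — the Artin--Rees type bookkeeping that preserves the prescribed order of agreement $c$ under the iteration — is exactly where the work is; for the present paper it is entirely delegated to \cite{Art}, and the burden here is merely the translation carried out above.
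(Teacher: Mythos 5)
Your proposal is correct and matches the paper, which offers no proof at all for this statement: it is quoted verbatim as \cite[Theorem 1.5]{Art}, and Artin's Theorem~1.5 is already stated in exactly this geometric form, so even the reduction to a system of analytic equations that you carry out is (correctly) the derivation Artin himself gives from his approximation theorem for analytic equations. Nothing further is needed.
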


Returning to our situation, suppose $E=\E nd(F)$ is a reflexive sheaf of 
Azumaya algebras on $X^d$ satisfying Condition \ref{cond-open-and-closed}. 
Recall that part (\ref{cond-item-infinitesimally-rigid}) of that condition assumes that 
$\Ext^1(F, F)=0$.

\begin{lem}\label{uniqueness-of-defo}
Let $\pi: \wt{X}\rightarrow S$ be a deformation of $X$ and let $\wt{E}$, $\wt{E}'$  
be families of reflexive sheaves of Azumaya algebras over 
$\wt{X}^d_\pi \rightarrow S$,
both restricting to the special fiber as $E$.
Then 
there is an isomorphism $\widetilde{E}\cong \widetilde{E}'$ over an open neighborhood of $0$ in $S$.
\end{lem}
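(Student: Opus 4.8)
The plan is to deduce uniqueness of the deformation $\widetilde{E}$ from the corresponding (stronger, because more rigid) uniqueness statement upstairs, on the blow-up $\widetilde{Y}$, where the relevant Azumaya algebra is locally free. Write $\beta:Y\rightarrow X^d$ for the blow-up of the diagonal, let $F$ be a reflexive twisted sheaf with $E\cong\SheafEnd(F)$, and let $A:=\SheafEnd(\beta^*F/{\rm torsion})$ be the locally free sheaf of Azumaya algebras on $Y$ supplied by Condition \ref{cond-open-and-closed} (\ref{cond-item-same-homogeneous-bundle}), so that $\beta_*A\cong E$ by Corollary \ref{cor-pushforward-commutes-with-End}. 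By Lemma \ref{isom-2} the forgetful morphism $\Def(Y,A)\rightarrow\Def(X)$ is an isomorphism of deformation functors; in particular, given the deformation $\pi:\widetilde{X}\rightarrow S$ of $X$, there is a unique (up to isomorphism) deformation $(\widetilde{Y},\widetilde{A})$ of $(Y,A)$ over $S$ lying over it, and $\widetilde{Y}$ is the relative blow-up $\widetilde{\beta}:\widetilde{Y}\rightarrow\widetilde{X}^d_\pi$ of the relative diagonal (compatibly with the identification $\Def(X)\cong\Def(Y)$ whose differential was computed in the proof of Lemma \ref{isom-2}).

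The core step is to lift each of the two given families $\widetilde{E}$ and $\widetilde{E}'$ to a deformation of the pair $(Y,A)$ over $S$, after possibly shrinking $S$ around $0$. For this, first use Proposition \ref{prop-azumaya-algebra-is-End-F} to write $\widetilde{E}\cong\SheafEnd(\widetilde{F})$ for a family $\widetilde{F}$ of twisted reflexive sheaves over $\pi$, restricting to $F$ on the special fiber. Pulling back by $\widetilde{\beta}$ and killing torsion, one would like $\widetilde{G}:=\widetilde{\beta}^*\widetilde{F}/{\rm torsion}$ to be $S$-flat and locally free, with $\widetilde{A}':=\SheafEnd(\widetilde{G})$ a locally free sheaf of Azumaya algebras deforming $A$; here the openness packaged in Lemma \ref{lemma-condition-involving-W-is-open} and Lemma \ref{locally-trivial}, together with the cohomology vanishing \eqref{az-push-fwd-1}--\eqref{az-push-fwd-2}, is what guarantees (after shrinking $S$) that $\widetilde{\beta}^*\widetilde{E}/{\rm torsion}$ has the expected form and that $\widetilde{\beta}_*\widetilde{A}'\cong\widetilde{E}$. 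The same construction applied to $\widetilde{E}'$ produces $\widetilde{A}''$. Both $(\widetilde{Y},\widetilde{A}')$ and $(\widetilde{Y},\widetilde{A}'')$ are then deformations of $(Y,A)$ over $S$ lying over $\pi$, so by the representability in Lemma \ref{isom-2} there is an isomorphism $\widetilde{A}'\cong\widetilde{A}''$ over a neighborhood of $0$ (as sheaves of Azumaya algebras); applying $\widetilde{\beta}_*$ and using $\widetilde{\beta}_*\widetilde{A}'\cong\widetilde{E}$, $\widetilde{\beta}_*\widetilde{A}''\cong\widetilde{E}'$ gives the desired isomorphism $\widetilde{E}\cong\widetilde{E}'$.

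An alternative, more hands-on route — which may in fact be cleaner and avoids re-proving local freeness of the pullback — is to argue directly by deformation theory over Artin bases and then invoke Artin's approximation theorem. Restricting to $S_m:={\rm Spec}(\StructureSheaf{S,0}/\fm^{m+1})$, one builds an isomorphism $\widetilde{E}_m\cong\widetilde{E}'_m$ inductively in $m$ using Proposition \ref{defo-thry}: the obstruction to extending an isomorphism from $S_{m-1}$ to $S_m$ lies in $\Ext^2$ of the relevant twisted sheaves and its vanishing/ambiguity is controlled by $\Ext^1(F,F)$ and $\Hom(F,F)$ on the central fiber. Since $\Ext^1(F,F)=0$ by Condition \ref{cond-open-and-closed} (\ref{cond-item-infinitesimally-rigid}), the lift at each stage is unique up to the action of $\Hom(F,F)$-automorphisms, so the system of isomorphisms is consistent and assembles to a formal isomorphism $\widehat{\widetilde{E}}\cong\widehat{\widetilde{E}'}$ over $\widehat{\StructureSheaf{S,0}}$. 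Artin's Theorem \ref{convergence}, applied to the problem of finding a point of the $\Hom$-scheme between $\widetilde{E}$ and $\widetilde{E}'$ compatible with the multiplications, then converts this formal isomorphism into an honest one over an analytic neighborhood of $0$ in $S$.

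The main obstacle, in either route, is the bookkeeping with \emph{twisted} sheaves and with the non-locally-free locus along the diagonal: one must ensure that ``$\widetilde{\beta}^*(-)/{\rm torsion}$'' behaves well in families (flatness and local freeness of $\widetilde{G}$, compatibility of $\widetilde{\beta}_*$ with base change as in Remark \ref{correct-base-change}), and that the gerbe/twisting data of $\widetilde{F}$ match up so that the reconstructed algebra really is isomorphic to the given $\widetilde{E}$ and not merely Brauer-equivalent to it. The $\Ext^1(F,F)=0$ rigidity hypothesis is the engine that makes the inductive lifting essentially unique; the work is in verifying that the openness results of the previous sections let us replace $S$ by a small enough neighborhood of $0$ throughout so that every ``fiberwise'' statement we need actually holds over the base.
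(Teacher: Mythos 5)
Your second route is the one the paper actually takes, but as sketched it has a genuine gap at the very first step of the induction. Proposition \ref{defo-thry} compares deformations of a \emph{fixed} twisted sheaf $G_0$, whereas Proposition \ref{prop-azumaya-algebra-is-End-F} only determines the twisted sheaves $\wt{F}$, $\wt{F}'$ with $\wt{E}\cong\SheafEnd(\wt{F})$, $\wt{E}'\cong\SheafEnd(\wt{F}')$ up to tensoring by a line bundle; their restrictions to the central fiber therefore satisfy $F_0\cong F_0'\otimes L_0$ for an a priori nontrivial line bundle $L_0$, and you cannot run the $\Ext^1(F_0,F_0)=0$ rigidity argument until you have replaced $\wt{F}'$ by $\wt{F}'\otimes\wt{L}$ for some extension $\wt{L}$ of $L_0$ over a neighborhood of $0$ in $S$. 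Producing that $\wt{L}$ is the bulk of the paper's proof: one observes that $L_0$ is an $n$-th root of the restriction of $\det(\wt{F})\otimes\det(\wt{F}')^{-1}$, so the obstructions to deforming $L_0$ sideways are $\frac{1}{n}$ times those of a line bundle that manifestly deforms and hence vanish; by Proposition \ref{defo-thry} and $H^1(\StructureSheaf{X^d})=0$ the formal deformation $\hat{L}$ is unique; and representability of the relative Picard functor together with Artin's Theorem \ref{convergence} converts $\hat{L}$ into an actual line bundle $\wt{L}$. Only then does the formal rigidity apply to $\wt{F}$ versus $\wt{F}'\otimes\wt{L}$, and the passage from formal to actual agreement is done via the Formal Functions Theorem and simplicity of the sheaves (not via a Hom-scheme as you suggest, though that could likely be made to work). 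Without the line-bundle normalization the induction you describe does not start.

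Your first route has a different, and in my view more serious, gap: to lift $\wt{E}$ to a deformation of the pair $(Y,A)$ you assert that $\wt{G}:=\wt{\beta}^*\wt{F}/{\rm torsion}$ is $S$-flat and locally free with $\wt{\beta}_*\SheafEnd(\wt{G})\cong\wt{E}$, but the results you cite (Lemma \ref{lemma-condition-involving-W-is-open}, Lemma \ref{locally-trivial}, Corollary \ref{cor-pushforward-commutes-with-End}) all run in the opposite direction — they start from a locally free Azumaya algebra on $\wt{Y}$ and push it down to a reflexive family on $\wt{X}^d_\pi$. Nothing in the paper establishes local freeness of the torsion-free pullback of an \emph{arbitrary} given family of reflexive sheaves restricting to $F$ on the special fiber, and proving it would essentially require knowing already that $\wt{F}$ agrees with the distinguished deformation coming from $\Def(Y,A)$, which is what you are trying to show. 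The paper sidesteps this circularity by never pulling the given families back to the blow-up.
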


\begin{proof}
There exist families of twisted sheaves $\wt{F}$, $\wt{F}'$ such that
$\wt{E}\cong \E nd(\wt{F})$, $\wt{E}'\cong \E nd(\wt{F}')$ 
by Proposition \ref{prop-azumaya-algebra-is-End-F}. We shall prove the equivalent statement
that  $\wt{F}$ and $\wt{F}'$ agree over an open neighborhood of $0$ in $S$, up to a line bundle.

Denote the restrictions of $\wt{F}$, $\wt{F}'$ to the central fiber by $F_0$, $F_0'$. 
There exists a unique line bundle $L_0$ over the central fiber such that 
$F_0\cong F_0'\otimes L_0$, by the first step of the proof of
Proposition \ref{prop-azumaya-algebra-is-End-F}. 
Let $\wt{M}$ be the line bundle $\det(\wt{F})\otimes \det(\wt{F}')^{-1}$ (see \cite{det}).
Then, $L_0$ is an $n$-th root  of the the restriction of $\wt{M}$ to the
central fiber $M_0$, where $n=\mbox{rk}(F)$, since determinants base-change correctly.
The obstructions to deforming $M_0$ sideways to successive infinitesimal neighborhoods 
are $n$ times the obstructions to infinitesimally deforming $L_0$ sideways 
\cite[Section 7.1]{Buf}. The former clearly vanish, so the latter do too.
Therefore, $L_0$ deforms formally to $\hat{L}$ and this deformation 
is unique, by Proposition \ref{defo-thry}, as $H^1(\ko_{X^d})=0$.
 The relative Picard functor of the family 
$\wt{X}^d_\pi \to S$ is representable by an analytic space $p:\P ic \to S$ 
by \cite{Bi}. 
There is a formal section of $p$ given by $\hat{L}$; this can be extended to a section
over an open neighborhood of $0$ in $S$, by Theorem \ref{convergence}, or what is
the same thing, a line bundle $\wt{L}$ over this open neighborhood. 
We may assume that this neighborhood is the whole of  $S$. 
Note that we have used here the uniqueness
of $\hat{L}$. We claim that $\wt{L}$ is unique too. Indeed, if $\wt{L}'$ is another
line bundle over $S$ extending $\hat{L}$, then the 
completion $\H om_\pi(\wt{L},\wt{L'})^\wedge$ of the sheaf of fiberwise homomorphisms
$\H om_\pi(\wt{L},\wt{L'})$ at the origin
is non-zero, by the Formal Functions Theorem. Thus its stalk is non-zero.
This is enough to say that $\wt{L}$ and $\wt{L}'$ are isomorphic over an open neighborhood
of $0$, since these sheaves are simple, and are known to be isomorphic over the central fiber.

Consider now the twisted sheaves $\wt{F}$,  and $\wt{F}'':=\wt{F}'\otimes \wt{L}$. These
are deformations of $F_0$, and as $\Ext^1(F_0, F_0)=0$, they agree formally by
Proposition \ref{defo-thry}.  
This formal agreement, together with
the fact that  $\wt{F}$ and $\wt{F}'$ are simple, implies that these sheaves agree 
over a neighborhood of $0$, by the same argument used above to prove the uniqueness of 
the line bundle $\widetilde{L}$.
%
\end{proof}

\begin{defi}
Let $E$ be a reflexive sheaf of Azumaya algebras on $X$ which
satisfies Condition \ref{cond-open-and-closed}. Define
$\Def(X,E): ({\mathbf{GAn}})\to ({\mathbf{Sets}})$ to be the deformation functor which 
associates to a germ $S$ the set of isomorphism classes of deformations 
$(\wt{X},\wt{E})$ over $S$ of the pair $(X,E)$. Here $\pi:\wt{X}\to S$ is a flat deformation
of $X$, and $\wt{E}$ is a family of reflexive Azumaya algebras over 
$\wt{X}^d_\pi \to S$.
\end{defi}

Let $\varrho: \Def(Y,A) \to  \Def(X,E)$ be the assignment which takes a pair 
$(\wt{Y}, \wt{A})$ over $S$ to $(\wt{X},  \SheafEnd (\wt{F}))$, where $\wt{F}$ is
a reflexive twisted sheaf such that $\wt{\beta}_*\wt{A}=\E nd(\wt{F})$. Note that $\wt{F}$ exists by
Proposition \ref{prop-azumaya-algebra-is-End-F}. 
This is a natural transformation of functors, since $\wt{A}$ base changes correctly as mentioned
in Remark \ref{correct-base-change}. Write $\phi:  \Def(X,E) \to \Def(X)$ for the 
forgetful functor.

\begin{cor}
\label{cor-varrho-is-an-isomorphism-of-deformation-functors}
The transformation $\varrho: \Def(Y,A) \to  \Def(X,E)$ is an isomorphism of deformation functors.
\end{cor}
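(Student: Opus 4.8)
The plan is to exhibit a two-sided inverse to $\varrho$ by combining the local identifications already established. Recall that Lemma \ref{isom-2} gives an isomorphism $\Def(Y,A)\to\Def(Y)\cong\Def(X)$, while Corollary \ref{cor-pushforward-commutes-with-End} together with Lemma \ref{flat-push} and Lemma \ref{locally-trivial} guarantee that $\varrho$ is well-defined: starting from a deformation $(\wt Y,\wt A)$ of $(Y,A)$ over a germ $S$, the pushforward $\wt\beta_*\wt A$ is a flat family of reflexive sheaves of Azumaya algebras over $\wt X^d_\pi\to S$ restricting to $E$ on the special fiber (after shrinking $S$), and by Proposition \ref{prop-azumaya-algebra-is-End-F} it is of the form $\SheafEnd(\wt F)$. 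Note that $\varrho$ sits over the forgetful morphisms to $\Def(X)$: both $\Def(Y,A)\to\Def(X)$ (via Lemma \ref{isom-2}) and $\phi\circ\varrho$ agree, since the underlying deformation of $X$ is recovered either way from $\wt Y$ or from $\wt X^d_\pi$.

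First I would prove that $\varrho$ is injective on isomorphism classes. Given two deformations $(\wt Y_1,\wt A_1)$ and $(\wt Y_2,\wt A_2)$ over $S$ with $\varrho(\wt Y_1,\wt A_1)\cong\varrho(\wt Y_2,\wt A_2)$, the induced deformations of $X$ agree, hence $\wt Y_1\cong\wt Y_2$ over $S$ by the isomorphism $\Def(X)\cong\Def(Y)$ of Lemma \ref{isom-2}; identify them and call the common total space $\wt Y$. Then $\wt\beta_*\wt A_1\cong\wt\beta_*\wt A_2$ as families of reflexive Azumaya algebras over $\wt X^d_\pi$. Pulling back via $\wt\beta$ and taking the locally free part $(\wt\beta^*(-))/\mathrm{torsion}$, which by Lemma \ref{lemma-sufficient-conditions-for-reflexivity-over-pi}(\ref{lemma-item-pullback-of-F-modulo-torsion-is-locally-free}) recovers $\wt A_i$ up to the fixed twist $\StructureSheaf{\wt Y}(-k\wt D)$ dictated by Condition \ref{cond-open-and-closed}, yields $\wt A_1\cong\wt A_2$; so the two pairs are isomorphic. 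Here I would lean on the reflexive case of Proposition \ref{prop-azumaya-algebra-is-End-F} (uniqueness of $\wt F$ up to a line bundle) exactly as in the proof of Lemma \ref{uniqueness-of-defo}, to rule out ambiguity in the reconstruction.

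Next I would prove surjectivity. Let $(\wt X,\wt E)$ be a deformation of $(X,E)$ over $S$. The underlying $\wt X$ is a deformation of $X$, hence by Lemma \ref{isom-2} corresponds to a deformation $\wt Y$ of $Y$, i.e.\ a blow-up $\wt\beta:\wt Y\to\wt X^d_\pi$ of the relative diagonal. Write $\wt E\cong\SheafEnd(\wt F)$ via Proposition \ref{prop-azumaya-algebra-is-End-F} and set $\wt A:=\SheafEnd\big((\wt\beta^*\wt F)/\mathrm{torsion}\big)\otimes\StructureSheaf{\wt Y}(-k\wt D)$; by Condition \ref{cond-open-and-closed}(\ref{cond-item-same-homogeneous-bundle}) on the special fiber and its openness (Lemma \ref{locally-trivial}), after shrinking $S$ this is a locally free sheaf of Azumaya algebras over $\wt Y$ deforming $A$, so $(\wt Y,\wt A)\in\Def(Y,A)(S)$, and $\varrho(\wt Y,\wt A)\cong(\wt X,\wt E)$ by Corollary \ref{cor-pushforward-commutes-with-End}. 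Combined with injectivity this gives bijectivity on $S$-points functorially in $S$, i.e.\ $\varrho$ is an isomorphism of deformation functors.

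The main obstacle I anticipate is bookkeeping around the twist by $\StructureSheaf{\wt Y}(-k\wt D)$ and the possibility that pushforward and the reconstruction $\wt F\mapsto(\wt\beta^*\wt F)/\mathrm{torsion}$ are only mutually inverse after a suitable normalization; making the round trips $\wt A\mapsto\wt\beta_*\wt A\mapsto\SheafEnd(\wt F)\mapsto(\wt\beta^*\wt F)/\mathrm{torsion}$ land back on $\wt A$ canonically uses the generation-by-global-sections hypotheses (\ref{az-push-fwd-1})--(\ref{az-push-fwd-2}) and the surjectivity of the counit in Lemma \ref{lemma-sufficient-conditions-for-reflexivity-over-pi}(\ref{lemma-item-pullback-of-F-modulo-torsion-is-locally-free}), which hold fiberwise over a shrunken $S$ by openness. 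A secondary point, already handled by the cited lemmas, is that all of this is compatible with arbitrary base change $T\to S$ (Remark \ref{correct-base-change}), which is what makes $\varrho$ a morphism of functors rather than just a bijection on each $S$.
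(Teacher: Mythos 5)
Your injectivity argument is fine (if more laborious than necessary: since $\phi\circ\varrho$ coincides with the isomorphism $\Def(Y,A)\to\Def(X)$ of Lemma \ref{isom-2}, injectivity of $\varrho$ is immediate), but your surjectivity argument has a genuine gap. You propose to build the preimage of $(\wt X,\wt E)$ by setting $G:=(\wt\beta^*\wt F)/\mathrm{torsion}$ and taking $\wt A:=\SheafEnd(G)$ (the extra factor $\otimes\,\StructureSheaf{\wt Y}(-k\wt D)$ in your formula is a slip --- twisting $\SheafEnd(G)$ by a nontrivial line bundle destroys the Azumaya structure; the twist by $-k\wt D$ belongs in the relation $(\wt\beta^*\wt E)/\mathrm{torsion}\cong \SheafEnd_0(G)(-k\wt D)\oplus\StructureSheaf{\wt Y}$, not in the definition of $\wt A$). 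The real problem is that you have no justification that $G$ is locally free over $\wt Y$, nor that $\wt\beta_*\SheafEnd(G)\cong\wt E$, for an \emph{arbitrary} family $\wt E$ deforming $E$. Taking the torsion-free quotient does not commute with restriction to the special fiber: one only gets surjections $\beta_0^*F_0\to G|_{Y_0}\to(\beta_0^*F_0)/\mathrm{torsion}$, and local freeness of $G$ near $Y_0$ would require $G|_{Y_0}$ to be torsion-free, which is not automatic. All the openness statements you cite (Lemma \ref{flat-push}, Lemma \ref{locally-trivial}, Lemma \ref{lemma-condition-involving-W-is-open}) run in the opposite direction: they start from a locally free family $\wt A$ on the blow-up and push down. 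None of them produces a locally free family on $\wt Y$ from a reflexive family on $\wt X^d_\pi$.

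The paper sidesteps this entirely with a purely formal argument: $\phi\circ\varrho:\Def(Y,A)\to\Def(X)$ is an isomorphism by Lemma \ref{isom-2}, hence $\varrho$ is injective; and the forgetful map $\phi:\Def(X,E)\to\Def(X)$ is injective by Lemma \ref{uniqueness-of-defo} (rigidity of $F$ plus Artin approximation force any two families of Azumaya algebras over the same $\wt X$ restricting to $E$ to agree), hence $\varrho$ is surjective --- given $(\wt X,\wt E)$, the isomorphism of Lemma \ref{isom-2} supplies some $(\wt Y,\wt A)$ with $\phi(\varrho(\wt Y,\wt A))=\wt X=\phi(\wt X,\wt E)$, and injectivity of $\phi$ identifies $\varrho(\wt Y,\wt A)$ with $(\wt X,\wt E)$. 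To repair your proof you should replace the direct reconstruction of $\wt A$ from $\wt E$ by this appeal to Lemma \ref{uniqueness-of-defo}, which is exactly the ingredient your surjectivity step is missing.
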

\begin{proof}
The composite $\Def(Y,A) \to  \Def(X,E) \to \Def(X)$ is an isomorphism
by Lemma \ref{isom-2}, so $\varrho$
is injective. Lemma \ref{uniqueness-of-defo} says that
the forgetful functor $\phi$ is also injective. Thus $\varrho$ is surjective.
\end{proof}

%
\subsection{A global moduli space}
The proof of the next Theorem 
will require the following terminology. 
Let $\omega$ be a K\"{a}hler class on $X$. The twistor line $\PP^1_{\omega}$ associated to $\omega$ 
(in Section \ref{flatness-and-cond-1.7-on-twistor-lines}) is 
{\em generic}, if the Picard group $\Pic(\X_t)$ is trivial for some fiber $\X_t$ of the twistor family
$\X\rightarrow \PP^1_\omega$. A path of twistor lines in $\fM_\Lambda$ is {\em generic}, if 
it consists of generic twistor lines and each pair of consecutive twistor lines meets at a marked pair with
a trivial Picard group.

\begin{thm}
\label{thm-existence}
There exists a coarse moduli space $\tfM_\Lambda$ parametrizing triples $(X_0,\eta_0,E)$
where $X_0$ is a holomorphic symplectic manifold of $K3^{[n]}$-type, 
$\eta_0: H^2(X_0,\mathbb{Z})\to \Lambda$ is a marking, and $E$ is an Azumaya
algebra on $X_0^d$ satisfying Condition \ref{cond-open-and-closed}.  
Let $\tfM^0_\Lambda$ be a connected component of $\tfM_\Lambda$
and $\fM^0_\Lambda$ the connected component of $\fM_\Lambda$
containing the image of $\tfM^0_\Lambda$ via 
the forgetful morphism 
$$
\phi: \tfM_\Lambda \to \fM_\Lambda. 
$$
Then the morphism $\phi: \tfM^0_\Lambda \to \fM^0_\Lambda$
is surjective and a local homeomorphism.
\end{thm}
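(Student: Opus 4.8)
The plan is to build $\tfM_\Lambda$ by gluing the local Kuranishi families produced in the previous subsections, and then to deduce surjectivity and the local-homeomorphism property from the isomorphism $\varrho:\Def(Y,A)\to\Def(X,E)$ of Corollary \ref{cor-varrho-is-an-isomorphism-of-deformation-functors} together with the twistor-line construction.

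\emph{Step 1: local charts.} Fix a triple $(X_0,\eta_0,E)$ satisfying Condition \ref{cond-open-and-closed}. By Corollary \ref{cor-varrho-is-an-isomorphism-of-deformation-functors} the functor $\Def(X_0,E)$ is isomorphic to $\Def(Y,A)$, which by Lemma \ref{isom-2} is isomorphic to $\Def(X_0)$; since $\Def(X_0)$ is unobstructed and pro-represented by the smooth Kuranishi space $\D_{X_0}$ (of dimension $h^1(T_{X_0})=b_2(X_0)-2$), the same holds for $\Def(X_0,E)$. We must promote this formal statement to an honest analytic family. Starting from the Kuranishi family $v:\V\to\D_{X_0}$ of $X_0$ and the induced family $\tilde v:\Y\to\D_{X_0}$ of the blow-up (Lemma \ref{isom-2}, pair (\ref{1})), Lemma \ref{lemma-A-is-rigid-if-E-is} gives $H^1(Y,A)=0$, so by Proposition \ref{defo-thry} the Azumaya algebra $A$ deforms uniquely (formally, hence — by Theorem \ref{convergence} applied to the relative moduli of simple sheaves, representable in the analytic category by \cite{Bi}-type arguments, exactly as in Lemma \ref{uniqueness-of-defo}) to a locally free Azumaya algebra $\widetilde A$ over $\Y$ in a neighborhood of $0$. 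Pushing forward, Lemma \ref{locally-trivial} shows $\widetilde E:=\widetilde\beta_*\widetilde A$ is a family of reflexive Azumaya algebras satisfying Condition \ref{cond-open-and-closed} fiberwise over a possibly smaller neighborhood $U_{(X_0,\eta_0,E)}\subset\D_{X_0}$. Choosing $\eta_0$ and transporting it gives a marking of each nearby fiber; this produces a chart $U_{(X_0,\eta_0,E)}\to\fM_\Lambda$ lifting a local chart of $\fM_\Lambda$, together with the tautological family of triples over it.

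\emph{Step 2: gluing.} Two charts around triples mapping to the same marked pair $(X,\eta)$ are glued by declaring $(X,\eta,E)$ and $(X,\eta,E')$ identified iff $E\cong E'$ as sheaves of Azumaya algebras over $X^d$. The key point that the gluing is along \emph{open} sets and is well defined is Lemma \ref{uniqueness-of-defo}: if two families $\widetilde E,\widetilde E'$ over the same base restrict to isomorphic objects over the central fiber, they are isomorphic over an open neighborhood. Hence the equivalence relation is open, the quotient $\tfM_\Lambda$ is a (a priori non-Hausdorff) complex manifold, and $\phi$ is well defined. That $\phi$ is a local homeomorphism is immediate from Step 1: on each chart $\phi$ is the identity map of an open subset of $\D_{X_0}$, which is an open subset of $\fM_\Lambda$ via the Kuranishi family and the local homeomorphism $\fM_\Lambda\to\Omega$. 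The coarse moduli (rather than fine) caveat enters only because the objects $E$ may have nontrivial automorphisms ($\Hom(F,F)=\CC$ forces $\Aut(E)$ to contain at least the scalars, and the twist $E\mapsto E^*$ can produce further identifications), so the tautological families over charts need not glue to a global family; this is exactly the content of "coarse".

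\emph{Step 3: surjectivity of $\phi:\tfM^0_\Lambda\to\fM^0_\Lambda$.} This is the main obstacle, and it is where the hyperholomorphic geometry of Section \ref{flatness-and-cond-1.7-on-twistor-lines} is used. The image of $\phi$ is open by the local homeomorphism property; I must show it is also closed in $\fM^0_\Lambda$, which together with connectedness finishes the argument. By \cite[Prop. 3.10]{huybrechts-torelli} any point of $\fM^0_\Lambda$ can be reached from a point in the image of $\phi$ by a path of generic twistor lines lying in $\fM^0_\Lambda$; so it suffices to show that if $(X,\eta,E)$ is a triple in $\tfM^0_\Lambda$ and $\PP^1_\omega$ is a generic twistor line through $(X,\eta)$, then every member $(\X_t,\eta_t)$ of that line lies in the image of $\phi$. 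Since $E$ satisfies Condition \ref{cond-open-and-closed}, Propositions \ref{prop-flatness-of-the-twistor-deformation} and \ref{prop-condition-remains-valid-under-twistor-deformations} (the latter invoking Conjecture \ref{conj-Ext-1-is-constant}, as the theorem's hypotheses allow) produce a flat family $\E$ of reflexive Azumaya algebras over $\X^d_\pi\to\PP^1_\omega$ with every fiber $\E_t$ satisfying Condition \ref{cond-open-and-closed}; marking each fiber by parallel transport of $\eta$ gives a path in $\tfM_\Lambda$, necessarily inside $\tfM^0_\Lambda$ by connectedness, lifting $\PP^1_\omega$. Concatenating such lifts along the generic twistor path shows every point of $\fM^0_\Lambda$ is hit. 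The subtlety to check carefully is that the marked fibers thus obtained indeed lie in the \emph{same} connected component $\tfM^0_\Lambda$ — this follows because the twistor family itself is connected and the construction of $\E$ over it is continuous, so the associated arc in $\tfM_\Lambda$ is connected and meets $\tfM^0_\Lambda$. This completes the proof. $\Box$
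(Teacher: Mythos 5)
Your Steps 1 and 2 match the paper's construction: local charts from the isomorphism $\Def(X,E)\cong\Def(Y,A)\cong\Def(X)$, gluing via the openness of the isomorphism relation supplied by Lemma \ref{uniqueness-of-defo}, and the local homeomorphism property read off from the charts. The gap is in Step 3. To deform $E$ along a twistor line $\PP^1_\omega$ you must invoke Verbitsky's hyperholomorphic machinery, and Proposition \ref{prop-condition-remains-valid-under-twistor-deformations} requires $E$ to be $\omega$-slope-stable \emph{for the specific K\"{a}hler class $\omega$ defining that twistor line}. Condition \ref{cond-open-and-closed} (\ref{condition-stability}) only guarantees stability for \emph{some} K\"{a}hler class, so your claim that an arbitrary generic twistor line through $(X,\eta)$ lifts to a path of triples is not justified: when $\Pic(X)\neq 0$ the sheaf $E$ may fail to be stable for the class cutting out the line you chose, and the flat family $\E$ over that line simply may not exist.

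The paper closes this gap with an extra step you are missing. Since $\phi$ is a local homeomorphism and marked pairs with trivial Picard group are dense in $\fM^0_\Lambda$, every non-empty component $\tfM^0_\Lambda$ contains a triple $(X_0,\eta_0,E_0)$ with $\Pic(X_0)=0$. By Condition \ref{cond-open-and-closed} (\ref{cond-item-Brauer-class-is-theta-X-eta}) and Remark \ref{rem-trivial-picard-implies-Brauer-class-has-order-r}, the Brauer class of $E_0$ then has order equal to the rank, so Proposition \ref{prop-maximally-twisted-sheaf-is-slope-stable} makes $E_0$ slope-stable for \emph{every} K\"{a}hler class. The generic twistor path is then run from this distinguished triple $(X_0,\eta_0,E_0)$ to the target $(X,\eta)$ (not from an arbitrary point of the component toward the target), and the vertices of a generic path have trivial Picard group, so at each vertex the deformed Azumaya algebra is again stable for every K\"{a}hler class and the lift can be continued along the next line. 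Without this reduction your induction along the path cannot even start, let alone propagate. (This is exactly the role played by \cite[Theorem 7.10]{markman-hodge} in the original argument, which the paper replaces by the density-plus-local-homeomorphism observation.)
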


\begin{proof} 
Putting together Corollary \ref{cor-varrho-is-an-isomorphism-of-deformation-functors}
and Lemma \ref{isom-2}, we have that
$\Def(X,E)$ and $\Def(X)$ are naturally isomorphic. Thus the 
deformation problem $\Def(X,E)$ admits a {\em universal} local moduli space. Adding the marking
and glueing the local charts $\D_{(X,\eta,E)}$ by the
same procedure as used in the construction of $\fM_\Lambda$
(see \cite[1.18]{huybrects-basic-results}), we obtain $\tfM_\Lambda$. 
As a topological space, $\tfM_\Lambda$ is the quotient $Z/\sim$ of the 
disjoint union $Z$ of the local Kuranishi spaces $\D_{(X,\eta, E)}$ by the
equivalence  relation $\sim$ corresponding to isomorphism of triples. 
The map $\D_{(X,\eta,E)}\rightarrow \tfM_\Lambda$ is injective, since
each composition
\begin{equation}
\label{eq-local-chart}
\D_{(X,\eta,E)}\rightarrow \D_{(X,\eta)}\RightArrowOf{P}\Omega
\end{equation}
is injective, and these compositions
glue to a well defined map $\widetilde{P}:\tfM_\Lambda\rightarrow \Omega$. 
The equivalence relation $\sim$ is open,\footnote{The statement that the relation is open 
translates, by definition,
to the following statement.
If $t'_1:=(X'_1,\eta'_1,\E'_1)\in \D_{(X_1,\eta_1,E_1)}$ is isomorphic to $t'_2:=(X'_2,\eta'_2,\E'_2)\in \D_{(X_2,\eta_2,E_2)}$,
then for every open neighborhood $V_1$ of $t'_1$ in $Z$, there exists an open neighborhood
$V_2$ of $t'_2$ in $Z$, such that for any $t''_2:=(X''_2,\eta''_2,\E''_2)$ in $V_2$ there exists 
a point $t''_1=(X''_1,\eta''_1,\E''_1)$ in $V_1$, such that $(X''_1,\eta''_1,\E''_1)$ is isomorphic to $(X''_2,\eta''_2,\E''_2)$.
This is known for marked pairs, and extends to marked triples by Lemma \ref{uniqueness-of-defo}.} 
so the quotient map $Z\rightarrow \tfM_\Lambda$ is an open map. 
We conclude that each Kuranishi deformation space $\D_{(X,\eta,E)}$ is embedded
as an open subset of the quotient space $\tfM_\Lambda$. 
The gluing transformations are holomorphic, as each of the compositions
(\ref{eq-local-chart}) is an open holomorphic embedding.

The statement that $\phi$ is a local homeomorphism is clear from the construction.

\smallskip
The morphism $\phi:\tfM^0_\Lambda\rightarrow \fM^0_\Lambda$ is surjective.
The proof is identical to that \cite[Theorem 7.11]{markman-hodge}. 
There it is shown that given any marked pair $(X,\eta)$ in $\fM^0_\Lambda$,
there exists a generic twistor path $C$ from $(X_0,\eta_0)$ to $(X,\eta)$, and a flat deformation $\E$ of
$E_0$ along this path, via slope-stable hyperholomorphic reflexive sheaves 
$\E_t$, $t\in C$, of Azumaya algebras.
The properties in Condition \ref{cond-open-and-closed} hold for all $\E_t$, $t\in C$, 
by Proposition \ref{prop-condition-remains-valid-under-twistor-deformations}.
The family $\E$ provides a lift of the path $C$ to a path in $\tfM^0_\Lambda$. Hence, $\phi$ is surjective.
The proof of Theorem 7.11 uses
Theorem 7.10 in \cite{markman-hodge}, which establishes 
the existence of a triple $(X_0,\eta_0,E_0)$ determining the component $\tfM^0_\Lambda$, 
such that the order of the Brauer class $\theta_{(X_0,\eta_0)}$ 
is equal to the rank of $E_0$. 
Here  the existence of such a triple, in any non-empty component $\tfM^0_\Lambda$, is established as follows
(eliminating the need for an analogue of \cite[Theorem 7.10]{markman-hodge}). 
The order of $\theta_{(X,\eta)}$ 
is $r$, whenever $\Pic(X)$ is trivial, by our assumption that the element $\tilde{\theta}$ has order $r$
in Condition \ref{cond-open-and-closed} (\ref{cond-item-Brauer-class-is-theta-X-eta}).
Points in $\fM^0_\Lambda$ with a trivial Picard group form a dense subset.
Hence, the existence of such a triple $(X_0,\eta_0,E_0)$ follows from the fact that the morphism
$\phi:\tfM^0_\Lambda\rightarrow \fM^0_\Lambda$ is a local homeomorphism.
\end{proof}

%
\section{Stability and separability}

Recall that a sheaf of Azumaya algebras may be represented as the endomorphism
sheaf of some twisted sheaf, which is unique, up to tensorization by a line bundle,
by Proposition \ref{prop-azumaya-algebra-is-End-F}.
The following is an analogue of \cite[Prop. 6.6]{kosarew-okonek}.

\begin{prop}
\label{prop-non-separated}
Let 
$E_1$, $E_2$ be two reflexive sheaves of Azumaya algebras on $X^d$
satisfying Condition \ref{cond-open-and-closed} (and so with the same Brauer class (\ref{eq-theta-X-eta})). 
Assume that 
the points associated  to $(X,\eta,E_1)$ and $(X,\eta,E_2)$ are non-separated 
in $\tfM_\Lambda$. Then there exist twisted reflexive sheaves 
$F_i$, such that $E_i\cong\SheafEnd(F_i)$, $i=1,2$, and 
$\det(F_1^*\otimes F_2)$ is the trivial line-bundle, as well as 
non-trivial homomorphisms
$\varphi:F_1\rightarrow F_2$ and $\psi:F_2\rightarrow F_1$, satisfying
$\varphi\psi=0$ and $\psi\varphi=0$.
\end{prop}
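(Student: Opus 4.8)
The plan is to translate the non-separatedness in the moduli space $\tfM_\Lambda$ into a one-parameter family argument, and then extract the homomorphisms $\varphi$ and $\psi$ from a limiting procedure, exactly in the spirit of \cite[Prop.~6.6]{kosarew-okonek}. First I would use the fact, established in the proof of Theorem \ref{thm-existence}, that each Kuranishi space $\D_{(X,\eta,E_i)}$ is embedded as an open subset of $\tfM_\Lambda$, and that the local charts glue along the period map $\widetilde P$. Since the two points $t_i$ associated to $(X,\eta,E_i)$ are non-separated, by the standard characterization of non-Hausdorff points there exist a pointed disc $(\Delta,0)$ and a holomorphic arc $g:\Delta\setminus\{0\}\to \tfM_\Lambda$ whose limit, as the parameter tends to $0$, equals $t_1$ when computed in the chart $\D_{(X,\eta,E_1)}$ and equals $t_2$ when computed in the chart $\D_{(X,\eta,E_2)}$. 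Composing with the forgetful morphism $\phi$ and using that $\widetilde P$ is a local homeomorphism, after shrinking $\Delta$ I may assume the underlying family of holomorphic symplectic manifolds is the constant family $X\times(\Delta\setminus\{0\})$, so the arc is really given by a single family $\mathcal{E}$ of reflexive Azumaya algebras over $X^d\times(\Delta\setminus\{0\})$ whose fibers all satisfy Condition \ref{cond-open-and-closed}, together with two distinct fill-ins $\mathcal{E}^{(1)}$ and $\mathcal{E}^{(2)}$ over $X^d\times\Delta$ restricting to $E_1$ and $E_2$ at $0$.

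Next I would pass from Azumaya algebras to twisted sheaves. By Proposition \ref{prop-azumaya-algebra-is-End-F} there are families $\mathcal{F}^{(1)},\mathcal{F}^{(2)}$ of twisted reflexive sheaves with $\mathcal{E}^{(i)}\cong\SheafEnd(\mathcal{F}^{(i)})$, and each is unique up to tensoring by a line bundle pulled back from $\Delta$. Over $\Delta\setminus\{0\}$ the two Azumaya algebras are isomorphic (they are the same family $\mathcal{E}$), so $\mathcal{F}^{(1)}$ and $\mathcal{F}^{(2)}\otimes L$ agree there for a suitable line bundle $L$ on $\Delta\setminus\{0\}$; normalizing the determinants as in the uniqueness argument of Lemma \ref{uniqueness-of-defo} (using $H^1(\StructureSheaf{X^d})=0$ and that determinants base-change correctly) I may arrange $\det(\mathcal{F}^{(1)})\cong\det(\mathcal{F}^{(2)})$ as line bundles over $\Delta$, hence $\det((F_1)^*\otimes F_2)$ trivial at the central fiber, which gives the determinant assertion. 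Now the generic isomorphism $\mathcal{F}^{(1)}_t\cong\mathcal{F}^{(2)}_t$ for $t\neq 0$ is a rational map between the two families which, after multiplying by an appropriate power $t^N$ of the parameter to clear poles and make it nonzero on the central fiber, specializes at $t=0$ to a nonzero homomorphism $\varphi:F_1\to F_2$; running the same argument with the inverse isomorphism and another power $t^{N'}$ produces a nonzero $\psi:F_2\to F_1$. Here one uses that $F_1$ and $F_2$ are reflexive and torsion-free so that the relevant sheaves of homomorphisms over $X^d\times\Delta$ are $\Delta$-flat and the specialization is well-behaved, together with the fact that $H^0(X^d,\SheafHom(F_1,F_2))$ is finite-dimensional so that the orders $N,N'$ are finite.

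Finally I would verify $\varphi\psi=0$ and $\psi\varphi=0$. Over $\Delta\setminus\{0\}$ the composition $\psi\varphi$ equals $t^{N+N'}$ times the identity of $\mathcal{F}^{(1)}_t$ (or a nonzero scalar times it, by simplicity of $\mathcal{F}^{(1)}_t$, which holds since $E_1$ is rigid and $\omega$-slope-stable hence simple as an Azumaya algebra). If this composition did not vanish at $t=0$, then $\varphi$ and $\psi$ would be mutually inverse isomorphisms $F_1\cong F_2$ at the central fiber, forcing $E_1\cong E_2$ and hence the two points of $\tfM_\Lambda$ to be equal — contradicting that they are distinct non-separated points (distinctness being what makes the limit genuinely two-valued). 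Hence $\psi\varphi=0$, and symmetrically $\varphi\psi=0$. The main obstacle I anticipate is the first paragraph: carefully reducing the abstract non-separatedness statement in the non-Hausdorff coarse moduli space to an honest one-parameter family over a disc with a constant underlying symplectic manifold, and controlling the twists so that the limit homomorphisms are genuinely nonzero; this requires using the openness of the equivalence relation and the local-homeomorphism property of $\widetilde P$ rather than a naive valuative criterion, since $\tfM_\Lambda$ is only a coarse, non-Hausdorff moduli space. The passage to $\varphi,\psi$ and the vanishing of the compositions is then a routine clearing-denominators argument once the family is in hand.
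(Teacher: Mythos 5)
Your overall strategy is the same as the paper's: turn non-separatedness into a one-parameter degeneration over a disc, pass to twisted sheaves via Proposition \ref{prop-azumaya-algebra-is-End-F}, normalize by a line bundle so that the two twisted families agree away from the central fibre, produce $\varphi$ and $\psi$ by semicontinuity (equivalently, by clearing denominators), and use simplicity of the $F_i$ plus $E_1\not\cong E_2$ to force $\varphi\psi=\psi\varphi=0$. The determinant statement by continuity, and the final dichotomy (the compositions are either zero or isomorphisms), are exactly as in the paper.

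There is, however, a genuine error in your opening reduction: you cannot assume that the underlying family of holomorphic symplectic manifolds is the constant family $X\times(\Delta\setminus\{0\})$. By Corollary \ref{cor-varrho-is-an-isomorphism-of-deformation-functors} and Lemma \ref{isom-2}, the forgetful map $\Def(X,E_i)\to\Def(X)$ is an isomorphism, so $E_i$ admits no non-constant deformations over a fixed $X$; equivalently, the composition $\D_{(X,\eta,E_i)}\to\Omega$ in the proof of Theorem \ref{thm-existence} is injective, so any arc in a Kuranishi chart whose period is constant is itself constant. Hence an arc over the constant family would be constantly equal to $t_1$ in the first chart and to $t_2$ in the second, forcing $t_1\sim t_2$, i.e.\ $E_1\cong E_2$ --- precisely the degenerate case in which the asserted $\varphi,\psi$ with vanishing compositions cannot exist. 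The non-separatedness is genuinely carried by points whose periods converge to, but differ from, $P(X,\eta)$, so the correct setup (as in the paper) is a non-constant deformation $\pi:\X\to S$ of $X$ over the disc, with families $\E_1,\E_2$ over $\X^d_\pi$ restricting to $E_1,E_2$ on the central fibre and isomorphic over the complement of that fibre. None of your subsequent steps actually uses constancy of the family, so the argument is repaired simply by replacing $X^d\times\Delta$ with $\X^d_\pi$ throughout; with that correction the proof goes through as in the paper.
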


\begin{proof}
Our assumption implies that 
there exists a family $\pi:\X\rightarrow S$, over a one dimensional disk $S$, 
with special fiber $\X_0$ isomorphic to $X$,
families $\E_1$ and $\E_2$ of reflexive sheaves  of Azumaya algebras over $\X^d_\pi\rightarrow S$, 
and isomorphisms of $E_i$ with the restriction of $\E_i$ to $\X_0^d$,
as well as an isomorphism $f:(\E_1\restricted{)}{U}\rightarrow (\E_2\restricted{)}{U}$
between the restrictions of the families to the complement $U\subset \X_\pi^d$ of the  fiber $\X_0^d$.

Let $\F_i$ be a family of $\theta$-twisted reflexive sheaves over $\X_\pi^d\rightarrow S$, 
such that $\SheafEnd(\F_i)$ is isomorphic to $\E_i$ as sheaves of Azumaya algebras.
Such sheaves $\F_i$ exist, by Proposition \ref{prop-azumaya-algebra-is-End-F}.
We may assume that $(\F_1\restricted{)}{U}$ is isomorphic to 
$(\F_2\restricted{)}{U}$, possibly after tensoring $\F_2$ by a line bundle $L$ over $\X_\pi^d$. 
Indeed, the existence of the isomorphism $f$ implies that there exists a line-bundle $\bar{L}$
over $U$, such that $(\F_1\restricted{)}{U}$ is isomorphic to 
$(\F_2\restricted{)}{U}\otimes \bar{L}$, by the
uniqueness statement in Proposition \ref{prop-azumaya-algebra-is-End-F}. 
The existence of such a line bundle $L$  over $\X_\pi^d$ follows from 
and the surjectivity of the 
restriction homomorphism from $\Pic(\X_\pi^d)$ to $\Pic(U)$. 
Let $F_i$ be the restriction of $\F_i$ to the special fiber $X_0^d$.
The existence of the non-trivial homomorphisms $\varphi$ and $\psi$ follows, by the semi-continuity theorem. 
The sheaves $F_i$ are simple, by the stability 
Condition \ref{cond-open-and-closed} (\ref{condition-stability}). Hence, $\varphi\psi$ and
$\psi\varphi$ either vanish or they are isomorphisms. The vanishing follows, since we 
assumed that the Azumaya algebras $E_1$ and $E_2$ are not isomorphic.
The triviality of $\det(F_1^*\otimes F_2)$ follows, by continuity.
\end{proof}

\begin{cor}
\label{cor-triple-is-a-separated-point}
Let $E$ be a  sheaf of Azumaya algebras over $X^d$, satisfying Condition
\ref{cond-open-and-closed},
with a Brauer class $\theta\in H^2(X^d,\StructureSheaf{X^d}^*)$.
If the rank of $E$ is equal to the order of $\theta$, then $(X,\eta,E)$
is a separated point of $\tfM_\Lambda$, provided $(X,\eta)$ is a separated point of $\fM_\Lambda$.
\end{cor}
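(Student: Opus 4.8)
The plan is to derive a contradiction from the assumption that $(X,\eta,E)$ is non-separated in $\tfM_\Lambda$, using Proposition \ref{prop-non-separated} together with the hypothesis that the rank of $E$ equals the order of its Brauer class. Suppose, for contradiction, that there is a point $(X,\eta,E')$ inseparable from $(X,\eta,E)$ in $\tfM_\Lambda$; since $(X,\eta)$ is separated in $\fM_\Lambda$, both points lie in the fiber $\phi^{-1}(X,\eta)$, so they correspond to genuinely distinct Azumaya algebras $E=E_1$ and $E'=E_2$ over $X^d$ satisfying Condition \ref{cond-open-and-closed} with the same Brauer class $\theta$. Applying Proposition \ref{prop-non-separated}, I would obtain twisted reflexive sheaves $F_1,F_2$ with $E_i\cong\SheafEnd(F_i)$, with $\det(F_1^*\otimes F_2)$ trivial, and with nonzero homomorphisms $\varphi\colon F_1\to F_2$, $\psi\colon F_2\to F_1$ satisfying $\varphi\psi=0$ and $\psi\varphi=0$.

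The heart of the argument is then to show that the existence of such $\varphi,\psi$ forces $E_1\cong E_2$ as Azumaya algebras, contradicting the hypothesis that the two triples are distinct points with $(X,\eta)$ separated. First I would observe that, because $E_i$ satisfies the stability Condition \ref{cond-open-and-closed}(\ref{condition-stability}) and in particular the underlying rank-$r^2$ sheaf is slope-polystable of slope $0$, the twisted sheaves $F_1$ and $F_2$ are themselves slope-stable of the same slope: indeed their endomorphism sheaves are stable Azumaya algebras with Brauer class of order $r=\mathrm{rk}(F_i)$, so by Proposition \ref{prop-maximally-twisted-sheaf-is-slope-stable} the maximal-twisting hypothesis makes $F_i$ stable with respect to every K\"ahler class. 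A nonzero homomorphism $\varphi\colon F_1\to F_2$ between stable twisted sheaves of the same slope and the same rank $r$ must be either zero or an isomorphism; since $\varphi\neq 0$, it is an isomorphism. (The twist is not an obstruction here: $\varphi$ is a genuine section of the untwisted reflexive sheaf $\SheafHom(F_1,F_2)$, whose double dual is a line bundle because $\det(F_1^*\otimes F_2)$ is trivial and both sheaves are stable of the same slope, so $\varphi$ has no zeros in codimension one and, by reflexivity, is an isomorphism.) An isomorphism $\varphi\colon F_1\to F_2$ induces an isomorphism $\SheafEnd(F_1)\cong\SheafEnd(F_2)$ of Azumaya algebras, i.e. $E_1\cong E_2$.

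This contradicts the assumption that the points $(X,\eta,E_1)$ and $(X,\eta,E_2)$ are distinct — or rather, once we know $E_1\cong E_2$, the two triples are isomorphic, hence equal as points of $\tfM_\Lambda$, so the fiber $\phi^{-1}(X,\eta)$ contains no pair of distinct inseparable points, and since $(X,\eta)$ is a separated point of $\fM_\Lambda$ and $\phi$ is a local homeomorphism (Theorem \ref{thm-existence}), $(X,\eta,E)$ is separated in $\tfM_\Lambda$. The main obstacle I anticipate is the careful handling of the twisted-sheaf bookkeeping in the step ``$\varphi\neq 0\Rightarrow\varphi$ an isomorphism'': one must check that stability of $F_i$ (which a priori is only asserted when the Brauer class has order $r$, via Proposition \ref{prop-maximally-twisted-sheaf-is-slope-stable}, and this is exactly where the rank-equals-order hypothesis of the Corollary is used) genuinely applies, and that the vanishing of $\det(F_1^*\otimes F_2)$ upgrades the nonzero map to an isomorphism rather than merely an injection. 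Everything else is a routine assembly of the cited results.
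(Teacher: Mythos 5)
Your proposal is correct and takes essentially the same route as the paper: both reduce to Proposition \ref{prop-non-separated}, and both exploit the fact that the rank of any subsheaf of a $\theta$-twisted sheaf divides the order of $\theta$, so that under the rank-equals-order hypothesis $F_i$ admits no nontrivial subsheaf of lower rank. The paper finishes a shade more directly — ${\rm Im}(\psi)\subseteq\ker(\varphi)$ would be exactly such a forbidden subsheaf, so the maps produced by Proposition \ref{prop-non-separated} cannot exist — whereas you route through stability of $F_i$, the triviality of $\det(F_1^*\otimes F_2)$, and the resulting isomorphism $F_1\cong F_2$, but the substance is the same.
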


\begin{proof}
Assume that $(X,\eta,E)$ and $(X',\eta',E')$ are non-separated points of $\tfM_\Lambda$.
Then $(X,\eta)$ and  $(X',\eta')$ are non-separated points of $\fM_\Lambda$ and are hence equal.
Let $F$ be a $\theta$-twisted sheaf representing $E$.
Then $F$ does not admit any non-trivial subsheaf of lower rank, since the rank of any subsheaf divides 
the order of $\theta$. The Corollary now follows from Proposition \ref{prop-non-separated}.
\end{proof}

\begin{lem}
\label{lemma-if-stable-with-respect-to-same-Kahler-class-then-separated}
Let $(X,\eta)$ be a  point of $\fM_\Lambda$, not necessarily separated, 
and $E_1$, $E_2$ two  reflexive sheaves of Azumaya algebras 
over $X^d$ satisfying Condition \ref{cond-open-and-closed}. 
Set $t_i=(X,\eta,E_i)$, $i=1,2$. 
If $E_1$ and $E_2$ are both $\omega$-slope-stable, with respect to the same
K\"{a}hler class $\omega$ on $X$,
then either $t_1=t_2$, or  the points $t_1$ and $t_2$ are separated in $\tfM_\Lambda$.
\end{lem}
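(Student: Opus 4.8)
The plan is to argue by contradiction, exactly as in the proof of Corollary \ref{cor-triple-is-a-separated-point}: assume $t_1\neq t_2$ and that $t_1,t_2$ are non-separated in $\tfM_\Lambda$. Proposition \ref{prop-non-separated} then furnishes twisted reflexive sheaves $F_1,F_2$ with $E_i\cong\SheafEnd(F_i)$, a trivialization of $\det(F_1^*\otimes F_2)$, and non-trivial homomorphisms $\varphi:F_1\to F_2$ and $\psi:F_2\to F_1$ with $\varphi\psi=0$ and $\psi\varphi=0$. Writing $r$ for the common rank of $F_1$ and $F_2$, $\omega':=\sum_{i=1}^d\pi_i^*\omega$, and $\mu_{\omega'}$ for the slope with respect to $\omega'$, I intend to reach a contradiction using only the stability of $E_1$ and $E_2$ with respect to the single class $\omega$.

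The first step is to translate the $\omega$-slope-stability of $E_i$ into slope-stability of the twisted sheaf $F_i$: for any subsheaf $F'\subset F_i$ with $0<\rank F'<r$, the sheaf $P_{F'}\subset\SheafEnd(F_i)$ of endomorphisms preserving $F'$ is a subsheaf of maximal parabolic subalgebras of $E_i$, and from the exact sequence $0\to P_{F'}\to\SheafEnd(F_i)\to\SheafHom(F',F_i/F')\to 0$ one computes $\deg_{\omega'}(P_{F'})=r\deg_{\omega'}(F')-(\rank F')\deg_{\omega'}(F_i)$, so that the negativity of the $\omega'$-slope of $P_{F'}$ demanded by Definition \ref{def-slope-stability} is equivalent to $\mu_{\omega'}(F')<\mu_{\omega'}(F_i)$. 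Hence both $F_1$ and $F_2$ are $\omega'$-slope-stable. The second step is the observation that, since $\det(F_1^*\otimes F_2)$ is trivial and $F_1^*\otimes F_2$ has rank $r^2$, comparison of $\omega'$-degrees gives $r\deg_{\omega'}(F_2)=r\deg_{\omega'}(F_1)$, hence $\mu_{\omega'}(F_1)=\mu_{\omega'}(F_2)$.

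Next I would show that $\varphi$ has rank $r$. Its image $Q:=\Im(\varphi)$ is a non-zero subsheaf of the torsion-free sheaf $F_2$. If $\rank Q<r$ then $Q$ is a non-trivial proper quotient of the $\omega'$-slope-stable sheaf $F_1$, so $\mu_{\omega'}(Q)>\mu_{\omega'}(F_1)$, and simultaneously a non-trivial proper subsheaf of the $\omega'$-slope-stable sheaf $F_2$, so $\mu_{\omega'}(Q)<\mu_{\omega'}(F_2)$; together these contradict $\mu_{\omega'}(F_1)=\mu_{\omega'}(F_2)$. Thus $\rank\varphi=r$, and by the symmetric argument $\rank\psi=r$. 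Finally I would use $\varphi\psi=0$: since $\rank\psi=r=\rank F_1$, the subsheaf $\Im(\psi)\subset F_1$ has rank $r$ and is annihilated by $\varphi$, so $\ker(\varphi)$ has rank $r$; then $\Im(\varphi)\cong F_1/\ker(\varphi)$ is a torsion sheaf contained in the torsion-free sheaf $F_2$, forcing $\varphi=0$, which is absurd. This rules out distinct non-separated $t_1,t_2$, so either $t_1=t_2$ or the two points are separated.

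The argument itself is short; the only delicate point is the correct interplay between $\omega$-slope-stability of an Azumaya algebra and slope-stability of a (twisted) sheaf representing it, together with the attendant bookkeeping of degrees of twisted sheaves, which is already part of the framework of Definition \ref{def-slope-stability} and of \cite{markman-hodge}. I therefore do not expect a substantial obstacle.
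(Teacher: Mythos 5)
Your argument is correct and takes essentially the same route as the paper's own proof: both invoke Proposition \ref{prop-non-separated} and then derive a contradiction from the fact that $\Im(\varphi)$ is simultaneously a quotient of the stable sheaf $F_1$ and a subsheaf of the stable sheaf $F_2$, whose slopes agree because $\det(F_1^*\otimes F_2)$ is trivial. The only cosmetic differences are that the paper encodes the slope inequalities as signs of degrees of the untwisted sheaves $\SheafHom(\Im(\varphi),F_i)$ and $\SheafHom(\ker(\varphi),F_1)$ --- thereby avoiding any convention for the degree of a twisted sheaf, a point you gloss over but which is harmless since only relative slopes enter --- and that the paper uses $\psi\varphi=0$ to see directly that $\Im(\varphi)$ has rank $<r$, where you instead dispose of the full-rank case in a separate final step via $\varphi\psi=0$.
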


\begin{proof}
The proof is by contradiction.
Assume that $t_1\neq t_2$ and that the points $t_1$ and $t_2$ are non-separated. Let $F_i$ be twisted sheaves
satisfying $\SheafEnd(F_i)\cong E_i$, $i=1,2$, such that the line bundle $\det(F_1^*\otimes F_2)$
is trivial, admitting non-zero homomorphisms
$\varphi:F_1\rightarrow F_2$ and
$\psi:F_2\rightarrow F_1$ satisfying $\varphi\psi=0$ and $\psi\varphi=0$, as in 
Proposition \ref{prop-non-separated}.

\hide{
Let $h_{12}:\SheafEnd(F_1)\rightarrow \SheafEnd(F_2)$ be the sheaf homomorphism given by 
$h_{12}(g)=\varphi g \psi$.
Define $h_{21}:\SheafEnd(F_2)\rightarrow \SheafEnd(F_1)$, by $h_{21}(g)=\psi g \varphi$. 
Denote by $\mu$ the $\omega$-slope function.
Both $\SheafEnd(F_1)$ and $\SheafEnd(F_2)$ are polystable with trivial determinant,
by Definition \ref{def-slope-stability}.
Hence, $\mu[Im(h_{12})]=0$.
We have the injective homomorphism
$\gamma:
Im(h_{21})\rightarrow \SheafHom\left(Im(\psi),Im(\varphi)\right),
$
sending $h_{12}(g):=\varphi g \psi$ to the induced homomorphism
from $F_2/\ker(\psi)\cong Im(\psi)$ to $Im(\varphi)$.
The kernel of $h_{12}$ consists of endomorphisms of
$F_1$, which map $Im(\psi)$ into $\ker(\varphi)$. 
One easily checks that the rank of the image of $h_{12}$ is equal
to $\rank(Im(\varphi))\rank(Im(\psi))$. Hence, 
$0=\mu(Im(h_{12}))\leq \mu\left[\SheafHom\left(Im(\psi),Im(\varphi)\right)\right]$.
Interchanging the roles of $F_1$ and $F_2$ and working with $h_{21}$ we get the inequality
$0\leq \mu\left[\SheafHom\left(Im(\varphi),Im(\psi)\right)\right].$
We conclude the equality
\[
\mu\left[\SheafHom\left(Im(\varphi),Im(\psi)\right)\right]=0.
\]
}

The slope function is additive with respect to tensor products and $\mu(F_1^*\otimes F_2)=0$. 
Hence,
\[
\mu\left[\SheafHom(Im(\varphi),F_1)\right]=
\mu\left[\SheafHom(Im(\varphi),F_1)\otimes F_1^*\otimes F_2\right]=
\mu\left[\SheafHom(Im(\varphi),F_2)\right].
\]
We conclude the equality of degrees
$
\deg\left[\SheafHom(Im(\varphi),F_1)\right]=\deg\left[\SheafHom(Im(\varphi),F_2)\right].
$
The degree function is additive with respect to short exact sequences, and so
$\deg\left[\SheafHom(Im(\varphi),F_1)\right]=-\deg\left[\SheafHom(\ker(\varphi),F_1)\right]$.
We get the equality
\[
\deg\left[\SheafHom(Im(\varphi),F_2)\right]=-\deg\left[\SheafHom(\ker(\varphi),F_1)\right].
\]
The left hand side of the above equality is strictly positive, 
since $F_2$ is stable, 
while the right hand side is strictly negative, since
$F_1$ is $\omega$-slope-stable. A contradiction.
\end{proof}

Once Theorem \ref{thm-introduction-Global-Torelli-for-triples} (\ref{thm-item-inseparable})
is established, we  conclude the equality $t_1=t_2$ in the statement of 
Lemma \ref{lemma-if-stable-with-respect-to-same-Kahler-class-then-separated}.

%
\section{A global Torelli theorem}

Let $\tfM^0_\Lambda$ be a connected component of $\tfM_\Lambda$. Then $\tfM^0_\Lambda$ contains 
a point associated to an isomorphism
class of a triple $(X_0,\eta_0,E_0)$, where the order of the
Brauer class $\theta_{X_0,\eta_0}\in H^2(X_0^d,\StructureSheaf{X_0^d}^*)$ is equal to the rank $r$ of $E_0$,
as shown in the proof of Theorem \ref{thm-existence}. 
Let $\fM^0_\Lambda$ be the connected component, of the moduli space $\fM_\Lambda$, containing the image
of $\tfM^0_\Lambda$ via the forgetful morphism. Denote the forgetful morphism by
\[
\phi \ : \ \tfM^0_\Lambda \ \ \ \rightarrow \ \ \ \fM^0_\Lambda.
\]
Let $\Omega\subset \PP(\Lambda\otimes_\Integers\ComplexNumbers)$ be the period domain
(\ref{eq-period-domain}) and $P:\fM^0_\Lambda\rightarrow \Omega$ the period map.
Let $\widetilde{P}:\tfM^0_\Lambda\rightarrow \Omega$
be the composition $P\circ \phi$.

Consider the relation $(X_1,\eta_1,E_1)\sim (X_2,\eta_2,E_2)$ between 
any pair of inseparable points in $\tfM^0_\Lambda$. 
The relation $\sim$ is clearly symmetric and reflexive. Following is the main result of this paper.

\begin{thm}
\label{main-thm}
Assume that Conjecture \ref{conj-Ext-1-is-constant} holds. 
\begin{enumerate}
\item
\label{thm-item-Hausdorff-reduction}
The relation $\sim$ is an equivalence relation. 
The quotient space $\hfM^0_\Lambda := \tfM^0_\Lambda/\sim$ admits a natural structure of a
Hausdorff complex manifold, and the quotient map $\tfM^0_\Lambda\rightarrow \hfM^0_\Lambda$ is open. 
The period map $\widetilde{P}$ factors through a morphism
$\overline{P}:\hfM^0_\Lambda\rightarrow \Omega$. 
\item
\label{thm-item-isomorphism}
The morphism $\overline{P}:\hfM^0_\Lambda\rightarrow \Omega$ is an isomorphism of complex manifolds.
\item 
\label{thm-item-generic-point-is-separated}
Let $x$ be a point of $\Omega$, such that 
$\Lambda^{1,1}(x)=0$,  or  $\Lambda^{1,1}(x)$ is cyclic generated by a class of non-negative self intersection.
Then the fiber $\widetilde{P}^{-1}(x)$ consists of a single separable point of $\tfM_\Lambda^0$.
\end{enumerate}
\end{thm}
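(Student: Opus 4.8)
The plan is to deduce all three parts from a single assertion, call it $(\star)$: two points of $\tfM^0_\Lambda$ lie in the same fibre of $\widetilde P=P\circ\phi$ if and only if they are inseparable. Note first that $\widetilde P$ is a surjective local homeomorphism onto $\Omega$, being the composition of the surjective local homeomorphism $\phi$ of Theorem \ref{thm-existence} with the surjective local homeomorphism $P$; since $\Omega$ is Hausdorff, inseparable points of $\tfM^0_\Lambda$ automatically have equal period, so the ``only if'' direction of $(\star)$ is free, and $(\star)$ says the fibres of $\widetilde P$ are exactly the classes of $\sim$. Hence $\sim$ is an equivalence relation; the quotient $\hfM^0_\Lambda$ is, as a set, the image $\Omega$ of $\widetilde P$, with $\overline P$ the induced bijection; $\overline P$ is continuous (as $q\colon\tfM^0_\Lambda\to\hfM^0_\Lambda$ is a quotient map with $\widetilde P=\overline P\circ q$) and open (as $q$ and $\widetilde P$ are), hence a homeomorphism, so $\hfM^0_\Lambda$ is Hausdorff; and $\overline P$ is holomorphic in the charts $q(\D_{(X,\eta,E)})\cong\D_{(X,\eta,E)}\cong\D_X$ coming from Theorem \ref{thm-existence} (on each Kuranishi chart $q$ is injective, the chart being an embedded open Hausdorff subspace), so $\overline P$ is a biholomorphism. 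This gives parts \ref{thm-item-Hausdorff-reduction} and \ref{thm-item-isomorphism}. For part \ref{thm-item-generic-point-is-separated}, if $x$ is as stated then $P^{-1}(x)$ is a single separated point $(X,\eta)$ of $\fM^0_\Lambda$ (there is no wall class in $\Lambda^{1,1}(x)$, which is zero or cyclic generated by a class of non-negative square), and $\theta_{(X,\eta)}$ has order $r$, by Remark \ref{rem-trivial-picard-implies-Brauer-class-has-order-r} in the first case and a short rank-one lattice computation, using $\mathrm{ord}(\tilde\theta)=r$, in the second. Thus every point of $\widetilde P^{-1}(x)=\phi^{-1}(X,\eta)$ (non-empty, as $\phi$ is surjective) is a separated point of $\tfM^0_\Lambda$ by Corollary \ref{cor-triple-is-a-separated-point}; by $(\star)$, $\widetilde P^{-1}(x)$ is a single $\sim$-class, and a separated point is $\sim$-equivalent only to itself, so $\widetilde P^{-1}(x)$ is a single separated point.

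It remains to prove $(\star)$, which I would do by transplanting Verbitsky's proof of the global Torelli theorem for $\fM_\Lambda$ (in Huybrechts' formulation) to the moduli of Azumaya algebras, carrying the algebra along by Proposition \ref{prop-condition-remains-valid-under-twistor-deformations} (this is where Conjecture \ref{conj-Ext-1-is-constant} enters). Let $t_1=(X_1,\eta_1,E_1)$ and $t_2=(X_2,\eta_2,E_2)$ have a common period $x$. Then $\phi(t_1)$ and $\phi(t_2)$ have the same period, hence are inseparable in $\fM^0_\Lambda$ by Verbitsky's theorem, so there is a family $\mathcal X\to S$ over a disc exhibiting them: two fillings $\mathcal X^{(i)}\to S$, with central fibres $X_1$ and $X_2$, of a common punctured family $\mathcal X^\ast\to S^\ast$ whose members may be taken to have trivial Picard group. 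Using $\Def(X_i,E_i)\cong\Def(X_i)$ (Corollary \ref{cor-varrho-is-an-isomorphism-of-deformation-functors}), Lemma \ref{uniqueness-of-defo} and Artin approximation, $E_i$ extends, after shrinking $S$, to a unique family $\mathcal E^{(i)}$ of reflexive sheaves of Azumaya algebras over $(\mathcal X^{(i)})^d_S$ satisfying Condition \ref{cond-open-and-closed} fibrewise. Restricting to $S^\ast$ yields two families of valid Azumaya algebras over $(\mathcal X^\ast)^d_{S^\ast}$; by the uniqueness assertion $(\diamond)$ below they are fibrewise isomorphic, and since valid algebras are simple these fibrewise isomorphisms have singleton stalks and glue over the connected base $S^\ast$, so $\mathcal E^{(1)}|_{S^\ast}\cong\mathcal E^{(2)}|_{S^\ast}$. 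Gluing $\mathcal E^{(1)}$ to $\mathcal E^{(2)}$ over $S^\ast$ through this isomorphism realises $t_1$ and $t_2$ as the two central fibres of one family, so they are inseparable.

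Everything therefore reduces to $(\diamond)$: over a marked pair $(X,\eta)$ with $\Pic(X)=0$ there is, up to isomorphism, exactly one Azumaya algebra on $X^d$ lying in $\tfM^0_\Lambda$. This is the analogue, for the component $\tfM^0_\Lambda$, of the connectedness input in Verbitsky's argument, and is the main obstacle. The plan is: $\tfM^0_\Lambda$ contains the distinguished triple $t_0=(X_0,\eta_0,E_0)$ with $\Pic(X_0)=0$ and $\mathrm{ord}(\theta_{(X_0,\eta_0)})=r$ produced in the proof of Theorem \ref{thm-existence}; by Proposition \ref{prop-condition-remains-valid-under-twistor-deformations}, $E_0$ has a canonical hyperholomorphic deformation along every twistor line, and a twistor line through a manifold of trivial Picard has all fibres of trivial Picard, so (by Proposition \ref{prop-maximally-twisted-sheaf-is-slope-stable} the transported algebra is stable for every K\"ahler class at each joint) iterating along a generic twistor path --- which joins any two marked pairs of $\fM^0_\Lambda$ by \cite[Prop. 3.10]{huybrechts-torelli} --- produces, for each marked pair $(X,\eta)$ with $\Pic(X)=0$, a valid algebra $E_{(X,\eta)}$ on $X^d$ in $\tfM^0_\Lambda$. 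The canonicity of Verbitsky's construction, together with the rigidity of the algebra (a continuous variation of the path cannot move the rigid $E_{(X,\eta)}$), makes $E_{(X,\eta)}$ independent of the path; the path-independence is exactly the statement that the twistor-path groupoid over the generic locus carries no monodromy, which is the content of the $\fM_\Lambda$-Torelli theorem. Finally, the set $\{(X,\eta,E_{(X,\eta)})\}$ is open in $\tfM^{\mathrm{gen}}_\Lambda:=\phi^{-1}(\{(X,\eta):\Pic(X)=0\})$ (twistor deformations out of a point sweep a neighbourhood, using the openness in Lemma \ref{lemma-condition-involving-W-is-open}), closed (by continuity and uniqueness of deformations), and non-empty, hence equals $\tfM^{\mathrm{gen}}_\Lambda$ by connectedness of $\tfM^0_\Lambda$; this is $(\diamond)$. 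The delicate point, as in Verbitsky's proof, is arranging the twistor transport of the algebra to be simultaneously well defined and surjective onto the generic locus.
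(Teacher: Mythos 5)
Your reductions are sensible, but the proposal has two genuine gaps, and the more serious one sits exactly at the point you yourself flag as ``delicate.'' Your assertion $(\star)$ is deduced from $(\diamond)$, and $(\diamond)$ from the path-independence of the twistor transport of the rigid Azumaya algebra over the trivial-Picard locus; you claim this path-independence ``is the content of the $\fM_\Lambda$-Torelli theorem.'' It is not. The Torelli theorem for $\fM_\Lambda$ says nothing about whether two generic twistor paths from $(X_0,\eta_0,E_0)$ to the same marked pair $(X,\eta)$ transport $E_0$ to isomorphic algebras: a priori they could produce two distinct \emph{separated} points of $\tfM^0_\Lambda$ over $(X,\eta)$ (Corollary \ref{cor-triple-is-a-separated-point} and Lemma \ref{lemma-if-stable-with-respect-to-same-Kahler-class-then-separated} only say they are equal \emph{or} separated). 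Killing this monodromy is precisely the content of part (\ref{thm-item-isomorphism}), and the paper obtains it in the opposite logical order: first part (\ref{thm-item-Hausdorff-reduction}) is proved using only that $\widetilde{P}$ is a local homeomorphism and that points over trivial Picard group are separated (so Verbitsky--Huybrechts' Hausdorff reduction, \cite[Prop. 4.9, Cor. 4.10]{huybrechts-torelli}, applies verbatim); then Proposition \ref{prop-lifting-of-twistor-lines} shows $\overline{P}$ is compatible with generic twistor lines, and Verbitsky's covering criterion \cite[Theorem 6.14]{verbitsky-torelli} together with the simple connectedness of $\Omega$ forces $\overline{P}$ to be an isomorphism. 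Your $(\diamond)$ is a \emph{consequence} of this, not an input to it; as written, the central difficulty is asserted rather than proved.

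The second gap is in part (\ref{thm-item-generic-point-is-separated}), case two. When $\Lambda^{1,1}(x)$ is cyclic generated by a class of non-negative square, you claim $\theta_{(X,\eta)}$ still has order $r$ by ``a short rank-one lattice computation, using ${\rm ord}(\tilde\theta)=r$.'' This is false in general: the order of the Brauer class in $H^2(X^d,\StructureSheaf{X^d}^*)$ is the least $k$ with $k\tilde\theta$ lying in the image of $NS(X^d)$ modulo $r$, and a rank-one Picard group can make this drop below $r$ for suitable $\tilde\theta$; the hypothesis ${\rm ord}(\tilde\theta)=r$ only controls the characteristic class in $H^2(X^d,\mu_r)$. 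So Corollary \ref{cor-triple-is-a-separated-point} is not available in this case. The paper instead argues directly with stability: writing $c$ for the generator of $\Pic(X)$ taken in the closure of the positive cone, any maximal parabolic subsheaf $B\subset E_2$ has $c_1(B)=kc$ with $k<0$ (from $\omega_2$-stability and $\int_X\omega^{2n-1}\alpha>0$ for $\alpha$ in the closed positive cone), hence $E_2$ is also $\omega_1$-slope-stable; Lemma \ref{lemma-if-stable-with-respect-to-same-Kahler-class-then-separated} then shows $t_1$ and $t_2$ are equal or separated, and part (\ref{thm-item-isomorphism}) shows they are equal or inseparable, so they coincide. You need to replace your lattice claim by this (or an equivalent) argument.
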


\noindent
{\bf Proof of part \ref{thm-item-Hausdorff-reduction} of Theorem 
\ref{main-thm}.}
The morphism $\phi$ is a local homeomorphism, by Theorem \ref{thm-existence}. 
Hence, so is $\widetilde{P}:\tfM^0_\Lambda\rightarrow \Omega$.
We show next that  a point
$(X,\eta,E)$ of $\tfM^0_\Lambda$ is a separated point, if $\Pic(X)$ is trivial. 
The point $(X,\eta)$ of 
$\fM^0_\Lambda$ is known to be separated \cite[Prop. 4.7]{verbitsky-torelli,huybrechts-torelli}. 
Condition \ref{cond-open-and-closed} (\ref{cond-item-Brauer-class-is-theta-X-eta}) 
assures that the Brauer class of the Azumaya algebra $E$ has order $r$
(Remark \ref{rem-trivial-picard-implies-Brauer-class-has-order-r}).
The triple $(X,\eta,E)$ thus corresponds to a separated point,
by Corollary \ref{cor-triple-is-a-separated-point} above.

We follow the procedure of {\em Hausdorff reduction} used by Verbitsky in his proof of the
Global Torelli Theorem \cite[Prop. 4.9 and Cor. 4.10]{verbitsky-torelli,huybrechts-torelli}.
There it is proven that the analogous relation $\sim$ for $\fM^0_\Lambda$ is an equivalence relation,
$\fM^0_\Lambda/\sim$ is a complex Hausdorff manifold, the quotient morphism 
$\fM^0_\Lambda\rightarrow \fM^0_\Lambda/\sim$ is open, and the period map $P$ factors through 
$\fM^0_\Lambda/\sim$.
The only two facts used, are that $P:\fM^0_\Lambda\rightarrow\Omega$ is a local homeomorphism,
and that points of $\fM^0_\Lambda$, such that $\Pic(X)$ is trivial, are separated. 
These two facts hold for $\tfM^0_\Lambda$ as well, as shown above. Hence, the proofs of 
\cite[Prop. 4.9 and Cor. 4.10]{huybrechts-torelli} apply verbatim to prove part \ref{thm-item-Hausdorff-reduction}.
\EndProof

\medskip
\noindent
{\bf Proof of part \ref{thm-item-isomorphism} of Theorem 
\ref{main-thm}.}

\begin{prop}
\label{prop-lifting-of-twistor-lines}
Consider an equivalence class $[X,\eta,E]\in  \overline{\fM}^0_\Lambda$ of a triple $(X,\eta,E)$
and assume that its period $\overline{P}([X,\eta,E])$ is contained in a generic twistor line $Tw\subset\Omega$.
Then there exists a lift  $\iota:Tw\rightarrow \overline{\fM}^0_\Lambda$, such that
$\overline{P}\circ\iota:Tw\rightarrow \Omega$ is the inclusion, and $\iota(Tw)$ contains 
$[X,\eta,E]$.
\end{prop}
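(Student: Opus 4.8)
The plan is to reduce the lifting problem for the equivalence class $[X,\eta,E]$ to the lifting problem for the underlying marked pair $(X,\eta)$, which is already understood from Verbitsky--Huybrechts theory, and then to propagate the Azumaya algebra along the twistor line using the machinery of Section \ref{flatness-and-cond-1.7-on-twistor-lines}. First I would choose a representative $(X,\eta,E)$ of the class whose Picard group is trivial; this is possible because $Tw$ is a generic twistor line, so some fiber $\X_t$ of the associated twistor family has $\Pic(\X_t)=0$, and by part \ref{thm-item-Hausdorff-reduction} of Theorem \ref{main-thm} the point of $\overline{\fM}^0_\Lambda$ over such a period is represented by a separated point of $\tfM^0_\Lambda$ whose underlying complex manifold has trivial Picard group. (If necessary, one first deforms within the equivalence class to land on such a fiber; the equivalence class maps to a single point of $\Omega$, so it suffices that the generic twistor line passes through a period with $\Lambda^{1,1}=0$, which is part of the genericity hypothesis.)

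Next, let $\omega$ be the K\"{a}hler class on $X$ with $\PP^1_\omega = Tw$, and let $\pi:\X\to \PP^1_\omega$ be the twistor family. Since $E$ is $\omega$-slope-stable and satisfies Condition \ref{cond-open-and-closed} — in particular $E$ is $\omega$-stable hyperholomorphic by parts (\ref{cond-item-characteristic-class}) and (\ref{condition-stability}) — Verbitsky's construction \cite[Cor. 6.10]{kaledin-verbitski-book} produces a reflexive sheaf $\E$ of Azumaya algebras over $\X^d_\pi$ extending $E$. By Proposition \ref{prop-flatness-of-the-twistor-deformation}, $\E$ is a flat family of reflexive sheaves of Azumaya algebras over $\PP^1_\omega$ and each fiber $\E_t$ satisfies Condition \ref{cond-open-and-closed} (\ref{cond-item-same-homogeneous-bundle}); by Proposition \ref{prop-condition-remains-valid-under-twistor-deformations} (using Conjecture \ref{conj-Ext-1-is-constant}), each $\E_t$ in fact satisfies all of Condition \ref{cond-open-and-closed}. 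Adding markings compatibly along the twistor line — using the fact that $\PP^1_\omega$ is simply connected, so the marking on $X$ extends uniquely to a continuous family of markings $\eta_t$ — we obtain a map $\PP^1_\omega\to\tfM^0_\Lambda$, $t\mapsto (\X_t,\eta_t,\E_t)$. Composing with the quotient map $\tfM^0_\Lambda\to\overline{\fM}^0_\Lambda$ gives the desired $\iota:Tw\to\overline{\fM}^0_\Lambda$. It lands in $\overline{\fM}^0_\Lambda$ because $\PP^1_\omega$ is connected and one fiber maps into $\overline{\fM}^0_\Lambda$; it contains $[X,\eta,E]$ by construction; and $\overline{P}\circ\iota$ is the inclusion $Tw\hookrightarrow\Omega$ because $P$ sends $(\X_t,\eta_t)$ to the period of $\X_t$, which traces out $Tw$ by definition of the twistor line.

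The main obstacle, and the step requiring the most care, is verifying that the assignment $t\mapsto [\X_t,\eta_t,\E_t]$ is a genuine continuous (in fact holomorphic) map into $\overline{\fM}^0_\Lambda$ rather than just a set-theoretic one, i.e. that it factors correctly through the local Kuranishi charts used to build $\tfM_\Lambda$ in Theorem \ref{thm-existence}. Here one uses the isomorphism of deformation functors $\Def(X,E)\cong\Def(X)$ from Corollary \ref{cor-varrho-is-an-isomorphism-of-deformation-functors} together with Lemma \ref{uniqueness-of-defo}: the flat family $\E$ over $\PP^1_\omega$ realizes, locally over each $t$, a deformation of $(\X_t,\E_t)$ over a base mapping to $\D_{(\X_t,\eta_t,\E_t)}$, and Lemma \ref{uniqueness-of-defo} guarantees this family is the unique such one extending its central fiber, so the charts glue and $\iota$ is well-defined and holomorphic. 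A secondary point is that $\iota$ is well-defined on the quotient $\overline{\fM}^0_\Lambda$: two inseparable lifts over the same period would, by part \ref{thm-item-Hausdorff-reduction}, already be identified in $\overline{\fM}^0_\Lambda$, so passing to the Hausdorff reduction removes any ambiguity in the choice of hyperholomorphic extension $\E$.
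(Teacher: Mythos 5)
Your overall strategy coincides with the paper's: construct the hyperholomorphic deformation of $E$ along the twistor family, invoke Proposition \ref{prop-condition-remains-valid-under-twistor-deformations} to keep Condition \ref{cond-open-and-closed}, and descend to the Hausdorff reduction. However, your reduction to the trivial Picard group case contains a genuine gap. The N\'{e}ron--Severi lattice of $X$ is $\eta^{-1}\bigl(\Lambda^{1,1}(\overline{P}([X,\eta,E]))\bigr)$, i.e.\ it is determined by the period, which is the same for every representative of the inseparability class $[X,\eta,E]$. So you cannot ``choose a representative of the class whose Picard group is trivial,'' and you cannot ``deform within the equivalence class to land on'' a fiber with trivial Picard group --- genericity of $Tw$ only guarantees that \emph{some other} period $p_0\in Tw$ has $\Lambda^{1,1}(p_0)=0$, not the period of the given triple. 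This matters because when $\Pic(X)\neq 0$ the positive class on $X$ determined by $Tw$ need not be K\"{a}hler, so the twistor family over $Tw$ with $X$ as a fiber, and hence the hyperholomorphic deformation of $E$ along it, cannot be constructed directly through $(X,\eta,E)$. The construction must start at the trivial-Picard point $p_0$, and one then needs a separate argument --- the point-set topology argument of \cite[Prop. 5.4]{huybrechts-torelli}, which the paper cites precisely for this purpose --- to show that the resulting lifted line actually passes through the \emph{given} class $[X,\eta,E]$ rather than through some other point of $\overline{P}^{-1}(\overline{P}([X,\eta,E]))$. Your assertion that $\iota(Tw)$ ``contains $[X,\eta,E]$ by construction'' is only valid in the case $\Pic(X)=0$.

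Two smaller points. First, even in the trivial Picard case, Condition \ref{cond-open-and-closed} (\ref{condition-stability}) only gives slope-stability of $E$ with respect to \emph{some} K\"{a}hler class, while the twistor construction requires stability with respect to the particular class $\omega$ cut out by $Tw$; the paper supplies this by observing that triviality of $\Pic(X)$ forces the Brauer class to have order $r$, whence $E$ is slope-stable for \emph{every} K\"{a}hler class (Proposition \ref{prop-maximally-twisted-sheaf-is-slope-stable} and Remark \ref{rem-trivial-picard-implies-Brauer-class-has-order-r}). You should make this explicit. Second, your final paragraph on well-definedness and holomorphicity of the classifying map via Corollary \ref{cor-varrho-is-an-isomorphism-of-deformation-functors} and Lemma \ref{uniqueness-of-defo} is correct and in fact spells out a point the paper leaves implicit; that part is fine.
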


\begin{proof}
The analogous statement for the Hausdorff reduction of $\fM^0_\Lambda$ is
\cite[Prop. 5.4]{huybrechts-torelli}. There a simple point set topology argument reduces 
the proof to the case where $\Pic(X)$ is trivial. In that case 
the existence of a unique lift  $\iota:Tw\rightarrow \fM^0_\Lambda$ through $(X,\eta)$ follows from 
the fact that the K\"{a}hler cone of $X$ is equal to its positive cone, and from the
construction of a twistor family through $(X,\eta)$ for any K\"{a}hler class $\omega$ of $X$.
Thus, it remains to prove that when $\Pic(X)=(0)$, any twistor line through 
$(X,\eta)$ lifts further to a twistor line through $(X,\eta,E)$ in $\tfM^0_\Lambda$. 
The triviality of $\Pic(X)$ implies that the order of $\theta_{(X,E)}$ is $r$, hence $E$ is 
$\omega$-slope-stable with respect to every K\"{a}hler class on $X$
\cite[Prop. 7.8]{markman-hodge}. The lifting of each twistor line to a deformation of the triple 
$(X,\eta,E)$ over the twistor family is proven, using results of Verbitsky, 
in \cite[Cor. 6.10 and Prop. 7.8]{markman-hodge}. The lifted deformation lies in
$\widetilde{\fM}^0_\Lambda$, by Proposition \ref{prop-condition-remains-valid-under-twistor-deformations}.
\end{proof}

The proof of part \ref{thm-item-isomorphism} of Theorem 
\ref{main-thm} is now identical to the argument in \cite[Sec.5.4]{huybrechts-torelli}
establishing Corollary \cite[5.9]{huybrechts-torelli}. Simply replace \cite[Prop. 5.4]{huybrechts-torelli}
by Proposition \ref{prop-lifting-of-twistor-lines} above.
In the language of Verbitsky, Proposition \ref{prop-lifting-of-twistor-lines} establishes the fact that
the map $\overline{P}:\overline{\fM}^0_\Lambda\rightarrow \Omega$ is 
{\em compatible with generic hyperk\"{a}hler lines} \cite[Definition 6.2]{verbitsky-torelli}.
Verbitsky proves that any map $\psi:M\rightarrow \Omega$ from a Hausdorff manifold $M$,
which is compatible with generic hyperk\"{a}hler lines, is a covering
\cite[Theorem 6.14]{verbitsky-torelli}. Theorem 
\ref{main-thm} (\ref{thm-item-isomorphism})  follows, since $\Omega$ is simply connected.
\EndProof

\medskip
\noindent
{\bf Proof of part
\ref{thm-item-generic-point-is-separated} of Theorem \ref{main-thm}.}
The case $\Lambda^{1,1}(x)=0$ follows from 
Corollary \ref{cor-triple-is-a-separated-point} and Remark \ref{rem-trivial-picard-implies-Brauer-class-has-order-r}.
Assume that $\Lambda^{1,1}(x)$ is spanned by the non-zero class $\bar{c}$, and $(\bar{c},\bar{c})\geq 0$. 
The fiber $P^{-1}(x)$ intersects the connected component $\fM_\Lambda^0$ in a single
separable point $(X,\eta)$, by Verbitsky's Global Torelli Theorem 
\cite[Theorem 2.2]{verbitsky-torelli,huybrechts-torelli,markman-survey}.
Let $t_1:=(X,\eta,E_1)$ and $t_2:=(X,\eta,E_2)$ be points in the intersection of the fiber $\widetilde{P}^{-1}(x)$
with the connected component $\tfM^0_\Lambda$. Then either $t_1=t_2$, 
or $t_1$ and $t_2$ are inseparable,
by part  (\ref{thm-item-isomorphism}) of the Theorem. 
Let $\omega_i$ be a K\"{a}hler class, such that $E_i$ is $\omega_i$-slope-stable, $i=1,2$.
It suffices to show that $E_2$ is also $\omega_1$-slope-stable, by Lemma 
\ref{lemma-if-stable-with-respect-to-same-Kahler-class-then-separated}.

Let $c:=\eta^{-1}(\bar{c})\in H^{1,1}(X,\Integers)$ be a generator. 
We may assume that $c$ belongs to the closure of the positive cone, possibly after replacing 
$\bar{c}$ by $-\bar{c}$.
Let $B\subset E_2$ be a maximal parabolic sheaf of Lie subalgebras.
Then $\int_X\omega_2^{2n-1}c_1(B)<0,$ since $E_2$ is $\omega_2$-slope-stable.
For every K\"{a}hler class $\omega$ and for every class 
$\alpha$ in the closure of the positive cone of $X$ in $H^{1,1}(X,\RealNumbers)$, 
the following inequality holds:
\[
\int_X\omega^{2n-1}\alpha>0,
\]
by \cite[(1.10)]{huybrects-basic-results}.
Hence, $c_1(B)=kc,$ for some negative integer $k$.
We conclude that the $\omega_1$-degree   $\int_X\omega_1^{2n-1}c_1(B)=k\int\omega_1^{2n-1}c$ of  $B$
is negative as well. Hence, $E_2$ is $\omega_1$-slope-stable. \nolinebreak
\EndProof

%
\section{Proof of Theorem \ref{thm-main-example} establishing the main example}
\label{sec-Example}

We will need the following Lemmas.
Let $V$ be a  complex vector space of even dimension $2n$, $n\geq 2$, 
and $\sigma$ a non-degenerate anti-symmetric
bilinear pairing on $V$. 
Denote by $\V$ the trivial bundle over $\PP{V}$ with fiber $V$ and let 
$\ell$ be the tautological line sub-bundle of $\V$. Denote by $\ell^\perp$ the 
sub-bundle of $\V$, which is $\sigma$-orthogonal to $\ell$. Note that 
$\sigma$ induces a non-degenerate anti-symmetric bilinear pairing on 
the quotient bundle 
\begin{equation}
\label{eq-W-is-a-subquotient}
W:=\ell^\perp/\ell.
\end{equation}

\begin{lem}
\label{lemma-cohomology-of-ell-perp-times-ell-to-the-j}
\begin{enumerate}
\item
\label{lemma-item-vanishing-of-cohomologies-of-ell-perp-times-ell-to-the-j}
$H^i(\ell^{j+1}\otimes\ell^\perp)=0$, if $i>0$, $j\leq 2$, and $(i,j)\neq (1,0)$.
\item
$H^1(\ell\otimes \ell^\perp)$ is one-dimensional.
\item
\label{lemma-item-global-sections-of-ell-perp-times-ell-to-the-j}
$H^0(\ell^{j+1}\otimes \ell^\perp)\cong 
\ker\left[\Sym^{-j-1}V^*\otimes V^*\rightarrow \Sym^{-j}V^*\right],$
for $j\leq -2$, and it vanishes for $j>-2$.
\end{enumerate}
\end{lem}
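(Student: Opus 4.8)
The plan is to compute the cohomology of $\ell^{j+1}\otimes\ell^\perp$ over $\PP V$ by using the two short exact sequences that define $\ell^\perp$ and that relate $\V$, $\ell$ and $\ell^\perp$, together with the standard Bott formula for line bundles $\StructureSheaf{\PP V}(m)$ on $\PP^{2n-1}$. First I would record the exact sequence
\[
0\rightarrow \ell^\perp\rightarrow \V\rightarrow \ell^*\rightarrow 0,
\]
where the right map is contraction with $\sigma$ composed with the inclusion $\ell\hookrightarrow\V$ (an isomorphism onto its image exactly because $\sigma$ is nondegenerate and $\ell$ is a line bundle, so $\ell^\perp$ has the expected corank one). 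Tensoring by the line bundle $\ell^{j+1}=\StructureSheaf{\PP V}(-j-1)$ gives
\[
0\rightarrow \ell^{j+1}\otimes\ell^\perp\rightarrow V\otimes\StructureSheaf{\PP V}(-j-1)\rightarrow \StructureSheaf{\PP V}(-j)\rightarrow 0.
\]
Then I would take the long exact sequence in cohomology and plug in Bott vanishing: on $\PP^{2n-1}$ with $2n-1\geq 3$, the line bundle $\StructureSheaf{}(m)$ has cohomology only in degree $0$ (for $m\geq 0$) or degree $2n-1$ (for $m\leq -2n$), and nothing in the intermediate range. Since the twists appearing are $-j-1$ and $-j$ with $j\leq 2$, all exponents lie in the range $(-3, \infty)$, well above $-2n$ for $n\geq 2$, so only $H^0$ and the connecting map matter.

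For part (\ref{lemma-item-vanishing-of-cohomologies-of-ell-perp-times-ell-to-the-j}), the long exact sequence reads
\[
H^{i-1}(\StructureSheaf{\PP V}(-j))\rightarrow H^i(\ell^{j+1}\otimes\ell^\perp)\rightarrow H^i(V\otimes\StructureSheaf{\PP V}(-j-1))\rightarrow H^i(\StructureSheaf{\PP V}(-j)),
\]
and for $i\geq 2$ (and also $i=1$ when $j\neq 0$) both flanking groups vanish by Bott, giving the stated vanishing of $H^i$. For part (2), when $j=0$ the relevant piece is
\[
H^0(V\otimes\StructureSheaf{\PP V}(-1))\rightarrow H^0(\StructureSheaf{\PP V})\rightarrow H^1(\ell\otimes\ell^\perp)\rightarrow H^1(V\otimes\StructureSheaf{\PP V}(-1));
\]
the last term vanishes, $H^0(\StructureSheaf{\PP V})\cong\ComplexNumbers$, and $H^0(V\otimes\StructureSheaf{\PP V}(-1))=V\otimes H^0(\StructureSheaf{}(-1))=0$, so $H^1(\ell\otimes\ell^\perp)\cong\ComplexNumbers$ is one-dimensional. (One should check the connecting map is nonzero, equivalently that the extension class of $0\to\ell^\perp\to\V\to\ell^*\to 0$ twisted by $\ell$ is nontrivial; this follows because $\V$ has no subbundle isomorphic to $\ell^\perp\oplus\ell$, or one simply notes the map $H^0(\V)=V\to H^0(\ell^*)$, which is $v\mapsto\sigma(v,-)|_\ell$ under the natural identification, is surjective onto the one-dimensional stalk generically but the global section $H^0(\StructureSheaf{})$ is not in the image, forcing the connecting map to be an isomorphism onto $H^1(\ell\otimes\ell^\perp)$.)

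For part (\ref{lemma-item-global-sections-of-ell-perp-times-ell-to-the-j}), taking $H^0$ of the twisted sequence gives the left exact sequence
\[
0\rightarrow H^0(\ell^{j+1}\otimes\ell^\perp)\rightarrow V\otimes H^0(\StructureSheaf{\PP V}(-j-1))\rightarrow H^0(\StructureSheaf{\PP V}(-j)),
\]
i.e. $0\rightarrow H^0(\ell^{j+1}\otimes\ell^\perp)\rightarrow V\otimes\Sym^{-j-1}V^*\rightarrow \Sym^{-j}V^*$; here I am using $H^0(\StructureSheaf{\PP V}(m))=\Sym^m V^*$, so the twists must be nonnegative, which is exactly $j\leq -1$ for the middle term and $j\leq 0$ for the right term. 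For $j\leq -2$ both are genuine, and one identifies the right-hand map with the natural multiplication/contraction $V\otimes\Sym^{-j-1}V^*\rightarrow \Sym^{-j}V^*$ dual to the inclusion $\Sym^{-j}V^*\hookrightarrow V^*\otimes\Sym^{-j-1}V^*$; after transposing (using that $V\cong V^*$ via $\sigma$, or simply dualizing) the kernel is identified with $\ker[\Sym^{-j-1}V^*\otimes V^*\rightarrow\Sym^{-j}V^*]$ as asserted. For $j=-1$ the middle term is $V\otimes\ComplexNumbers=V$ and the right term $\Sym^1 V^*=V^*$, and the map $V\to V^*$ is the isomorphism induced by $\sigma$, so its kernel is zero; for $j\geq 0$ the middle term $H^0(\StructureSheaf{}(-j-1))$ vanishes, so $H^0(\ell^{j+1}\otimes\ell^\perp)=0$. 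This covers the stated vanishing for $j>-2$.

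\textbf{Main obstacle.} The only genuinely non-formal point is pinning down the connecting homomorphism in part (2): Bott vanishing reduces everything to a single $1\times 1$ situation, but one must argue that the map $H^0(\StructureSheaf{\PP V})\rightarrow H^1(\ell\otimes\ell^\perp)$ is an isomorphism rather than zero, which amounts to verifying that the Euler-type sequence defining $\ell^\perp$ does not split after the relevant twist. I expect this to follow quickly from the nondegeneracy of $\sigma$ together with the identification of the extension class, but it is the step that requires a genuine argument rather than invocation of Bott's formula.
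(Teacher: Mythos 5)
Your route is the same as the paper's: the paper uses the short exact sequence $0\rightarrow \ell^{j+1}\otimes\ell^\perp\rightarrow \ell^{j+1}\otimes\V^*\rightarrow\ell^{j}\rightarrow 0$ (embedding $\ell^\perp$ in $\V^*$ via $\sigma$), which is your sequence after the identification $\V\cong\V^*$ induced by $\sigma$, followed by the same reading of the long exact sequence. There is, however, a genuine error in your part (1) in the range $j\leq -1$. You assert that for $i=1$ and $j\neq 0$ ``both flanking groups vanish by Bott''; the flanking groups are $H^{0}(\StructureSheaf{\PP{V}}(-j))$ and $H^{1}(V\otimes\StructureSheaf{\PP{V}}(-j-1))$, and the first equals $\Sym^{-j}V^*\neq 0$ whenever $j\leq -1$. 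So Bott vanishing alone does not give $H^1(\ell^{j+1}\otimes\ell^\perp)=0$ for negative $j$; one must also show that the quotient map is surjective on global sections, i.e.\ that the multiplication map $V\otimes \Sym^{-j-1}V^*\rightarrow \Sym^{-j}V^*$ is surjective, which kills the connecting homomorphism into $H^1$. This is exactly the extra point the paper records (``$\alpha$ induces a surjective homomorphism on global sections, except when $j=0$''), and the negative-$j$ cases cannot be discarded: they are precisely the ones invoked later in computing $H^i(\ell^\perp\otimes\ell^\perp\otimes\ell^j)$ and $H^i(W\otimes\ell^{j+1})$. The repair is one line, and you essentially have the ingredient already, since in part (3) you identify this map as the natural multiplication.

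Conversely, the step you single out as the ``main obstacle'' is not one. In part (2) the relevant segment of the long exact sequence is $0=H^0(V\otimes\StructureSheaf{\PP{V}}(-1))\rightarrow H^0(\StructureSheaf{\PP{V}})\rightarrow H^1(\ell\otimes\ell^\perp)\rightarrow H^1(V\otimes\StructureSheaf{\PP{V}}(-1))=0$, so exactness forces the connecting map to be an isomorphism; no analysis of the extension class or of splitting is needed. Parts (2) and (3) are otherwise correct, modulo the harmless use of $\sigma$ to identify $V$ with $V^*$ so as to match the stated form $\ker\left[\Sym^{-j-1}V^*\otimes V^*\rightarrow\Sym^{-j}V^*\right]$.
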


\begin{proof}
Consider the short exact sequence
$0\rightarrow \ell^{j+1}\otimes \ell^\perp\rightarrow \ell^{j+1}\otimes \V^*\RightArrowOf{\alpha} \ell^j\rightarrow 0.$,
where we embed $\ell^\perp$ as a sub-bundle of $\V^*$ via $\sigma$.
The middle and right terms do not have higher cohomology for $j\not\in \{2n-1,2n\}$, and in particular for $j\leq 2$.
The homomorphism
$\alpha$ induces a surjective homomorphism on global sections, except when $j=0$.
\end{proof}

\begin{lem}
\label{lemma-cohomology-of-ell-perp-squared}
\begin{enumerate}
\item
\label{lemma-item-higher-cohomology-of-ell-perp-times-ell-perp-times-ell-to-the-j}
$H^i(\ell^\perp\otimes\ell^\perp\otimes\ell^j)=0,$ for $j\leq 0$, for all $i>0$, except when $(i,j)=(1,0)$.
$H^1(\ell^\perp\otimes\ell^\perp)\cong\Wedge{2}V^*$. 
\item
\label{lemma-item-global-sections-of--ell-perp-times-ell-perp-times-ell-to-the-j}
$H^0(\ell^\perp\otimes\ell^\perp\otimes\ell^j)=0,$ for $j\geq -1$.
\end{enumerate}
\end{lem}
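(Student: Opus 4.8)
The plan is to compute the cohomology of $\ell^\perp\otimes\ell^\perp\otimes\ell^j$ over $\PP V$ by resolving the subbundle $\ell^\perp\subset\V^*$ (using $\sigma$ to identify $\ell^\perp$ with the annihilator of $\ell$ inside $\V^*$). First I would twist the defining short exact sequence
\[
0\rightarrow \ell^\perp\rightarrow \V^*\RightArrowOf{\sigma} \ell^{-1}\rightarrow 0
\]
by $\ell^\perp\otimes\ell^j$ to obtain
\[
0\rightarrow \ell^\perp\otimes\ell^\perp\otimes\ell^j\rightarrow \ell^\perp\otimes\V^*\otimes\ell^j\rightarrow \ell^\perp\otimes\ell^{j-1}\rightarrow 0.
\]
The right-hand term $\ell^\perp\otimes\ell^{j-1}=\ell^{(j-2)+1}\otimes\ell^\perp$ has its cohomology already tabulated in Lemma \ref{lemma-cohomology-of-ell-perp-times-ell-to-the-j}, and the middle term $\ell^\perp\otimes\ell^j\otimes\V^*$ is a direct sum of $2n$ copies of $\ell^{(j-1)+1}\otimes\ell^\perp$, whose cohomology is again given by that lemma. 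So the long exact sequence in cohomology immediately reduces both parts of the present lemma to Lemma \ref{lemma-cohomology-of-ell-perp-times-ell-to-the-j}, together with identification of the connecting maps.

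For part (\ref{lemma-item-higher-cohomology-of-ell-perp-times-ell-perp-times-ell-to-the-j}): for $j\le 0$ the terms $\ell^{(j-2)+1}\otimes\ell^\perp$ and $\ell^{(j-1)+1}\otimes\ell^\perp$ have exponent $\le 2$ in the sense of Lemma \ref{lemma-cohomology-of-ell-perp-times-ell-to-the-j}(\ref{lemma-item-vanishing-of-cohomologies-of-ell-perp-times-ell-to-the-j}), so their $H^i$ vanish for $i>0$ except possibly $H^1$ of $\ell\otimes\ell^\perp$, which occurs exactly when $j-1=0$ (for the right term) or $j-1=0$ i.e.\ $j=1$ — but $j\le 0$ excludes $j=1$, and the right term contributes its potential $H^1$ when $j-2=0$, i.e.\ $j=2$, also excluded. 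Hence for $j<0$ all higher cohomology of the middle and right terms vanishes and one reads off $H^i(\ell^\perp\otimes\ell^\perp\otimes\ell^j)=0$ for $i>0$. For $j=0$, the right term is $\ell^{-1}\otimes\ell^\perp$, whose cohomology vanishes entirely (exponent $-1>-2$ for $H^0$, exponent $\le 2$ with $(i,j)\ne(1,0)$ for $H^i$), while the middle term $\ell^\perp\otimes\V^*$ contributes $H^1(\ell\otimes\ell^\perp)\otimes V^*$, which is one-dimensional tensored with $V^*$, i.e.\ of dimension $2n$; but its $H^0$ also vanishes. The boundary map $H^0(\ell^\perp\otimes\ell^{-1})\rightarrow H^1(\ell^\perp\otimes\ell^\perp)$ has vanishing source, so $H^1(\ell^\perp\otimes\ell^\perp)$ injects into $H^1(\ell^\perp\otimes\V^*)\cong V^*\otimes H^1(\ell\otimes\ell^\perp)\cong V^*$, and then maps onward to $H^1(\ell^\perp\otimes\ell^{-1})=0$; so $H^1(\ell^\perp\otimes\ell^\perp)$ is a subspace of $V^*$, but I expect it to be all of an appropriate summand. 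To pin it down as $\Wedge{2}V^*$ I would instead use the Koszul-type resolution of $\ell^\perp$ more carefully, or note that $\ell^\perp\otimes\ell^\perp$ surjects onto $\Wedge{2}\ell^\perp$ and its symmetric part, and track cohomology through $0\to\Wedge{2}\ell^\perp\to\ell^\perp\otimes\ell^\perp\to\Sym^2\ell^\perp\to 0$; computing $H^1$ of each factor via the same technique and using $\Wedge{2}\V^*=\Wedge{2}V^*\otimes\StructureSheaf{}$ gives the claimed identification $H^1(\ell^\perp\otimes\ell^\perp)\cong\Wedge{2}V^*$. This identification step — getting the precise module structure rather than just a dimension count — is the main obstacle; it requires keeping careful track of which subquotient of $V^*\otimes V^*$ survives, and I expect to do it by comparing with the Borel–Weil–Bott description of cohomology of homogeneous bundles on $\PP V$.

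For part (\ref{lemma-item-global-sections-of--ell-perp-times-ell-perp-times-ell-to-the-j}): when $j\ge -1$ the right term $\ell^\perp\otimes\ell^{j-1}=\ell^{(j-2)+1}\otimes\ell^\perp$ has $j-2\ge -3$, and by Lemma \ref{lemma-cohomology-of-ell-perp-times-ell-to-the-j}(\ref{lemma-item-global-sections-of-ell-perp-times-ell-to-the-j}) its $H^0$ vanishes whenever the relevant exponent $j-2>-2$, i.e.\ $j>0$; for $j=0$ and $j=-1$ the $H^0$ of $\ell^{-1}\otimes\ell^\perp$ resp.\ $\ell^{-2}\otimes\ell^\perp$ are $0$ resp.\ possibly nonzero, so I need a small extra argument in the edge case $j=-1$. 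Similarly the middle term $\ell^\perp\otimes\V^*\otimes\ell^j$ is $2n$ copies of $\ell^{(j-1)+1}\otimes\ell^\perp$, with $H^0$ vanishing for $j-1>-2$, i.e.\ $j>-1$; for $j=-1$ one gets $2n$ copies of $H^0(\ell^{-1}\otimes\ell^\perp)=0$. Combining, the left-exact sequence $0\to H^0(\ell^\perp\otimes\ell^\perp\otimes\ell^j)\to H^0(\ell^\perp\otimes\V^*\otimes\ell^j)$ forces the global sections of $\ell^\perp\otimes\ell^\perp\otimes\ell^j$ to vanish for all $j\ge -1$, since the second term vanishes (for $j\ge 0$ directly, for $j=-1$ by the computation just made). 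This completes the proof modulo the bookkeeping, and routine verification that the connecting homomorphisms behave as claimed in the boundary cases $j=0,-1$.
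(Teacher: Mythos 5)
Your short exact sequence is exactly the one the paper uses, but the cohomology bookkeeping goes wrong at the one place where it matters, and the error propagates into both parts. The culprit is the repeated claim that $H^0(\ell^\perp\otimes\ell^{-1})=0$. Writing $\ell^\perp\otimes\ell^{-1}=\ell^{j+1}\otimes\ell^\perp$ with $j=-2$, Lemma \ref{lemma-cohomology-of-ell-perp-times-ell-to-the-j}(\ref{lemma-item-global-sections-of-ell-perp-times-ell-to-the-j}) gives $H^0\cong\ker\left[V^*\otimes V^*\rightarrow\Sym^2V^*\right]=\Wedge{2}V^*$, which is nonzero; the vanishing in that lemma holds only for $j>-2$. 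This wrecks your identification of $H^1(\ell^\perp\otimes\ell^\perp)$: the boundary map $H^0(\ell^\perp\otimes\ell^{-1})\rightarrow H^1(\ell^\perp\otimes\ell^\perp)$ does not have vanishing source. On the contrary, since $H^0(\V^*\otimes\ell^\perp)=V^*\otimes H^0(\ell^\perp)=0$ and $H^1(\V^*\otimes\ell^\perp)=V^*\otimes H^1(\ell^\perp)=0$ (the middle term carries no $\ell$ factor, so your claim $H^1(\ell^\perp\otimes\V^*)\cong V^*\otimes H^1(\ell\otimes\ell^\perp)$ is also wrong), that boundary map is an isomorphism; this is precisely how the paper obtains $H^1(\ell^\perp\otimes\ell^\perp)\cong\Wedge{2}V^*$, with no Borel--Weil--Bott detour or $\Wedge{2}\oplus\Sym^2$ decomposition needed. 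The same false vanishing undermines your $j=-1$ case of part (\ref{lemma-item-global-sections-of--ell-perp-times-ell-perp-times-ell-to-the-j}): there $H^0(\V^*\otimes\ell^\perp\otimes\ell^{-1})=V^*\otimes\Wedge{2}V^*\neq 0$, so left-exactness alone gives nothing; one must instead show that $a$ induces an injection on global sections, which the paper does by comparing the explicit descriptions of both sides coming from Lemma \ref{lemma-cohomology-of-ell-perp-times-ell-to-the-j}(\ref{lemma-item-global-sections-of-ell-perp-times-ell-to-the-j}).

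There is a second gap in part (\ref{lemma-item-higher-cohomology-of-ell-perp-times-ell-perp-times-ell-to-the-j}): for $j<0$ you ``read off'' the vanishing of $H^1(\ell^\perp\otimes\ell^\perp\otimes\ell^j)$ from the vanishing of the higher cohomology of the middle and right terms, but the long exact sequence only yields
$H^1(\ell^\perp\otimes\ell^\perp\otimes\ell^j)\cong\coker\left[H^0(\V^*\otimes\ell^\perp\otimes\ell^j)\rightarrow H^0(\ell^\perp\otimes\ell^{j-1})\right]$,
and the target is a nonzero kernel of a symmetrization map for every $j\leq 0$. The real content of this case is therefore the surjectivity of $a$ on global sections for $j\leq -1$, which the paper proves by observing that both spaces of global sections are images of $\Sym^{\bullet}V^*\otimes\Wedge{2}V^*$ and that the map between them is induced by multiplication by linear forms. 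Without that step the vanishing of $H^1$ for $j<0$ is unproved.
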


\begin{proof}
(\ref{lemma-item-higher-cohomology-of-ell-perp-times-ell-perp-times-ell-to-the-j}) 
Consider the short exact sequence
\[
0\rightarrow \ell^\perp\otimes\ell^\perp\otimes\ell^j\rightarrow
\V^*\otimes \ell^\perp\otimes\ell^j\RightArrowOf{a} \ell^\perp\otimes \ell^{j-1}\rightarrow 0.
\]
The higher cohomologies of the middle and right terms vanish for $j\leq 0$, 
by Lemma \ref{lemma-cohomology-of-ell-perp-times-ell-to-the-j} 
(\ref{lemma-item-vanishing-of-cohomologies-of-ell-perp-times-ell-to-the-j}).
When $j=0$, $H^0(\V\otimes \ell^\perp\otimes\ell^j)=0$ and 
$H^0(\ell^\perp\otimes\ell^{j-1})\cong \Wedge{2}V^*$, 
by Lemma \ref{lemma-cohomology-of-ell-perp-times-ell-to-the-j} 
(\ref{lemma-item-global-sections-of-ell-perp-times-ell-to-the-j}).
When $j\leq 0$, $H^0(\ell^\perp\otimes \ell^{j-1})$
is equal to the image of $\Sym^{-j}(V^*)\otimes \Wedge{2}V^*$ in $\Sym^{-j+2}V^*$,
by Lemma \ref{lemma-cohomology-of-ell-perp-times-ell-to-the-j} 
(\ref{lemma-item-global-sections-of-ell-perp-times-ell-to-the-j}). 
Similarly, $H^0(\V^*\otimes\ell^\perp\otimes\ell^j)$ is the image of 
$V^*\otimes \Sym^{-j-1}(V^*)\otimes \Wedge{2}V^*$ in $V^*\otimes \Sym^{-j+1}V^*$.
Thus, 
the homomorphism $a$ induces a surjective homomorphism on global sections, 
for $j\leq -1$. 

(\ref{lemma-item-global-sections-of--ell-perp-times-ell-perp-times-ell-to-the-j})
The vanishing is clear for $j\geq 0$. When $j=-1$, the homomorphism $a$ induces an isomorphism
on global sections, by  Lemma \ref{lemma-cohomology-of-ell-perp-times-ell-to-the-j} 
(\ref{lemma-item-global-sections-of-ell-perp-times-ell-to-the-j}).
\end{proof}

\begin{lem}
\label{lemma-cohomologies-of-end-ell-perp-mod-ell}
\begin{enumerate}
\item
\label{lemma-item-vanishing-of-global-sections-of-traceless-endomorphisms-for-j-less-than-2}
$H^0(\PP{V},\SheafEnd_0(W)\otimes \StructureSheaf{\PP{V}}(j))=0$, for $j<2$.
\item
\label{lemma-item-higher-cohomologies-for-j-equal-0}
$H^0(\PP{V},\SheafEnd(W))$ is one dimensional, 
$H^1(\PP{V},\SheafEnd(W))\cong \Wedge{2}V^*/\sigma$, 
$H^2(\PP{V},\SheafEnd(W))$ vanishes, and the pair $(\PP{V},W)$ is infinitesimally rigid.
\item
\label{lemma-item-End-W-twisted-by-2-is-globally-generated}
The sheaf 
$\SheafEnd_0(W)\otimes \StructureSheaf{\PP{V}}(2)$ is generated by its global sections.
\item
\label{lemma-item-higher-cohomologies-vanish-for-j-equal-minus-2}
$H^i(\PP{V},\SheafEnd_0(W)\otimes \StructureSheaf{\PP{V}}(j))=0$, for $i>0$ and for $j\geq 1$.
\item
\label{lemma-item-cohomologies-of-W-tensor-lb}
$H^i(W\otimes \StructureSheaf{\PP{V}}(j))$ vanishes, for $i>0$ and $j\geq 0$. 
$W\otimes \StructureSheaf{\PP{V}}(1)$ is generated by global sections.
\end{enumerate}
\end{lem}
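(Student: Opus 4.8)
The plan is to reduce everything to the cohomology of the line bundles on $\PP{V}=\PP^{2n-1}$ and of the tautological sequences, using the two preceding Lemmas \ref{lemma-cohomology-of-ell-perp-times-ell-to-the-j} and \ref{lemma-cohomology-of-ell-perp-squared} as input. The key is to express $\SheafEnd(W)$ and $\SheafEnd_0(W)$ in terms of $\ell$, $\ell^\perp$ and $\V$. First I would note $W=\ell^\perp/\ell$ is self-dual via $\sigma$, so $W^*\cong W$ and $\SheafEnd(W)\cong W\otimes W\cong \Sym^2 W\oplus \Wedge{2}W$, where the trivial summand $\StructureSheaf{\PP{V}}$ sitting inside $\Wedge{2}W$ (the image of $\sigma$) splits off, giving $\SheafEnd_0(W)\oplus\StructureSheaf{\PP{V}}\cong\SheafEnd(W)$. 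To compute the cohomology of $W\otimes W\otimes\StructureSheaf{}(j)$ I would use the two short exact sequences
\[
0\rightarrow \ell\otimes\ell^\perp\rightarrow \ell^\perp\otimes\ell^\perp\rightarrow W\otimes\ell^\perp\rightarrow 0,
\qquad
0\rightarrow \ell\otimes W\rightarrow \ell^\perp\otimes W\rightarrow W\otimes W\rightarrow 0,
\]
and, to handle the $\ell\otimes W$ and $W\otimes\ell^\perp$ terms, the further sequences obtained by tensoring $0\rightarrow \ell\rightarrow\ell^\perp\rightarrow W\rightarrow 0$ by $\ell$ and by $\ell^\perp$. All the terms that then appear are of the form $\ell^a\otimes\ell^\perp\otimes\ell^\perp\otimes\StructureSheaf{}(j)$, $\ell^a\otimes\ell^\perp\otimes\StructureSheaf{}(j)$ (using $\StructureSheaf{}(1)\cong\ell^{-1}$) or line bundles, all covered by Lemmas \ref{lemma-cohomology-of-ell-perp-times-ell-to-the-j} and \ref{lemma-cohomology-of-ell-perp-squared} together with Bott vanishing $H^i(\PP^{2n-1},\StructureSheaf{}(j))$.

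For part (\ref{lemma-item-vanishing-of-global-sections-of-traceless-endomorphisms-for-j-less-than-2}) I would chase $H^0(W\otimes W\otimes\StructureSheaf{}(j))$ for $j<2$ down to $H^0$ of the bottom-level line bundles and the $H^0$ groups in Lemma \ref{lemma-cohomology-of-ell-perp-times-ell-to-the-j}(\ref{lemma-item-global-sections-of-ell-perp-times-ell-to-the-j}) and Lemma \ref{lemma-cohomology-of-ell-perp-squared}(\ref{lemma-item-global-sections-of--ell-perp-times-ell-perp-times-ell-to-the-j}), checking that the surviving global sections come entirely from the trace summand $\StructureSheaf{}(j)$, which has no sections for $j<0$ and is the constants for $j=0,1$ only if $j\le 0$; a short bookkeeping with the connecting maps then kills the traceless part for $j<2$. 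For part (\ref{lemma-item-higher-cohomologies-for-j-equal-0}), the case $j=0$: $H^0(\SheafEnd(W))$ is one-dimensional because $W$ is a stable bundle (it is a homogeneous irreducible $Sp(V)$-bundle), so $\SheafEnd_0(W)$ has no global sections, consistent with (\ref{lemma-item-vanishing-of-global-sections-of-traceless-endomorphisms-for-j-less-than-2}); the computation $H^1(\SheafEnd(W))\cong\Wedge{2}V^*/\sigma$ I would get by running the two sequences above at $j=0$ and feeding in $H^1(\ell^\perp\otimes\ell^\perp)\cong\Wedge{2}V^*$ and $H^1(\ell\otimes\ell^\perp)$ one-dimensional from Lemma \ref{lemma-cohomology-of-ell-perp-times-ell-to-the-j}(\ref{lemma-item-vanishing-of-cohomologies-of-ell-perp-times-ell-to-the-j}) and its part (2), tracking that the one-dimensional $H^1(\ell\otimes\ell^\perp)$ maps isomorphically onto the span of $\sigma\in\Wedge{2}V^*$; then $H^2$ vanishes because all the $H^2$ and relevant $H^3$ contributions vanish by the two lemmas. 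Infinitesimal rigidity of $(\PP{V},W)$ is then the statement that $\LieAlg{sl}(V)\rightarrow \Ext^1(W,W)=H^1(\SheafEnd(W))$ is surjective; since the target is $\Wedge{2}V^*/\langle\sigma\rangle$ and $\LieAlg{sp}(V)=\Sym^2 V\subset\LieAlg{sl}(V)$ acts trivially on $W$ while the complement of $\LieAlg{sp}(V)$ in $\LieAlg{sl}(V)$ is exactly $\Wedge{2}V$ (via $\sigma$), which maps onto $\Wedge{2}V^*/\langle\sigma\rangle$, I would conclude the differential is surjective. Parts (\ref{lemma-item-End-W-twisted-by-2-is-globally-generated}), (\ref{lemma-item-higher-cohomologies-vanish-for-j-equal-minus-2}), (\ref{lemma-item-cohomologies-of-W-tensor-lb}) are then routine: for (\ref{lemma-item-higher-cohomologies-vanish-for-j-equal-minus-2}) and the vanishing in (\ref{lemma-item-cohomologies-of-W-tensor-lb}), run the same sequences with $j\ge 1$ (resp. $j\ge 0$) and invoke the higher-cohomology vanishing in the two preceding lemmas plus Bott; for global generation in (\ref{lemma-item-End-W-twisted-by-2-is-globally-generated}) and (\ref{lemma-item-cohomologies-of-W-tensor-lb}), observe $W(1)$ is a quotient of $\ell^\perp(1)\subset\V^*(1)=\V^*\otimes\ell^{-1}$, and $\V^*(1)$, hence $W(1)$, is globally generated off the locus where it could fail — but checking globality at every point reduces to the surjectivity $H^0(\ell^\perp(1))\twoheadrightarrow \ell^\perp(1)\otimes k(x)$, which follows since $\ell^\perp(1)$ is a subbundle of the trivial-twisted $\V^*(1)$ with the right $H^1$-vanishing from Lemma \ref{lemma-cohomology-of-ell-perp-times-ell-to-the-j}; then $\SheafEnd_0(W)(2)$ is a quotient of $W(1)\otimes W(1)$, hence globally generated.

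The main obstacle I expect is the bookkeeping in part (\ref{lemma-item-higher-cohomologies-for-j-equal-0}): pinning down the precise image $\langle\sigma\rangle\subset\Wedge{2}V^*$ of the one-dimensional group $H^1(\ell\otimes\ell^\perp)$ inside $H^1(\ell^\perp\otimes\ell^\perp)\cong\Wedge{2}V^*$, and then identifying $H^1(\SheafEnd(W))$ $Sp(V)$-equivariantly with $\Wedge{2}V^*/\langle\sigma\rangle$, so that the rigidity statement comes out cleanly. This requires being careful about which connecting homomorphisms are zero and which are injective in the four-term spectral-sequence-like chase through the nested short exact sequences; once the $Sp(V)\times\CC^*$-equivariance is used to force all maps to be either zero or isomorphisms on isotypic components, the rest is formal.
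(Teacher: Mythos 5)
Your proposal follows essentially the same route as the paper: the same nested short exact sequences obtained by tensoring $0\rightarrow\ell\rightarrow\ell^\perp\rightarrow W\rightarrow 0$ with $\ell^j$, with $\ell^\perp\otimes\ell^j$ and with $W\otimes\ell^j$, all terms being fed into Lemmas \ref{lemma-cohomology-of-ell-perp-times-ell-to-the-j} and \ref{lemma-cohomology-of-ell-perp-squared}, the same identification of the differential $\LieAlg{sl}(V)\rightarrow H^1(\SheafEnd(W))\cong\Wedge{2}V^*/\sigma$ for rigidity, and the same reduction of part (\ref{lemma-item-End-W-twisted-by-2-is-globally-generated}) to global generation of $W\otimes\StructureSheaf{\PP{V}}(1)$. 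Two local deviations are worth recording. First, for $h^0(\SheafEnd(W))=1$ you invoke stability of $W$ as an irreducible homogeneous $Sp(V)$-bundle, whereas the paper extracts simplicity directly from the cohomology chase (the injection $H^0(W\otimes W)\hookrightarrow H^1(W\otimes\ell)$ into a one-dimensional space) and only afterwards, in Lemma \ref{lemma-W-is-slope-stable}, deduces stability \emph{from} simplicity; your order is not circular provided you prove stability by an independent route (Ramanan's theorem on irreducible homogeneous bundles, or the polystability-plus-irreducible-fibre argument), but it imports an external result and you still need the full cohomological chase anyway to get $H^1$ and $H^2$ of $\SheafEnd(W)$. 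Second, the paper simply cites \cite[Lemma 4.6]{markman-hodge} for global generation of $W\otimes\ell^{-1}$, while you sketch a direct argument; the clean version is that $H^0(\ell^\perp\otimes\ell^{-1})\cong\Wedge{2}V^*$ and for every $v\neq 0$ and $w\in v^\perp$ there is an alternating form $B$ with $B(v,\cdot)=\sigma(w,\cdot)$, so $\ell^\perp\otimes\ell^{-1}$ and hence its quotient $W\otimes\ell^{-1}$ is globally generated --- your appeal to ``the right $H^1$-vanishing'' does not by itself yield pointwise generation, so make that step explicit. Finally, your insistence in part (\ref{lemma-item-vanishing-of-global-sections-of-traceless-endomorphisms-for-j-less-than-2}) on separating the trace summand before claiming vanishing is the correct bookkeeping, since $H^0(W\otimes W\otimes\ell^{-1})$ contains $H^0(\StructureSheaf{\PP{V}}(1))=V^*$ and only the traceless part vanishes; carry that separation through the chase at $j=1$ rather than leaving it as ``short bookkeeping''.
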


\begin{proof}
Consider the short exact sequence
\begin{equation}
\label{eq-W}
0\rightarrow \ell\rightarrow\ell^\perp \rightarrow W\rightarrow 0.
\end{equation}
Tensoring with $W\otimes\ell^j$ we get the short exact sequence
\begin{equation}
\label{eq-short-exact-for-End-W}
0\rightarrow W\otimes\ell^{j+1}\rightarrow \ell^\perp\otimes W\otimes \ell^j\rightarrow W\otimes W\otimes \ell^j\rightarrow 0.
\end{equation}
Note that $\SheafEnd(W)$ is isomorphic to $W\otimes W$.
We need to compute the zero-th sheaf cohomologies of $W\otimes W\otimes \ell^j$
for $j\geq -2$ and the higher
sheaf cohomology for $j=0$ and $j\leq -2$.
We have the short exact sequences
\begin{equation}
\label{eq-seq-of-ell-j+1-times-W}
0\rightarrow \ell^{j+2}\rightarrow \ell^{j+1}\otimes \ell^\perp\rightarrow W\otimes \ell^{j+1}\rightarrow 0,
\end{equation}
\begin{equation}
\label{eq-short-exact-seq-with-ell-perp-times-W-times-ell-j}
0\rightarrow \ell^\perp\otimes \ell^{j+1}\rightarrow \ell^\perp\otimes\ell^\perp\otimes \ell^j\rightarrow 
\ell^\perp\otimes W\otimes \ell^j\rightarrow 0.
\end{equation}

Part (\ref{lemma-item-higher-cohomologies-vanish-for-j-equal-minus-2}): 
When $j\leq -1$, the higher cohomologies of the left and middle terms in the two sequences above vanish,
by Lemmas \ref{lemma-cohomology-of-ell-perp-times-ell-to-the-j} and 
\ref{lemma-cohomology-of-ell-perp-squared}. Hence, so do the higher cohomologies 
of the right terms. The latter are the left and middle terms of 
the sequence (\ref{eq-short-exact-for-End-W}). Hence, the higher cohomologies
of $W\otimes W\otimes \ell^{j}$ vanish, for $j\leq -1$.

Part (\ref{lemma-item-higher-cohomologies-for-j-equal-0}):
The quotient homomorphism $\ell^\perp\otimes \ell\rightarrow W\otimes \ell$ 
induces an isomorphism $H^i(\ell^\perp\otimes \ell)\cong H^i(W\otimes \ell)$, for all $i$,
as seen from the exact sequence
(\ref{eq-seq-of-ell-j+1-times-W}) and the vanishing of $H^i(\ell^2)$, for all $i$.
Hence, $H^i(W\otimes \ell)$ vanishes, for $i\neq 1$, and $H^1(W\otimes \ell)$ is one dimensional,
by Lemma \ref{lemma-cohomology-of-ell-perp-times-ell-to-the-j}.
$H^0(\ell^\perp\otimes W)$ clealy vanishes. Consider now the long exact sequence of cohomologies 
associated to the sequence 
(\ref{eq-short-exact-for-End-W}) 
when $j=0$.  In that case we have seen that $H^1(W\otimes \ell^{j+1})$ is one-dimensional, and
$H^0(\ell^\perp\otimes W\otimes\ell^j)$ vanishes, and so 
$H^0(W\otimes W)$ is one-dimensional, as it injects into the one-dimensional space $H^1(W\otimes\ell^{j+1})$.

The vector space $H^1(\ell^\perp\otimes W)$ is isomorphic to
$\Wedge{2}V^*/\ComplexNumbers\sigma$, by the computation of the first sheaf cohomologies
of the left and middle terms in the short exact sequence
(\ref{eq-short-exact-seq-with-ell-perp-times-W-times-ell-j}) 
in Lemmas \ref{lemma-cohomology-of-ell-perp-times-ell-to-the-j} and 
\ref{lemma-cohomology-of-ell-perp-squared}. $H^1(\ell^\perp\otimes W)\rightarrow H^1(W\otimes W)$
is an isomorphism, using the long exact cohomology sequence associated to the 
short exact sequence (\ref{eq-short-exact-for-End-W}), the vanishing of 
$H^2(\ell\otimes W)$, and the established fact that $H^0(W\otimes W)\rightarrow H^1(\ell\otimes W)$
is an isomorphism. We conclude that $H^1(W\otimes W)$ is isomorphic to $\Wedge{2}V^*/\ComplexNumbers\sigma$.

The diffrential $\LieAlg{sl}(V)\rightarrow H^1(\SheafEnd(W))$ of the pullback action of $\Aut(\PP{V})$
is identified as the composition
\[
\LieAlg{sl}(V)\rightarrow V^*\otimes V \LongRightArrowOf{\sigma\otimes 1}V^*\otimes V^* \rightarrow 
\Wedge{2}V^*\rightarrow \Wedge{2}V^*/\ComplexNumbers \sigma\cong H^1(\SheafEnd(W)),
\]
where the right isomorphism is the one constructed above.
The differential is thus surjective. Hence, the pair $(\PP{V},W)$ is infinitesimally rigid.

It remains to prove the vanishing of $H^2(W\otimes W)$. The homomorphism 
$H^2(\ell^\perp\otimes W)\rightarrow H^2(W\otimes W)$ is bijective, by the vanishing of 
$H^i(\ell\otimes W)$ established above for $i\geq 2$. In turn, 
$H^2(\ell^\perp\otimes W)$ is isomorphic to $H^2(\ell^\perp\otimes\ell^\perp)$, 
since $H^i(\ell^\perp\otimes\ell)$ vanishes for $i\geq 2$, by Lemma 
\ref{lemma-cohomology-of-ell-perp-times-ell-to-the-j}. Finally, $H^2(\ell^\perp\otimes\ell^\perp)$ vanishes, by Lemma
\ref{lemma-cohomology-of-ell-perp-squared}.

Part (\ref{lemma-item-vanishing-of-global-sections-of-traceless-endomorphisms-for-j-less-than-2}): 
The vanishing of $H^0(\SheafEnd_0(W))$ has been established in part 
(\ref{lemma-item-higher-cohomologies-for-j-equal-0}). Hence, the space $H^0(\SheafEnd_0(W)\otimes \ell^j)$
vanishes, for $j\geq 0$. It remains to prove the vanishing of 
$H^0(W\otimes W\otimes \ell^{-1})$.
$H^i(\ell)$ and $H^i(\ell^\perp)$ vanish for all $i$. Hence, $H^i(W)$ vanish for all $i$, by the exactness
of the sequence (\ref{eq-W}).
We get the isomorphism 
$H^0(W\otimes W\otimes \ell^{-1})\cong H^0(\ell^\perp\otimes W\otimes \ell^{-1})$, 
by the exactness of the sequence
(\ref{eq-short-exact-for-End-W}). The vector space $H^0(\ell^\perp\otimes W\otimes \ell^{-1})$
is isomorphic to $H^0(\ell^\perp\otimes\ell^\perp\otimes\ell^{-1})$, by the exactness of sequence 
(\ref{eq-short-exact-seq-with-ell-perp-times-W-times-ell-j}). 
Now $H^0(\ell^\perp\otimes\ell^\perp\otimes\ell^{-1})$ vanishes, by Lemma 
\ref{lemma-cohomology-of-ell-perp-squared} 
(\ref{lemma-item-global-sections-of--ell-perp-times-ell-perp-times-ell-to-the-j}).

Part (\ref{lemma-item-End-W-twisted-by-2-is-globally-generated}): 
It suffices to prove that $W\otimes \ell^{-1}$ is generated by its global sections. 
This was proven in \cite[Lemma 4.6]{markman-hodge}.

Part (\ref{lemma-item-cohomologies-of-W-tensor-lb}): The vanishing was proven in the proof
of part (\ref{lemma-item-higher-cohomologies-vanish-for-j-equal-minus-2}). 
The generation is proven in \cite[Lemma 4.6]{markman-hodge}.
\end{proof}

Denote by $Sp(V,\sigma)$ the subgroup of $GL(V)$ leaving $\sigma$ invariant.

\begin{lem}
\label{lemma-W-is-slope-stable}
\label{lemma-stability-of-ell-perp-over-ell}
The vector bundle $W$ is slope stable.
\end{lem}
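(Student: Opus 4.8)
Here is how I would prove that $W=\ell^\perp/\ell$ is slope-stable over $\PP(V)$.

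\textbf{Setup and strategy.} The bundle $W$ carries a non-degenerate symplectic form $\sigma_W$ induced by $\sigma$, so $W\cong W^*$, and in particular $W$ has slope zero (its determinant is trivial, since $\det(\ell^\perp)=\ell\otimes\det V$ and $\det V$ is trivial under the identification — more simply $c_1(W)=c_1(\ell^\perp)-c_1(\ell)=0$). Hence to prove stability it suffices to show that every coherent subsheaf $0\neq S\subsetneq W$ with $0<\rank(S)<2n-2$ has $c_1(S)<0$, i.e. $\deg(S)\le -1$. Because $\Pic(\PP(V))=\Integers$, subsheaves of a given rank have degree bounded above, so it is enough to rule out $\deg(S)\ge 0$; and passing to the saturation only increases the degree, so I may assume $S$ is saturated, hence a subbundle off a codimension-$\ge 2$ locus, with reflexive quotient. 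The plan is: (i) reduce to showing $H^0(\PP(V),W)=0$ and more generally control $H^0$ of small twists of exterior powers; (ii) use the symplectic self-duality to pair a putative destabilizing subsheaf against its orthogonal; (iii) derive a contradiction from the computation already carried out, namely that $H^0(W\otimes\StructureSheaf{}(j))=0$ for $j\le 0$ (Lemma \ref{lemma-cohomologies-of-end-ell-perp-mod-ell}(\ref{lemma-item-vanishing-of-global-sections-of-traceless-endomorphisms-for-j-less-than-2}) and its proof) and that $H^0(W)=0$.

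\textbf{Main argument.} Suppose $S\subset W$ is saturated of rank $s$, $1\le s\le 2n-3$, with $\deg(S)\ge 0$; twisting, $S(-1)\subset W(-1)$ would then satisfy $\deg(S(-1))\ge -s$, but more usefully I exploit the symplectic structure directly. Let $S^{\perp_\sigma}\subset W$ be the $\sigma_W$-orthogonal subsheaf; it has rank $2n-2-s$ and, since $\sigma_W$ is an isomorphism $W\to W^*$, one gets $\deg(S^{\perp_\sigma})=-\deg(S/(S\cap S^{\perp_\sigma})) \le -\deg(S)+\deg(S\cap S^{\perp_\sigma})$, and $S\cap S^{\perp_\sigma}$ is isotropic. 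The clean case is $\deg(S)\ge 0$ with $S$ generically contained in $S^{\perp_\sigma}$ (isotropic) or generically transverse; in either case one produces, after twisting down by $1$ and using that $W(-1)$ is globally generated (Lemma \ref{lemma-cohomologies-of-end-ell-perp-mod-ell}(\ref{lemma-item-End-W-twisted-by-2-is-globally-generated}), via $W\otimes\ell^{-1}$ globally generated), a non-zero section of $W(-1)$ or of $W$, contradicting the vanishing in Lemma \ref{lemma-cohomology-of-ell-perp-times-ell-to-the-j} and Lemma \ref{lemma-cohomologies-of-end-ell-perp-mod-ell}(\ref{lemma-item-vanishing-of-global-sections-of-traceless-endomorphisms-for-j-less-than-2}). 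Concretely: if $\deg(S)\ge 0$ then $\SheafHom(W/S,\StructureSheaf{})$ would have a section of $W^*\cong W$ supported in the right twist range; chasing the exact sequence $0\to S\to W\to W/S\to 0$ together with $H^0(W)=0$ forces $\Hom(W/S,\StructureSheaf{})\neq 0$, and dualizing inside $W\cong W^*$ gives a surjection $W\twoheadrightarrow S^*$ whose kernel, a twist of which would be globally generated, again produces the forbidden section. The only alternative to reaching such a contradiction is $\deg(S)\le -1$ for all such $S$, which is exactly stability.

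\textbf{Cleaner route and the main obstacle.} Actually the most economical proof mimics \cite{markman-hodge}: show directly that $H^0(\PP(V),\SheafEnd_0(W))$ is $0$-dimensional (done, Lemma \ref{lemma-cohomologies-of-end-ell-perp-mod-ell}(\ref{lemma-item-higher-cohomologies-for-j-equal-0})), deduce $W$ is \emph{simple}, and then upgrade simplicity to stability using the self-duality $W\cong W^*$: a symplectic self-dual bundle is stable iff it has no isotropic destabilizing subsheaf, and any isotropic subsheaf $S$ of slope $\ge 0$ would give, via $W/S^{\perp_\sigma}\cong S^*$, a splitting-type contradiction with $H^0(\SheafEnd_0(W))=0$ and the vanishings $H^0(W\otimes\StructureSheaf{}(j))=0$ for $j\le 0$. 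I would carry out: Step 1, record $W\cong W^*$, $c_1(W)=0$; Step 2, reduce stability to the non-existence of a saturated isotropic $S$ with $\deg S\ge 0$; Step 3, for such $S$ consider $0\to S\to W\to W/S\to 0$, dualize using $\sigma_W$ to get $0\to S^{\perp_\sigma}\to W\to S^*\to 0$, and note $\deg S^{\perp_\sigma}=-\deg S\le 0$; Step 4, combine with global generation of $W(-1)$ and $H^0(W)=0$ to force $S^{\perp_\sigma}$, hence $S$, to be zero or all of $W$ — contradiction. The main obstacle is Step 4: pinning down exactly which cohomology vanishing is needed for a destabilizing subsheaf of arbitrary intermediate rank, rather than just rank one or $2n-3$. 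I expect this to be handled by an induction on rank, passing to $\Wedge{s}S\subset\Wedge{s}W$ and using that the relevant twists of $\Wedge{s}W$ are built from $\ell$, $\ell^\perp$, and $W$ via the Koszul filtration, so that all the needed $H^0$-vanishings reduce to the three computations already in Lemmas \ref{lemma-cohomology-of-ell-perp-times-ell-to-the-j}, \ref{lemma-cohomology-of-ell-perp-squared}, and \ref{lemma-cohomologies-of-end-ell-perp-mod-ell}.
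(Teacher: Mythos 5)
Your proposal does not close. The reduction to showing that every saturated subsheaf $S\subsetneq W$ of intermediate rank has negative degree is fine, and the extreme cases do follow from the stated vanishings (a rank-one $S$ of degree $d\geq 0$ gives a non-zero section of $W\otimes\StructureSheaf{\PP{V}}(-d)$, and a corank-one $S$ gives one of $W^*\otimes\StructureSheaf{\PP{V}}(-d)\cong W\otimes\StructureSheaf{\PP{V}}(-d)$, contradicting $H^0(W\otimes\StructureSheaf{\PP{V}}(j))=0$ for $j\leq 0$). But for $2\leq \rank(S)\leq 2n-4$ the step that is supposed to produce the contradiction is never carried out: the claim that such an $S$ ``forces $\Hom(W/S,\StructureSheaf{\PP{V}})\neq 0$'' or ``produces a forbidden section of $W$ or $W(-1)$'' is unsubstantiated, and you flag this yourself as the main obstacle. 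The repair you propose -- pass to $\Wedge{s}S\subset\Wedge{s}W$ and prove $H^0(\Wedge{s}W\otimes\StructureSheaf{\PP{V}}(j))=0$ for $j\leq 0$ -- cannot work as stated, because that vanishing is \emph{false} for $s=2$, $j=0$: the induced non-degenerate pairing $\sigma_W$ on $W=\ell^\perp/\ell$ is a nowhere-vanishing global section of $\Wedge{2}W^*\cong\Wedge{2}W$, and the same problem recurs for every even exterior power. One would then have to show that the section attached to a destabilizing $S$ is not a multiple of a power of $\sigma_W$, which is the original difficulty in disguise. A further soft spot: the reduction ``stable iff no isotropic destabilizing subsheaf'' (via $S\to W\cong W^*\to S^*$ applied to the maximal destabilizing subsheaf) only excludes subsheaves of strictly positive slope, so it says nothing about the degree-zero case, which is exactly the borderline you must rule out to get stability rather than semistability.

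The paper's proof is shorter and avoids all of this with an ingredient you are missing: it lifts a saturated $S\subset W$ of degree $d$ to its preimage $\widetilde{S}\subset\ell^\perp$, which has degree $d-1$ and is a saturated subsheaf of the trivial bundle $\V$; such a subsheaf has degree $\leq 0$, with equality only if it is a trivial subbundle, and $H^0(\ell^\perp)=0$ excludes that, so $d-1<0$ and hence $d\leq 0$ for subsheaves of \emph{every} rank simultaneously. Stability is then obtained not by further cohomology computations but by homogeneity: $W$ is $Sp(V,\sigma)$-homogeneous and the stabilizer of a point of $\PP{V}$ acts transitively on the non-zero vectors of the fiber, so the maximal polystable subsheaf is all of $W$; polystability together with $\dim\End(W)=1$ (Lemma \ref{lemma-cohomologies-of-end-ell-perp-mod-ell}) gives stability. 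If you want to salvage your route, import the lifting-to-$\ell^\perp$ step; the symplectic self-duality alone is not enough.
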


\begin{proof}
Given a subsheaf $F$ of $W$, denote by $\widetilde{F}$ its inverse image
subsheaf in $\ell^\perp$. Clearly, the degree of $W$ is zero, and 
$\deg(\ell^\perp)=-1$. Let $F$ be a saturated non-zero subsheaf of $W$ of degree $d$ and rank $<2n-2$.
Then $\deg(\widetilde{F})=d-1$. The sheaf $\widetilde{F}$ is a saturated subsheaf of the 
trivial bundle $\V$. Hence, $d-1\leq 0$, and $d-1=0$, if and only if
$\widetilde{F}$ is a trivial bundle. Now $H^0(\widetilde{F})$ vanishes, since so does $H^0(\ell^\perp)$.
Hence, $\widetilde{F}$  is non-trivial and $d-1<0$. We conclude that $d\leq 0$ and $W$ is semi-stable.

The sub-bundle $\ell$ and the symplectic form $\sigma$ are invariant with respect to 
the $Sp(V,\sigma)$ action on $\V$, which corresponds to the diagonal action on $\PP{V}\times V$.
Hence, so is $\ell^\perp$, and 
the bundle $W$ is homogeneous with respect to an induced $Sp(V,\sigma)$-action.
Let $F\subset W$ be the maximal polystable subsheaf \cite[Lemma 1.5.5]{huybrechts-lehn-book}.
Then $F$ is an $Sp(V,\sigma)$-invariant subsheaf. Given a point $x\in \PP{V}$, the stabilizer of 
$x$ in $Sp(V,\sigma)$ acts transitively on the set of non-zero vectors in the fiber of $W$ over $x$. 
Hence, $F=W$ and $W$ is poly-stable. 
Finally, $\End(W)$ is one-dimensional, by Lemma \ref{lemma-cohomologies-of-end-ell-perp-mod-ell},
and so $W$ is stable.
\end{proof}

Let $M$ be the coarse moduli space of slope stable vector bundles over $\PP(V)$ of rank $\dim(V)-2$
and Chern character equal to $\dim(V)-2\sum_{i=0}^{\dim(V)/2}h^{2i}/(2i)!$, where $h$ is the first Chern class
of $\StructureSheaf{\PP(V)}(1)$.
Let $\Sigma\subset \PP\left(\Wedge{2}V^*\right)$ be the Zariski open subset of lines in $\Wedge{2}V^*$
spanned by non-degenerate $2$-forms. A point $\bar{\sigma}$ of $\Sigma$
determines the vector bundle $W_{\bar{\sigma}}$ given in Equation (\ref{eq-W}).
A relative analogue of the above construction combines with
Lemma \ref{lemma-W-is-slope-stable} to yield a morphism
\[
\kappa \ : \ \Sigma \ \ \rightarrow \ \ M,
\]
sending the point $\bar{\sigma}$ to the isomorphism class of the vector bundle $W_{\bar{\sigma}}$.

\begin{lem}
\label{lemma-kappa-is-an-open-embedding}
The morphism $\kappa$ is an open embedding.
\end{lem}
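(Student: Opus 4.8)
The plan is to show that $\kappa$ is injective, that its differential is an isomorphism at every point, and that its image is open; since $M$ is a coarse moduli space and the source $\Sigma$ is smooth, this suffices for an open embedding. First I would prove injectivity: if $W_{\bar\sigma_1}\cong W_{\bar\sigma_2}$, I want to recover $\bar\sigma$ from the bundle $W_{\bar\sigma}$. The key is that the short exact sequence $0\to\ell\to\ell^\perp\to W\to 0$ of (\ref{eq-W}) is intrinsic: $\ell$ is the unique line subbundle of $\V$ of degree $-1$ passing through a given point, $\ell^\perp$ is determined by $\ell$ together with $\sigma$, and conversely the extension data of $W$ by $\ell$ recovers $\ell^\perp\subset\V$. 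More directly, I would argue that $W_{\bar\sigma}$ carries an intrinsic symplectic form up to scalar (induced by $\sigma$ on $\ell^\perp/\ell$), and that the embedding $\PP(V)\hookrightarrow$ (the relevant Grassmannian or the recovery of $\V$) together with this form pins down $\bar\sigma$ as an element of $\Sigma$. Concretely: from $W$ one reconstructs $\ell^\perp$ as the preimage in $\V$ under the natural evaluation-type map, hence the flag $\ell\subset\ell^\perp\subset\V$, and the symplectic form making $\ell^\perp=\ell^\sigma$ is unique up to scalar, giving $\bar\sigma\in\PP(\Wedge 2 V^*)$; it lies in $\Sigma$ because it is nondegenerate.

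Next I would compute the differential $d\kappa_{\bar\sigma}:T_{\bar\sigma}\Sigma\to T_{[W_{\bar\sigma}]}M=\Ext^1(W,W)$. Since $\Sigma$ is open in $\PP(\Wedge 2 V^*)$, its tangent space at $\bar\sigma$ is $\Wedge 2 V^*/\ComplexNumbers\sigma$. By Lemma \ref{lemma-cohomologies-of-end-ell-perp-mod-ell} (\ref{lemma-item-higher-cohomologies-for-j-equal-0}) we have $\End(W)=\ComplexNumbers$, so $\Ext^1(W,W)=H^1(\PP V,\SheafEnd(W))\cong\Wedge 2 V^*/\ComplexNumbers\sigma$, and $H^2(\PP V,\SheafEnd(W))=0$, so $M$ is smooth of the expected dimension at $[W_{\bar\sigma}]$ and the obstruction space vanishes. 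It remains to identify $d\kappa_{\bar\sigma}$ with the identity map of $\Wedge 2 V^*/\ComplexNumbers\sigma$ (up to an automorphism). For this I would differentiate the family: a tangent vector is represented by $\dot\sigma\in\Wedge 2 V^*$, which deforms the subbundle $\ell^\perp=\ker(\V\xrightarrow{\sigma}\ell^*)$, hence deforms $W=\ell^\perp/\ell$; tracing through the connecting maps in the cohomology of (\ref{eq-W}) — exactly the maps already analyzed in the proof of part (\ref{lemma-item-higher-cohomologies-for-j-equal-0}), where $H^1(\SheafEnd(W))\cong\Wedge 2 V^*/\ComplexNumbers\sigma$ is built from the sequences (\ref{eq-short-exact-for-End-W}), (\ref{eq-seq-of-ell-j+1-times-W}), (\ref{eq-short-exact-seq-with-ell-perp-times-W-times-ell-j}) — shows the Kodaira–Spencer class of the deformation is the image of $\dot\sigma$ under the quotient $\Wedge 2 V^*\to\Wedge 2 V^*/\ComplexNumbers\sigma$. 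So $d\kappa_{\bar\sigma}$ is an isomorphism.

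Finally, openness of the image: $\kappa$ is a morphism between smooth varieties whose differential is everywhere an isomorphism, so it is étale onto its image, hence open; combined with injectivity this makes $\kappa$ an open immersion. I should also address that $M$ is only a \emph{coarse} moduli space, so "differential of $\kappa$" needs the deformation-theoretic interpretation: the Kodaira–Spencer map of the universal-up-to-twist family of the $W_{\bar\sigma}$ over $\Sigma$ lands in $\Ext^1$, and étaleness of the induced map on (coarse) moduli follows because $\Ext^1$ is the tangent space, $\Ext^2=0$ gives smoothness, and the map on tangent spaces is the isomorphism computed above; then $\kappa$ is a local homeomorphism and injective, hence an open embedding of analytic spaces.

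The main obstacle I anticipate is the injectivity step — making rigorous the reconstruction of $\bar\sigma$ from the abstract isomorphism class of $W_{\bar\sigma}$. One must show that the flag $\ell\subset\ell^\perp\subset\V$ (equivalently the symplectic form up to scalar) is canonically determined by $W$; the cleanest route is probably to observe that $H^0(W\otimes\StructureSheaf{\PP V}(1))$ recovers a fixed vector space canonically, that the evaluation map reconstructs $\V$ and the tautological $\ell$, and that $\ell^\perp$ is forced, after which nondegeneracy of the recovered $2$-form places $\bar\sigma$ in $\Sigma$ — but pinning down these canonical identifications precisely (rather than merely up to the $\Aut(\PP V)$-action, which here is harmless since $Sp(V,\sigma)$ acts) is where the real work lies.
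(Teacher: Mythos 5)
Your overall architecture agrees with the paper's: the image lands in the smooth locus because $H^2(\PP V,\SheafEnd(W))=0$, the differential $T_{\bar\sigma}\Sigma\cong\Wedge{2}V^*/\ComplexNumbers\sigma\rightarrow H^1(\PP V,\SheafEnd(W))\cong\Wedge{2}V^*/\ComplexNumbers\sigma$ is an isomorphism by Lemma \ref{lemma-cohomologies-of-end-ell-perp-mod-ell} (\ref{lemma-item-higher-cohomologies-for-j-equal-0}), so the image is open, and it then remains to prove injectivity. That last step is where you have a genuine gap, and you say so yourself: you sketch a reconstruction of the flag $\ell\subset\ell^\perp\subset\V$ from the abstract bundle $W$ but never carry it out, and the route you propose ($H^0(W\otimes\StructureSheaf{\PP V}(1))$ ``recovers a fixed vector space canonically'') does not obviously work — that space is $\Wedge{2}V^*/\ComplexNumbers\sigma$, not $V$, so the evaluation map does not hand you back $\V$ and $\ell$. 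A reconstruction along your lines can be completed, but it requires a different mechanism: $\Ext^1(W,\ell)\cong H^1(W\otimes\ell)$ is one-dimensional, so $\ell^\perp$ is the unique non-split extension of $W$ by $\StructureSheaf{\PP V}(-1)$; likewise $H^1(\ell\otimes\ell^\perp)$ is one-dimensional, so $\V$ is the unique non-split extension of $\StructureSheaf{\PP V}(1)$ by $\ell^\perp$, and the resulting surjection $V\otimes\StructureSheaf{\PP V}\rightarrow\StructureSheaf{\PP V}(1)$, read through the isomorphism $V\cong H^0(\V)$ forced by the subbundle $\ell\subset\V$, recovers $\sigma$ up to scalar as an element of $V^*\otimes V^*$ (alternating because $\ell\subset\ell^\perp$, nondegenerate because $\ell^\perp$ has corank one). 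None of this is in your write-up, and without it injectivity is unproven.

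The paper takes a different and cleaner route to injectivity: using $\chi(W_{\bar\sigma}\otimes\ell)=-1$ it invokes the appendix of \cite{mukai-hodge} to produce a universal family $\W$ over $\PP(V)\times M$, and considers the differential $a:\LieAlg{sl}(V)_M\rightarrow R^1p_{2,*}\SheafEnd(\W)$ of the $PGL(V)$-action on $M$. Over $\kappa(\Sigma)$ the fiber of $\ker(a)$ at $\kappa(\bar\sigma)$ is exactly the stabilizer subalgebra $\LieAlg{sp}(V,\bar\sigma)$, and $\bar\sigma$ is recovered as the unique line in $\Wedge{2}V^*$ preserved by it; this packages the recovery of $\bar\sigma$ into a holomorphic morphism $M\rightarrow\Sigma$ inverting $\kappa$, so injectivity (and the inverse being a morphism) come for free. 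What the paper's approach buys is that the only data needed from $W$ is its infinitesimal symmetry algebra inside $\LieAlg{sl}(V)$ — a consequence of the rigidity statement already proven — rather than a step-by-step reconstruction of the extension data; what your approach would buy, if completed, is independence from the existence of a universal family. As written, though, your injectivity argument is a plan rather than a proof.
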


\begin{proof}
The obstruction space $H^2(\PP(V),\SheafEnd(W_{\bar{\sigma}}))$ vanishes,
by Lemma \ref{lemma-cohomologies-of-end-ell-perp-mod-ell} (\ref{lemma-item-higher-cohomologies-for-j-equal-0}).
Hence, the image of $\kappa$ is contained in the smooth locus of $M$.
The differential $d\kappa_{\bar{\sigma}}:T_{\bar{\sigma}}\Sigma\rightarrow H^1(\PP(V),\SheafEnd(W_{\bar{\sigma}}))$
is an isomorphism, 
by Lemma \ref{lemma-cohomologies-of-end-ell-perp-mod-ell} (\ref{lemma-item-higher-cohomologies-for-j-equal-0})
again. Hence the image of $\kappa$ is an open subset of $M$.
The Euler characteristic $\chi(W_{\bar{\sigma}}\otimes\ell)$ is $-1$, as seen in the proof of
Lemma \ref{lemma-cohomologies-of-end-ell-perp-mod-ell} (\ref{lemma-item-higher-cohomologies-for-j-equal-0}).
Consequently, there exists a universal family $\W$ over $\PP(V)\times M$, by the appendix in
\cite{mukai-hodge}.

Let 
$p_2:\PP(V)\times M\rightarrow M$ be the projection.
Denote by $\LieAlg{sl}(V)_M$ the trivial vector bundle over $M$ with fiber $\LieAlg{sl}(V)$. The action of $PGL(V)$ on
$\PP(V)$ induces an action of $PGL(V)$ on $M$, and we denote by
\[
a \ : \ \LieAlg{sl}(V)_M \rightarrow R^1p_{2,*}\SheafEnd(\W)
\]
the differential of this action.
We have seen that the homomorphism $a$ is surjective over the open subset $\kappa(\Sigma)$. The fiber of
$\ker(a)$ over $\kappa(\bar{\sigma})$ is the Lie sub-algebra $\LieAlg{sp}(V,\bar{\sigma})\subset \LieAlg{sl}(V)$
leaving the line $\bar{\sigma}\subset \Wedge{2}V^*$ invariant. The composition
$\ker(a)\rightarrow \LieAlg{sl}(V)_M \rightarrow \End\left(\Wedge{2}V^*\right)_M$
corresponds to a homomorphism
$
\varphi : \left(\Wedge{2}V^*\right)_M\rightarrow \SheafHom\left(\ker(a),\left(\Wedge{2}V^*\right)_M\right),
$
whose kernel is a line subbundle $L$ of the trivial vector bundle $\left(\Wedge{2}V^*\right)_M$. The line subbundle $L$ 
corresponds to a morphism 
$M\rightarrow \Sigma$, which is the inverse of $\kappa$.
\end{proof}

Let $\pi:\X\rightarrow B$ be a smooth and proper family of irreducible holomorphic symplectic manifolds
of relative dimension $2n$ over a complex analytic space $B$.
Let $T_\pi$ be the vertical tangent bundle.
Set $D:=\PP(T_\pi)$ and let $p:D\rightarrow \X$ be the natural morphism.
Let $\ell$ be the tautological line subbundle of $p^*T_\pi$.
Let $L\subset \Wedge{2}T^*_\pi$ be the image of the canonical homomorphism
$\pi^*\pi_*\Wedge{2}T^*_\pi\rightarrow \Wedge{2}T^*_\pi$. Then $L$ is a line subbundle of $\Wedge{2}T^*_\pi$
and each fiber of $L$ is spanned by the holomorphic symplectic form of the fiber of $\pi$.
Let $\ell^\perp$ be the symplectic orthogonal to $\ell$ with respect to $L$.
Set $\W:=\ell^\perp/\ell$.

Let $\ell_x$ be the restriction of $\ell$
to the fiber $\PP[T_x(\X_{\pi(x)})]$ of $p$ over $x\in \X$. 
Given a non-degenerate element $\sigma_x\in \Wedge{2}T^*_x\X_{\pi(x)}$, let
$\ell_x^{\perp_{\sigma_x}}$ be the subbundle, of the trivial bundle with fiber $T_x\X_{\pi(x)}$ over
$\PP[T_x(\X_{\pi(x)})]$, which is 
$\sigma_x$-orthogonal to $\ell_x$.

\begin{lem}
\label{lemma-W'-and-W-coincide}
Let $\W'$ be a rank $2n-2$ vector bundle over $D$. Given a point $x\in \X$, denote by $\W'_x$ the restriction of $\W'$ to the fiber $\PP[T_x(\X_{\pi(x)})]$ of $p$ over $x$. 
Assume that for every $x\in\X$, the bundle $\W'_x$ is isomorphic to $\ell_x^{\perp_{\sigma_x}}/\ell_x$,
for some non-degenerate element $\sigma_x\in \Wedge{2}T^*_x\X_{\pi(x)}$. 
Then $\W'$ is isomorphic to $\W\otimes p^*Q$, for some line bundle $Q$ over $\X$.
\end{lem}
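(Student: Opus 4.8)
The plan is to run the construction from the proof of Lemma~\ref{lemma-kappa-is-an-open-embedding} in the relative setting over $\X$, producing a line subbundle $L'\subset\Wedge{2}T^*_\pi$ whose fibre at $x$ is the line $\CC\sigma_x$, then to show that $L'$ must coincide with $L$ by exploiting the simple connectedness of the fibres of $\pi$, and finally to descend along $p$ by a base change argument. First I would record the base change consequences of Lemma~\ref{lemma-cohomologies-of-end-ell-perp-mod-ell}: for each $x\in\X$ the bundle $\W'_x\cong\ell_x^{\perp_{\sigma_x}}/\ell_x$ is stable with $h^0(D_x,\SheafEnd(\W'_x))=1$ and $h^1(D_x,\SheafEnd(\W'_x))=\binom{2n}{2}-1$, dimensions independent of $x$, so by Grauert's cohomology and base change theorem $p_*\SheafEnd(\W')=\StructureSheaf{\X}$ and $R^1p_*\SheafEnd(\W')$ is locally free and compatible with base change. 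The group scheme $\underline{\Aut}_{D/\X}$ is $PGL(T_\pi)$, with Lie algebra sheaf $\SheafEnd_0(T_\pi)$, and, exactly as in the proof of Lemma~\ref{lemma-kappa-is-an-open-embedding}, the differential of its action on $\W'$ gives $a:\SheafEnd_0(T_\pi)\to R^1p_*\SheafEnd(\W')$ whose fibre at $x$ is the map $\LieAlg{sl}(T_x\X_{\pi(x)})\to H^1(\SheafEnd(W_{\bar\sigma_x}))$ of Lemma~\ref{lemma-cohomologies-of-end-ell-perp-mod-ell}(\ref{lemma-item-higher-cohomologies-for-j-equal-0}); that fibre map is surjective with kernel $\LieAlg{sp}(T_x\X_{\pi(x)},\sigma_x)$, so $a$ is surjective and $K:=\ker(a)$ is a subbundle of $\SheafEnd_0(T_\pi)$ with $K_x=\LieAlg{sp}(T_x\X_{\pi(x)},\sigma_x)$. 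Composing $K\hookrightarrow\SheafEnd_0(T_\pi)\hookrightarrow\SheafEnd(T_\pi)$ with the natural action on $\Wedge{2}T^*_\pi$ yields $\varphi:\Wedge{2}T^*_\pi\to\SheafHom(K,\Wedge{2}T^*_\pi)$; since the only $\LieAlg{sp}(V,\sigma)$-invariants in $\Wedge{2}V^*$ are the multiples of $\sigma$, the subsheaf $L':=\ker(\varphi)$ is a line subbundle of $\Wedge{2}T^*_\pi$ with $L'_x=\CC\sigma_x$ for all $x\in\X$.

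The crux is that $L'=L$. Fix $b\in B$. The map $\omega\mapsto\omega^{\wedge n}$ sends a local generator of $L'$ over $\X_b$ to a nowhere-vanishing section of $K_{\X_b}=\Wedge{2n}T^*_{\X_b}$, so $(\restricted{L'}{\X_b})^{\otimes n}\cong K_{\X_b}$; the latter is trivial, being trivialized by $(\sigma^{hol}_b)^{\wedge n}$, and $\Pic(\X_b)$ is torsion free because $\X_b$ is simply connected, hence $\restricted{L'}{\X_b}\cong\StructureSheaf{\X_b}$. The corresponding nowhere-zero holomorphic $2$-form on $\X_b$ must then be a nonzero multiple of $\sigma^{hol}_b$, since $H^0(\X_b,\Wedge{2}T^*_{\X_b})=\CC\sigma^{hol}_b$; therefore $\restricted{L'}{\X_b}=\restricted{L}{\X_b}$ for every $b$, and as two line subbundles of $\Wedge{2}T^*_\pi$ that agree at every point they coincide. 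In particular $\W'_x\cong W_{\bar\sigma_x}=\ell_x^{\perp_{L_x}}/\ell_x=\W_x$ for every $x\in\X$.

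To finish, since $\W_x\cong\W'_x$ are stable of the same slope for all $x$, $\dim\Hom(\W_x,\W'_x)=\dim\End(\W_x)=1$ is constant in $x$, so $Q:=p_*\SheafHom(\W,\W')$ is a line bundle on $\X$ with $Q\otimes\CC(x)\cong\Hom(\W_x,\W'_x)$, by base change. The evaluation morphism $\mu:p^*Q\otimes\W\to\W'$, sending a local section of $Q$, interpreted as a homomorphism $\W\to\W'$, together with a section of $\W$ to its value, restricts over $D_x$ to the map $\phi_x\otimes w\mapsto\phi_x(w)$, where $\phi_x$ spans $\Hom(\W_x,\W'_x)$; a nonzero homomorphism between stable bundles of equal slope is an isomorphism, so $\restricted{\mu}{D_x}$ is an isomorphism for every $x$. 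Since the $D_x$ cover $D$, $\mu$ is an isomorphism at every point of $D$ between vector bundles of the same rank $2n-2$, hence an isomorphism, and $\W'\cong\W\otimes p^*Q$.

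I expect the identification $L'=L$ in the second step to be the main obstacle: it is the place where genuinely global information about the fibres of $\pi$ — simple connectedness, hence torsion-freeness of the Picard group, together with the uniqueness of the holomorphic symplectic form — enters, and without it the conclusion would be false. By contrast, the relative version of Lemma~\ref{lemma-kappa-is-an-open-embedding} producing $L'$ and the concluding descent through $p_*\SheafHom(\W,\W')$ are routine.
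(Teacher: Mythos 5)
Your proof is correct and follows essentially the same route as the paper: produce the line subbundle $L'\subset\Wedge{2}T^*_\pi$ via (the relative form of) Lemma \ref{lemma-kappa-is-an-open-embedding}, show $L'|_{\X_b}$ is trivial using torsion-freeness of $\Pic(\X_b)$ and triviality of the canonical bundle, identify $L'=L$ by uniqueness of the trivial line subbundle of $\Wedge{2}T^*\X_b$, and conclude fiberwise isomorphism $\W'_x\cong\W_x$. The only differences are cosmetic (you get $(L')^{\otimes n}\cong\omega_{\X_b}$ by wedging rather than $(L')^{2n}\cong\omega_\pi^2$ via the contraction isomorphism) plus the fact that you spell out the relative construction of $L'$ and the final descent through $p_*\SheafHom(\W,\W')$, which the paper leaves implicit.
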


\begin{proof}
It suffices to show  $\W'_x$ is isomorphic to $\W_x$, for all $x\in \X$. 
The vector bundle $\W'$ determines a line subbundle $L'$ of $\Wedge{2}T^*_\pi$, 
each fiber of which is spanned by a non-degenerate form, by Lemma 
\ref{lemma-kappa-is-an-open-embedding}. 
We thus have a natural isomorphism $T_\pi\otimes L'\rightarrow T^*_\pi$.
It follows that the tensor power $(L')^{2n}$ is isomorphic to the square 
$\omega_\pi^2$ of the relative canonical bundle over $\X$. 
The Picard group of $\X_b$ is torsion free, for all $b\in B$, and the canonical line bundle of
$\X_b$ is trivial.
Hence, $L'$ restricts to $\X_b$ as the trivial line bundle, for all $b\in B$.
The vector bundle $\Wedge{2}T^*\X_b$ admits a unique trivial line subbundle. Hence
$L_b=L'_b$, for all $b\in B$.
We conclude that the line subbundles $L$ and $L'$ are equal, and hence $\W'_x$ and $\W_x$ are isomorphic 
for all $x\in \X$. 
\end{proof}

%

\begin{cor}
\label{cor-families-of-W-are-locally-trivial}
Let $\W$ and $\W'$ be the vector bundles over $D$ given in Lemma \ref{lemma-W'-and-W-coincide}.
Then $\W'$ is locally trivial in the topology of $\X$ in the sense of Definition
\ref{def-vector-bundle-over-D-is-locally-trivial-over-X}.
\end{cor}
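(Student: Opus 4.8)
\textbf{Proof proposal for Corollary \ref{cor-families-of-W-are-locally-trivial}.}
The plan is to deduce local triviality from the structural description obtained in Lemma \ref{lemma-W'-and-W-coincide}, namely the isomorphism $\W'\cong \W\otimes p^*Q$ for a line bundle $Q$ on $\X$. Since tensoring by the pullback of a line bundle on the base manifold $\X$ is compatible with the restriction/pullback operations appearing in Definition \ref{def-vector-bundle-over-D-is-locally-trivial-over-X}, and since the local triviality of a vector bundle of the form $\W\otimes p^*Q$ is equivalent to that of $\W$ together with local triviality of $Q$ on $\X$ (which is automatic, any line bundle being locally trivial), it suffices to prove the statement for $\W$ itself. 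Thus the first step is this reduction, which I would state explicitly: fix $x\in\X$, choose a local trivialization of $Q$ near $p(x)$, and observe that the trivializing isomorphism $\tilde f$ of Equation (\ref{eq-trivializing-isomorphism-of-vector-bundles}) for $\W'$ is obtained from that for $\W$ by twisting with the (now trivial) pullback of $Q$.

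Next I would establish local triviality of $\W$ directly from its definition. Recall $\W=\ell^\perp/\ell$, where $\ell\subset p^*T_\pi$ is the tautological subbundle and $\ell^\perp$ is the $L$-symplectic orthogonal, with $L\subset\Wedge{2}T^*_\pi$ the image of $\pi^*\pi_*\Wedge{2}T^*_\pi\rightarrow\Wedge{2}T^*_\pi$. Given $x\in\X$, one chooses local coordinates on $\X$ near $x$ realizing an open neighborhood $U$ of $x$ as a product $(U\cap\X_{\pi(x)})\times\overline U$ with $\pi$ the second projection, exactly as in Definition \ref{def-vector-bundle-over-D-is-locally-trivial-over-X}. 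Shrinking $U$, the vertical tangent bundle $T_\pi$ trivializes relative to this product structure — more precisely, $T_\pi|_U$ is isomorphic to the pullback of $T_{U\cap\X_{\pi(x)}}$ under the projection $\pi_1:U\rightarrow U\cap\X_{\pi(x)}$ — since the family is (locally over the base) a trivial family of complex manifolds. Consequently $D=\PP(T_\pi)$ over $U$, the tautological subbundle $\ell$, and the canonical morphism $p$ all pull back compatibly from the fiber $U\cap\X_{\pi(x)}$. The only remaining ingredient is the line bundle $L$: since $L$ restricts to each fiber $\X_b$ as the unique trivial line subbundle of $\Wedge{2}T^*\X_b$ (each Picard group being torsion free, each fiber having trivial canonical bundle, as used in the proof of Lemma \ref{lemma-W'-and-W-coincide}), $L|_U$ is identified with $\pi_1^*$ of the corresponding line subbundle on the fiber. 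Hence $\ell^\perp$, being defined as an orthogonal complement with respect to data all pulled back via $\pi_1$, is itself pulled back via $\pi_1$, and therefore so is the quotient $\W=\ell^\perp/\ell$. This produces the required isomorphism $\tilde f$ in Equation (\ref{eq-trivializing-isomorphism-of-vector-bundles}).

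I expect the main obstacle to be the careful bookkeeping of the product structures: one must produce, from the chosen local trivialization $f:U\rightarrow (U\cap\X_{\pi(x)})\times\overline U$ of the family, the induced trivialization of $p^{-1}(U)\cong p^{-1}(U\cap\X_{\pi(x)})\times\overline U$ and the projection $\pi_1$ on the projectivized tangent bundle, and then check that every sheaf appearing in the construction of $\W$ (namely $T_\pi$, $\ell$, $L$, and $\ell^\perp$) is genuinely the $\pi_1$-pullback of its restriction to the central fiber, not merely abstractly isomorphic to it. The subtlety with $L$ is the only place where a nontrivial geometric input enters, and it is already furnished by the argument in Lemma \ref{lemma-W'-and-W-coincide}; everything else is the standard fact that a holomorphic family of manifolds is locally, in the topology of the total space, a product. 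Once this is in place the corollary follows immediately, and in fact the reduction via Lemma \ref{lemma-W'-and-W-coincide} shows the same conclusion for the a priori more general bundle $\W'$.
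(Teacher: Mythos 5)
Your reduction to $\W$ via Lemma \ref{lemma-W'-and-W-coincide} is fine and is exactly how the paper's proof begins. The gap is in the second step, at precisely the point you flag as the only nontrivial geometric input: the claim that, under an arbitrary local product structure $f:U\rightarrow (U\cap\X_{\pi(x)})\times\overline{U}$, the line bundle $L|_U$ is identified with $\pi_1^*$ of its restriction to the central fiber. The uniqueness of the trivial line subbundle of $\Wedge{2}T^*\X_b$ is a statement about the \emph{compact} fiber (it rests on $h^0(\X_b,\Wedge{2}T^*\X_b)=1$) and does not survive restriction to the small open set $U\cap\X_b$. Concretely, writing $V:=U\cap\X_{\pi(x)}$ and transporting the symplectic form $\sigma_b$ of $\X_b$ to $V$ via $f_b$, what you need is that $f_b^*\sigma_b$ is \emph{pointwise proportional} to $\sigma_{\pi(x)}$. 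For a general biholomorphism $f_b$ this fails once $\dim\X_b=2n\geq 4$ (which is the case here, since $n\geq 2$): $\Wedge{2}T^*_vV$ then has dimension greater than one, so two nondegenerate forms need not span the same line. With an arbitrary trivialization, $\ell^\perp$ and hence $\W$ are therefore \emph{not} $\pi_1$-pullbacks; worse, by Lemma \ref{lemma-kappa-is-an-open-embedding} the restrictions of $\iota^*\W$ and $\pi_1^*\bar{\iota}^*\W$ to a fiber of $p$ would be non-isomorphic bundles whenever the two symplectic lines differ, so no isomorphism $\tilde f$ of the form (\ref{eq-trivializing-isomorphism-of-vector-bundles}) can exist for that choice of $f$.

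The paper closes exactly this gap by choosing the trivialization more carefully: it invokes the fact that a smooth family of holomorphic symplectic manifolds admits local \emph{symplectic} trivializations, i.e., one may choose $f$ so that each $f_b:(U\cap\X_{\pi(x)})\rightarrow (U\cap\X_b)$ is a symplectomorphism (a relative holomorphic Darboux statement). With that choice $f_b^*\sigma_b=\sigma_{\pi(x)}$, so $L$, $\ell^\perp$, and $\W=\ell^\perp/\ell$ are all genuinely $\pi_1$-pullbacks and the required $\tilde f$ follows. Your argument becomes correct once you replace ``local coordinates realizing $U$ as a product'' by ``local fiberwise-symplectic coordinates'' and justify their existence; the appeal to the uniqueness of the trivial line subbundle on compact fibers, borrowed from Lemma \ref{lemma-W'-and-W-coincide}, cannot substitute for this.
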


\begin{proof}
It suffices to prove the statement for $\W$, by Lemma \ref{lemma-W'-and-W-coincide}. 
A smooth family of holomorphic symplectic manifolds always admits local sysmplectic trivializations. Hence, 
each point $x\in \X$ admits an open neighborhood $U\subset \X$ and an open
neighborhood $\overline{U}\subset\PP^1_\omega$ of $\pi(x)$,
with an isomorphism 
\[
f:U\rightarrow (U\cap \X_{\pi(x)})\times \overline{U},
\]
such that $\pi\circ f^{-1}$ is the projection onot $\overline{U}$, and 
$f$ restricts to a symplectomorphism
\[
f_b:(U\cap \X_{\pi(x)})\rightarrow (U\cap \X_b), 
\]
for all $b\in \overline{U}$. 
The vector bundle $\W$ 
depends only on the symplectic structure and hence admits a trivializing isomorphism
$
\tilde{f}:\pi_1^*\left(\restricted{\W}{p^{-1}(U\cap\X_{\pi(x)})}\right)\rightarrow \restricted{\W}{p^{-1}(U)}
$
of the form required in Equation
(\ref{eq-trivializing-isomorphism-of-vector-bundles}) over the inverse image $p^{-1}(U)$ of $U$ in $D$.
\end{proof}

\noindent
{\bf Proof  of Theorem \ref{thm-main-example}.}
Part \ref{thm-item-E-satisfies-open-closed-condition})
Condition \ref{cond-open-and-closed} 
(\ref{cond-item-characteristic-class}) is verified in \cite[Prop. 4.2]{markman-hodge}.
The sheaf $F$, given in Equation (\ref{eq-sheaf-F}),
is a simple and infinitesimally rigid reflexive sheaf of rank $2n-2$ 
\cite[Prop. 4.5]{markman-hodge} and \cite[Lemma 4.2]{MM}.
Condition \ref{cond-open-and-closed} (\ref{cond-item-infinitesimally-rigid}) thus holds. 
The sheaf $F$ does not have any non-zero subsheaf of lower rank, 
when $\Pic(S)$ is trivial \cite{markman-stability}, and so 
Condition \ref{cond-open-and-closed} (\ref{condition-stability}) holds. 

Condition \ref{cond-open-and-closed} (\ref{cond-item-same-homogeneous-bundle}): 
We prove first that $F$ is $\beta$-tight (Definition \ref{def-tightness}). 
The extension sheaves $\SheafExt^i(F,F)$ where 
calculated in \cite[Prop. 3.15 ]{MM}, for all $i$, and the short exact sequence 
(\ref{eq-filtration-of-SheafExt-1-F-F}) for $\SheafExt^1(F,F)$ was established there. 
The sheaf $F$ is thus $\beta$-tight, by Lemma \ref{lemma-tightness-criterion}.

Let $\ell$ be the tautological line sub-bundle of the trivial vector bundle
over $\PP[T_zS^{[n]}]$ with fiber $T_zS^{[n]}$. 
Let $\ell^\perp$ be the symplectic orthogonal of $\ell$ and set $W:=\ell^\perp/\ell$.
The sheaf $\beta^*F/{\rm torsion}$ restricts to $\PP[T_zS^{[n]}]$ as $W\otimes\ell^{-1}$, by 
\cite[Prop. 4.5]{markman-hodge}.  Condition \ref{cond-open-and-closed} (\ref{cond-item-same-homogeneous-bundle})
now follows, with $m=0$, $m'=2$, and $k=2$, from 
Lemmas 
\ref{lemma-cohomologies-of-end-ell-perp-mod-ell} and \ref{lemma-W-is-slope-stable}.

It remains to verify Condition \ref{cond-open-and-closed} (\ref{cond-item-Brauer-class-is-theta-X-eta}).
Let $\widetilde{H}(S,\Integers)$ be the Mukai lattice, $v=(1,0,1-n)$ the Mukai vector of the
ideal sheaf of a length $n$ subscheme, $v^\perp$ the sublattice of $\widetilde{H}(S,\Integers)$ 
orthogonal to $v$, and 
\[
\mu:v^\perp  \rightarrow H^2(S^{[n]},\Integers)
\]
the Mukai isometry \cite[Eq. (1.6) and Theorem 8.1]{yoshioka-abelian-surface}.
Set $\Lambda:=v^\perp$ and let
$\eta_0:H^2(S^{[n]},\Integers)\rightarrow \Lambda$ be the inverse $\mu^{-1}$. 
Let $w\in v^\perp$ be the Mukai vector $(1,0,n-1)$ and set
\begin{equation}
\label{eq-monodromy-orbit-of-characteristic-class-tilde-theta}
\tilde{\theta}:= (-w,w)+(2n-2)[v^\perp\oplus v^\perp].
\end{equation}
Then $\tilde{\theta}$ is a class of order $2n-2$ in $\Lambda^{\oplus 2}/(2n-2)\Lambda^{\oplus 2}$.
Given a marked pair $(X,\eta)$, deformation equivalent to $(S^{[n]},\eta_0)$, 
we get the class $\theta_{(X,\eta)}$ in $H^2(X\times X,\StructureSheaf{X\times X}^*)$, 
given in (\ref{eq-theta-X-eta}). The sheaf $E:=\SheafEnd(F)$, the marking $\eta$, 
and the class $\tilde{\theta}$ satisfy 
Condition \ref{cond-open-and-closed} (\ref{cond-item-Brauer-class-is-theta-X-eta}), 
by \cite[Lemma 7.3]{markman-hodge}.

We claim that the sheaf $E:=\SheafEnd(F)$ satisfies Condition \ref{enhanced-open-and-closed-condition}.
Indeed, the Kodaira-Spencer class $\kappa_x$, given in Equation 
(\ref{eq-Kodaira-Spencer-homomorphism}), is surjective for all $x\in S^{[n]}$, by \cite[Lemma 4.3]{MM}.
Lemma \ref{lem-surjectivity-of-Kodaira-Spencer-implies-reduction-of-conjecture} implies the vanishing of 
$H^1(E_x)$, for all $x\in S^{[n]}$. 
Slope-stability of $E_x$, for all $x\in S^{[n]}$, for some K\"{a}hler class on $S^{[n]}$, is shown as follows.
Fix a K\"{a}hler class $\omega$, such that the twistor line $\PP^1_\omega$ is generic. 
Let $\E$ be a hyperholomorphic deformation of $E$ along $\PP^1_\omega$.
There exists a point $t\in \PP^1_\omega$, such that $\Pic(\X_t)$ is trivial, by our choice of $\omega$.
Let $\E_{t,x'}$ be the restriction of $\E_t$ to $\{x'\}\times\X_t$, $x'\in\X_t$.
The Brauer class of the sheaves $\E_{t,x'}$ 
has order $2n-2$, for all $x'\in \X_t$. Here we used the condition on the class $\tilde{\theta}$ in 
Condition \ref{enhanced-open-and-closed-condition}, which requires that it projects to a class 
of order $2n-2$ in $\Lambda/(2n-2)\Lambda$. The latter condition 
is evidently satisfied by the class 
(\ref{eq-monodromy-orbit-of-characteristic-class-tilde-theta}). 
It follows that $\E_{t,x'}$ is  $\omega_t$-slope-stable with respect to the K\"{a}hler class $\omega_t$ on $\X_t$ 
associated to the twistor line, by Remark \ref{rem-trivial-picard-implies-Brauer-class-has-order-r}.
Now slope-stability is preserved along hyperholomorphic deformations, and 
each $E_x$, $x\in X$, is obtained from some $\E_{t,x'}$, $x'\in \X_t$, by a hyperholomorphic deformation 
associated to $\E$ via the horizontal section through $x$ of the twistor family $\pi:\X\rightarrow\PP^1_\omega$.

Part \ref{thm-item-existence-of-E-over-every-X}) Follows from the surjectivity of
the morphism $\phi:\tfM^0_\Lambda\rightarrow \fM^0_\Lambda$ proven in Theorem
\ref{thm-existence}.

Condition \ref{enhanced-open-and-closed-condition} is preserved along twistor deformation 
(Remark \ref{rem-enhanced-condition}). Lemma 
\ref{lem-surjectivity-of-Kodaira-Spencer-implies-reduction-of-conjecture} 
(\ref{lemma-item-vanishing-of-H-1-for-E-x-implies-that-of-E})
shows that
Conjecture \ref{conj-Ext-1-is-constant} follows from Conjecture \ref{conj-Ext-1-is-constant-isolated-singularity-case}
for every hyperholomorphic deformation of $E$ along every generic twistor path. 
The proofs of Theorems 
\ref{thm-existence} and  \ref{main-thm} in our current case 
are thus conditional on either one of the two conjectures above. 

Part \ref{thm-item-Torelli-without-marking})
Condition (a) implies that $E_i$ is slope-stable with respect to some K\"{a}hler class, if and only if
it is slope-stable with respect to every K\"{a}hler class 
(see the proof of Theorem \ref{main-thm} part \ref{thm-item-generic-point-is-separated}).
Hence, we may assume that $E_1$ and $E_2$ are $\omega$-slope-stable with respect to the same 
K\"{a}hler class $\omega$ (which is Condition (b)). 
Let $Mon^2(X)\subset \Aut[H^2(X,\Integers)]$ be the monodromy group. 
$Mon^2(X)$ acts transitively on the set of  
isomorphism classes of marked pair $(X,\eta)$, in a fixed connected component $\fM_\Lambda^0$ 
and with a fixed $X$.  
The action of $g\in Mon^2(X)$
is given by $g(X,\eta)=(X,\eta\circ g^{-1})$. 
There exists a non-trivial character $cov:Mon^2(X)\rightarrow \Integers/2\Integers$ with the following property. 
Given an Azumaya algebra $A$, set $A^{(*^0)}:=A$ and $A^{(*^1)}:=A^*$, the dual Azumaya algebra.
If $(X,A)$ is deformation equivalent to $(S^{[n]},E)$, 
then $(X,\eta,E)$ and $(X,\eta\circ g,A^{(*^{cov(g)})})$ are deformation equivalent, so belong to the same
connected component $\tfM_\Lambda^0$. This statement was proven in
\cite[Theorem 1.2 (4) and Equaton (1.22)]{markman-monodromy-I}, for all elements $g$ of a subgroup 
$W\subset Mon^2(X)$, 
and the subgroup $W$ was later shown to be the full monodromy group $Mon^2(X)$ in
\cite[Theorem 1.2]{markman-integral-constraints}. 

The pairs
$(X,E_1)$ and $(X,E_2)$ are both assumed to be deformation equivalent to $(S^{[n]},E)$. 
This assumption implies
that for every marking $\eta_1$ of $X$, there exists an element $g\in Mon^2(X)$, such that
$(X,\eta_1,E_1)$ and $(X,\eta_1g,E_2)$ belong to the same connected component
$\tfM_\Lambda^0$. Thus, $(X,\eta_1g,E_1^{(*^{cov(g)})})$ and $(X,\eta_1g,E_2)$
belong to the intersection of the same fiber of $\widetilde{P}$ with $\tfM_\Lambda^0$.
Now $E_1^{(*^{cov(g)})}$ and $E_2$ are both $\omega$-slope-stable, by assumption.
Hence, $E_1^{(*^{cov(g)})}$ and $E_2$ are isomorphic, by 
Lemma \ref{lemma-if-stable-with-respect-to-same-Kahler-class-then-separated} and
Theorem \ref{main-thm} part \ref{thm-item-isomorphism}.
\EndProof

\hide{
%
\section{Stability}

Let $S$ be a $K3$ surface with a trivial Picard group. Denote by $L$ the line bundle over $S^{[n]}$, $n\geq 2$, 
such that $c_1(L)$ is half the class of the diagonal divisor. 
Let $L_{a,b}$ be the line bundle $L^a\boxtimes L^b$
over $S^{[n]}\times S^{[n]}$. 
The sheaf $E_S$ over $S^{[n]}\times S^{[n]}$ has determinant $L_{-1,1}$, by 
\cite[Lemma 5.9]{markman-integral-constraints}. 
Let $[\Delta]$ be the class of the diagonal in $H^{4n}(S^{[n]}\times S^{[n]},\Integers)$.
The Chern class $c_{2n-1}(E_S\otimes M)$ vanishes, 
and $c_{2n}(E_S\otimes M)=[1-(2n-1)!][\Delta]$,
for any line-bundle $M$, 
by \cite[Theorem 1 part 3]{markman-diagonal}.
When $n=2$, $E_S$ is $\omega$-slope-stable, with respect to every
K\"{a}hler class $\omega$ on $S^{[2]}\times S^{[2]}$, by the following Lemma.

\begin{lem}
\label{stability-lemma}
Let $E$ be a simple reflexive sheaf of rank $2$ and
determinant $L_{c,d}$ over $S^{[2]}\times S^{[2]}$, 
where $c$ and $d$ are odd integers. Assume that $c_3(E\otimes M)$ vanishes 
and
$c_4(E\otimes M)$ is independent of the line bundle $M$.
Then $E$ does not have any subsheaf of rank $1$.
\end{lem}

\begin{proof}
The proof is by contradiction.
Let $F$ be a rank $1$ subsheaf of $E$. We may assume that $F$ is saturated.
Then $F$  is a reflexive sheaf of rank $1$ and hence locally free.
The Picard group of $S^{[2]}\times S^{[2]}$ has rank $2$, generated by $L_{1,0}$ and $L_{0,1}$.
Let $a$ and $b$ be integers, such that $F$ is isomorphic to $L_{a,b}$. 
Then $E/F$ is isomorphic to $I_Z\otimes L_{c-a,d-b}$, for some subscheme
$Z$ of $S^{[2]}\times S^{[2]}$ of codimension $\geq 2$.
Consider the short exact sequence defining $Q$
\[
0\rightarrow E/F \rightarrow L_{c-a,d-b}\rightarrow Q \rightarrow 0.
\]
We get the long exact sequence
\[
0\rightarrow \Ext^1(Q,L_{a,b}) \rightarrow \Ext^1(L_{c-a,d-b},L_{a,b})\rightarrow \Ext^1(E/F,F)
\rightarrow \Ext^2(Q,L_{a,b}) \rightarrow \cdots
\]
We have also 
the isomorphisms $\Ext^2(Q,L_{a,b})\cong \Ext^{6}(L_{a,b},Q)^*\cong H^{6}(L_{-a,-b}\otimes Q)^*$.

If the co-dimension of $Z$ is larger than $2$, then
$H^{6}(L_{-a,-b}\otimes Q)^*$ vanishes. Hence, 
the extension class of 
\[
0\rightarrow F \rightarrow E \rightarrow E/F \rightarrow 0
\]
is the image of a class $\epsilon$ in $H^1\left(S^{[n]}\times S^{[n]},L_{2a-c,2b-d}\right)$.
The latter is the direct sum
\[
H^0(S^{[n]},L^{2a-c})\otimes H^1(S^{[n]},L^{2b-d})\ \ \oplus \ \ 
H^1(S^{[n]},L^{2a-c})\otimes H^0(S^{[n]},L^{2b-d}).
\]
The class $\epsilon$ must thus vanish, since $H^0(S^{[n]},L^k)$ vanishes, for odd $k$.
This contradicts the assumption that the sheaf $E$ is simple.

It remains to prove that the co-dimension of $Z$ is larger than $2$.
Define $z_i$ as the coefficients of the Chern character
\[
ch(I_Z) \ \ = \ \ 1-z_2t^2+z_3 t^3 -z_4 t^4 +  \cdots 
\]
Let $\delta\in H^2(S^{[2]},\Integers)$ be half the class of the diagonal divisor,
and denote by $\delta_i$ its pullback to $H^2(S^{[2]}\times S^{[2]},\Integers)$ via the $i$-th projection, $i=1,2$.
Set $\alpha:=a\delta_1+b\delta_2$ and $\beta:=(c-a)\delta_1+(d-b)\delta_2$. Then $c(L_{a,b})=1+\alpha t$.
Recall that the  coefficients of degree two, three, and four  of the Chern character are 
$ch_2=\frac{1}{2}c_1^2-c_2$,  $ch_3=\frac{1}{6}c_1^3-\frac{1}{2}c_1c_2+\frac{1}{2}c_3$, and
\[
ch_4 \ \ = \ \ \frac{1}{24}\left(
c_1^4-4c_1c_3+2c_2^2-4c_4
\right).
\]
We get the following equalities.
\begin{eqnarray*}
ch(I_Z\otimes L_{c-a,d-b})&=&1+\beta t + (\beta^2/2-z_2)t^2 +
(\beta^3/6-\beta z_2 +z_3)t^3 
\\
& & + (\frac{1}{24}\beta^4-\frac{1}{2}\beta^2z_2+\beta z_3-z_4) t^4\cdots
\\
c(I_Z\otimes L_{c-a,d-b})&=&1+\beta t + z_2 t^2 + (-\beta z_2+2z_3)t^3 
\\
& & +(\frac{1}{2}z_2^2+4\beta^2 z_2 -8\beta z_3 +6 z_4)t^4 \cdots
\\
c_3(E) & = & (\alpha-\beta)z_2+2z_3.
\\
c_4(E) & = & \alpha(2z_3-\beta z_2) + (\frac{1}{2}z_2^2+4\beta^2 z_2 -8\beta z_3 +6 z_4).
\end{eqnarray*}
The vanishing of $c_3(E)$ yields the equality 
\[
z_3 \ \ = \ \ \frac{1}{2}(\beta-\alpha)z_2.
\]
Eliminating $z_3$, we get the equality
\[
c_4(E) \ \ = \ \
6z_4+\frac{1}{2}z_2^2+z_2(4\alpha\beta-\alpha^2).
\]

Set $\widetilde{E}:=E\otimes L_{x,y}$. Set 
$\tilde{a}:=a+x$, $\tilde{b}=b+y$, $\tilde{c}:=c+2x$, $\tilde{d}=d+2y$,
$\tilde{\alpha}:=\tilde{a}\delta_1+\tilde{b}\delta_2$, 
$\tilde{\beta}:=(\tilde{c}-\tilde{a})\delta_1+(\tilde{d}-\tilde{b})\delta_2$.
We get the equation
\[
c_4(\widetilde{E}) \ \ = \ \ 6z_4+\frac{1}{2}z_2^2+z_2(4\tilde{\alpha}\tilde{\beta}-\tilde{\alpha}^2).
\]
The class $c_4(\widetilde{E})$ is constant with respect to $x$ and $y$.
Equating the coefficients of $x$, $y$, $x^2$, $xy$, and $y^2$ to zero,
we get five equations in $H^8(S^{[2]}\times S^{[2]},\Integers)$. The latter three are equivalent to the following differential equations. 
The partials satisfy $\tilde{\alpha}_x=\tilde{\beta}_x=\delta_1$ and 
$\tilde{\alpha}_y=\tilde{\beta}_y=\delta_2$. We get:
\begin{eqnarray*}
c_4(\widetilde{E})_{xx}&=&6\delta_1^2z_2 = 0.
\\
c_4(\widetilde{E})_{yy}&=&6\delta_2^2z_2=0.
\\
c_4(\widetilde{E})_{xy}&=&6\delta_1\delta_2z_2=0.
\end{eqnarray*}

The cohomology $H^*(S^{[2]},\RationalNumbers)$ is generated by $H^2(S^{[2]},\RationalNumbers)$.
For a generic $S$, the vector space $H^{2,2}(S^{[2]}\times S^{[2]},\RationalNumbers)$ is spanned by 
\[
\delta_1^2\boxtimes 1, \ \ 
1\boxtimes \delta_2^2, \ \ 
\delta_1\boxtimes\delta_2, \ \ 
c_2\boxtimes 1, \ \ 
1\boxtimes c_2.
\]
Let $h$ be the homomorphism from $H^{2,2}(S^{[2]}\times S^{[2]},\RationalNumbers)$ into 
the direct sum of three copies of 
$H^{4,4}(S^{[2]}\times S^{[2]},\RationalNumbers)$, sending a class $\gamma$ to 
$(\delta_1^2\gamma,\delta_1\delta_2\gamma,\delta_2^2\gamma)$. 
Then $h$ is an injective homomorphism. Thus the class $z_2$ vanishes.
\end{proof}

\begin{question}
Can we generalize Lemma \ref{stability-lemma} for $n>2$?
\end{question}
}

\hide{
\begin{lem}
Let $S$ be a generic K\"{a}hler $K3$ surface. Denote by $D\subset S^{[2]}$ the diagonal divisor.
Let $I\subset S^{[2]}\times S^{[2]}$ be the incidence variety, consisting of pairs of
length two subschemes with non-disjoint supports.
\begin{enumerate}
\item
\label{lemma-item-two-dimensional-subvarieties}
Let $Z$ be a reduced and  irreducible two dimensional subscheme of
$S^{[2]}$. Then there exists a point $x\in S$, such that $Z$ is the
locus $\hat{S}_x$ of ideal sheaves of length $2$ subschemes of $S$,
which support contains the point $x$. 
\item
\label{lemma-item-six-dimensional-subvarieties}
Let $Z$ be a reduced and  irreducible six dimensional subscheme of
$S^{[2]}\times S^{[2]}$. Then $Z$ is either $I$, 
$D\times D$, or the product of one of the $S^{[2]}$ factors with a two-dimensional subscheme of the other.
\end{enumerate}
\end{lem}

\begin{proof}
(\ref{lemma-item-two-dimensional-subvarieties}) 
Let $Z$ be a $2$-dimensional subvariety of $S^{[2]}$. 
If $Z$ is contained in $D$, then $\pi:D\rightarrow S$ must map $Z$ onto $S$,
since $S$ does not contain any curves. Let $f:Z\rightarrow S$ be the restriction of $\pi$.
Then $f$ must be \'{e}tale over the complement of finitely many points, for the same reason.
This complement is simply connected and $f$ is proper. Hence, $Z$ must be a section
of $\pi$. But the tangent bundle of $S$ is slope-stable, hence does not have any 
rank $1$ subsheaf. A contradition.

We may thus assume that $Z$ is not contained in $D$. Hence $Z$ is the image of 
a $2$-dimensional subvariety $\widetilde{Z}$ of $S\times S$, which is not the diagonal. 
The projection of $\widetilde{Z}$ to an $S$ factor is either zero-dimensional,
or the whole of $S$, since $S$ does not contain any curve. Hence $\widetilde{Z}$
is of the form $\{x\}\times S$ or $S\times \{x\}$, for some point $x$ of $S$.

(\ref{lemma-item-six-dimensional-subvarieties})
Let $\pi_i:S^{[2]}\times S^{[2]}\rightarrow S^{[2]}$, $i=1,2$, be the projection.
If $\pi_i(Z)$ is two-dimensional, we are done. 
If $\pi_i(Z)$ is a divisor, we are done as well.
We may thus assume that $\pi_i(Z)$ is
the whole of $S^{[2]}$. 
Let $f_i:Z\rightarrow S^{(2)}$ the restriction of $\pi_i$ composed with the 
Hilbert Chow morphism $S^{[2]}\rightarrow S^{(2)}$.
Then the generic fiber of $f_i$ is two dimensional
(not necessarily integral). If the generic fiber is a constant subvariety, we are done.
Otherwise, $Z$ is the image of a subvariety of $S^{[2]}\times [S\times S]$,
and the generic fiber of the  projection to $S\times S$ is integral (use the Stein factorization,
the fact that $S\times S$ is simply connected, and that $S\times S$
does not contain any divisor).
The statement now follows easily from part \ref{lemma-item-two-dimensional-subvarieties}.
\end{proof}
}
\hide{
\section{Isogenies}

The locally free case of Theorems \ref{thm-existence} and \ref{main-thm} admits a natural generalization.
We state it here for the case $d=2$.
Fix lattices $\Lambda_1$, $\Lambda_2$ and a rational isometry 
$\phi:\Lambda_{1,\RationalNumbers}\rightarrow \Lambda_{2,\RationalNumbers}$, where 
$\Lambda_{i,\RationalNumbers}:=\Lambda_i\otimes_\Integers\RationalNumbers$.
Assume that $\Lambda_1\cap\phi^{-1}(\Lambda_2)$ has finite index in $\Lambda_1$. Then
$\Lambda_2\cap\phi(\Lambda_1)$ has finite index in $\Lambda_2$.
Let $\fM_{\Lambda_i}$ be the moduli space of marked pairs,  and 
$P_i:\fM_{\Lambda_i}\rightarrow \Omega_{\Lambda_i}$ the period maps, $i=1,2$.
The isometry $\phi$ induces an isomorphism $\phi:\Omega_{\Lambda_1}\rightarrow \Omega_{\Lambda_2}$.

\begin{condition}
\label{condition-for-quintuples}
Let $(X_i,\eta_i)$ be a marked pair in $\fM_{\Lambda_i}$, $i=1,2$, and $E$ a rank $r$ locally free sheaf 
of Azumaya algebras over $X_1\times X_2$. We require the quintuple $(X_1,\eta_1,X_2,\eta_2,E)$ to
have the following properties.
\begin{enumerate}
\item
\label{cond-item-same-period}
$\phi(P_1(X_1,\eta_1))=P_2(X_2,\eta_2)$.
\item
\label{cond-item-rigidity-of-E-in-quituple}
$H^1(E)=0.$
\item
\label{cond-item-same-kahler-class}
$E$ is $(\omega_1,\omega_2)$-slope-stable with respect to some pair of K\"{a}hler classes $\omega_i$ on $X_i$,
such that $\phi(\eta_1(\omega_1))=\eta_2(\omega_2)$.
\item
\label{condition-item-G-invariance-of-c-2-E}
Set $\varphi:=\eta_2^{-1}\phi\eta_1:H^2(X_1,\RationalNumbers)\rightarrow H^2(X_2,\RationalNumbers)$.
Let $G_\varphi\subset Mon(X_1)\times Mon(X_2)$ be the following subgroup.
\[
G_\varphi=\{
(g_1,g_2) \ : \ \varphi \bar{g}_1=\bar{g}_2\varphi
\},
\]
where $\bar{g}_i$ is the restriction of $g_i$ to an isometry of $H^2(X_i,\Integers)$.
Then $c_2(E)$ is invariant with respect to a finite index subgroup of $G_\varphi$.
\item
The characteristic class of $E$ in $H^2(X_1\times X_2,\mu_r)$ has order $r$.
\end{enumerate}
\end{condition}

Let $(X_1,\eta_1,X_2,\eta_2,E)$ be a quintuple satisfying Condition \ref{condition-for-quintuples}.
Let $\pi_i:\X_i\rightarrow \PP^1_{\omega_i}$ be the twistor families associated to the K\"{a}hler classes
$\omega_i$. The base $\PP^1_{\omega_i}$ naturally embeds in the period domain $\Omega_{\Lambda_i}$
as the intersection of the latter with the plane spanned by the $\eta_i$ image of 
the set $\{\omega_i, \mbox{Re}(\sigma_i), \mbox{Im}(\sigma_i)\}$, where $\sigma_i$ is a non-zero class in
$H^{2,0}(X_i)$.
The isomorphism $\phi:\Omega_{\Lambda_1}\rightarrow \Omega_{\Lambda_2}$ maps 
$\PP^1_{\omega_1}$ isomorphically onto $\PP^1_{\omega_2}$,
by Conditions \ref{condition-for-quintuples} (\ref{cond-item-same-period}) and (\ref{cond-item-same-kahler-class}).
The sheaf $E$ is $(\omega_1,\omega_2)$-hyperholomorphic, and it deforms 
to a holomorphic locally free sheaf of Azumaya algebras over the fiber product
$\X_1\times_{\PP^1_{\omega_2}}\X_2$, where we consider the map 
$\phi\circ \pi_1:\X_1\rightarrow \PP^1_{\omega_2}$ from the first factor.

 Let $\Omega_\phi\subset \Omega_{\Lambda_1}\times\Omega_{\Lambda_1}$ be the graph of the isomorphism
 $\phi$. Let $\fM_\phi\subset \fM_{\Omega_1}\times \fM_{\Omega_1}$ be the subset consisting of
 isomorphism classes of pairs 
 $((X_1,\eta_1),(X_2,\eta_2))$ satisfying Condition \ref{condition-for-quintuples}
 (\ref{cond-item-same-period}) and $\phi(\eta_1(\omega_1))=\eta_2(\omega_2)$ 
 for some K\"{a}hler classes $\omega_i$ on $X_i$. 
 Denote by 
 \[
 P:\fM_\phi\rightarrow \Omega_\phi
 \]
 the restriction of $(P_1,P_2):\fM_{\Lambda_1}\times \fM_{\Lambda_2}\rightarrow 
 \Omega_{\Lambda_1}\times\Omega_{\Lambda_1}$. 
 Note that the above period map $P$ is surjective, by Verbitsky's Global Torelli theorem 
 \cite{huybrechts-torelli,verbitsky-torelli} and the bijection between the set of points in the fiber of $P_i$ 
 and K\"{a}hler type chambers of the positive cone given in
 \cite[Theorem 5.16]{markman-survey}. Furthermore, the fiber of $P$ over a point $p\in \Omega_\phi$
 consists of a single separated point of $\fM_\phi$, whenever $p$ 
 corresponds to periods with trivial Picard number, or with rank one Picard groups generated
 by non-negative classes, by the global Torelli theorem.
 
 \begin{thm}
 \label{thm-quintuples}
 \begin{enumerate}
 \item
 There exists a coarse moduli space $\tfM_\phi$, which is a complex non-Hausdorff manifold,  
 parametrizing isomorphism classes of quintuples
 $(X_1,\eta_1,X_2,\eta_2,E)$ satisfying Condition \ref{condition-for-quintuples}.
 \item
 \label{thm-item-surjectivity-from-quintumples-to-quadruples}
 Let $\tfM^0_\phi$ be a connected component of $\tfM_\phi$ and let 
 $\fM^0_\phi$ the connected component of $\fM_\phi$ containing the image of $\tfM^0_\phi$
 via the forgetful morphism
 $f:\tfM^0_\phi\rightarrow \fM_\phi$, forgetting the sheaf $E$. Then $f$ maps $\tfM^0_\phi$
 onto $\fM^0_\phi$. 
 \item
 The morphism $\widetilde{P}:=P\circ f:\tfM^0_\phi\rightarrow \Omega_\phi$ is surjective. Its 
 fiber over a point $p\in \Omega_\phi$ consists of a single separated point, whenever $p$ corresponds to periods
 of Hodge structures with trivial Neron-Severi groups, or cyclic 
 Neron-Severi groups generated by classes of non-negative self-intersection.
 \item
 If $(X_1,\eta_1,X_2,\eta_2,E)$ and $(X_1,\eta_1,X_2,\eta_2,E')$ are two points in the fiber of $f$
 and both $E$ and $E'$ are $(\omega_1,\omega_2)$-stable
 with respect to the same K\"{a}hler classes in Condition
 \ref{condition-for-quintuples} (\ref{cond-item-same-kahler-class}),
then $E$ is isomorphic to $E'$.
\end{enumerate}
 \end{thm}

Let $O(\Lambda_{1,\RationalNumbers},\Lambda_{2,\RationalNumbers})$ be the set 
of rational isometries $\phi$ as above.
Then $O(\Lambda_1)\times O(\Lambda_2)$ acts naturally 
on the disjoint union 
${\displaystyle \bigcup_{\phi\in O(\Lambda_{1,\RationalNumbers},\Lambda_{2,\RationalNumbers})}\tfM_\phi}$
by 
\[
(g_1,g_2)(X_1,\eta_1,X_2,\eta_2,E)=(X_1,g_1\eta_1,X_2,g_2\eta_2,E),
\]
sending $\tfM_\phi$ to $\tfM_{g_2\phi g_1^{-1}}.$

\begin{example}
\label{example-rational-Hodge-isometries}
Let $M$ be a two dimensional projective moduli space of rank $r$ stable vector bundles over a 
projective $K3$-surface $S$ with a cyclic Picard group.
Let $F$ be a universal  twisted bundle over $S\times M$.
Assume that the Brauer class $\theta$ of $F$ on $M$ has order $r$. Set $E:=\SheafEnd(F)$.
Let $\Lambda_1=\Lambda_2=\Lambda$ be the $K3$-lattice.
Choose marking $\eta_1$ for $S$ and $\eta_2$ for $M$.
Let $\varphi:H^2(S,\RationalNumbers)\rightarrow H^2(M,\RationalNumbers)$
be the isometry induced by the correspondence
\begin{equation}
\label{eq-correspondence-of-E}
\frac{-1}{2r}p_S^*(\sqrt{td_S})c_2(E)p_M^*(\sqrt{td_M}),
\end{equation}
where $p_S$ and $p_M$ are the projections from $S\times M$ onto the corresponding factors. 
Set $\phi:=\eta_2\varphi\eta_1^{-1}:\Lambda\rightarrow\Lambda$.
The monodromy invariance of $c_2(E)$ in Condition
\ref{condition-for-quintuples} (\ref{condition-item-G-invariance-of-c-2-E})
is a tautology, due to the above relationship between $c_2(E)$ and $\phi$.
The universal bundle $F$ is the kernel for a Fourier-Mukai equivalence of the bounded derived categories
of (twisted) coherent sheaves 
$\Phi_F:D^b(S)\rightarrow D^b(M,\theta)$. We get an equivalence 
$1\times \Phi_F:D^b(S\times S)\rightarrow D^b(S\times M,p_M^*\theta)$, and
$F$ is the image of the structure sheaf $\StructureSheaf{\Delta}$ of the diagonal. The rigidity of $E$
in Condition \ref{condition-for-quintuples} (\ref{cond-item-rigidity-of-E-in-quituple}) 
follows from the vanishing of $\Ext^1(\StructureSheaf{\Delta},\StructureSheaf{\Delta})$.
$F$ is slope-stable with respect to every K\"{a}hler class, 
by Proposition \ref{prop-maximally-twisted-sheaf-is-slope-stable}.
Condition \ref{condition-for-quintuples} (\ref{cond-item-same-kahler-class}) is satisfied, since
both $S$ and $M$ are projective with Picard number $1$, and so their K\"{a}hler cones are 
equal to their positive cones. We conclude 
that $(S,\eta_1,M,\eta_2,E)$ is a quintuple in $\tfM_\phi$. The connected component $\tfM^0_\phi$
containing $(S,\eta_1,M,\eta_2,E)$
parametrizes pairs of marked $K3$ surfaces with a rational Hodge isometry, which is induced by a
characteristic class. In particular, this rational Hodge isometry is induced by an algebraic correspondence,
whenever the $K3$ surfaces are projective.
\end{example}

Let $\Lambda$ be the $K3$ lattice and $O(\Lambda_\RationalNumbers)$ the group of rational isometries.
Let
\[
O_{\rm Hdg}(\Lambda_\RationalNumbers)
\]
be the subgroup of $O(\Lambda_\RationalNumbers)$ generated by $O(\Lambda)$ and 
isometries $\phi$ for which the moduli space $\tfM_\phi$ has a non-empty component 
$\tfM^0_\phi$ consisting of quintuples $(X_1,\eta_1,X_2,\eta_2,E)$, whith $X_i$ a $K3$ surface and the sheaf $E$
is such that $\phi$ is equal to the correspondence (\ref{eq-correspondence-of-E}). 

\begin{question}
How big is the subgroup $O_{\rm Hdg}(\Lambda_\RationalNumbers)$ of $O(\Lambda_\RationalNumbers)$?
\end{question}

The surjectivity statement in Theorem \ref{thm-quintuples} (\ref{thm-item-surjectivity-from-quintumples-to-quadruples}) 
means that a proof of an equality $O_{\rm Hdg}(\Lambda_\RationalNumbers)=O(\Lambda_\RationalNumbers)$
would provide a proof of a conjecture of Shafarevich about the algebraicity of rational Hodge isometries 
between projective $K3$ surfaces. Foundational work on this problem was carried out by S. Mukai \cite{mukai-hodge}.
}

\end{document}